\newcommand{\FixedSet}{\tmop{Fix}}
\newcommand{\Identity}{\mathrm{Id}}
\newcommand{\rmn}{\mathrm{n}}
\newcommand{\rmt}{\mathrm{t}}
\newcommand{\Qset}{\mathbb{Q}}
\newcommand{\Rset}{\mathbb{R}}
\newcommand{\Sset}{\mathbb{S}}
\newcommand{\Zset}{\mathbb{Z}}
\newcommand{\tmfloatcontents}{}
\newlength{\tmfloatwidth}
\newcommand{\tmfloat}[5]{
  \renewcommand{\tmfloatcontents}{#4}
  \setlength{\tmfloatwidth}{\widthof{\tmfloatcontents}+1in}
  \ifthenelse{\equal{#2}{small}}
    {\ifthenelse{\lengthtest{\tmfloatwidth > \linewidth}}
      {\setlength{\tmfloatwidth}{\linewidth}}{}}
    {\setlength{\tmfloatwidth}{\linewidth}}
  \begin{minipage}[#1]{\tmfloatwidth}
    \begin{center}
      \tmfloatcontents
      \captionof{#3}{#5}
    \end{center}
  \end{minipage}}
\newcommand{\tmop}[1]{\ensuremath{\operatorname{#1}}}
\newcommand{\tmtextit}[1]{{\itshape{#1}}}
\newcommand{\tmtextup}[1]{{\upshape{#1}}}
\newcommand{\DS}{\displaystyle}
\newtheorem{corollary}{Corollary}
\newtheorem{definition}{Definition}
\newtheorem{lemma}{Lemma}
\newenvironment{proof}{\noindent\textbf{Proof\ }}{\hspace*{\fill}$\Box$\medskip}
\newtheorem{proposition}{Proposition}
{\theorembodyfont{\rmfamily}\newtheorem{remark}{Remark}}
\newtheorem{theorem}{Theorem}
\newenvironment{descriptioncompact}{\begin{description} }{\end{description}}
\newenvironment{enumeratealpha}{\begin{enumerate}[a{\textup{)}}] }{\end{enumerate}}
\newenvironment{enumerateroman}{\begin{enumerate}[i.] }{\end{enumerate}}
\newenvironment{itemizedot}{\begin{itemize} }{\end{itemize}}
\definecolor{dark green}{RGB}{0,100,0}
\begin{document}

\title[Classification of SPTs in billiards]
      {Classification of symmetric periodic trajectories in ellipsoidal billiards}

\author{Pablo S. Casas}
\email{\texttt{pablo@casas.upc.es}.}
\author{Rafael Ram\'{\i}rez-Ros}
\email{\texttt{Rafael.Ramirez@upc.edu}.}
\affiliation{Departament de Matem\`atica Aplicada I,
             Universitat Polit\`ecnica de Catalunya,\\
             Diagonal 647, 08028 Barcelona, Spain}

\date{\today}

\begin{abstract}
We classify nonsingular symmetric periodic trajectories
(SPTs) of billiards inside ellipsoids of $\Rset^{n+1}$
without any symmetry of revolution.
SPTs are defined as periodic trajectories passing through some symmetry set.
We prove that there are exactly $2^{2n} (2^{n+1} - 1)$ classes of
such trajectories.
We have implemented an algorithm to find minimal SPTs of each of
the 12 classes in the 2D case ($\Rset^2$) and each of the 112 classes
in the 3D case ($\Rset^3$).
They have periods 3, 4 or 6 in the 2D case;
and 4, 5, 6, 8 or 10 in the 3D case.
We display a selection of 3D minimal SPTs.
Some of them have properties that cannot take place in the 2D case.
\end{abstract}

\maketitle

\begin{quotation}
Smooth convex billiards are a paradigm of conservative dynamics, in
which a particle collides with a fixed closed smooth convex hypersurface of
$\bm{\Rset^{n + 1}}$. They provide examples of different dynamics:
integrable, mostly regular, chaotic, etc. In this paper we tackle the
integrable situation. Concretely, we find and classify symmetric periodic
trajectories (SPTs) inside ellipsoids of $\bm{\Rset^{n + 1}}$. PTs show
different dynamics, which describe how they fold in $\bm{\Rset^{n + 1}}$. STs
present symmetry with respect to a coordinate subspace of $\bm{\Rset^{n + 1}}$.
Dynamics and symmetry are precisely the main aspects we consider in our
classification of SPTs. We establish 112 classes of SPTs in the 3D case, and
we find a representative of each class with the smallest possible period.
Those minimal SPTs have periods 4, 5, 6, 8, or 10. We depict a selection of
minimal 3D SPTs. Some of them have properties that cannot take place in the 2D
case. SPTs are preserved under symmetric deformations of the ellipsoid. In a
future paper we plan to study their bifurcations and the transition between
stability and instability under such deformations.
\end{quotation}

\section{Introduction}

Smooth convex billiards, in which a point particle moves uniformly until it
undergoes abrupt elastic collisions with a smooth strictly convex hypersurface $Q
\subset \Rset^{n + 1}$, are one of the typical examples of conservative
systems with both regular and chaotic dynamics. The 2D case ($n = 1$) was introduced
by Birkhoff\cite{Birkhoff1966} and there is an extensive literature about it. The
high-dimensional case remains much less studied, although we highlight two known
results. First,
there exists a Nekhoroshev-like theorem for billiards.~\cite{GramchevPopov1995}
Second, there are some general lower bounds on the number of periodic billiard
trajectories.~\cite{Babenko1992,Farber2002a,Farber2002b,FarberTabachnikov2002}
Of course, these lower bounds are not informative for integrable systems,
where periodic trajectories are organized in continuous families.

\begin{figure*}
  \resizebox{2.07\columnwidth}{!}{\includegraphics{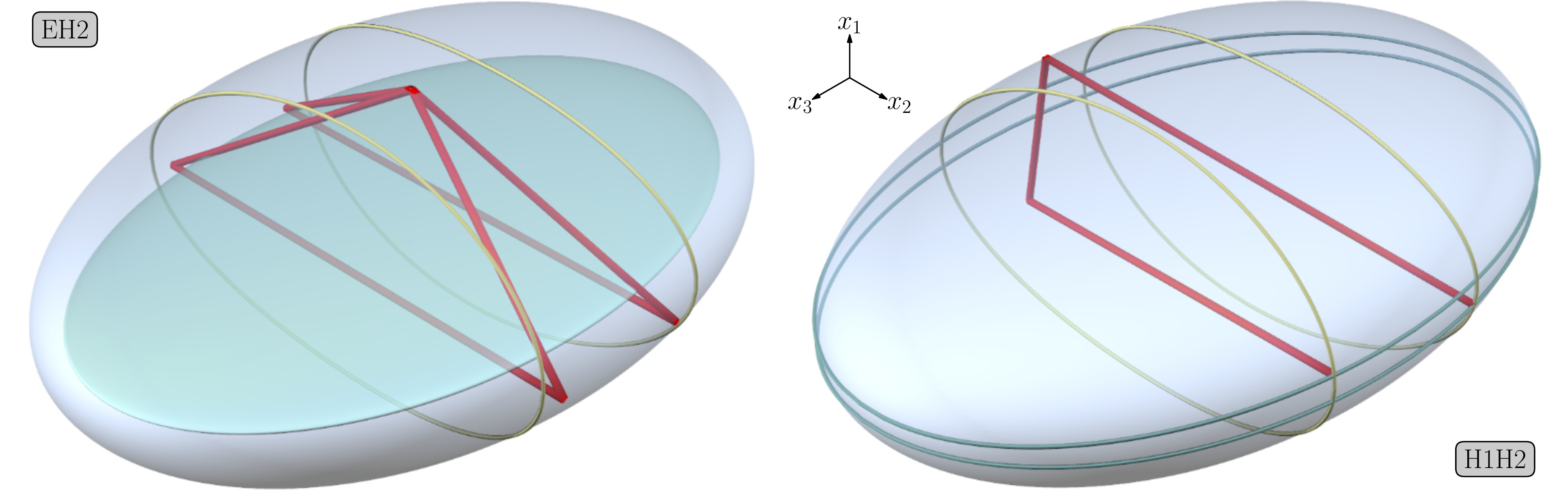}}
  \caption{\label{fig:TwoAmazingSPTs_3D}Two SPTs of period 6, but with only 5
  and 4 different impact points. Lines in green and yellow represent the
  intersections of the ellipsoid with H1-caustics and H2-caustics,
  respectively. The E-caustic is also shown in the EH2 figure.}
\end{figure*}

Ellipsoids are the only known examples of smooth strictly convex
closed hypersurfaces of the Euclidean space $\Rset^{n +1}$
that give rise to integrable billiards.
There is a larger class of integrable billiards,
called \emph{Liouville billiard tables},
but in none of the new integrable billiards the point particle follows
straight lines between two consecutive impacts;
instead it moves on the geodesics of some smooth compact
connected Riemannian manifold with
boundary.\cite{PopovTopalov2003,PopovTopalov2008,PopovTopalov2011}

In fact, an old conjecture atributed to Birkhoff states:
ellipsoids are the only smooth strictly convex closed hypersurfaces
of $\Rset^{n+1}$ that give rise to integrable billiards.
The conjecture remains still unproven,
but Berger\cite{Berger1995} and Gruber\cite{Gruber1995} proved that only
ellipsoids have smooth caustics when $n \geq 2$,
which is a powerful argument for the conjecture. A smooth \textit{caustic} is a
smooth hypersurface with the property that a billiard trajectory, once tangent
to it, stays tangent after every reflection.

Periodic trajectories (PTs) are the most distinctive trajectories, so their study is
the first task. There exist two remarkable results about PTs inside ellipsoids:
\textit{Poncelet theorem} and \textit{Cayley conditions}.
Poncelet\cite{Poncelet1822} showed that if a billiard trajectory inside an ellipse is
periodic, then all the trajectories sharing its conic caustic are also periodic.
Cayley\cite{Cayley1854} gave some algebraic conditions for determining conic
caustics whose trajectories are periodic. Later on, both results were generalized to
any dimension.~\cite{ChangFriedberg1988,DragovicRadnovic1998b}

Almost all papers on billiards inside ellipsoids follow an algebro-geometric
approach to the problem.\cite{Veselov1988,ChangFriedberg1988,MoserVeselov1991,
Chang_etal1993a,Chang_etal1993b,DragovicRadnovic1998a,DragovicRadnovic1998b,Fedorov1999, 
DragovicRadnovic2006,AbendaFedorov2006}
A more dynamical approach to billiards inside ellipsoids has recently been
considered in Refs.~\onlinecite{Waalkens_etal1999,WaalkensDullin2002,
DragovicRadnovic2009,CasasRamirez2011}.
We follow that approach, in which the description of the
dynamics is more important than the obtaining of explicit formulae. Indeed, this
paper is a natural continuation of the study on (minimal) periodic trajectories
carried out in Ref.~\onlinecite{CasasRamirez2011}. In the framework of smooth
convex billiards the minimal period is always two. Nevertheless, since all
two-periodic billiard trajectories inside ellipsoids are singular ---in the
sense that some of their caustics are singular elements of a family of confocal
quadrics---, two questions arise: which is the minimal period among
nonsingular billiard trajectories? which ellipsoids display such trajectories?
The authors settled both questions in Ref.~\onlinecite{CasasRamirez2011}.

We say that an ellipsoid of $\Rset^{n+1}$ whose $n+1$ axis have
different lengths (i.e., an ellipsoid without any symmetry of revolution)
is \textit{nondegenerate}.
Our current goal is to classify nonsingular symmetric
periodic trajectories (SPTs) inside nondegenerate
ellipsoids.
We also look for minimal SPTs
---representatives of each class with the smallest possible period.

The relevance of the SPTs relies in three facts.
First, they persist under small enough symmetric perturbations of
a nondegenerate ellipsoid.
Second, each perturbed SPT can be computed by solving
a nonlinear equation for which the corresponding unperturbed SPT
is a good initial approximation.
Third, this nonlinear equation is $n$-dimensional, not $2n$-dimensional.
That is, the dimension of the problem can be halved.
These properties greatly simplify the numerical computation of SPTs
inside symmetrically perturbed ellipsoids, which is our next aim.
To be more precise,
we plan to study the stability and bifurcations of perturbed SPTs,
as a way to gain some insight into the transition from integrability
to chaos in high-dimensional smooth convex billiard problems.

Roughly speaking, a symmetric trajectory (ST) inside
\[ Q = \left\{ \left( x_1, \ldots, x_{n + 1} \right) \in \Rset^{n + 1} :
   \frac{x_1^2}{a_1} + \cdots + \frac{x_{n + 1}^2}{a_{n + 1}} = 1 \right\}, \]
is a billiard trajectory invariant under a orthogonal reflection,
with respect to some of the $2^{n+1} - 1$ coordinate proper subspaces
of the Euclidean space $\Rset^{n+1}$.
For instance, the left trajectory in Fig.~\ref{fig:TwoAmazingSPTs_3D}
is invariant under the $x_1$-axial and
$x_1 x_3$-specular reflections, whereas the right one is invariant under the
$x_2$-axial reflection. There are twice as many ``billiard symmetries'' as
coordinate proper subspaces of $\Rset^{n + 1}$. This is easily
understood by means of the following example. We can associate two different
kinds of SPTs to any coordinate axis of $\Rset^{n + 1}$; namely, the
ones with an impact point on the axis and the ones with a segment parallel to
the axis. Note that the left SPT in Fig.~\ref{fig:TwoAmazingSPTs_3D} has an
impact point on the $x_1$-axis ---and so, it is invariant under the
$x_1$-axial reflection---, but also has a segment parallel to the
$x_2$-axis ---and so, it is invariant under the $x_1 x_3$-specular
reflection. We will check that this kind of double symmetry is the norm among
SPTs, not the exception.

In view of the above arguments it is easy to guess that there is a rich
casuistry of SPTs inside nondegenerate ellipsoids of $\Rset^{n + 1}$, so
much so that there is no simple way to describe them. For simplicity, let us
briefly consider the 3D case: $n = 2$. Then, any billiard trajectory inside $Q
\subset \Rset^3$ has two caustics of the form
\[
Q_\lambda =
\left\{
( x_1, x_2, x_3 ) \in \Rset^3 :
\sum_{j=1}^3 \frac{x_j^2}{a_j - \lambda}  = 1
\right\}.
\]
Let us assume that $0 < a_1 < a_2 < a_3$. We restrict our attention to
nonsingular trajectories. That is, trajectories whose caustics are ellipsoids:
$0 < \lambda < a_1$; 1-sheet hyperboloids: $a_1 < \lambda < a_2$; or 2-sheet
hyperboloids: $a_2 < \lambda < a_3$. The singular values $\lambda \in \left\{
a_1, a_2, a_3 \right\}$, are discarded. It is known that there are only four
types of couples of nonsingular caustics: EH1, H1H1, EH2, and H1H2. The
notation is self-explanatory. It is also known that each caustic type gives
rise to a different billiard dynamics. For instance, only trajectories with
EH2-caustics rotate around the $x_3$-axis.
The left trajectory in Fig.~\ref{fig:TwoAmazingSPTs_3D} is a sample.

Coming back to the general case, its billiard trajectories present $2^n$ different
caustic types, which describe how trajectories fold in $\Rset^{n + 1}$; see
Ref.~\onlinecite{CasasRamirez2011}. Then, we classify SPTs inside nondegenerate
ellipsoids of $\Rset^{n + 1}$ by their caustic types and their symmetries.
That is, two SPTs with the same symmetries, but different caustic type, are in
different classes. On the one hand, not every symmetry takes place among SPTs of a
given caustic type, but only $2^{n + 1}$. On the other hand, all SPTs are, in some
sense, doubly symmetric, which rise to $2^n \left( 2^{n + 1} - 1 \right)$
classes we may encounter with a given caustic type. Consequently, we know that there
are at most $2^{2 n} \left( 2^{n + 1} - 1 \right)$ classes of SPTs. Finally, by
means of an argument involving \textit{winding numbers}, we establish the existence
of SPTs for all those classes.

That concludes the analytical part of the work. We have also implemented a
numerical algorithm to find minimal SPTs of each of the 12 classes in the 2D
case, and each of the 112 classes in the 3D case. They have periods 3, 4 or 6
in the 2D case; and 4, 5, 6, 8 or 10 in the 3D case. We depict all 2D minimal
SPTs in Table~\ref{tab:SPTs_2D}, and just a gallery (for brevity of
exposition) of 3D minimal SPTs in Sec.~\ref{sec:3D}.

For instance, we show in Fig.~\ref{fig:TwoAmazingSPTs_3D} two minimal SPTs
of period 6, but with only 5 and 4 different impact points. Only the second
phenomena can take place in the 2D case. This has to do with the fact that,
from a generic point on the ellipsoid, we can trace four lines tangent to a
fixed couple of caustics. On the contrary, we can trace just two in the 2D
case.

Some of the ideas used in this paper were first introduced by Kook and
Meiss,\cite{KookMeiss1989} to classify symmetric periodic orbits of some standard-like
reversible $2 n$-dimensional symplectic maps. The technical details of our problem
are harder ---there are more symmetries, caustic types play a role, and the
classification does not depend only on the evenness or oddness of some
integers---, but the main arguments do not change.

Some of our billiard SPTs could be considered as discrete versions of the closed
geodesics on triaxial ellipsoids found in Ref.~\onlinecite{Fedorov2005}. For
instance, it is interesting to compare the SPT on the left side of
Fig.~\ref{fig:TwoAmazingSPTs_3D} with the symmetric closed geodesic shown in Fig.~4 of
that paper. Of course, the SPT on the right side of Fig.~\ref{fig:TwoAmazingSPTs_3D}
have no continuous version, because there are no geodesics with return points.

We complete this introduction with a note on the organization of the article. We
review the classical theory of reversible maps in Sec.~\ref{sec:ReversibleMaps}
following Ref.~\onlinecite{LambRoberts1998}. Next, we specialize that
theory to billiards inside symmetric hypersurfaces.
In Sec.~\ref{sec:BilliardsEllipsoids} we review briefly some well-known results about
billiards inside ellipsoids, in order to fix notations that will be used along the
rest of the paper. Billiards inside ellipses of $\Rset^2$ and inside triaxial
ellipsoids of $\Rset^3$, are thoroughly studied in Sec.~\ref{sec:2D} and
Sec.~\ref{sec:3D}, respectively. Billiards inside nondegenerate ellipsoids of
$\Rset^{n + 1}$ are revisited in Sec.~\ref{sec:nD}. Perspectives and
conclusions are drawn in Sec.~\ref{sec:Conclusions}.

\section{Reversible maps and symmetric orbits}\label{sec:ReversibleMaps}

In this section we state the concept of symmetric periodic orbit for a map. A
map is reversible if each orbit is related to its time reverse orbit by a
symmetry transformation. Reversible maps can be characterized as maps that
factorize as the composition of two involutions. Symmetric orbits must have
points on certain symmetry sets; namely, the fixed sets of the involutions
(reversors) which factorize the original map. A reversible map can have
different factorizations and, therefore, its symmetric orbits can be
classified according to the symmetry sets they intersect.

Let $f : M \to M$ be a diffeomorphism on a manifold $M$.

\begin{definition}
  The map $f$ is symmetric when there exists an involution $s : M \to M$
  (i.e., $s \circ s = \Identity$), such that $f \circ s = s \circ f$.
  Then, $s$ is called a symmetry of the map $f$.
\end{definition}

\begin{definition}
  \label{def:ReversibleMap}The map $f$ is reversible when there exists an
  involution $r : M \to M$ such that $f \circ r = r \circ f^{- 1}$. Then, $r$
  is called a reversor of the map $f$ (i.e., $f$ is $r$-reversible).
\end{definition}

\begin{remark}
  \label{rem:SymmetryReversor}The composition of a symmetry and a reversor
  (provided both commute) is another reversor.
\end{remark}

\begin{definition}
  If $r$ is a reversor, we denote by $\FixedSet (r) = \{m \in M :
  r (m) = m\}$ its set of fixed points, which is called the symmetry set of
  the reversor. 
\end{definition}

\begin{definition}
  An orbit of the map $f$ is a sequence $O = (m_j)_{j \in \Zset}$ such
  that $m_j = f (m_{j - 1}) = f^j (m_0)$.
\end{definition}

\begin{definition}
  An orbit $O$ of a $r$-reversible map $f$ is called $r$-symmetric when $r
  (O) = O$. 
\end{definition}

The following characterization of reversible maps goes back to G. D. Birkhoff.

\begin{lemma}
  \label{lem:ReversibleMaps}A map $f$ is reversible if and only if it can be
  factorized as the composition of two involutions, in which case both of them
  are reversors of $f$. 
\end{lemma}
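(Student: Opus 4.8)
The plan is to prove the two implications separately, using only the algebra of involutions together with the defining relation $f \circ r = r \circ f^{-1}$ of a reversor from Definition~\ref{def:ReversibleMap}.

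For the ``if'' direction, suppose $f = u \circ v$ with $u$ and $v$ involutions. Since $u \circ u = v \circ v = \Identity$, the inverse factorizes in reversed order as $f^{-1} = v \circ u$. I would then verify directly that $v$ is a reversor: on the one hand $f \circ v = u \circ v \circ v = u$, and on the other hand $v \circ f^{-1} = v \circ v \circ u = u$, so $f \circ v = v \circ f^{-1}$. The same short computation with the roles exchanged gives $f \circ u = u \circ v \circ u = u \circ f^{-1}$, so $u$ is a reversor as well. This already establishes the final clause of the statement, namely that both involutions are reversors of $f$.

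For the ``only if'' direction, assume $f$ is $r$-reversible, so that $r \circ r = \Identity$ and $f \circ r = r \circ f^{-1}$. The key step is to guess the correct factorization: I would set $v = r$ and $u = f \circ r$. Then $u \circ v = f \circ r \circ r = f$, so the factorization reproduces $f$. It remains to check that $u$ is an involution. Rewriting $u = f \circ r = r \circ f^{-1}$ by means of the reversor relation, I compute $u \circ u = (f \circ r) \circ (r \circ f^{-1}) = f \circ (r \circ r) \circ f^{-1} = f \circ f^{-1} = \Identity$. Hence $f$ factorizes as the composition of the two involutions $u$ and $v$.

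I expect no serious obstacle, since the lemma is essentially a formal manipulation of the composition relations. The only point requiring care is the nontrivial direction, where one must exhibit the right pair of involutions; the natural choice $v = r$ and $u = f \circ r$ works precisely because the reversor identity is exactly what collapses $u \circ u$ to the identity. Everything else is routine bookkeeping with the relations $r \circ r = \Identity$ and $f \circ r = r \circ f^{-1}$.
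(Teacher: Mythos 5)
Your proof is correct and takes essentially the same route as the paper: in the nontrivial direction you choose the same factorization ($v = r$, $u = f \circ r$) and use the reversor identity $f \circ r = r \circ f^{-1}$ to collapse $u \circ u$ to the identity, while in the converse direction you verify the reversor relation for both factors directly (the paper checks the equivalent identities $f \circ \tilde{r} \circ f = \tilde{r}$ and $f \circ \hat{r} \circ f = \hat{r}$, which differ from your computation only by composing with $f^{-1}$ on the right).
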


\begin{proof}
  Let us assume that $f$ is $\tilde{r}$-reversible. Then $\hat{r} = f \circ
  \tilde{r} = \tilde{r} \circ f^{- 1}$ is another reversor, because:
  \begin{itemize}
    \item $f \circ \hat{r} = f \circ \tilde{r} \circ f^{- 1} = \hat{r} \circ
    f^{- 1}$, and
    
    \item $\hat{r}^2 = \hat{r} \circ \hat{r} = \tilde{r} \circ f^{- 1} \circ f
    \circ \tilde{r} = \tilde{r}^2 = \Identity$.
  \end{itemize}
  Therefore, the map $f = f \circ \tilde{r}^2 = \hat{r} \circ \tilde{r}$ is
  the composition of two involutions.
  
  On the other hand, if $f = \hat{r} \circ \tilde{r}$ and $\hat{r}^2 =
  \tilde{r}^2 = \Identity$, then:
  \begin{itemize}
    \item $f \circ \tilde{r} \circ f = \hat{r} \circ \tilde{r}^2 \circ \hat{r}
    \circ \tilde{r} = \hat{r}^2 \circ \tilde{r} = \tilde{r}$, and
    
    \item $f \circ \hat{r} \circ f = \hat{r} \circ \tilde{r} \circ \hat{r}^2
    \circ \tilde{r} = \hat{r} \circ \tilde{r}^2 = \hat{r}$,
  \end{itemize}
  so both involutions $\hat{r}$ and $\tilde{r}$ are reversors of $f$.
\end{proof}

The factorization of a reversible symmetric map as a composition of two
involutions is not unique. From any given factorization $f = \hat{r} \circ
\tilde{r}$, we can construct infinitely many more, namely, $f = \hat{r}_k
\circ \tilde{r}_k$, where $\hat{r}_k = f^{k + 1} \circ \tilde{r} = f^k \circ
\hat{r} = \hat{r} \circ f^{- k}$ and $\tilde{r}_k = f^k \circ \tilde{r}$ for
any $k \in \Zset$. Nevertheless, these new factorizations do not provide
new symmetric orbits, because $\hat{r} (O) = \tilde{r} (O) = \hat{r}_k (O) =
\tilde{r}_k (O)$, i.e., an orbit is invariant under $\tilde{r}$ (or $\hat{r}$)
if and only if it is invariant under all the reversors $\tilde{r}_k$ and
$\hat{r}_k$. On the contrary, the existence of a symmetry $s$ commuting with
the reversors $\tilde{r}$ and $\hat{r}$ is more promising, because then $f =
(s \circ \hat{r}) \circ (s \circ \tilde{r})$ is a new factorization, which
could give rise to new kinds of symmetric orbits. The more such symmetries
exist, the more kinds of symmetric orbits we can try to find.

Given a factorization $f = \hat{r} \circ \tilde{r}$, we say that $\tilde{r}$ and
$\hat{r}$ are \tmtextit{associated reversors}, and $\FixedSet ( \tilde{r})$ and
$\FixedSet ( \hat{r})$ are \tmtextit{associated symmetry sets}. The importance of
these concepts is clarified in the following characterization of symmetric orbits,
which can be found in Ref.~\onlinecite{LambRoberts1998}.

\begin{theorem}
  \label{thm:SymmetricOrbits}Let $f = \hat{r} \circ \tilde{r}$ be any
  factorization of a rever-sible map as the composing of two involutions. Then
  an orbit of this map is:
  \begin{enumeratealpha}
    \item $\hat{r}$-symmetric if and only if it is $\tilde{r}$-symmetric.
    
    \item \label{item:NoMoreThan2}$\tilde{r}$-symmetric if and only if it has
    at least one point on $\FixedSet ( \tilde{r}) \cup
    \FixedSet ( \hat{r})$, in which case it has no more than two
    points on $\FixedSet ( \tilde{r}) \cup \FixedSet (
    \hat{r})$.
    
    \item \label{item:SPOs}$\tilde{r}$-symmetric and periodic if and only if
    it has exactly two points on $\FixedSet ( \tilde{r}) \cup
    \FixedSet ( \hat{r})$, in which case it has a point on each
    symmetry set if and only if it has odd period. In particular,
    \begin{enumerateroman}
      \item An orbit is $\tilde{r}$-symmetric with period $2 k$ if and only if
      it has a point $m_{\ast}$ such that either $m_{\ast} \in
      \FixedSet ( \tilde{r}) \cap f^k \FixedSet (
      \tilde{r})$ or $m_{\ast} \in \FixedSet ( \hat{r}) \cap f^k
      \FixedSet ( \hat{r})$, in which case $f^k (m_{\ast})$ is on
      the same symmetry set as $m_{\ast}$.
      
      \item An orbit is $\tilde{r}$-symmetric orbit with period $2 k + 1$ if
      and only if it has a point $m_{\ast}$ such that $m_{\ast} \in
      \FixedSet ( \tilde{r}) \cap f^k \FixedSet (
      \hat{r})$, in which case $f^{k + 1} (m_{\ast}) \in \FixedSet
      ( \hat{r})$.
    \end{enumerateroman}
  \end{enumeratealpha}
\end{theorem}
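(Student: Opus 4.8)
The plan is to reduce the entire statement to a bookkeeping of how the two reversors permute the indices of a symmetric orbit. First I would record the elementary consequence of Definition~\ref{def:ReversibleMap}: from $f \circ \tilde r = \tilde r \circ f^{-1}$ one gets $\tilde r \circ f^{j} = f^{-j} \circ \tilde r$ for every $j \in \Zset$, and the same for $\hat r$. Item (a) needs nothing more: since $O$ is an orbit we have $f(O) = O$, so $\hat r(O) = f\bigl(\tilde r(O)\bigr)$ shows at once that $\tilde r(O) = O$ if and only if $\hat r(O) = O$. For everything else, assume $O = (m_j)$ is $\tilde r$-symmetric; then $\tilde r(m_0) = m_i$ for some index $i$, and the identity above yields
\[ \tilde r(m_j) = m_{i-j}, \qquad \hat r(m_j) = f\bigl(\tilde r(m_j)\bigr) = m_{i+1-j} . \]
Hence $m_j \in \FixedSet(\tilde r)$ iff $2j \equiv i$ and $m_j \in \FixedSet(\hat r)$ iff $2j \equiv i+1$, these relations being read in $\Zset$ for a non-periodic orbit and modulo the minimal period $N$ for a periodic one.

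I would deduce item (b) from these two congruences. For the converse direction I run the identity backwards: if $m_p \in \FixedSet(\tilde r)$ then $\tilde r(m_j) = m_{2p-j}$, so $\tilde r(O) = O$, and the case $m_p \in \FixedSet(\hat r)$ reduces to this via item (a). For the direct implication together with the bound ``no more than two'', I count the solutions of $2j \equiv i$ and $2j \equiv i+1$: in the non-periodic case exactly one of the two integer equations is solvable and has a single root, giving precisely one point on $\FixedSet(\tilde r) \cup \FixedSet(\hat r)$; in the periodic case each solvable congruence has one root when $N$ is odd and two roots (differing by $N/2$) when $N$ is even, so the union is met in at most two points.

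Item (c) then falls out of the same count. ``Exactly two points'' on the union forces $\tilde r$-symmetry by item (b), and the one-point tally rules out the non-periodic case, so $O$ is periodic; conversely a symmetric periodic orbit meets the union in exactly two points. The parity dichotomy is immediate: when $N$ is odd, $2$ is invertible modulo $N$, so $2j \equiv i$ and $2j \equiv i+1$ each have a unique root and the two points lie on different symmetry sets, whereas when $N$ is even only one congruence is solvable and its two roots lie on the same set. To recover (i) and (ii) in the stated form I normalise indices so that one special point sits at $j = 0$: in the even case $N = 2k$ the roots of $2j \equiv i$ are $0$ and $k$, so $m_\ast = m_0$ satisfies $m_\ast \in \FixedSet(\tilde r) \cap f^{k}\FixedSet(\tilde r)$ with $f^{k}(m_\ast) = m_k$ on the same set (and symmetrically with $\hat r$); in the odd case $N = 2k+1$ one finds $m_0 \in \FixedSet(\tilde r)$ and $m_{k+1} \in \FixedSet(\hat r)$, whence $m_0 \in f^{k}\FixedSet(\hat r)$ and $f^{k+1}(m_0) \in \FixedSet(\hat r)$.

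The step I expect to be most delicate is the clean passage between the integer statements for non-periodic orbits (where the union is met at most once) and the modular statements for periodic ones (where the even/odd alternative and the distinctness of the two intersection points must be tracked carefully), together with verifying that the integer $k$ appearing in (i) and (ii) is exactly one half of the period, respectively one half of one less than the period, so that $f^{k}$ indeed carries one special point to the other.
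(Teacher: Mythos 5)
Your proof is correct and is in fact more complete than the paper's own, which opens with ``this is an old result, so we skip most of the details'' and verifies only selected implications. The underlying mechanism is the same: from $\tilde r(m_0)=m_i$ you derive $\tilde r(m_j)=m_{i-j}$ and $\hat r(m_j)=m_{i+1-j}$, which is precisely the computation the paper performs. The genuine difference is what you do next: you repackage membership in the symmetry sets as the congruences $2j\equiv i$ and $2j\equiv i+1$ (over $\Zset$ for non-periodic orbits, modulo the minimal period $N$ for periodic ones) and then \emph{count} solutions, whereas the paper argues by ad hoc parity case distinctions and only proves the implications it later needs --- a point on a fixed set forces symmetry; two points on $\FixedSet(\tilde r)$ at indices $0$ and $k$ force period $2k$; points at indices $0$ and $k+1$ on associated sets force period $2k+1$. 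The converses, the bound ``no more than two points'' in item (b), the ``exactly two'' in item (c), and the odd/even dichotomy are all left unproved in the paper; your solution count (one root for each of the two congruences when $N$ is odd, a single solvable congruence with two roots differing by $N/2$ when $N$ is even, a single solvable integer equation in the non-periodic case) delivers every one of these claims, so it buys precisely the quantitative content that the paper skips. Two caveats, which you share with the paper rather than introduce: the orbit of a symmetric fixed point of $f$ (minimal period $N=1$) meets $\FixedSet(\tilde r)\cup\FixedSet(\hat r)$ in a single point lying on both sets, so ``exactly two'' tacitly assumes $N\geq 2$; and the periods $2k$ and $2k+1$ in items (i) and (ii) need not be minimal, so your normalization by $N$ should be supplemented by the easy observation that the congruence argument only requires $N$ to divide the stated period (e.g., $N\mid 2k$, together with oddness or evenness of $N$, still places some $m_{\ast}$ in $\FixedSet(\tilde r)\cap f^k\FixedSet(\tilde r)$ or $\FixedSet(\hat r)\cap f^k\FixedSet(\hat r)$).
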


\begin{proof}
  This is an old result, so we skip most of the details. We focus in those
  aspects that are more relevant for our goals.
  \begin{enumeratealpha}
    \item It is immediate as $\hat{r} \left( O \right) = \tilde{r} \left( O
    \right)$.
    
    \item To begin with, let us assume that an orbit $O = (m_j)_{j \in
    \Zset}$ is $\tilde{r}$-symmetric, so there exists some $l \in
    \Zset$ such that $m_l = \tilde{r} (m_0)$. Then $m_{l + 1} = f (m_l) =
    \hat{r} (m_0)$, because $\hat{r} = f \circ \tilde{r}$. We distinguish two
    cases:
    \begin{itemize}
      \item If $l = 2 k$ is even then $m_k \in \FixedSet (
      \tilde{r})$ as:
      \[ m_k = f^{- k} (m_{2 k}) = f^{- k} ( \tilde{r} (m_0)) = \tilde{r}
         (m_k) . \]
      \item If $l = 2 k + 1$ is odd then $m_{k + 1} \in \FixedSet
      ( \hat{r})$ as:
      \[
        m_{k + 1} = f^{- (k + 1)} (m_{2 k + 2})
        = f^{- (k + 1)} ( \hat{r} (m_0)) = \hat{r} (m_{k + 1}) .
      \]
    \end{itemize}
    On the other hand, let us assume that $O$ has a point, say $m_k$, in one
    of those fixed sets. Then there exist two possibilities. In the case $m_k
    \in \FixedSet ( \tilde{r})$, $\forall j \in \Zset$:
    \[ m_{k + j} = f^j (m_k) = f^j ( \tilde{r} (m_k)) = \tilde{r} (f^{- j}
       (m_k)) = \tilde{r} (m_{k - j}) . \]
    In the case $m_k \in \FixedSet ( \hat{r})$, from $\tilde{r} =
    f^{- 1} \circ \hat{r}$, $\forall j \in \Zset$:
    \begin{eqnarray*}
      m_{k + j - 1} & = & f^{j - 1} (m_k) \hspace{0.15cm} = \hspace{0.15cm}
      f^{j - 1} ( \hat{r} (m_k))\\
      & = & f^j ( \tilde{r} (m_k)) \hspace{0.15cm} = \hspace{0.15cm}
      \tilde{r} (f^{- j} (m_k)) \hspace{0.15cm} = \hspace{0.15cm} \tilde{r}
      (m_{k - j}) .
    \end{eqnarray*}
    Hence, $O$ turns out to be $\tilde{r}$-symmetric in both cases.
    
    \item Next, we study the symmetric periodic orbits. First, if $O$ has two
    points, say $m_0$ and $m_k$, on $\FixedSet ( \tilde{r})$, then
    $O$ is $2 k$-periodic, because:
    \begin{eqnarray*}
      m_{2 k} & = & f^k (m_k) \hspace{0.15cm} = \hspace{0.15cm} f^k (
      \tilde{r} (m_k))\\
      & = & \tilde{r} (f^{- k} (m_k)) \hspace{0.15cm} = \hspace{0.15cm}
      \tilde{r} (m_0) \hspace{0.15cm} = \hspace{0.15cm} m_0 .
    \end{eqnarray*}
    Second, if $O$ has one point on each symmetry set, say $m_0 \in
    \FixedSet ( \tilde{r})$ and $m_{k + 1} \in
    \FixedSet ( \hat{r})$, then $O$ is $(2 k + 1)$-periodic,
    because:
    \begin{eqnarray*}
      m_{2 k + 1} & = & f^k (m_{k + 1}) = f^k( \hat{r} (m_{k + 1}))\\
      & = & f^{k + 1} ( \tilde{r} (m_{k + 1})) = \tilde{r} (f^{- (k + 1)} (m_{k +
        1}))\\
      & = & \tilde{r} (m_0) = m_0.
    \end{eqnarray*}
  \end{enumeratealpha}
\vspace{-28pt}
\end{proof}

Therefore, the computation of symmetric orbits with period $2 k$ can be
reduced to the computation of points on symmetry sets, which are mapped onto
the same symmetry set after $k$ iterations of the map. On the other hand, the
computation of symmetric orbits with period $2 k + 1$ can be reduced to the
computation of points on symmetry sets, which are mapped onto their associated
symmetry sets after $k + 1$ iterations of the map.

\section{Billiards inside symmetric hypersurfaces of $\bm{\Rset^{n+1}}$}
\label{sec:SymmetricBilliards}

Let $Q$ be a (strictly) convex smooth hypersurface of $\Rset^{n + 1}$.
The billiard motion inside $Q$ can be modelled by means of a diffeomorphism
$f$ defined on the \tmtextit{phase space}
\[
M =
\left\{
(q,p) \in Q \times \Sset^n :
\mbox{$p$ is directed outward $Q$ at $q$}
\right\}.
\]
We define the \tmtextit{billiard map} $f : M \to M$, $f (q, p) = (q', p')$, as
follows. The new velocity $p'$ is the reflection of $p$, with
respect to the tangent hyperplane $T_q Q$. That is,
\[ p' = p_{\rmt} - p_{\rmn} = p - 2 p_{\rmn}, \]
where $p_\rmt$ and $p_\rmn$ are the tangent and normal components at $q$ of the old
velocity:
\begin{equation}
p = p_{\rmt} + p_{\rmn},\qquad
p_{\rmt} \in T_q Q, \qquad
p_{\rmn} \in N_q Q.
\label{eq:TangentNormal}
\end{equation}
Let $q + \langle p' \rangle$ be the line through $q$ with direction $p'$. Then
the new impact point $q'$ is determined by the condition
\[ Q \cap \left( q + \langle p' \rangle \right) = \{q, q' \} . \]
That is, $q'$ is the intersection of the ray $\{q + \mu p' : \mu > 0\}$ with
the hypersurface $Q  $. This intersection is unique and
transverse by convexity.

\begin{definition}
  A billiard orbit is a sequence of points $\left\{ m_j \right\}_{j \in
  \Zset} \subset M$ such that $m_j = f (m_{j - 1}) = f^j (m_0)$. If the points
  $m_j = (q_j, p_j)$ form a billiard orbit, then the sequence of impact points
  $\left\{ q_j \right\}_{j \in \Zset} \subset Q$ is a billiard (or impact)
  configuration, whose joining by polygonal lines form a billiard trajectory;
  the sequence of outward velocities $\left\{ p_j \right\}_{j \in \Zset}
  \subset \Sset^n$ is a velocity configuration. 
\end{definition}

The distinction between orbits and trajectories is clear. We refer to orbits
when we are working in the phase space $M$, whereas we refer to trajectories
when we are drawing in $\Rset^{n + 1}$. There exists an one-to-one
correspondence between billiard orbits and billiard configurations. The
velocities of the billiard orbit that correspond to the billiard configuration
$\left\{ q_j \right\}_{j \in \Zset}$ are:
\begin{equation}
  p_j = \frac{q_j - q_{j - 1}}{|q_j - q_{j - 1} |} . \label{eq:Relation-qp}
\end{equation}

From now on we shall assume that the hypersurface $Q$ is symmetric with respect
to \tmtextit{all} coordinate hyperplanes of $\Rset^{n + 1}$. Then the
billiard map admits $2^{n + 1}$ factorizations as a composition of two
involutions. We need the following notations in order to describe them.

Let $\Sigma$ be the set made up of the \tmtextit{reflections} ---that is,
involutive linear transformations--- with respect to the $2^{n + 1}$
coordinate subspaces of $\Rset^{n + 1}$. We represent its elements as
diagonal matrices whose diagonal entries are equal to $1$ or $- 1$, so
\[ \Sigma = \left\{ \sigma = \tmop{diag} \left( \sigma_1, \ldots, \sigma_{n +
   1} \right) : \sigma_j \in \left\{ - 1, 1 \right\} \tmop{for} \tmop{all} j
   \right\} . \]
Given $\left( q, p \right) \in M$, there exists a unique point $\hat{q}$
such that
\[ Q \cap \left( q + \langle p \rangle \right) = \{q, \hat{q} \} . \]
Thus, $\hat{q}$ denotes the previous impact point. Finally, let $\tilde{p}$ be
the reflection of the velocity $p$ with respect to the normal line $N_q Q$.
That is,
\[ \tilde{p} = - p' = p_{\rmn} - p_{\rmt} = p - 2 p_{\rmt}, \]
where $p_\rmt$ and $p_\rmn$ were defined in the
decomposition~(\ref{eq:TangentNormal}).

In what follows, symbols $q'$, $p'$, $\hat{q}$, and $\tilde{p}$ have the
meaning given in the previous paragraphs. We emphasize that they make sense
only after both impact point $q$, and unitary outer velocity $p$, are given.
Next we describe the $2^{n + 1}$ factorizations of the billiard map.

\begin{proposition}
  \label{prop:BilliardFactorizations}Let $f : M \to M$, $f \left( q, p \right)
  = \left( q', p' \right)$, be the billiard map inside a closed convex
  symmetric hypersurface $Q \subset \Rset^{n + 1}$. Let $\tilde{r},
  \hat{r} : M \rightarrow M$ be the maps
  \[ \tilde{r} (q, p) = (q, \tilde{p}), \qquad \hat{r} (q, p) = (
     \hat{q}, - p) . \]
  Let $s_{\sigma}, \tilde{r}_{\sigma}, \hat{r}_{\sigma} : M \to M$ be the maps
  \[ s_{\sigma} (q, p) = (\sigma q_{}, \sigma p_{}), \qquad
     \tilde{r}_{\sigma} = s_{\sigma} \circ \tilde{r}, \qquad
     \hat{r}_{\sigma} = s_{\sigma} \circ \hat{r} \]
  for any reflection $\sigma \in \Sigma$. Then:
  \begin{enumeratealpha}
    \item $f = \hat{r} \circ \tilde{r}$, and $\hat{r}^2 = \tilde{r}^2 =
    \Identity$.
    
    \item $f \circ s_{\sigma} = s_{\sigma} \circ f$, $s_{\sigma}^2 =
    \Identity$, $\hat{r} \circ s_{\sigma} = s_{\sigma} \circ \hat{r}$, and
    $\tilde{r} \circ s_{\sigma} = s_{\sigma} \circ \tilde{r}$.
    
    \item $f = \hat{r}_{\sigma} \circ \tilde{r}_{\sigma}$, and
    $\hat{r}_{\sigma}^2 = \tilde{r}_{\sigma}^2 = \Identity$.
  \end{enumeratealpha}
\end{proposition}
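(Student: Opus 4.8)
The plan is to establish the three parts essentially in order: part (a) needs genuine geometric bookkeeping, part (b) reduces to the fact that each $\sigma \in \Sigma$ is a linear isometry of $\Rset^{n+1}$ preserving $Q$, and part (c) then follows purely algebraically from (a) and (b).

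For part (a), I would first check that $\tilde{r}$ and $\hat{r}$ map $M$ into itself and square to the identity. Writing $\tilde{p} = p_\rmn - p_\rmt$, its tangent and normal components at $q$ are $-p_\rmt$ and $p_\rmn$, so $\tilde{p}$ retains the outward normal component $p_\rmn$ of $p$; hence $(q,\tilde{p}) \in M$ and reflecting once more restores $p_\rmt$, giving $\tilde{r}^2 = \Identity$. For $\hat{r}$, the two points of $Q \cap (q + \langle p\rangle)$ are $q$ and $\hat{q}$, and at $\hat{q}$ the direction $-p$ points outward, so $(\hat{q},-p) \in M$; since the chord $\hat{q} + \langle -p\rangle$ is the same line, applying $\hat{r}$ again returns $(q,p)$, so $\hat{r}^2 = \Identity$. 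To prove $f = \hat{r} \circ \tilde{r}$, I would compute $\hat{r}(\tilde{r}(q,p)) = \hat{r}(q,\tilde{p})$: because $\tilde{p} = -p'$, the line $q + \langle \tilde{p}\rangle$ coincides with $q + \langle p'\rangle$, whose second intersection with $Q$ is exactly the new impact point $q'$; thus the previous impact point of $(q,\tilde{p})$ is $q'$, and the velocity becomes $-\tilde{p} = p'$, yielding $(q',p') = f(q,p)$.

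For part (b), the key observation is that every $\sigma \in \Sigma$ is an orthogonal involution with $\sigma Q = Q$, so that $\sigma(T_q Q) = T_{\sigma q}Q$ and $\sigma(N_q Q) = N_{\sigma q}Q$; hence $\sigma$ commutes with the tangent/normal splitting, $(\sigma p)_\rmt = \sigma p_\rmt$ and $(\sigma p)_\rmn = \sigma p_\rmn$. From $\sigma^2 = \Identity$ I get $s_\sigma^2 = \Identity$, and since $\sigma$ sends outward velocities to outward velocities, $(\sigma q, \sigma p) \in M$. Commutation with $f$ follows by checking that reflecting $\sigma p$ off $T_{\sigma q}Q$ gives $\sigma p'$ and that the new impact point on the ray $\sigma q + \langle \sigma p'\rangle = \sigma(q + \langle p'\rangle)$ is $\sigma q'$. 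The same splitting identity gives $\widetilde{\sigma p} = \sigma \tilde{p}$, so $\tilde{r} \circ s_\sigma = s_\sigma \circ \tilde{r}$, and applying $\sigma$ to the chord defining $\hat{q}$ gives $\hat{r} \circ s_\sigma = s_\sigma \circ \hat{r}$.

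Finally, for part (c) I would argue formally. Using the commutation relations from (b) and $s_\sigma^2 = \Identity$, I get $\hat{r}_\sigma \circ \tilde{r}_\sigma = s_\sigma \circ \hat{r} \circ s_\sigma \circ \tilde{r} = s_\sigma^2 \circ \hat{r} \circ \tilde{r} = \hat{r} \circ \tilde{r} = f$ by part (a); and $\tilde{r}_\sigma^2 = s_\sigma \circ \tilde{r} \circ s_\sigma \circ \tilde{r} = s_\sigma^2 \circ \tilde{r}^2 = \Identity$, with the identical computation for $\hat{r}_\sigma$. I expect the only real obstacle to lie in part (a): keeping the roles of the two intersection points straight and verifying that $\tilde{r}$ and $\hat{r}$ genuinely land in $M$ (the outward-velocity condition), since everything in (b) and (c) then reduces to the equivariance of the reflection law under the linear isometry $\sigma$ and to routine algebra.
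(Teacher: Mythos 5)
Your proposal is correct and takes essentially the same route as the paper's proof: a direct geometric computation for (a), equivariance of the tangent/normal splitting and of the impact point under the linear isometry $\sigma$ preserving $Q$ for (b), and the commutation algebra for (c), which the paper dispatches by citing Remark~\ref{rem:SymmetryReversor} but whose content is exactly your two-line calculation. Your additional verification that $\tilde{r}$ and $\hat{r}$ genuinely land in $M$ (the outward-velocity condition) is a detail the paper leaves implicit.
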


\noindent\textbf{Proof\ }
  \begin{enumeratealpha}
    \item $\hat{r} \left( \tilde{r} \left( q, p \right) \right) = \hat{r}
    \left( q, \tilde{p} \right) = \hat{r} \left( q, - p' \right) = \left( q',
    p' \right)$. Besides, $\hat{r}^2 \left( q, p \right) = \hat{r} \left(
    \hat{q}, - p \right) = \left( q, p \right)$, as $Q \cap \left( \hat{q} +
    \left\langle - p \right\rangle \right) = \left\{ q, \hat{q} \right\}$.
    Finally, $\tilde{r}^2 \left( q, p \right) = \tilde{r} \left( q, \tilde{p}
    \right) = \left( q, p \right)$.
    
    \item Clearly $s_{\sigma}$ is an involution. Since $Q$ is symmetric with
    respect to the coordinate hyperplanes, it turns out that if $p_\rmt$ and $p_\rmn$
    are the tangent and normal components of a velocity $p$ at an impact point
    $q$, then $\sigma p_\rmt$ and $\sigma p_\rmn$ are the tangent and normal
    components of $\sigma p$ at the impact point $\sigma q$. On the other
    hand, if we write $q' = q + \mu \left( q, p' \right) p'$, then $\mu \left(
    \sigma q, \sigma p' \right) = \mu \left( q, p' \right)$, again by the
    symmetry of $Q$. Hence, $f \circ s_{\sigma} = s_{\sigma} \circ f$. The
    proof of $\hat{r} \circ s_{\sigma} = s_{\sigma} \circ \hat{r}$ and
    $\tilde{r} \circ s_{\sigma} = s_{\sigma} \circ \tilde{r}$ follows the same
    lines.
    
    \item It is a direct consequence of Remark~\ref{rem:SymmetryReversor}.
    \hspace*{\fill}$\Box$\medskip
  \end{enumeratealpha}

No symmetry has been required to obtain the factorization $f = \hat{r} \circ
\tilde{r}$. Therefore, all convex hypersurfaces $Q \subset \Rset^{n +
1}$ give rise to reversible billiard maps, although symmetric hypersurfaces
have much more factorizations.

We introduce the acronyms SO, ST, SPO, and SPT for symmetric orbit, symmetric
trajectory, symmetric periodic orbit, and symmetric periodic trajectory,
respectively. If $r$ is any reversor of the billiard map, we can deal with
$r$-SOs, $r$-STs, $r$-SPOs, and $r$-SPTs.

Once these $2^{n + 1}$ factorizations $f = \hat{r}_{\sigma} \circ
\tilde{r}_{\sigma}$ have been found, we describe their symmetry sets. In the
next proposition we prove that only two symmetry sets are empty. We also
provide an explicit geometric description of the $2^{n + 2} - 2$ nonempty
symmetry sets where one can look for SOs. As a consequence, we shall see that
these symmetry sets are mutually exclusive ---that is, they do not
intersect--- but at some very specific points which are described in
detail. As before, some notations are required.

We recall that given a reflection defined on an Euclidean space, we can
decompose the space as the orthogonal sum of the eigenspaces of eigenvalues $-
1$ and $1$. This is called the \tmtextit{spectral decomposition} of the
reflection. Let $\Rset^{n + 1} = V^+_{\sigma} \perp V^-_{\sigma}$ be the
spectral decomposition associated to any reflection $\sigma \in \Sigma$.
That is,
\[ V^{\pm}_{\sigma} = \{p \in \Rset^{n + 1} : \sigma p_{} = \pm p\} . \]
For instance, if $\left( x_1, x_2, x_3 \right)$ are the Cartesian coordinates
in $\Rset^3$ and $\sigma = \tmop{diag} \left( - 1, 1, 1 \right)$, then
$V_{\sigma}^+$ is the $x_2 x_3$-plane, and $V_{\sigma}^-$ is the $x_1$-axis.
We also introduce the sections
\[ \begin{array}{lllll}
     Q_{\sigma} & = & Q \cap V^+_{\sigma} & = & \{q \in Q : \sigma q_{} =
     q\},\\
     P_{\sigma} & = & \Sset^n \cap V_{\sigma}^- & = & \{p \in \Sset^n : \sigma
     p_{} = - p\} .
   \end{array} \]
Given two linear varieties $V_1, V_2 \subset \Rset^{n + 1}$, the symbol
$V_1 \perp_{\ast} V_2$ means that they have a orthogonal intersection.
Finally, we recall that a line in $\Rset^{n + 1}$ is a \tmtextit{chord}
of $Q$ when it intersects orthogonally $Q$ at two different points.

\begin{proposition}
  \label{prop:SymmetrySets}The symmetry sets of the reversors
  $\tilde{r}_{\sigma}$ and $\hat{r}_{\sigma}$, $\sigma \in \Sigma$, are:
  \begin{eqnarray*}
    \FixedSet ( \tilde{r}_{\sigma}) & = & \left\{ \left( q, p \right) \in M :
    q \in Q_{\sigma} \tmop{and} p \in N_q Q_{\sigma} \right\},\\
    \FixedSet ( \hat{r}_{\sigma}) & = & \left\{ \left( q, p \right) \in M :
    \sigma q_{} \in q + \langle p \rangle \tmop{and} p \in P_{\sigma}
    \right\}\\
    & = & \left\{ \left( q, p \right) \in M : \left( q + \left\langle p
    \right\rangle \right) \perp_{\ast} V_{\sigma}^+ \right\} .
  \end{eqnarray*}
  Only the reversors $\tilde{r}_{-\Identity} $ and $\hat{r}_{\Identity}$ have
  empty symmetry sets. Moreover, if a point $\left( q, p \right) \in M$
  belongs simultaneously to two different symmetry sets, then the line $q +
  \left\langle p \right\rangle$:
  \begin{itemize}
    \item Is contained in some coordinate hyperplane of $\Rset^{n + 1}$,
    in which case so is its whole billiard trajectory; or
    
    \item Is a chord of the hypersurface $Q$ through the origin, in which case
    its billiard trajectory is 2-periodic.
  \end{itemize}
\end{proposition}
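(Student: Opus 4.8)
The plan is to read off the two symmetry sets by solving the fixed-point equations directly, and then to treat emptiness and pairwise intersections separately. First I would use the formulas $\tilde{r}_\sigma(q,p)=(\sigma q,\sigma\tilde{p})$ and $\hat{r}_\sigma(q,p)=(\sigma\hat{q},-\sigma p)$ from Proposition~\ref{prop:BilliardFactorizations}. The one structural input I need is that for a hypersurface symmetric under every coordinate reflection, $q\in V_\sigma^+$ forces $N_qQ\subseteq V_\sigma^+$ (the outward normal obeys $\sigma n(q)=n(\sigma q)=n(q)$), so that $V_\sigma^-\subseteq T_qQ$. For $\tilde{r}_\sigma$, $\sigma q=q$ gives $q\in Q_\sigma$, while $\sigma\tilde{p}=p$ reduces (using $p_{\rmn}\in N_qQ\subseteq V_\sigma^+$) to $\sigma p_{\rmt}=-p_{\rmt}$, i.e. $p_{\rmt}\in V_\sigma^-$; combined with $V_\sigma^+=(T_qQ\cap V_\sigma^+)\perp N_qQ$ this is exactly $p\in N_qQ\oplus V_\sigma^-=N_qQ_\sigma$. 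For $\hat{r}_\sigma$, $-\sigma p=p$ gives $p\in P_\sigma$, and writing $\hat{q}=q+\mu p$ the equation $\sigma\hat{q}=q$ rearranges to $\sigma q\in q+\langle p\rangle$; the description through $\perp_{\ast}$ follows because, with $p\in V_\sigma^-$, a point $q+tp$ lies in $V_\sigma^+$ precisely when $\sigma q-q\in\langle p\rangle$, i.e. when the line meets $V_\sigma^+$ orthogonally. The only delicate point is the converse when $q\in V_\sigma^+$: then $q+\langle p\rangle$ is $\sigma$-invariant, so $\sigma$ permutes its two intersections with $Q$; since $\hat{q}\neq q$ and $\hat{q}-q\parallel p\in V_\sigma^-$ we cannot have $\hat{q}\in V_\sigma^+$, which rules out $\sigma\hat{q}=\hat{q}$ and forces $\sigma\hat{q}=q$.

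Next I would dispose of emptiness. The set $\FixedSet(\tilde{r}_\sigma)$ is empty iff $Q_\sigma=Q\cap V_\sigma^+=\emptyset$, which (the center being interior) happens iff $V_\sigma^+=\{0\}$, i.e. $\sigma=-\Identity$; for any other $\sigma$ the outward normal at a point of the nonempty section $Q_\sigma$ supplies a point. Dually, $\FixedSet(\hat{r}_\sigma)$ is empty iff $P_\sigma=\Sset^n\cap V_\sigma^-=\emptyset$, i.e. $V_\sigma^-=\{0\}$, i.e. $\sigma=\Identity$; otherwise a diameter through the center with direction in $V_\sigma^-$ meets $V_\sigma^+$ orthogonally. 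This isolates exactly the two empty sets $\FixedSet(\tilde{r}_{-\Identity})$ and $\FixedSet(\hat{r}_{\Identity})$.

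The core is the intersection statement, which I would split by the types of the two reversors, working in the coordinates that simultaneously diagonalize the two commuting reflections (so their joint eigenspaces are coordinate subspaces, indexed by the sign pattern $(\sigma_j,\tau_j)$). In the two like cases $\FixedSet(\tilde{r}_\sigma)\cap\FixedSet(\tilde{r}_\tau)$ and $\FixedSet(\hat{r}_\sigma)\cap\FixedSet(\hat{r}_\tau)$ with $\sigma\neq\tau$, the conditions force both $q$ and $p$ to vanish on every index where $\sigma_j\neq\tau_j$: for two tildes one has $p_{\rmn}\in N_qQ\subseteq V_\sigma^+\cap V_\tau^+$ and $p_{\rmt}\in V_\sigma^-\cap V_\tau^-$, so $p$ is supported on $\{\sigma_j=\tau_j\}$ while $q\in V_\sigma^+\cap V_\tau^+$; for two hats $p\in V_\sigma^-\cap V_\tau^-$, and matching the supports of $\sigma q-q$ and $\tau q-q$ against $\langle p\rangle$ kills $q$ off $\{\sigma_j=\tau_j\}$. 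Picking an index $j_0$ with $\sigma_{j_0}\neq\tau_{j_0}$ (one exists) puts the whole line into $\{x_{j_0}=0\}$; as $Q$ is symmetric in that hyperplane and the normal there stays in it, so does the entire trajectory.

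The delicate case --- and the one I expect to be the main obstacle --- is the mixed intersection $\FixedSet(\tilde{r}_\sigma)\cap\FixedSet(\hat{r}_\tau)$, where the two alternatives genuinely compete. The outward condition does the decisive work: were $\tau q=q$, then $n(q)\in V_\tau^+$ while $p\in V_\tau^-$, forcing $p_{\rmn}=0$ and contradicting $(q,p)\in M$; hence $\tau q\neq q$ and $\tau q-q=\gamma p$ with $\gamma\neq0$. Since $q\in V_\sigma^+$ and $\sigma,\tau$ commute, $\tau q-q$ again lies in $V_\sigma^+$, so $p\in V_\sigma^+$ and its tangent part $p_{\rmt}$ (the $V_\sigma^-$-component of $p$) vanishes: the bounce is perpendicular. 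If $\sigma\neq\Identity$, then at any index with $\sigma_{j_0}=-1$ both $q$ and $p$ vanish and the line lies in $\{x_{j_0}=0\}$. If $\sigma=\Identity$, I would invoke the ellipsoidal normal $n(q)_j=q_j/a_j$: from $p\parallel n(q)\in V_\tau^-$ we get $q\in V_\tau^-$, whence $\tau q-q=-2q\parallel n(q)$ gives $q\parallel n(q)$; as the $a_j$ are distinct this pins $q$ to a coordinate axis with $q\parallel p$, so the chord is a diameter through the origin, hit perpendicularly, and its trajectory is $2$-periodic. Assembling the three cases yields the stated alternative, and I expect the bookkeeping between the tangent/normal and the $V_\sigma^\pm$, $V_\tau^\pm$ splittings, together with the timely use of the outward condition to excise the tangential (non-$M$) configurations, to be the only genuinely subtle point.
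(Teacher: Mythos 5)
Your overall route coincides with the paper's: solve the fixed-point equations $s_\sigma(q,p)=\tilde r(q,p)$ and $s_\sigma(q,p)=\hat r(q,p)$ using the structural fact that $q\in V_\sigma^+$ forces $N_qQ\subset V_\sigma^+$ and $V_\sigma^-\subset T_qQ$, read off emptiness from $Q_\sigma$ and $P_\sigma$, and then run a three-way case analysis (tilde--tilde, hat--hat, tilde--hat) on double intersections. Your handling of the converse for $\FixedSet(\hat r_\sigma)$ (excluding $\sigma q=q$ via the outward condition), which the paper passes over silently, is correct and welcome, and your like-type intersection arguments are sound.

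There is, however, a genuine gap in the mixed case $\FixedSet(\tilde r_{\Identity})\cap\FixedSet(\hat r_\tau)$: you invoke ``the ellipsoidal normal $n(q)_j=q_j/a_j$'' and the distinctness of the $a_j$. This proposition lives in Sec.~\ref{sec:SymmetricBilliards} and is asserted for an \emph{arbitrary} smooth, strictly convex hypersurface $Q\subset\Rset^{n+1}$ symmetric with respect to all coordinate hyperplanes; ellipsoids and the nondegeneracy condition $a_1<\cdots<a_{n+1}$ only appear in Sec.~\ref{sec:BilliardsEllipsoids}. Consequently the step ``from $p\parallel n(q)\in V_\tau^-$ we get $q\in V_\tau^-$'' is unjustified at the stated level of generality, and it is precisely the crux of this case. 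The general replacement --- which is what the paper's phrase ``because $Q$ is symmetric, smooth, and (strictly) convex'' encodes --- is: if $n(q)_j=0$ for some $j$ with $\tau_j=+1$, then $n(\sigma_j q)=\sigma_j n(q)=n(q)$, and injectivity of the Gauss map of a closed strictly convex hypersurface forces $\sigma_j q=q$, i.e.\ $q_j=0$. With that substitution your conclusion goes through: $q\in V_\tau^-$, hence $\hat q=\tau q=-q$ and the line is a chord through the origin whose trajectory is $2$-periodic. (A cosmetic difference: the paper splits $\sigma=\Identity$ into $\tau\neq-\Identity$, recording the coordinate-hyperplane alternative, and $\tau=-\Identity$, recording the chord; you record only the chord for all $\tau$, which is legitimate because the stated conclusion is a disjunction and both alternatives in fact hold when $\tau\neq-\Identity$.)
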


\begin{proof}
  To begin with, we deduce from the definitions of reversors
  $\tilde{r}_{\sigma}$ and $\hat{r}_{\sigma} $, and symmetries
  $s_{\sigma}$ that
  \begin{eqnarray*}
    (q, p) \in \FixedSet ( \tilde{r}_{\sigma}) & \Longleftrightarrow &
    s_{\sigma} (q, p) = \tilde{r} (q, p)\\
    & \Longleftrightarrow & \begin{Bmatrix}
      \sigma q = q\\
      \sigma p = \tilde{p}
    \end{Bmatrix} \Longleftrightarrow \begin{Bmatrix}
      q \in Q_{\sigma}\\
      \sigma p = \tilde{p}
    \end{Bmatrix},\\
    (q, p) \in \FixedSet ( \hat{r}_{\sigma}) & \Longleftrightarrow &
    s_{\sigma} (q, p) = \hat{r} (q, p)\\
    & \Longleftrightarrow & \begin{Bmatrix}
      \sigma q = \hat{q} \phantom{ql} \\
      \sigma p = - p
    \end{Bmatrix} \Longleftrightarrow \begin{Bmatrix}
      \sigma q \in q + \langle p \rangle\\
      p \in P_{\sigma}
    \end{Bmatrix}.
  \end{eqnarray*}
  To understand the condition $\sigma p_{} = \tilde{p}$, we compare the
  spectral decomposition $\Rset^{n + 1} = V^+_{\sigma} \perp
  V^-_{\sigma}$ with the spectral decomposition $\Rset^{n + 1} \simeq
  T_q \Rset^{n + 1} = \tilde{V}^+_q \perp \tilde{V}^-_q$ associated to
  the reflection $p \mapsto \tilde{p}$. We note that
  \[ \begin{array}{lllll}
       \tilde{V}^-_q & = & \{p \in T_q \Rset^{n + 1} : \tilde{p} = - p\}
       & = & T_q Q,\\
       \tilde{V}_q^+ & = & \{p \in T_q \Rset^{n + 1} : \tilde{p}
       = p\} & = & N_q Q.
     \end{array} \]
  If $q \in Q_{\sigma}$, then
  \[ \begin{array}{lllll}
       N_q Q_{\sigma} & = & N_q Q \perp V_{\sigma}^- & = & \tilde{V}^+_q \perp
       V^-_{\sigma},\\
       T_q Q_{\sigma} & = & T_q Q \cap V^+_{\sigma} & = & \tilde{V}^-_q \cap
       V_{\sigma}^+,
     \end{array} \qquad \begin{array}{l}
       \tilde{V}^+_q \subset V_{\sigma}^+,\\
       V^-_{\sigma} \subset \tilde{V}^-_q .
     \end{array} \]
  The equality on $N_q Q_{\sigma}$ is due to $\left( A \cap B \right)^{\bot} =
  A^{\bot} \bot B^{\bot}$. The inclusions follow from $V_{\sigma}^+$ being a
  symmetry subspace of $Q$. Those relations imply that
  \begin{eqnarray*}
    T_q  \Rset^{n + 1} & = & N_q Q_{\sigma} \perp T_q Q_{\sigma}
    \hspace{0.15cm} = \hspace{0.15cm} \tilde{V}^+_q \perp V^-_{\sigma} \perp (
    \tilde{V}^-_q \cap V^+_{\sigma})\\
    & = & (V^+_{\sigma} \cap \tilde{V}^+_q) \perp (V^-_{\sigma} \cap
    \tilde{V}^-_q) \perp ( \tilde{V}^-_q \cap V^+_{\sigma}) .
  \end{eqnarray*}
  Therefore, if we fix any $q \in Q_{\sigma}$, then $\sigma p = \tilde{p}
  \Leftrightarrow p \in (V^+_{\sigma} \cap \tilde{V}^+_q) \perp (V^-_{\sigma}
  \cap \tilde{V}^-_q) = \tilde{V}^+_q \perp V^-_{\sigma} = N_q Q_{\sigma}$, so
  \[ \FixedSet ( \tilde{r}_{\sigma}) = \left\{ (q, p) \in M : q \in
     Q_{\sigma}, p \in N_q Q_{\sigma} \right\} . \]
  Next, we study the other symmetry sets for $\sigma \neq \Identity$. If $p
  \in P_{\sigma}$, then $p \perp V^+_{\sigma}$ and $p \parallel V^-_{\sigma}$,
  which implies that $\sigma q_{} \in q + \langle p \rangle \Leftrightarrow q
  + \langle p \rangle \perp_{\ast} V^+_{\sigma}$. Therefore,
  \[ \FixedSet ( \hat{r}_{\sigma}) = \left\{ (q, p) \in M : q + \langle p
     \rangle \perp_{\ast} V^+_{\sigma} \right\} . \]
  Clearly, $Q_{\sigma}$ is empty if and only if $\sigma = - \Identity$, and
  $P_{\sigma}$ is empty if and only if $\sigma = \Identity$.
  
  Finally, let us check that if the point $\left( q, p \right) \in M$ belongs
  to two different symmetry sets, then $q + \left\langle p \right\rangle$ is a
  chord of $Q$ or is contained in some coordinate hyperplane like
  \begin{equation}
    H_j = \left\{ q = \left( x_1, \ldots, x_{n + 1} \right) \in \Rset^{n
    + 1} : x_j = 0 \right\} . \label{eq:Hyperplane}
  \end{equation}
  First, if $\left( q, p \right) \in \FixedSet \left( \tilde{r}_{\sigma}
  \right) \cap \FixedSet \left( \tilde{r}_{\tau} \right)$, then using the
  relations written at the beginning of this proof, we get
  \[ \sigma q = q = \tau q , \qquad \sigma p = \tilde{p} = \tau
     p. \]
  If $\sigma \neq \tau$, then $\sigma_j \neq \tau_j$ for some $j$, so that $q
  \in H_j$, $p \left| \right| H_j$, and $q + \left\langle p \right\rangle
  \subset H_j$.
  
  Second, if $\left( q, p \right) \in \FixedSet \left( \tilde{r}_{\sigma}
  \right) \cap \FixedSet \left( \hat{r}_{\tau} \right)$, then using again the
  same relations, we get that
  \[ \sigma q = q , \qquad \sigma p_{} = \tilde{p} = - p',
     \qquad \tau q = \hat{q}, \qquad \tau p_{} = - p. \]
  If $\sigma \neq \Identity$, then $\sigma_j = - 1$ for some $j$, so that $q,
  \hat{q} \in H_j$, $p \left| \right| H_j$, and $q + \left\langle p
  \right\rangle \subset H_j$. If $\sigma = \Identity$ but $\tau \neq -
  \Identity$, then $\tau_j = 1$ for some $j$, so that $p' = - p \in N_q Q$, $p
  \left| \right| H_j$, and $p' \left| \right| H_j$. Thus, $p_\rmn \left| \right|
  H_j$ and $N_q Q \subset H_j$, which implies that $q \in H_j$, because $Q$ is
  symmetric, smooth, and (strictly) convex. Therefore, $q + \left\langle p
  \right\rangle \subset H_j$. If $\sigma = \Identity$ and $\tau = -
  \Identity$, then $p' = - p \in N_q Q$ and $\hat{q} = - q$, so that the line
  $q + \left\langle p \right\rangle$ intersects orthogonally $Q$ at $q$ and $-
  q$.
  
  Third, if $\left( q, p \right) \in \FixedSet \left( \hat{r}_{\sigma}
  \right) \cap \FixedSet \left( \hat{r}_{\tau} \right)$, then
  \[ \sigma q = \hat{q} = \tau q , \qquad \sigma p = - p = \tau
     p, \]
  and we apply the same reasoning as in the first case.
\end{proof}

Hence, only very specific billiard orbits ---the ones contained in a
coordinate hyperplane or 2-periodic--- can have a point in the
intersection of two symmetry sets. For instance, it turns out that any
2-periodic billiard orbit inside a nondegenerate ellipsoid is contained in
more than half of the $2^{n + 2} - 2$ nonempty symmetry sets.

Nevertheless, there exists other billiard orbits that intersect different
symmetry sets at different points. Indeed, all SPOs with odd period have
points on two associated symmetry sets, whereas all SPOs with even period have
\tmtextit{two} points on some symmetry set; see item~\ref{item:SPOs}) of
Theorem~\ref{thm:SymmetricOrbits}. However, a especial situation arises when
$Q$ is an ellipsoid: most of its SPOs with even period have exactly
\tmtextit{four} ---instead of the expected two--- points in the
symmetry sets described in Proposition~\ref{prop:SymmetrySets}, in which case
they must intersect two different symmetry sets
(see item~\ref{item:NoMoreThan2}) of Theorem~\ref{thm:SymmetricOrbits}).

That motivates the following definition.

\begin{definition}
  An SPO inside a symmetric hypersurface $Q \subset \Rset^{n + 1}$, is a
  doubly SPO when it intersects two different symmetry sets.
\end{definition}

\begin{remark}\label{rem:CentralAxialSpecular}
Let $O$ be a $\tilde{r}_{\sigma}$-SO or, equivalently,
a $\hat{r}_{\sigma}$-SO (see Theorem~\ref{thm:SymmetricOrbits}).
By definition, $O$, as a subset of the phase space $M$,
is invariant under the reversors
  \[ \tilde{r}_{\sigma} (q, p) = (\sigma q_{}, \sigma \tilde{p}_{}),
     \qquad \hat{r}_{\sigma} (q, p) = (\sigma \hat{q}_{}, - \sigma p_{})
     . \]
\end{remark}

In particular, the billiard configuration associated to $O$, viewed as a
subset of the configuration space $Q$, is invariant under the map
$\sigma_{\left| Q \right.} : q \mapsto \sigma q$, whereas its velocity
configuration, viewed as a subset of the velocity space $\Sset^n$, is
invariant under the map $- \sigma_{\left| \Sset^n \right.} : p \mapsto -
\sigma p$. This motivates the following definitions for the 2D and 3D cases. A
billiard configuration inside a symmetric curve/surface is called
\tmtextit{central}, \tmtextit{axial} or \tmtextit{specular} when it is
symmetric with respect to the origin, some axis of coordinates or some plane of
coordinates, respectively. Similar definitions apply to velocity
configurations.

\section{Billiards inside ellipsoids of $\Rset^{n + 1}$}\label{sec:BilliardsEllipsoids}

The billiard dynamics inside ellipsoids has several important properties. For
instance, it is completely integrable in the sense of Liouville. We present some
results about such billiards. First, we list the classical ones, which can be found
in the monographs Refs.~\onlinecite{KozlovTreschev1991,Tabachnikov1995,Tabachnikov2005}.
Next, we describe a dual property discovered in
Refs.~\onlinecite{ChangFriedberg1988,Veselov1988}. Finally, we detail the behaviour
of elliptic coordinates of billiard trajectories inside ellipsoids given in
Ref.~\onlinecite{DragovicRadnovic2006}.

\subsection{Caustics and elliptic coordinates}\label{ssec:Caustics}

The following results go back to Jacobi, Chasles, Poncelet, and Darboux.
We consider a billiard inside the ellipsoid
\begin{equation}
  Q = \left\{ q \in \Rset^{n + 1} : \left\langle D q, q \right\rangle =
  1 \right\}, \label{eq:Ellipsoid}
\end{equation}
where $D^{- 1} = \tmop{diag} \left( a_1, \ldots, a_{n + 1} \right)$ is a
diagonal matrix such that $0 < a_1 < \cdots < a_{n + 1}$. The degenerate cases
in which the ellipsoid has some symmetry of revolution are not considered
here. This ellipsoid is an element of the family of confocal quadrics given by
\[ Q_{\mu} = \left\{ q \in \Rset^{n + 1} : \left\langle D_{\mu} q, q
   \right\rangle = 1 \right\}, \qquad \mu \in \Rset, \]
where $D_{\mu} = \left( D^{- 1} - \mu \Identity \right)^{- 1}$. The meaning of
$Q_{\mu}$ in the singular cases $\mu \in \left\{ a_1, \ldots, a_{n + 1}
\right\}$ must be clarified. If $\mu = a_j$, we define it as the
$n$-dimensional hyperplane~(\ref{eq:Hyperplane}).

\begin{theorem}\label{thm:Jacobi}
Let $Q$ be the nondegenerate ellipsoid~(\ref{eq:Ellipsoid}).
\begin{enumeratealpha}
\item
Any generic point $q \in \Rset^{n + 1}$ belongs to exactly $n+1$
distinct nonsingular quadrics $Q_{\mu_0},\ldots,Q_{\mu_n}$ such
that $\mu_0 < a_1 < \mu_1 < a_2 < \cdots < a_n < \mu_n < a_{n + 1}$.
Besides, those $n + 1$ quadrics are mutually orthogonal at $q$.
\item
Any generic line $\ell \subset \Rset^{n + 1}$ is tangent to
exactly $n$ distinct nonsingular confocal quadrics
$Q_{\lambda_1},\ldots,Q_{\lambda_n}$ such that
$\lambda_1 < \cdots < \lambda_n$, $\lambda_1 \in (-\infty,a_1) \cup(a_1, a_2)$,
and $\lambda_i \in (a_{i - 1}, a_i) \cup (a_i, a_{i + 1})$,
for $i = 2, \ldots, n$.
\end{enumeratealpha}
\end{theorem}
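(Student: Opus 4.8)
The plan is to reduce both statements to the analysis of a single rational function of the parameter, exploiting the diagonal form $D_\mu = \operatorname{diag}\big(1/(a_1-\mu),\dots,1/(a_{n+1}-\mu)\big)$. For part a), I would write $q=(x_1,\dots,x_{n+1})$ and read the condition $q\in Q_\mu$ as the vanishing of
\[
\phi_q(\mu)=\langle D_\mu q,q\rangle-1=\sum_{j=1}^{n+1}\frac{x_j^2}{a_j-\mu}-1 .
\]
Here ``generic'' should be taken to mean $x_j\neq 0$ for all $j$, an open dense full-measure condition that prevents the poles from cancelling. On each of the $n+1$ intervals $(-\infty,a_1),(a_1,a_2),\dots,(a_n,a_{n+1})$ the function $\phi_q$ is smooth and strictly increasing, since $\phi_q'(\mu)=\sum_j x_j^2/(a_j-\mu)^2>0$; it runs from $-1$ (as $\mu\to-\infty$) to $+\infty$ (as $\mu\to a_1^-$) on the first interval, from $-\infty$ to $+\infty$ on each bounded interval, and stays negative on $(a_{n+1},+\infty)$. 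The intermediate value theorem then gives exactly one root $\mu_i$ per interval, yielding simultaneously the count $n+1$ and the interlacing $\mu_0<a_1<\mu_1<\dots<a_n<\mu_n<a_{n+1}$. For orthogonality I would use that the outward normal to $Q_\mu$ at $q$ is parallel to $D_\mu q$, together with the partial-fraction identity
\[
\langle D_\mu q, D_\nu q\rangle=\sum_j\frac{x_j^2}{(a_j-\mu)(a_j-\nu)}=\frac{\langle D_\nu q,q\rangle-\langle D_\mu q,q\rangle}{\nu-\mu}=\frac{1-1}{\nu-\mu}=0
\]
for distinct roots $\mu,\nu$, so any two of the $n+1$ quadrics meet orthogonally at $q$.

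For part b) I would parametrize $\ell=\{q+tv : t\in\Rset\}$ and observe that $Q_\lambda$ is tangent to $\ell$ exactly when the quadratic $t\mapsto\langle D_\lambda(q+tv),q+tv\rangle-1$ has a double root, i.e.\ when its discriminant
\[
G(\lambda)=\langle D_\lambda v,v\rangle\big(\langle D_\lambda q,q\rangle-1\big)-\langle D_\lambda q,v\rangle^2
\]
vanishes. The key algebraic step is the Lagrange identity
\[
\langle D_\lambda v,v\rangle\langle D_\lambda q,q\rangle-\langle D_\lambda q,v\rangle^2=\sum_{i<j}\frac{(x_iv_j-x_jv_i)^2}{(a_i-\lambda)(a_j-\lambda)},
\]
which shows that the double poles of $G$ at $\lambda=a_j$ cancel, so that $\tilde G(\lambda):=\prod_k(a_k-\lambda)\,G(\lambda)$ is a polynomial of degree $n$ with leading coefficient $\pm|v|^2$. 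This already bounds the number of tangent confocal quadrics by $n$, and I would then locate the roots by a sign analysis of $G$ across the intervals bounded by the $a_j$, again through the intermediate value theorem.

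The hard part will be the interlacing in b). Unlike a), where each gap contains exactly one root, the residue of $G$ at each $a_j$ has a sign depending on the line, so a tangency parameter may fall in either of the two adjacent gaps $(a_{i-1},a_i)$ or $(a_i,a_{i+1})$ --- which is precisely the phenomenon behind the several caustic types (EH1, H1H1, EH2, H1H2 in the 3D case). I expect the clean way to organize this is to interpret each $\lambda_i$ as a turning value of the $i$-th elliptic coordinate of the moving point $q+tv$ along $\ell$: as $t$ ranges over $\Rset$ each coordinate $\mu_i(t)$ from part a) oscillates within its band, and the tangent quadrics are exactly those the line touches without crossing, i.e.\ the extrema of the $\mu_i(t)$. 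Tracking the limits of these coordinates as $t\to\pm\infty$, which are governed by the direction $v$, should pin down the stated union-of-two-intervals localization and confirm that exactly $n$ such extrema occur. The genericity hypothesis (no $v_j$ or $x_iv_j-x_jv_i$ vanishing, and $\ell$ meeting the interior of $Q$) is what guarantees that all $n$ roots are real, simple, and nonsingular.
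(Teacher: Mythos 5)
The paper gives no proof of this theorem: it is quoted as classical material going back to Jacobi, Chasles, Poncelet, and Darboux, with references to the cited monographs, so your attempt has to be judged against the standard argument rather than against anything in the paper. Your part a) \emph{is} the standard argument and is complete and correct: strict monotonicity of $\phi_q$ on each of the $n+1$ intervals cut out by the poles $a_j$, the limiting values $-1$ and $\pm\infty$, and the partial-fraction identity for $\langle D_\mu q, D_\nu q\rangle$ settle the count, the interlacing, and the orthogonality.

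Part b) contains a genuine gap, which you yourself flag. What you actually prove is only the upper bound: tangency $\Leftrightarrow G(\lambda)=0$, plus the Lagrange identity, show that $\tilde G(\lambda)=\prod_k(a_k-\lambda)\,G(\lambda)$ is a polynomial of degree $n$, hence at most $n$ nonsingular confocal quadrics are tangent to $\ell$. The existence of $n$ real tangency parameters and the localization $\lambda_i\in(a_{i-1},a_i)\cup(a_i,a_{i+1})$ --- which is the actual content of Chasles' theorem and what the paper's whole caustic-type apparatus rests on --- is deferred to an oscillation argument that is only an expectation, and that argument is not straightforward to close as stated. The correspondence between coordinate bands and tangencies is not one-to-one: for an H1H1 line in $\Rset^3$ both tangency parameters are extrema (a maximum and a minimum) of the single coordinate $\mu_1(t)$, while $\mu_0$ and $\mu_2$ contribute none; and at a crossing of a coordinate hyperplane $H_j$ the coordinate touching $a_j$ has derivative zero, yet the value $a_j$ is not a tangency to a nonsingular quadric. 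So ``extremum of some $\mu_i$'' and ``tangency'' cannot be matched band by band, and the lower bound of $n$ does not follow from the picture you describe.

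The missing half can in fact be obtained inside your own framework by a sign analysis you stopped short of. Let $\nu_1<\cdots<\nu_n$ be the roots of $\langle D_\nu v,v\rangle=0$, one in each $(a_i,a_{i+1})$ (same IVT argument as in part a), applied to $\sum_j v_j^2/(a_j-\nu)$). At these points $G(\nu_i)=-\langle D_{\nu_i}q,v\rangle^2<0$ generically (vanishing here means $\ell$ is tangent to $Q_{\nu_i}$ only at infinity, a degenerate case to be excluded by genericity), and $G(\lambda)\sim|v|^2/\lambda<0$ as $\lambda\to-\infty$. Setting $\nu_0=-\infty$, each interval $(\nu_{i-1},\nu_i)$ contains exactly one pole of $G$, namely $a_i$, and the paper's genericity condition (no tangency to singular confocal quadrics) is precisely the statement $\tilde G(a_i)\neq0$, i.e.\ this simple pole is genuine. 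Since $G\to+\infty$ on one side of the pole while $G<0$ at both ends of the interval, the intermediate value theorem produces a root in $(\nu_{i-1},\nu_i)\setminus\{a_i\}$ for each $i=1,\dots,n$: at least $n$ roots, hence exactly $n$ by your degree bound, one per interval; and since $\nu_{i-1}\in(a_{i-1},a_i)$ and $\nu_i\in(a_i,a_{i+1})$, each root lies in $(a_{i-1},a_i)\cup(a_i,a_{i+1})$, which is exactly the claimed localization. Without this step (or an equivalent one, e.g.\ dualizing the confocal family to the linear pencil $\langle D^{-1}\xi,\xi\rangle-\lambda|\xi|^2$ and invoking the spectral theorem on the $n$-dimensional subspace dual to $\ell$), your part b) establishes only ``at most $n$.''
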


Set $a_0 = 0$. If a generic point $q$ is in the interior of the ellipsoid $Q$,
then $\mu_1 > 0$, so $a_0 < \mu_0 < a_1$. In the same way, if a generic line
$\ell$ has a transverse intersection with the ellipsoid $Q$, then $\lambda_1 >
0$, so $\lambda_1 \in \left( a_0, a_1 \right) \cup \left( a_1, a_2 \right)$.
The values $\mu_0 = 0$ and $\lambda_1 = 0$ are attained just when $q \in Q$
and $\ell$ is tangent to $Q$, respectively.

We denote by $\mathfrak{q}= \left( \mu_0, \ldots, \mu_n \right) \in
\Rset^{n + 1}$, the \tmtextit{Jacobi elliptic coordinates} of the point
$q = \left( x_1, \ldots, x_{n + 1} \right)$. Cartesian and elliptic
coordinates are linked by relations
\begin{equation}
  x^2_j = \frac{\prod_{i = 0}^n \left( a_j - \mu_i \right)}{\prod_{i \neq j}
  \left( a_j - a_i \right)}, \qquad j = 1, \ldots, n + 1.
  \label{eq:EllipticCoordinatesnD}
\end{equation}
Elliptic coordinates define a coordinate system on each of the $2^{n + 1}$
open orthants of the Euclidean space $\Rset^{n + 1}$, but they become
singular at the $n + 1$ coordinate hyperplanes, because the map $q \mapsto
\mathfrak{q}$ is not one-to-one in any neighborhood of theses hyperplanes.

A point is \tmtextit{generic}, in the sense of Theorem~\ref{thm:Jacobi},
if and only if it is outside all coordinate hyperplanes.
From Eq.~(\ref{eq:EllipticCoordinatesnD}),
we deduce that when the point $q$ tends to the hyperplane $H_j$,
some elliptic coordinate $\mu_i$ tends to $a_j$.
In fact, $i = j$ or $i = j - 1$,
because of the inequalities $a_i < \mu_i < a_{i+1}$.

A line is \tmtextit{generic}, in the sense of Theorem~\ref{thm:Jacobi}, if and
only if it is neither tangent to a singular confocal quadric nor contained in
a nonsingular confocal quadric.

If two lines obey the reflection law at a point $q \in Q$, then both lines are
tangent to the same confocal quadrics. This shows a tight relation between
elliptic billiards and confocal quadrics: all lines of a billiard trajectory
inside the ellipsoid $Q$ are tangent to exactly $n$ confocal quadrics
$Q_{\lambda_1}, \ldots, Q_{\lambda_n} $, which are called
\tmtextit{caustics} of the trajectory.
We will say that $\lambda = (\lambda_1, \ldots, \lambda_n) \in \Rset^n$
is the \tmtextit{caustic parameter} of the trajectory.

\begin{definition}
  A billiard trajectory inside a nondegenerate ellipsoid $Q$ is nonsingular
  when it has $n$ distinct nonsingular caustics; that is, when its caustic
  parameter belongs to the nonsingular caustic space
\[
\Lambda =
\left\{
(\lambda_1, \ldots, \lambda_n) \in \Rset^n :
\begin{array}{cc}
0  < \lambda_1 < \cdots < \lambda_n \\
\lambda_i \in (a_{i-1}, a_i) \cup (a_i, a_{i+1})
\end{array}
\right\}.
\]
\end{definition}

We will only deal with nonsingular billiard trajectories along this paper.

\begin{remark}
  \label{rem:NumberCausticTypes}The set $\Lambda$ has $2^n$ connected
  components, being each one associated to a different type of caustics. For
  instance, in the 2D case ($n = 1$) the two connected components correspond
  to ellipses and hyperbolas.
\end{remark}

\begin{theorem}
  If a nonsingular billiard trajectory closes after $l$ bounces, all
  trajectories sharing the same caustics also close after $l$ bounces.
\end{theorem}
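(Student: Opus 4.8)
The plan is to exploit the complete integrability of the billiard inside $Q$ and reduce the closure property to a statement about a rotation vector that depends only on the caustics. Fix a nonsingular caustic parameter $\lambda = (\lambda_1,\ldots,\lambda_n) \in \Lambda$. Viewed as functions on the phase space $M$, the caustic parameters $\lambda_1,\ldots,\lambda_n$ are $n$ first integrals of the billiard map $f$; on the nonsingular stratum $\Lambda$ the $n$ caustics are distinct and smooth, so these integrals are smooth and functionally independent. First I would record that $f$ is an (exact) symplectic diffeomorphism and that $\lambda_1,\ldots,\lambda_n$ are in involution with respect to its symplectic structure --- the classical facts underlying the Liouville integrability of elliptic billiards.

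Next I would apply the Arnold--Liouville theorem to the common level set $\mathcal{T}_\lambda = \{ m \in M : \lambda(m) = \lambda \}$. Since every billiard orbit stays inside the bounded ellipsoid $Q$, this level set is compact, so each of its connected components is diffeomorphic to an $n$-dimensional torus $\Rset^n/\Zset^n$, and on each component the billiard map $f$ is smoothly conjugate to a rigid translation by a constant rotation vector $\rho = \rho(\lambda)$. The crucial point is that $\rho$ is attached to the torus, hence to $\lambda$, and not to any individual orbit lying on it.

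With this in hand the theorem is immediate. Under a rigid translation by $\rho$, an orbit is $l$-periodic precisely when $l\rho \equiv 0 \pmod{\Zset^n}$, a condition involving only $\lambda$; consequently, on a fixed torus either every orbit closes after $l$ bounces or none does. Components of $\mathcal{T}_\lambda$ related by the reflections $s_\sigma$ of Proposition~\ref{prop:BilliardFactorizations} carry the opposite rotation vector $-\rho$, which yields the same closure condition, so nothing is lost in passing between components. Finally, using the one-to-one correspondence between billiard orbits and billiard configurations, together with the fact that every nonsingular trajectory with caustics $Q_{\lambda_1},\ldots,Q_{\lambda_n}$ corresponds to an orbit on $\mathcal{T}_\lambda$, I conclude that all trajectories sharing these caustics close after exactly $l$ bounces.

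The main obstacle is verifying the hypotheses of Arnold--Liouville in the billiard setting: smoothness, functional independence, and involutivity of the integrals $\lambda_i$ over the nonsingular stratum, and the fact that $f$ is symplectic. A more explicit alternative, following the elliptic-coordinate description of Ref.~\onlinecite{DragovicRadnovic2006}, is to separate variables: each Jacobi coordinate $\mu_i$ oscillates independently between bounds fixed by $\lambda$ and the $a_j$, and the increment of its conjugate angle per reflection is a hyperelliptic integral depending only on $\lambda$. This makes the rotation vector $\rho(\lambda)$ fully explicit and bypasses the abstract machinery, at the cost of a computation with Abelian integrals.
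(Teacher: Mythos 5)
The paper never proves this theorem: immediately after stating it, it attributes the result to Poncelet and Darboux in low dimensions and to Ref.~\onlinecite{ChangFriedberg1988} for arbitrary dimension, and afterwards it only uses the consequence that periodicity is detected by a rotation number (2D) or frequency map (3D) depending on $\lambda$ alone. So there is no in-paper argument to compare with; your Liouville-integrability strategy is the standard modern route, and its first ingredients (symplecticity of $f$, the caustic parameters as commuting independent integrals, compactness of the level sets) are indeed classical. The difficulty, however, is concentrated exactly where your write-up is least precise: the connected components of the level set $\mathcal{T}_\lambda$.

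Two concrete problems arise there. First, $f$ need not preserve a component, so the step ``on each component the billiard map $f$ is smoothly conjugate to a rigid translation'' fails as stated. Already in the 2D case with a hyperbola caustic, $\mathcal{T}_\lambda$ is the union of two closed curves, one surrounding each of the two phase points of the minor-axis $2$-periodic orbit, and $f$ interchanges them (every H-type orbit alternates between the two curves, which is why all H-type periods are even). One must pass to the first-return power of $f$ on a component; that is a repair, not a fatal flaw. Second, and this is the genuine gap: the torus argument only yields the all-or-nothing alternative for orbits meeting one fixed $f$-cycle of components, whereas the theorem concerns \emph{all} trajectories with caustic parameter $\lambda$. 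You therefore need the group generated by $f$, the symmetries $s_{\sigma}$ and the reversors to act transitively on the set of components of $\mathcal{T}_\lambda$; for $n \geq 2$ there genuinely are several components (the fibre of the projection $\mathcal{T}_\lambda \to Q$ has up to $2^n$ points), and your one-line appeal to the reflections is an assertion, not a proof. (Note also that the roles are swapped in your justification: by Proposition~\ref{prop:BilliardFactorizations} the $s_{\sigma}$ \emph{commute} with $f$, hence preserve the rotation vector, and it is the reversors that reverse it --- harmless for the periodicity condition, but a sign the component bookkeeping has not been done.) The rigorous way to close the gap is precisely the ``alternative'' you mention at the end and do not carry out: separation of variables shows the per-bounce shift is a vector of quotients of hyperelliptic integrals depending only on $\lambda$ --- the frequency map $\omega(\lambda)$ of Subsec.~\ref{ssec:FrequencyMap}, cf.\ Theorem~\ref{thm:Oscillations} and Eq.~(\ref{eq:FrequencyWindingNumbers}) --- which makes the periodicity criterion manifestly independent of the orbit and of the component at once. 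That computation is where the content of the theorem actually lies, and it is the route taken in the works the paper cites.
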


Poncelet and Darboux proved this theorem for ellipses and triaxial ellipsoids of
$\Rset^3$, respectively. Later on, it was generalized to any dimension in
Ref.~\onlinecite{ChangFriedberg1988}.

\subsection{A dual transformation}

Let us present a map $g:M \to M$ that takes place only for
billiards inside ellipsoids.
It exchanges the role of positions and velocities
---so, it could be seen as a dual transformation---,
and was introduced in Refs.~\onlinecite{ChangFriedberg1988,Veselov1988}.
The following explicit formulae are required to define $g$.

\begin{proposition}
The billiard map $f : M \rightarrow M$ inside the ellipsoid~(\ref{eq:Ellipsoid})
is expressed by $f \left( q, p \right) = \left( q', p'
  \right)$, where
  \[ \left\{ \begin{array}{ll}
       q' = q + \mu \left( q, p' \right) p', & \mu \left( q, p' \right) = - 2
       \DS \frac{\langle D q, p' \rangle}{\left\langle D p', p'
       \right\rangle},\\
       p' = p + \nu \left( q, p \right) D q, & \nu \left( q, p \right) = - 2
       \DS \frac{\left\langle D q, p \right\rangle}{\left\langle D q, D q
       \right\rangle} .
     \end{array} \right. \]
  Besides, the reversors $\hat{r}, \tilde{r} : M \rightarrow M$ are given by:
  \[ \left\{ \begin{array}{ll}
       \hat{r} \left( q, p \right) = \left( \hat{q}, - p \right), & \hat{q} =
       q + \mu \left( q, p \right) p,\\
       \tilde{r} \left( q, p \right) = \left( q, \tilde{p} \right), &
       \tilde{p} = - p - \nu \left( q, p \right) D q.
     \end{array} \right. \]
\end{proposition}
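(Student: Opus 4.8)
The plan is to compute everything directly from the Euclidean geometry of the ellipsoid, exploiting the fact that the defining function $F(q) = \langle Dq, q\rangle - 1$ has gradient $\nabla F(q) = 2Dq$, so that the normal line $N_q Q$ at any $q \in Q$ is spanned by $Dq$. First I would establish the reflection formula for $p'$. Since $p' = p - 2p_{\rmn}$ by definition, and the normal component is the orthogonal projection of $p$ onto $\langle Dq\rangle$, namely $p_{\rmn} = \frac{\langle p, Dq\rangle}{\langle Dq, Dq\rangle} Dq$, substituting yields $p' = p + \nu(q,p)\, Dq$ with $\nu(q,p) = -2\langle Dq, p\rangle/\langle Dq, Dq\rangle$, exactly as claimed.

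Next I would locate the new impact point $q'$. By definition $q'$ lies on the line $q + \langle p'\rangle$ and on $Q$, so writing $q' = q + \mu p'$ and imposing $\langle Dq', q'\rangle = 1$ gives, after using $\langle Dq, q\rangle = 1$, the quadratic
\[ \mu\left(2\langle Dq, p'\rangle + \mu\langle Dp', p'\rangle\right) = 0. \]
The root $\mu = 0$ recovers $q$ itself; the nontrivial root $\mu = -2\langle Dq, p'\rangle/\langle Dp', p'\rangle = \mu(q,p')$ gives $q'$. This is the first pair of formulae.

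The reversor formulae then follow with almost no additional work, invoking Proposition~\ref{prop:BilliardFactorizations}. The point $\hat{q}$ is defined there as the previous impact point, i.e.\ the second intersection of the line $q + \langle p\rangle$ with $Q$; repeating the quadratic computation with $p$ in place of $p'$ gives $\hat{q} = q + \mu(q,p)\, p$. For the other reversor, $\tilde{p}$ is defined by $\tilde{p} = -p'$, so substituting the expression for $p'$ already obtained yields $\tilde{p} = -p - \nu(q,p)\, Dq$ at once.

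There is no genuine obstacle here: the proposition is a collection of elementary computations. The only points requiring care are correctly identifying $Dq$ (rather than $q$) as the normal direction, and keeping track that $p$ is a unit vector while $Dq$, $Dp$, and $Dp'$ are generally not, so that the denominators $\langle Dq, Dq\rangle$, $\langle Dp, p\rangle$, and $\langle Dp', p'\rangle$ cannot be simplified away.
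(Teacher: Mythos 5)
Your proposal is correct, and it is actually more self-contained than the paper's own proof. The paper disposes of the first pair of formulae (for $q'$ and $p'$) by declaring them well known and citing Veselov's work, and only verifies the reversor formulae: it obtains $\hat{q}$ by noting that the previous impact point lies along the direction $-p$, so $\hat{q} = q + \mu(q,-p)(-p) = q + \mu(q,p)\,p$, and gets $\tilde{p}$ from $\tilde{p} = -p'$ exactly as you do. You instead derive the billiard map from first principles: identifying $N_q Q = \langle Dq \rangle$ via the gradient of $\langle Dq,q\rangle - 1$, projecting to get $p' = p + \nu(q,p)\,Dq$, and solving the quadratic $\mu\left(2\langle Dq,p'\rangle + \mu\langle Dp',p'\rangle\right) = 0$ whose nonzero root is $\mu(q,p')$; your treatment of $\hat{q}$ repeats the same quadratic with $p$ in place of $p'$, which is equivalent to the paper's $\mu(q,-p)(-p)$ manipulation since the line $q + \langle p \rangle$ meets $Q$ in exactly two points. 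What your route buys is a proof that stands on its own without the external reference; what the paper's route buys is brevity, treating the map formulae as standard and spending ink only on the parts specific to the reversors. One cosmetic remark: you could note in passing that $\langle Dq, p'\rangle < 0$ because $p'$ points inward at $q$, so the nonzero root $\mu(q,p')$ is indeed positive and picks out the forward intersection demanded by the definition of the billiard map; this is implicit in the two-point intersection $Q \cap (q + \langle p' \rangle) = \{q,q'\}$, but making it explicit costs one line.
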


\begin{proof}
  The formulae for the billiard map are well known. See, for instance,
  Ref.~\onlinecite{Veselov1988}. Next, to obtain the formula for $\hat{r}$ we recall
  that $\hat{q}$ is the previous impact point, so
  \[ \hat{q} = q + \mu \left( q, - p \right) \left( - p \right) = q + \mu
     \left( q, p \right) p. \]
  Finally, the formula for $\tilde{r}$ follows from $\tilde{p} = - p'$.
\end{proof}

We are ready to introduce the dual transformation $g$.
We check that, in certain sense, $g$ is the square root of $f$.
In particular, $g$ has the same symmetries and reversors as $f$.
From our point of view,
its most useful property is that it interchanges some symmetry sets.

\begin{proposition}
  \label{prop:DualSymmetry}Let $g : M \rightarrow M$, $g \left( q, p \right) =
  \left( \bar{q}, \bar{p} \right)$, where:
  \[ \left\{ \begin{array}{l}
       \bar{q} = Cp' = Cp + \nu \left( q, p \right) C^{- 1} q,\\
       \bar{p} = - C^{- 1} q,
     \end{array} \right. \]
  with $C = D^{- 1 / 2}$.
Then, the following relations hold:
  \begin{enumeratealpha}
    \item $g^2 = - f$, and $f \circ g = g \circ f$.
    
    \item $g \circ s_{\sigma} = s_{\sigma} \circ g$ for all $\sigma \in
    \Sigma$.
    
    \item $\hat{r} \circ g = - g \circ \tilde{r}$, $g \circ \tilde{r} \circ g
    = \tilde{r}$, and $g \circ \hat{r} \circ g = \hat{r}$.
    
    \item $g \left( \FixedSet \left( \tilde{r}_{\sigma} \right) \right) =
    \FixedSet \left( \hat{r}_{- \sigma} \right) = \FixedSet \left( f \circ
    \tilde{r}_{- \sigma} \right)$ for all $\sigma \in \Sigma$.
  \end{enumeratealpha}
\end{proposition}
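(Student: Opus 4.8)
The plan is to treat $g^2 = -f$ and the single reversing relation $\hat r\circ g = -g\circ\tilde r$ as the two computational ``seeds'', and to obtain everything else by formal manipulation of involutions together with the equivariance in item (b). Throughout I would use that $C = D^{-1/2}$ is a symmetric positive diagonal matrix with $C^2 = D^{-1}$, $DC = C^{-1} = D^{1/2}$, and that $C$ commutes with every $\sigma\in\Sigma$. First I would check that $g$ is well defined as a map $M\to M$: one has $\langle D\bar q,\bar q\rangle = \langle DCp',Cp'\rangle = \langle p',p'\rangle = 1$ and $|\bar p|^2 = \langle C^{-1}q,C^{-1}q\rangle = \langle Dq,q\rangle = 1$, while the outward condition follows from $\langle D\bar q,\bar p\rangle = -\langle Dq,p'\rangle > 0$, since $p'$ points inward at $q$.

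Item (b) is the most elementary, so I would dispose of it first: since $Q$ is invariant under every $\sigma\in\Sigma$, the reflected velocity at $\sigma q$ is $\sigma p'$ (this is contained in Proposition~\ref{prop:BilliardFactorizations}(b)), and because the diagonal matrices $C$ and $C^{-1}$ commute with $\sigma$, one gets $g(\sigma q,\sigma p) = (\sigma Cp',-\sigma C^{-1}q) = s_\sigma(g(q,p))$. For the first relation in (a), $g^2 = -f$, I would substitute directly. Writing $g(q,p) = (\bar q,\bar p)$ and $g(\bar q,\bar p) = (C\bar p',-C^{-1}\bar q)$, the second component is at once $-C^{-1}Cp' = -p'$. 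The only real computation is the reflection $\bar p' = \bar p + \nu(\bar q,\bar p)D\bar q$: using $D\bar q = C^{-1}p'$ one finds $\langle D\bar q,\bar p\rangle = -\langle Dq,p'\rangle$ and $\langle D\bar q,D\bar q\rangle = \langle Dp',p'\rangle$, so $\nu(\bar q,\bar p) = -\mu(q,p')$ and hence $\bar p' = -C^{-1}(q+\mu(q,p')p') = -C^{-1}q'$, giving $C\bar p' = -q'$. Thus $g^2(q,p) = (-q',-p') = -f(q,p)$. The commutativity $f\circ g = g\circ f$ is then purely formal: from $g\circ g^2 = g^2\circ g$ one gets $g\circ(-f) = (-f)\circ g$, and writing $-f = s_{-\Identity}\circ f$ and using (b) to commute $s_{-\Identity}$ with $g$, cancellation of the involution $s_{-\Identity}$ yields $f\circ g = g\circ f$.

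For (c) the only genuine computation is the reversing relation $\hat r\circ g = -g\circ\tilde r$. Two facts drive it: reflection is an involution, so the reflected velocity of $\tilde p = -p'$ at $q$ equals $-p$, whence $g(\tilde r(q,p)) = (-Cp,-C^{-1}q)$; and the tangency identity $\langle Dq,p+p'\rangle = 2\langle Dq,p_\rmt\rangle = 0$, which lets one evaluate the previous-impact point $\hat{\bar q}$ of $g(q,p)$ as $Cp$. Then both $\hat r(g(q,p))$ and $-g(\tilde r(q,p))$ equal $(Cp,C^{-1}q)$. The remaining two relations are formal: substituting $\hat r = f\circ\tilde r = s_{-\Identity}\circ g^2\circ\tilde r$ into $\hat r\circ g = s_{-\Identity}\circ g\circ\tilde r$ and cancelling $s_{-\Identity}$ and one factor of $g$ gives $g\circ\tilde r\circ g = \tilde r$; then $g\circ\hat r\circ g = g\circ f\circ\tilde r\circ g = f\circ g\circ\tilde r\circ g = f\circ\tilde r = \hat r$, using $f\circ g = g\circ f$ from (a).

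Finally, for (d) I would invoke the general identity $g(\FixedSet(R)) = \FixedSet(g\circ R\circ g^{-1})$, valid for any bijection $g$. Taking $R = \tilde r_\sigma = s_\sigma\circ\tilde r$ and using $g\circ s_\sigma = s_\sigma\circ g$ from (b) together with $g\circ\tilde r\circ g = \tilde r$ from (c), a short manipulation ---rewrite $g\circ\tilde r\circ g^{-1} = \tilde r\circ g^{-2}$ and use $g^{-2} = -f^{-1}$ and $f^{-1} = \tilde r\circ\hat r$--- yields $g\circ\tilde r\circ g^{-1} = s_{-\Identity}\circ\hat r$, so that $g\circ\tilde r_\sigma\circ g^{-1} = s_\sigma\circ s_{-\Identity}\circ\hat r = s_{-\sigma}\circ\hat r = \hat r_{-\sigma}$; hence $g(\FixedSet(\tilde r_\sigma)) = \FixedSet(\hat r_{-\sigma})$. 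The second equality $\FixedSet(\hat r_{-\sigma}) = \FixedSet(f\circ\tilde r_{-\sigma})$ is immediate from the factorization in Proposition~\ref{prop:BilliardFactorizations}(c), which gives $\hat r_{-\sigma} = f\circ\tilde r_{-\sigma}$. The main obstacle is simply the bookkeeping in the two seed computations, $g^2 = -f$ and $\hat r\circ g = -g\circ\tilde r$, with the $C = D^{-1/2}$ algebra; once those two identities and the equivariance (b) are in hand, the rest of (a), all of (c) beyond its seed, and (d) reduce to pure involution algebra.
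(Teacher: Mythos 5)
Your proposal is correct and follows essentially the same route as the paper: two computational seeds ($g^2=-f$ and $\hat{r}\circ g=-g\circ\tilde{r}$, the latter being exactly the paper's $g\circ\tilde{r}=-\hat{r}\circ g$) combined with formal involution algebra and the equivariance in (b) to obtain the rest of (a), the remaining identities in (c), and item (d) by conjugation. The differences are only presentational: you add a well-definedness check of $g:M\to M$ that the paper omits, you derive the seed of (c) geometrically (involutivity of the reflection $p\mapsto\tilde{p}$ and the identification $\hat{\bar{q}}=Cp$) where the paper instead reuses its $\mu/\nu$ relations $\nu\circ g=\mu\circ\tilde{r}$, $\mu\circ g=\nu\circ\tilde{r}$ from (a), and you phrase (d) through the identity $g\left(\FixedSet(R)\right)=\FixedSet\left(g\circ R\circ g^{-1}\right)$ rather than the paper's direct chain of equivalences.
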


\begin{proof}
  
  \begin{enumeratealpha}
    \item We observe two relations:
    \begin{eqnarray*}
      \nu \left( \bar{q}, \bar{p} \right) & = & - 2 \frac{\left\langle D
      \bar{q}, \bar{p} \right\rangle}{\left\langle D \bar{q}, D \bar{q}
      \right\rangle} = 2 \frac{\left\langle D q, p' \right\rangle}{\left\langle D
        p', p' \right\rangle}\\
      & = & - \mu \left( q, p' \right) = \mu \left( q, \tilde{p} \right),\\
      \mu \left( \bar{q}, \bar{p} \right) & = & - 2 \frac{\left\langle D
      \bar{q}, \bar{p} \right\rangle}{\left\langle D \bar{p}, \bar{p}
      \right\rangle} = 2 \frac{\left\langle D q, p' \right\rangle}{\left\langle D q,
        D q \right\rangle}\\
      & = & - \nu \left( q, p' \right) = \nu \left( q, \tilde{p} \right) .
    \end{eqnarray*}
    Thus $\nu \circ g = \mu \circ \tilde{r}$ and $\mu \circ g = \nu \circ
    \tilde{r}$, and then $g^2 = - f$:
    \begin{eqnarray*}
      g^2 \left( q, p \right) & = & g \left( \bar{q}, \bar{p} \right)
      = \left( C \bar{p} + \nu \left(
      \bar{q}, \bar{p} \right) C^{- 1}  \bar{q}, - C^{- 1}  \bar{q} \right)\\
      & = & \left( - q - \mu \left( q, p' \right) p', - p' \right)
      = \left( - q', - p' \right) .
    \end{eqnarray*}
    It is immediate that $g$ is odd. Therefore,
    \[ f \circ g = \left( - g^2 \right) \circ g = - g^3 = g \circ \left( - g^2
       \right) = g \circ f. \]
    \item It is obvious.
    
    \item By using a) and definition of $g$:
    \begin{eqnarray*}
      g \left( q, \tilde{p} \right) & = & \left( C \tilde{p} + \nu \left( q,
      \tilde{p} \right) C^{- 1} q, - C^{- 1} q \right)\\
      & = & \left( - \bar{q} - \mu \left( \bar{q}, \bar{p} \right)  \bar{p},
      \bar{p} \right) = - \hat{r} \left( \bar{q}, \bar{p} \right).
    \end{eqnarray*}
    As a consequence $\hat{r} \circ g = - g \circ \tilde{r}$ and besides:
    \begin{eqnarray*}
      \hat{r} & = & f \circ \tilde{r} = - g^2 \circ \tilde{r} = g \circ \hat{r}
      \circ g,\\
      \tilde{r} & = & f \circ \tilde{r} \circ f = \hat{r} \circ f = \hat{r} \circ
      ( -g^2 )\\
      & = & - \hat{r} \circ g^2 = g \circ \tilde{r} \circ g.
    \end{eqnarray*}
    \item Because $g$ is a diffeomorphism we have that:
    \begin{eqnarray*}
      m \in \FixedSet \left( \tilde{r}_{\sigma} \right) & \Longleftrightarrow
      & g \left( m \right) = g \left( \tilde{r}_{\sigma} \left( m \right)
      \right) = - \hat{r}_{\sigma} \left( g \left( m \right) \right)\\
      & \Longleftrightarrow & g \left( m \right) \in \FixedSet \left(
      \hat{r}_{- \sigma} \right) .
    \end{eqnarray*}
    We have used that $- \hat{r}_{\sigma} = \hat{r}_{- \sigma}$. Consequently,
    \[ g \left( \FixedSet \left( \tilde{r}_{\sigma} \right) \right) =
       \FixedSet \left( \hat{r}_{- \sigma} \right) = \FixedSet \left( f
       \circ \tilde{r}_{- \sigma} \right) . \]
  \end{enumeratealpha}
\vspace{-28pt}
\end{proof}

This proposition has a practical corollary.

\begin{corollary}\label{cor:DualCorrespondence}
The dual transformation $g : M \rightarrow M$ gives an explicit one-to-one
correspondence between $\tilde{r}_{\sigma}$-SPOs and
$\left( f \circ \tilde{r}_{-\sigma} \right)$-SPOs of the same period.
\end{corollary}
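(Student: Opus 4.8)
The plan is to take the correspondence to be simply $O \mapsto g(O)$, where $g$ is the explicit dual transformation of Proposition~\ref{prop:DualSymmetry}, and to verify four things: that $g$ sends $f$-orbits to $f$-orbits, that it preserves the period, that it converts the $\tilde{r}_{\sigma}$-symmetry into the $\left( f \circ \tilde{r}_{-\sigma} \right)$-symmetry, and that it is a bijection. The first, second, and fourth points are cheap: since $f \circ g = g \circ f$ by part~a), $g$ maps the orbit through $m_0$ onto the orbit through $g(m_0)$; and since $g$ is a diffeomorphism whose inverse also commutes with $f$, the minimal period is unchanged and the assignment is invertible. So the whole content lies in the symmetry conversion.

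For that I would first recall that $f \circ \tilde{r}_{-\sigma} = \hat{r}_{-\sigma}$ (because $f = \hat{r}_{-\sigma} \circ \tilde{r}_{-\sigma}$ with $\tilde{r}_{-\sigma}^2 = \Identity$, by Proposition~\ref{prop:BilliardFactorizations}), so the target symmetry is just the $\hat{r}_{-\sigma}$-symmetry. The key step is then to establish the single conjugation identity
\[ \hat{r}_{-\sigma} \circ g = g \circ \tilde{r}_{\sigma}, \]
i.e.\ $\hat{r}_{-\sigma} = g \circ \tilde{r}_{\sigma} \circ g^{-1}$. I would derive it by chaining the relations already proved: starting from $\hat{r}_{-\sigma} = s_{-\sigma} \circ \hat{r}$, replace $\hat{r} \circ g$ using $\hat{r} \circ g = - g \circ \tilde{r}$ from part~c), rewrite the sign factor as $- g = s_{-\Identity} \circ g$ (fibrewise negation), collapse $s_{-\sigma} \circ s_{-\Identity} = s_{\sigma}$ by the componentwise product of reflections, and finally pass $s_{\sigma}$ through $g$ using $g \circ s_{\sigma} = s_{\sigma} \circ g$ from part~b). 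What remains is $s_{\sigma} \circ g \circ \tilde{r} = g \circ \left( s_{\sigma} \circ \tilde{r} \right) = g \circ \tilde{r}_{\sigma}$, which is the claimed identity.

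With this identity in hand the conversion is immediate: if $\tilde{r}_{\sigma}(O) = O$, then $\hat{r}_{-\sigma}(g(O)) = g\bigl( \tilde{r}_{\sigma}(O) \bigr) = g(O)$, so $g(O)$ is $\hat{r}_{-\sigma}$-symmetric, that is, $\left( f \circ \tilde{r}_{-\sigma} \right)$-symmetric; the converse direction follows by applying the same identity to $g^{-1}$. Combining this with the orbit- and period-preservation noted above yields the stated one-to-one correspondence between $\tilde{r}_{\sigma}$-SPOs and $\left( f \circ \tilde{r}_{-\sigma} \right)$-SPOs of equal period, with the explicit bijection furnished by $g$. I expect the only delicate point to be the bookkeeping of signs in the conjugation identity: one must check that the symmetry index $\sigma$ genuinely flips to $-\sigma$, which is exactly what the factor $-g$ from part~c) together with the cancellation $s_{-\sigma} \circ s_{-\Identity} = s_{\sigma}$ produces, so there is no real obstruction once those two steps are lined up correctly.
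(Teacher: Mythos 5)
Your proposal is correct and takes essentially the same route as the paper: the correspondence is $O \mapsto g(O)$, period preservation follows from $f \circ g = g \circ f$, and the symmetry conversion is exactly the content of Proposition~\ref{prop:DualSymmetry}. The only difference is presentational: where the paper cites part~(d) of that proposition, $g(\FixedSet(\tilde{r}_{\sigma})) = \FixedSet(\hat{r}_{-\sigma})$, applied to a periodic point lying on the symmetry set, you re-derive the equivalent conjugation identity $\hat{r}_{-\sigma} \circ g = g \circ \tilde{r}_{\sigma}$ from parts~(b) and~(c) and apply it to the orbit as a set --- a correct inlining of the same fact, carried out with the right sign bookkeeping.
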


\begin{proof}
  Let $m \in \FixedSet \left( \tilde{r}_{\sigma} \right)$ be a point of the
  phase space such that its orbit $O$, is $l$-periodic. Thus, $O$ is a
  $\tilde{r}_{\sigma}$-SPO. Let us consider the orbit $\bar{O}$ by the point
  $\bar{m} = g \left( m \right)$. First,
  \[ f^l \left( \bar{m} \right) = f^l \left( g \left( m \right) \right) = g
     \left( f^l \left( m \right) \right) = g \left( m \right) = \bar{m} . \]
  Thus, $\bar{O}$ is $l$-periodic. Second, $\bar{m} = g \left( m \right) \in
  \FixedSet \left( \hat{r}_{- \sigma} \right)$, so $\bar{O}$ is a $\hat{r}_{-
  \sigma}$-SPO. Therefore, we have explicitly constructed the correspondence
  $g : O \mapsto \bar{O}$, which is one-to-one since $g^2 = - f$.
\end{proof}

In subsequent sections we carry out some computations on SPOs only for
reversors $\left\{ \tilde{r}_{\sigma} : \sigma \in \Sigma \right\}$, since
from this corollary, we deduce the analogous results for reversors $\left\{
\hat{r}_{\sigma} = f \circ \tilde{r}_{\sigma} : \sigma \in \Sigma \right\}$.

\subsection{Elliptic coordinates of SPTs}\label{ssec:EllipticCoordinates}

The behaviour of elliptic coordinates along nonsingular billiard trajectories inside
a nondegenerate ellipsoid, can be summarized as follows. If a particle obeys the
billiard dynamics, describing a polygonal curve of $\Rset^{n + 1}$ whose
vertexes are on the ellipsoid, then its $i$-th elliptic coordinate $\mu_i$
oscillates inside some interval $I_i = \left[ c_{2 i}, c_{2 i + 1} \right]$ in such
a way that its only critical points are attained when $\mu_i \in \partial I_i$. In
other words, the oscillation has amplitude $c_{2 i + 1} - c_{2 i}$. This is a
classical result that can be found, for instance, in
Ref.~\onlinecite{DragovicRadnovic2006}. Let us state it as a formal theorem.

In order to describe the intervals $I_i$, we set
\[ \left\{ c_1, \ldots, c_{2 n + 1} \right\} = \left\{ a_1, \ldots, a_{n + 1}
   \right\} \cup \left\{ \lambda_{1,} \ldots, \lambda_n \right\}, \]
once ellipsoid parameters $a_1, \ldots, a_{n + 1}$, and caustic parameters
$\lambda_{1,} \ldots, \lambda_n$, are fixed.

If $\lambda \in \Lambda$, then $c_1, \ldots, c_{2 n + 1}$ are pairwise
distinct and positive, so we can assume that
\[ c_0 := 0 < c_1 < \cdots < c_{2 n + 1} . \]
Then $I_i = \left[ c_{2 i}, c_{2 i + 1} \right]$, $0 \leq i \leq n$, are the
intervals that we were looking for.

\begin{theorem}\label{thm:Oscillations}
Let $q(t)$ be an arc-length parameterization of a nonsingular
billiard trajectory inside the ellipsoid~(\ref{eq:Ellipsoid}),
sharing caustics $Q_{\lambda_1}, \ldots, Q_{\lambda_n}$.
Let $\mathfrak{q}(t) = ( \mu_0(t), \ldots, \mu_n(t))$ be the corresponding
  parameterization in elliptic coordinates. The following properties hold:
  \begin{enumeratealpha}
    \item $c_{2 i} \leq \mu_i \left( t \right) \leq c_{2 i + 1}$ for all $t
    \in \Rset$.
    
    \item Functions $\mu_i \left( t \right)$ are smooth everywhere, except
    $\mu_0 \left( t \right)$ which is nonsmooth at impact points ---that
    is, when $q \left( t_{\star} \right) \in Q$---, in which case
    $\mu'_0 \left( t_{\star} + \right) = - \mu'_0 \left( t_{\star} - \right)
    \neq 0$.
    
    \item \label{item:CriticalPoints}If $\mu_i \left( t \right)$ is smooth at
    $t = t_{\star}$, then
    \[ \mu_i' \left( t_{\star} \right) = 0 \Longleftrightarrow \mu_i \left(
       t_{\star} \right) \in \left\{ c_{2 i}, c_{2 i + 1} \right\} . \]
    \item \label{item:WindingNumbers}If the trajectory is periodic with length
    $L_0$, then $\mathfrak{q} \left( t \right)$ is $L_0$-periodic and there
    exist some positive integers $m_0, \ldots, m_n$, called winding numbers,
    such that $\mu_i \left( t \right)$ makes exactly $m_i$ complete
    oscillations (round trips) inside the interval $I_i = \left[ c_{2 i}, c_{2
    i + 1} \right]$ along one period $0 \leq t \leq L_0$. Besides,
    \begin{enumerateroman}
      \item $m_0$ is the period of the billiard trajectory.
      
      \item $m_i$ is even when $\left\{ c_{2 i}, c_{2 i + 1} \right\} \cap
      \left\{ a_1, \ldots, a_{n + 1} \right\} \neq \emptyset$.
      
      \item \label{item:HalfPeriod}$\mathfrak{q} \left( t \right)$ has period
      $L_0 / 2$ if and only if all winding numbers are even, but it never has
      period $L_0 / 4$.
      
      \item \label{item:NotMultiplesOf4}Not all winding numbers can be
      multiples of four.
    \end{enumerateroman}
  \end{enumeratealpha}
\end{theorem}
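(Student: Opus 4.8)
The plan is to push everything through the classical separated first-order equations satisfied by the elliptic coordinates along the trajectory, read off (a)--(c) from the location and multiplicity of the zeros of the right-hand sides, and reserve the real work for the winding-number statements in (d). I would begin by writing the arc-length evolution in the standard Jacobi separated form
\[
\left(\frac{d\mu_i}{dt}\right)^2 \prod_{k\neq i}(\mu_i-\mu_k)^2 = R(\mu_i),
\]
where, up to a fixed positive factor, $R$ is a rational function built from the factors $(\mu-a_j)$ and $(\mu-\lambda_k)$, having simple zeros exactly at $c_1,\dots,c_{2n+1}$ and being nonnegative precisely on the intervals $I_i$. The sign of the right-hand side then confines $\mu_i$ to $I_i=[c_{2i},c_{2i+1}]$, which is (a); the endpoints being simple zeros of $R$ forces $d\mu_i/dt=0$ there and only there among smooth points, which is (c). For (b), the point $q(t)$ meets $Q$ (an impact) exactly when $\mu_0=0$, and the reflection law sends the incoming chord to a chord symmetric about the normal, so it reverses the sign of $\mu_0'$ while preserving $|\mu_0'|\neq0$; the remaining $\mu_i$ depend only on the (preserved) caustics and so cross impacts smoothly.

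For the framework of (d): periodicity of $\mathfrak{q}$ with length $L_0$ is immediate from periodicity of $q(t)$, and by (c) each $\mu_i$ is strictly monotone between consecutive visits to $\partial I_i$, so it performs genuine oscillations and $m_i$ is well defined. Statement (i) I would get by inspecting a single segment between two consecutive impacts: there $\mu_0$ leaves $0$, attains exactly one interior critical value which by (c) must equal $c_1$, and returns to $0$; hence each of the (period)-many segments contributes precisely one oscillation, so $m_0$ equals the period. For (ii), suppose an endpoint of $I_i$ equals some $a_j$. From the Cartesian--elliptic relation, $x_j^2$ vanishes exactly when $\mu_i=a_j$ (the other coordinates stay away from $a_j$ since the $c$'s are distinct), and at such an instant $\mu_i$ has a quadratic turning point by (c), so $x_j$ has a simple zero and changes sign transversally. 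Over one period $x_j$ is continuous and periodic, hence has an even number of sign changes; these number exactly $m_i$ (one per oscillation), so $m_i$ is even.

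The equivalences in (iii) and the negative result (iv) I would prove together through a bridge to the phase-space map. If $\mathfrak{q}$ has period $T$, then $x_j(t+T)^2=x_j(t)^2$, and because every zero of $x_j$ is transversal the ratio $\epsilon_j=x_j(t+T)/x_j(t)$ cannot jump, so it is a constant sign; thus $q(t+T)=\sigma q(t)$ for a fixed $\sigma=\operatorname{diag}(\epsilon_j)\in\Sigma$, and differentiating along segments gives $p(t+T)=\sigma p(t)$, i.e. $f^{N}=s_\sigma$ on the orbit with $N=m_0T/L_0$. Action--angle linearization of the integrable billiard makes each angle conjugate to $\mu_i$ ($i\geq1$) advance uniformly by $2\pi m_i$ over a period, so $\mu_i(t+T)=\mu_i(t)$ holds if and only if $m_iT/L_0\in\Zset$; with $T=L_0/2$ this reads ``$m_i$ even''. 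Combined with $m_0$ even (forced because $N=m_0/2$ must be integral), this yields the forward direction of (iii). The converse I would obtain from the $2^{n+1}$-to-one covering of the orthants of $\Rset^{n+1}$ by elliptic coordinates, whose deck transformations are exactly the reflections in $\Sigma$: when all $m_i$ are even, the half-period shift of the angles descends to a well-defined reflection $\sigma$, which is the glide $q(t+L_0/2)=\sigma q(t)$, so $\mathfrak{q}$ has period $L_0/2$. Finally, for the $L_0/4$ clause and for (iv), were all winding numbers divisible by four the same construction with $T=L_0/4$ would give $f^{m_0/4}=s_\sigma$, whence $f^{m_0/2}=s_\sigma^2=\Identity$ on the orbit; its period would then divide $m_0/2<m_0$, contradicting (i). Hence not all winding numbers can be multiples of four, and $\mathfrak{q}$ never attains period $L_0/4$.

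\textbf{Main obstacle.} The coordinate $\mu_0$ is the only non-smooth one and it tracks impacts whose spacing is \emph{not} uniform (successive chords have different lengths), so it cannot be fed directly into the uniform angle picture that governs $\mu_1,\dots,\mu_n$. The crux of the argument, and where I expect the effort to concentrate, is making precise how the half- and quarter-period translations on the angle torus of the caustic coordinates descend through the orthant covering to a genuine reflection $\sigma$ acting on the \emph{impact} sequence, that is, rigorously justifying the identification $f^{N}=s_\sigma$ that powers both the converse in (iii) and the contradiction in (iv).
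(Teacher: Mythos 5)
You should know at the outset that the paper never proves Theorem~\ref{thm:Oscillations}: it states it as a classical result, citing Ref.~\onlinecite{DragovicRadnovic2006} for the oscillation behaviour and Ref.~\onlinecite{CasasRamirez2011} for the winding-number statements, so your attempt must be judged as a reconstruction of that classical argument. Much of it is sound: (a) and (c) from the separated Jacobi equations; (d)(i) from the one-turning-point-per-chord argument; (d)(ii) from counting sign changes of $x_j$; and, crucially, the glide construction --- if $\mathfrak{q}$ is $T$-periodic then the ratios $x_j(t+T)/x_j(t)$ are constant signs, so $q(t+T)=\sigma q(t)$ and $f^N=s_\sigma$ on the orbit --- which by itself already settles the ``never period $L_0/4$'' clause of (d)(iii), because $f^{m_0/4}=s_\sigma$ forces $f^{m_0/2}=\Identity$, contradicting minimality of the period. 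Two smaller repairs are needed: in (b), ``the remaining $\mu_i$ depend only on the preserved caustics'' is not an argument (the correct mechanism is that $Q=\{\mu_0=0\}$ is a coordinate hypersurface of an orthogonal system, so the reflection flips only $\mu_0'$; even then, $C^1$ matching must still be upgraded to smoothness through the quadratures); and the transversality of the zeros of $x_j$, which your glide argument uses, must also be checked at impact points lying on $H_j$, where it holds because the normal to $Q$ at such points is parallel to $H_j$, so $u_j$ does not jump at the bounce, and because a nonsingular trajectory cannot lie in a coordinate hyperplane, so $x_j\not\equiv 0$.

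The genuine gap is the converse half of (d)(iii) --- all $m_i$ even implies $\mathfrak{q}$ has period $L_0/2$ --- and, with it, item (d)(iv), which needs ``all $m_i\in 4\Zset$ implies $\mathfrak{q}$ has period $L_0/4$''. You support this with the claim that each angle ``conjugate to $\mu_i$ advances uniformly by $2\pi m_i$ over a period''. Read on the phase-space Liouville torus, that claim is false: if the flow there had frequencies $2\pi m_i/L_0$, even winding numbers would make the phase-space orbit itself $L_0/2$-periodic, which is absurd --- the $4$-periodic orbit of caustic type E inside an ellipse has $(m_0,m_1)=(4,2)$, yet $f^2=s_{-\Identity}\neq\Identity$ on it (second row of Table~\ref{tab:SPTs_2D}). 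The numbers $2\pi m_i/L_0$ are frequencies relative to a different torus: the torus of elliptic-coordinate data $(\mu_i,\mu_i')$, which is the quotient of the Liouville torus by the subgroup of reflections $s_\sigma$ preserving it, and whose period lattice is accordingly strictly finer. The converse of (d)(iii) is precisely the statement that half the orbit's period vector lies in that finer lattice whenever all of its coordinates $m_i$ there are even. So what actually has to be proven --- and is the real content of the cited references --- is (1) that the billiard flow, impacts included, linearizes on this quotient torus (the quadrature/Abel-map argument, in which the reflection at $Q$ is absorbed as a branch point of the underlying hyperelliptic curve), and (2) that the $m_i$ are exactly the coordinates of the orbit's homology class in the basis dual to the oscillation phases of that torus. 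You gesture at both points (``orthant covering'', ``deck transformations'') but prove neither; moreover, you locate the residual difficulty in the wrong place, since the identification $f^N=s_\sigma$ is already secured by your sign argument once $T$-periodicity of $\mathfrak{q}$ is known --- what is missing is deducing that $T$-periodicity from the parity of the winding numbers.
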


Therefore,
all billiard trajectories sharing the caustics $Q_{\lambda_1},\ldots,Q_{\lambda_n}$
are contained in the $\left( n + 1 \right)$-dimensional \tmtextit{cuboid}
\[ \mathcal{} \mathcal{C}_{\lambda} : = I_0 \times \cdots \times I_n = \left[
   0, c_1 \right] \times \cdots \times \left[ c_{2 n}, c_{2 n + 1} \right]
   \subset \Rset^{n + 1}, \]
when they are expressed in elliptic coordinates. Besides, a billiard
trajectory drawn in elliptic coordinates, has elastic reflections with the
$n$-dimensional face $\left\{ \mu_0 = 0 \right\}$, of the cuboid, but inner
tangent contacts with the other $2 n + 1$ faces. This behaviour can be
observed in Table~\ref{tab:SPTs_2D}, where several SPTs are drawn in Cartesian
and elliptic coordinates. The cuboid is just a rectangle in those cases.

Based on extensive numerical experiments,
it has been conjectured\cite{CasasRamirez2011} that
\begin{equation}
  2 \leq m_n < \cdots < m_1 < m_0, \label{eq:Conjecture}
\end{equation}
but we are not aware of any proof.

\section{2D case}\label{sec:2D}

\subsection{Caustics and elliptic coordinates}

We adapt the previous setting of billiards inside ellipsoids of
$\Rset^{n + 1}$ to the 2D case; that is, $n = 1$. Then the configuration
space is an ellipse $Q$, which, in order to simplify the exposition, we write
as
\[
Q =
\left\{
q = (x,y) \in \Rset^2 : \frac{x^2}{a} + \frac{y^2}{b} = 1
\right\},\qquad
a > b > 0.
\]
As we said in Subsec.~\ref{ssec:Caustics}, any nonsingular billiard trajectory
inside $Q$ is tangent to one confocal caustic
\[ Q_{\lambda} = \left\{ q = \left( x, y \right) \in \Rset^2 :
   \frac{x^2}{a - \lambda} + \frac{y^2}{b - \lambda} = 1 \right\}, \]
where the caustic parameter $\lambda$ belongs to the nonsingular caustic space
\[
\Lambda = E \cup H, \qquad
E = \left( 0, b \right), \qquad
H = \left( b, a \right) .
\]
The names of the connected components of $\Lambda$ come from the fact that
$Q_{\lambda}$ is an confocal ellipse for $\lambda \in E$ and a confocal
hyperbola for $\lambda \in H$. The singular cases $\lambda = b$ and $\lambda =
a$ correspond to the $x$-axis and $y$-axis, respectively. We say that the
\tmtextit{caustic type} of a billiard trajectory is E or H, when its caustic
is an ellipse or a hyperbola. We also distinguish between E-caustics and
H-caustics.

\begin{figure}
\includegraphics{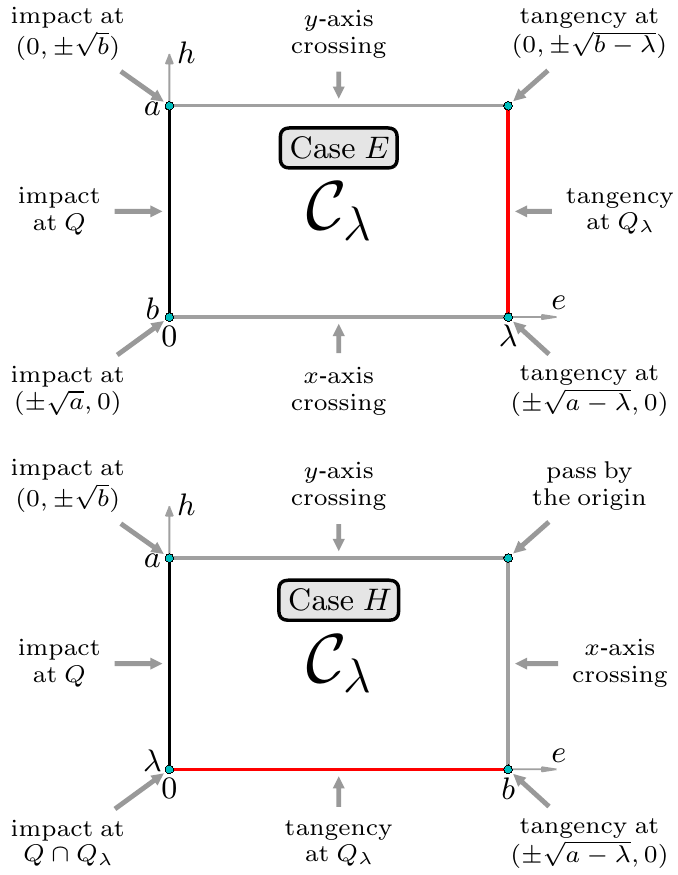}
\caption{\label{fig:SidesVertexes}Geometric meaning of sides and vertexes
       of $\EuScript{\mathcal{C}}_{\lambda}$.}
\end{figure}

We denote by $\mathfrak{q}= \left( e, h \right)$ the Jacobi elliptic
coordinates of the point $q = \left( x, y \right)$. That is, $q$ belongs to
the orthogonal intersection of the confocal ellipse $Q_e$ and the confocal
hyperbola $Q_h$. We recall from Eq.~(\ref{eq:EllipticCoordinatesnD}) that
\[
x^2 = \frac{\left( a - e \right)  \left( a - h \right)}{a - b},\qquad
y^2 = \frac{\left( b - e \right)  \left( h - b \right)}{a - b}.
\]
Besides, $0 < e < b < h < a$ if the point $q$ is contained in the interior of
$Q$, and $e = 0$ at impact points $q \in Q$. The $y$-axis is $\left\{ h = a
\right\}$, the $x$-axis is $\left\{ e = b \right\} \cup \left\{ h = b
\right\}$, and the foci of the ellipse verify $e = h = b$, in elliptic
coordinates. Likewise, the points $\left( \pm x, \pm y \right)$ have
associated the same elliptic coordinates $\left( e, h \right)$, so this system
of coordinates do not distinguish among the four quadrants in $\Rset^2$.

We also know that all billiard trajectories sharing the caustic
$Q_{\lambda_{}}$ are contained in the rectangle
\begin{equation}
  \EuScript{\mathcal{C}}_{\lambda} \mathscr{} = \left[ 0, \min \left(
  b, \lambda \right) \right] \times \left[ \max \left( b, \lambda
  \right), a \right] \subset \Rset^2 \label{eq:Rectangle}
\end{equation}
when they are expressed in terms of $\left( e, h \right)$. Moreover, the
coordinates $e$ and $h$ have a monotone behaviour except at the endpoints of
the intervals that enclose them. The geometric meaning of the sides and
vertexes of the rectangle $\EuScript{\mathcal{C}}_{\lambda}$ is described in
Fig.~\ref{fig:SidesVertexes}. In the figure, black, red and gray sides means
impact with the ellipse $Q$, tangency with the caustic $Q_{\lambda}$, and
crossing some coordinate axis, respectively. This color code is repeated in
Table~\ref{tab:SPTs_2D}.

\subsection{Rotation number and winding numbers}

We recall some concepts related to periodic trajectories of billiards inside
ellipses. These results can be found, for instance, in
Refs.~\onlinecite{CasasRamirez2011, ChangFriedberg1988}.

To begin with, we introduce the function $\rho : \Lambda \rightarrow
\Rset$ given by the quotient of elliptic integrals
\[ \rho \left( \lambda \right) = \frac{\int_0^{\min \left( b, \lambda
   \right)} \frac{\mathrm{d} s}{\sqrt{\left( \lambda - s \right) \left( b - s
   \right) \left( a - s \right)}}}{2 \int_{\max \left( b, \lambda
   \right)}^a \frac{\mathrm{d} s}{\sqrt{\left( \lambda - s \right) \left( b -
   s \right) \left( a - s \right)}}} . \]
It is called the \tmtextit{rotation number} and characterizes
the caustic parameters that give rise to periodic trajectories.
To be precise, the billiard trajectories sharing the caustic $Q_{\lambda}$ are
periodic if and only if
\[ \rho \left( \lambda \right) = m_1 / 2 m_0 \in \Qset \]
for some integers $2 \leq m_1 < m_0$, which are the \tmtextit{winding
numbers}. It turns out that $m_0$ is the period, $m_1 / 2$ is the number of
turns around the ellipse $Q_{\lambda}$ for E-caustics, and $m_1$ is the number
of crossings of the $y$-axis for H-caustics. Thus, $m_1$ is always even.
Besides, all periodic trajectories with H-caustics have even period.

\subsection{Reversors and their symmetry sets}\label{ssec:SymmetrySets_2D}

We also change the notations for reversors. We denote by $R$ the reversor
previously written as $\tilde{r}$. Besides, $R_x$, $R_y$, and $R_{x y}$ are
the reversors obtained by composing $R$ with the symmetries defined on the
phase space and associated to the reflections $\left( x, y \right) \mapsto
\left( - x, y \right)$, $\left( x, y \right) \mapsto \left( x, - y \right)$,
and $\left( x, y \right) \mapsto \left( - x, - y \right)$, respectively.
Finally, $f \circ R$, $f \circ R_x$, $f \circ R_y$, and $f \circ R_{x y}$
denote the four reversors of the form $\left\{ \hat{r}_{\sigma} = f \circ
\tilde{r}_{\sigma} : \sigma \in \Sigma \right\}$.
We proved in Proposition~\ref{prop:SymmetrySets} that the symmetry sets
of $R_{x y}$ and $f \circ R$ are empty sets,
and following the same proposition we get:
\begin{itemizedot}
  \item $\FixedSet ( R ) = \{ ( q, p ) \in M : p
  \in N_q Q \}$,
  
  \item $\FixedSet ( R_x ) = \{ (q, p) \in M : q
  = ( 0, \pm \sqrt{b} ) \}$,
  
  \item $\FixedSet ( R_y ) = \{ ( q, p ) \in M : q
  = ( \pm \sqrt{a}, 0 ) \}$,
  
  \item $\FixedSet ( f \circ R_x ) = \{ ( q, p )
  \in M : p = ( \pm 1, 0 ) \}$,
  
  \item $\FixedSet ( f \circ R_y ) = \{ ( q, p )
  \in M : p = ( 0, \pm 1 ) \}$, and
  
  \item $\FixedSet ( f \circ R_{xy} ) = \{ ( q, p
  ) \in M : 0 \in q + \langle p \rangle \}$.
\end{itemizedot}
Here,
$M =
 \{ (q, p) \in Q \times \Sset:
 \mbox{$p$ directed outward $Q$ at $q$}
 \}$ is the phase space.
We write $q = (x,y)$ and $p = (u,v)$.

\begin{table}\caption{\label{tab:Pointsqp_ST_2D}
Points $(q, p) \in \FixedSet(r)$ such that the line $q + \langle p \rangle$
is tangent to the caustic $Q_{\lambda}$.}
\begin{ruledtabular}
\begin{tabular}{ll}
     $r$ & $q = (x,y)$, $p = (u,v)$ \\
     [0.05cm]
     \hline \\
     [-0.3cm]
     $R$ &
     $x^2 = \frac{a ( a - \lambda)}{a - b}$,
     $y^2 = \frac{b ( \lambda - b)}{a - b}$,
     $p = \mu (\frac{x}{a}, \frac{y}{b})$,
     $\mu = \sqrt{a b/\lambda}$ \\
     $R_x$ &
     $q = (0, \pm \sqrt{b})$, $u^2 = (a - \lambda)/a$, $v^2 = \lambda/a$ \\
     $R_y$ &
     $q = (\pm\sqrt{a}, 0)$, $u^2 = \lambda/b$, $v^2 = (b - \lambda)/b$ \\
     $f \circ R_x$ &
     $x^2 = a \lambda/b$, $y^2 = b - \lambda$, $p = (\pm 1, 0)$ \\
     $f \circ R_y$ &
     $x^2 = a - \lambda$, $y^2 = b \lambda/a$, $p = ( 0, \pm 1)$ \\
     $f \circ R_{xy}$ &
     $u^2 = \frac{a - \lambda}{a - b}$,
     $v^2 = \frac{\lambda - b}{a - b}$,
     $q = \sqrt{a b / \lambda} p$\\
\end{tabular}
\end{ruledtabular}
\end{table}

\subsection{Characterization of STs}

\begin{table}
\caption{\label{tab:Vertexes_2D}Points in STs that correspond to
   vertexes of $\mathcal{C}_{\lambda}$.}
\begin{ruledtabular}
\begin{tabular}{cccc}
     Vertex $\left( e, h \right)$ & $q_{\star}$ belongs to & Reversor & Type \\
     \hline
     $\left( 0, \lambda \right)$ & $Q \cap Q_{\lambda}$ & $R$ & H\\
     $\left( 0, a \right)$ & $Q \cap \left\{ y\textrm{-axis} \right\}$
     & $R_x$ & any\\
     $\left( 0, b \right)$ & $Q \cap \left\{ x\textrm{-axis} \right\}$
     & $R_y$ & E\\
     $\left( \lambda, a \right)$ & $Q_{\lambda} \cap \left\{ y\textrm{-axis} \right\}$ & $f \circ R_x$ & E\\
     $\left( \lambda, b \right)$ or $\left( b, \lambda \right)$ & $Q_{\lambda}
     \cap \left\{ x\textrm{-axis} \right\}$ & $f \circ R_y$ & any\\
     $\left( b, a \right)$ & $\left\{ \left( 0, 0 \right) \right\}$ & $f \circ
     R_{x y}$ & H\\
   \end{tabular}
\end{ruledtabular}
\end{table}

In order to find caustics $Q_{\lambda}$, whose tangent trajectories are
periodic, we solve equation $\rho \left( \lambda \right) = m_1 / 2 m_0$, for
some winding numbers $2 \leq m_1 < m_0$. Hence, once we fix a caustic
parameter $\lambda$ such that $\rho \left( \lambda \right) = m_1 / 2 m_0 \in
\Qset$, and a reversor $r : M \rightarrow M$, it is natural to look for
points $\left( q, p \right) \in \FixedSet \left( r \right)$ such that $q +
\left\langle p \right\rangle$ is tangent to $Q_{\lambda}$. If we can find
those points, billiard trajectories associated to them are $r$-SPTs of period
$m_0$.

In the 2D case there are only $6 \left( = 2^3 - 2 \right)$ nonempty symmetry sets
where one can look for STs; see previous subsection. We write the six formulae for
$\left( q, p \right)$ in Table~\ref{tab:Pointsqp_ST_2D}. The cases where $p$ is
directed inward $Q$ at $q$ are excluded from this table, because we require $\left(
q, p \right) \in M$. Let us check the first $3$ cases in
Table~\ref{tab:Pointsqp_ST_2D}.
The formulae for $f \circ R_x$, $f \circ R_y$ and $f \circ
R_{xy}$ follow from their dual reversors $R_y$, $R_x$ and $R_{}$, respectively. See
Corollary~\ref{cor:DualCorrespondence}. Suffice it to perform the changes $u^2
\leftrightarrow x^2 / a$ and $v^2 \leftrightarrow y^2 / b$ in the first formulae.
This has to do with the fact that the dual transformation $g : M \rightarrow M $,
$g \left( q, p \right) = \left( \bar{q}, \bar{p} \right)$, $\bar{p} = \left(
\bar{u}, \bar{v} \right) $ verifies that $\bar{u} = - x / \sqrt{a}$ and
$\bar{v} = - y / \sqrt{b}$.

\begin{descriptioncompact}
  \item[$\boldsymbol{R}.$] We look for $\left( q, p \right) \in M$ such that $p \in N_q Q$
  and $q + \left\langle p \right\rangle$ is tangent to $Q_{\lambda}$. If $p
  \in N_q Q$, then there exists $\, \mu > 0$ such that $p = \mu \left( x / a,
  y / b \right)$. If, in addition, $q + \left\langle p \right\rangle$ is
  tangent to $Q_{\lambda} $, then $Q_{\lambda}$ is a hyperbola and $q
  \in Q \cap Q_{\lambda}$. Here, we have used that confocal ellipses and
  confocal hyperbolas are mutually orthogonal. Thus, the elliptic coordinates
  of $q = \left( x, y \right)$ are $\left( e, h \right) = \left( 0, \lambda
  \right)$, and so $x^2 = a \left( a - \lambda \right) / \left( a - b \right)$
  and $y^2 = b \left( \lambda - b \right) / \left( a - b \right)$. Finally, we
  recall that $p$ is a unit velocity:
  \[ 1 = u^2 + v^2 = \mu^2  \left( \frac{x^2}{a^2} + \frac{y^2}{b^2} \right) =
     \frac{\lambda \mu^2}{ab} \Longrightarrow \mu^2 = \frac{ab}{\lambda} . \]
  \item[$\boldsymbol{R_x}.$] We look for a velocity $p = \left( u, v \right) \in \Sset$ such
    that $q + \left\langle p \right\rangle$ has exactly one point on $Q_{\lambda}$
    for $q = ( 0, \pm \sqrt{b} )$. This condition turns into the uniqueness of
    solutions in $t$ of
  \[ \alpha t^2 + 2 \beta t + \gamma = 0, \]
  where
  \[ \alpha = \frac{u^2}{a - \lambda} + \frac{v^2}{b - \lambda},\qquad
     \beta = \frac{\pm \sqrt{b} v}{b - \lambda},\qquad
     \gamma = \frac{\lambda}{b - \lambda}, \]
  or equivalently $\beta^2 = \alpha \gamma$. This last equality together with
  $u^2 + v^2 = 1$ gives $v^2 = \lambda / a$ and $u^2 = \left( a - \lambda
  \right) / a$.
  
  \item[$\boldsymbol{R_y}.$] It is obtained directly from the previous case by exchanging
  the role of coordinates $x$ and $y$.
\end{descriptioncompact}

Next, we characterize nonsingular STs inside noncircular ellipses, as the
trajectories passing, in elliptic coordinates, through some vertex of the
rectangle $\mathcal{C}_{\lambda}$.

We only have to check that any vertex $\mathfrak{q}_{\star} \in
\mathcal{C}_{\lambda}$, corresponds to some point $q_{\star} \in q +
\left\langle p \right\rangle$, such that $q + \left\langle p \right\rangle$ is
tangent to $Q_{\lambda}$ and $\left( q, p \right) \in \FixedSet \left( r
\right)$ for some reversor $r$. We list such correspondences in
Table~\ref{tab:Vertexes_2D}. The point $q_{\star}$ is not unique, because elliptic
coordinates do not distinguish among the four quadrants in $\Rset^2$. In
its last column we describe the feasible reversors for each caustic type; see
Subsec.~\ref{ssec:ForbiddenReversors_2D} for more detailed information.
For instance, let us focus on the vertex $\mathfrak{q}_{\star} = \left( 0,
\lambda \right)$. It is a vertex of $\mathcal{C}_{\lambda}$ if and only if $b
< \lambda < a$, so the type of caustic is H. We note that $q_{\star} \in Q$
since $e = 0$, and $q_{\star} \in Q_{\lambda}$ since $h = \lambda$. Let $p \in
\Sset$ be the unique outward velocity such that $q_{\star} +
\left\langle p \right\rangle$ is tangent to $Q_{\lambda}$. Then $p \in
N_{q_{\star}} Q$, since the confocal hyperbola $Q_{\lambda}$ has a orthogonal
intersection at $q_{\star}$ with the ellipse $Q$. Hence, $\left( q_{\star}, p
\right) \in \FixedSet \left( R \right)$. The other cases are similar. We only
observe that $e \neq 0$ in the last three cases, so $q_{\star} \not\in Q$; but
$q_{\star}$ is the middle point of two consecutive impact points.

\begin{table}
\caption{\label{tab:FeasibleReversors_2D}
Feasible and forbidden reversors for each caustic type.}
\begin{ruledtabular}
\begin{tabular}{ccc}
Type &  Feasible &  Forbidden \\
[0.03cm] \hline \\ [-0.3cm]
E & $R_x$, $R_y$, $f \circ R_x$, $f \circ R_y$ & $R$, $f \circ R_{xy}$ \\
H & $R$, $R_x$, $f \circ R_y$, $f \circ R_{xy}$ & $R_y$, $f \circ R_x$ \\
\end{tabular}
\end{ruledtabular}
\end{table}

\subsection{Forbidden reversors for each type of caustic}
\label{ssec:ForbiddenReversors_2D}

Although there are 6 nonempty symmetry sets, once the caustic type (E or H) is
fixed, only 4 of them give rise to SPTs of that type. The correspondences between
caustic types and reversors whose symmetry sets are feasible or forbidden are listed
in Table~\ref{tab:FeasibleReversors_2D}. To check that all couples caustic
type/feasible reversor take really place,
we show some examples in Table~\ref{tab:SPTs_2D}.

We can prove Table~\ref{tab:FeasibleReversors_2D} in three different ways.
Firstly, by means of analytical arguments based on the formulae listed in
Table~\ref{tab:Pointsqp_ST_2D}. Secondly, we could deduce it by using elliptic
coordinates, as shown in Table~\ref{tab:Vertexes_2D}. Finally, we could write
a geometric proof. The first way is the simplest one.
If $(q,p) \in \FixedSet(R)$, then $y^2 = b (\lambda - b)/(a - b)$,
so $\lambda > b$ and $Q_{\lambda}$ is a hyperbola.
If $(q,p) \in \FixedSet (f \circ R_{x y})$,
then $v^2 = (\lambda - b)/(a - b)$,
so $\lambda > b$ and $Q_{\lambda}$ is a hyperbola.
If $(q,p) \in \FixedSet(R_y)$,
then $v^2 = (b - \lambda)/b$,
so $\lambda < b$ and $Q_{\lambda}$ is an ellipse.
If $(q,p) \in \FixedSet(f \circ R_x)$,
then $y^2 = b - \lambda$, so $\lambda < b$ and
$Q_{\lambda}$ is an ellipse.
This ends the proof.

\subsection{Characterization and classification of SPTs}
\label{ssec:12TypesSPTs}

\begin{table}
\caption{\label{tab:Propertiesqp_2D}
Symmetry properties of the billiard orbits $(q_j, p_j) = f^j(q,p)$
such that $(q,p) \in \FixedSet(r)$.}
\begin{ruledtabular}
\begin{tabular}{@{\hspace{0.6cm}}l@{\hspace{0.6cm}}l@{\hspace{0.6cm}}}
$r$ & Symmetry properties \\ \hline \\ [-0.3cm]
$R$ & $q_{- j} = q_j$, $p_{- j + 1} = - p_j$\\
$R_x$ & $q_{- j} = ( - x_j, y_j )$, $p_{- j} = ( u_{j+1}, - v_{j + 1} )$\\
$R_y$ & $q_{- j} = ( x_j, - y_j )$, $p_{- j} = ( - u_{j+1}, v_{j + 1} )$\\
$f \circ R_x$ & $q_{- ( j + 1 )} = ( - x_j, y_j )$, $p_{- j} = ( u_j, - v_j )$\\
$f \circ R_y$ & $q_{- ( j + 1 )} = ( x_j, - y_j )$, $p_{- j} = ( - u_j, v_j )$\\
$f \circ R_{xy}$ & $q_{- ( j + 1 )} = - q_j$, $p_{- j} = p_j$
\end{tabular}
\end{ruledtabular}
\end{table}

Next, we give a complete classification of nonsingular SPTs inside noncircular
ellipses. We classify them by caustic type (E or H) and symmetry sets they
intersect. We also characterize them as trajectories connecting, in elliptic
coordinates, two vertexes of the rectangle $\mathcal{C}_{\lambda}$.

Along this subsection, $O$ denotes a $r$-SO through a point $( q, p ) \in
\FixedSet (r)$, for some reversor $r : M \to M$. We set
$( q_j, p_j ) = f^j ( q, p )$, $q_j = ( x_j, y_j)$, and $p_j = ( u_j, v_j )$.
The properties listed in Table~\ref{tab:Propertiesqp_2D} are easily deduced using
geometric arguments from the symmetry of the ellipse. For instance, let us explain
the case of reversor $R_y$. If $( q, p ) \in \FixedSet ( R_y
)$, then $x_0 = \pm \sqrt{a}$ and $y_0 = 0$. Let us assume that $x_0 =
\sqrt{a}$. Then it is geometrically evident that $q_{- j} = ( x_j, - y_j
)$ and the billiard configuration is symmetric with respect to the $x$-axis;
see Fig.~\ref{fig:Ry-ST}.

We note that all properties concerning velocities $p_j$ can be directly deduced from
the properties of impact points $q_j$, by using identity~(\ref{eq:Relation-qp}).

Below, we suppose $O$ is periodic as well, with winding numbers $2 
\leq m_1 < m_0$, $m_0$ being its period. We need a technical lemma on SPOs of
even period.

\begin{lemma}
  \label{lem:Technical_EH}Let us assume that the period is even: $m_0 = 2 l$,
  and the reversor is $r = R_x$, so that $x_0 = 0$.
  \begin{enumeratealpha}
    \item If the caustic is an ellipse, $q_l = - q_0$ and $p_l = - p_0$.
    Thus, $q_{l + j} = - q_j$ and $p_{l + j} = - p_j$, for all $j
    \in \Zset$.
    
    \item If the caustic is a hyperbola, then $q_l = \left( - 1 \right)^l q_0$
    and
    \[ p_l = \left\{ \begin{array}{ll}
         \left( u_0, - v_0 \right), & \text{if } l \text{ odd and } m_1 / 2
         \text{ even,}\\
         \left( - u_0, v_0 \right), & \text{if } l \text{ even,}\\
         \left( - u_0, - v_0 \right), & \text{if } l \text{ odd and } m_1 / 2
         \text{ odd.}
       \end{array} \right. \]
  \end{enumeratealpha}
\end{lemma}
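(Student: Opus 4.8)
The plan is to reduce the whole statement to the identification of a single reflection $\sigma=\tmop{diag}(\sigma_1,\sigma_2)\in\Sigma$ such that $f^l(q_0,p_0)=s_\sigma(q_0,p_0)$, and then to pin down $\sigma_1,\sigma_2$ by counting how often the half–trajectory $q_0,\dots,q_l$ meets the coordinate axes. First I would record that $x_l=0$: since $(q_0,p_0)\in\FixedSet(R_x)$, Table~\ref{tab:Propertiesqp_2D} gives $q_{-j}=(-x_j,y_j)$, and combining this with the $2l$–periodicity yields $x_{2l-j}=-x_j$, whence $j=l$ forces $x_l=0$. Thus $q_l=(0,\pm\sqrt b)$, and because $q_l$ is an impact point whose line is tangent to the \emph{same} caustic $Q_\lambda$ as $q_0$, the phase point $m_l=f^l(m_0)$ must be one of the four points $\{s_\sigma(m_0):\sigma\in\Sigma\}$ — the only phase points lying over $(0,\pm\sqrt b)$ with a line tangent to $Q_\lambda$. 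Once $\sigma$ is known, the rest of the lemma is automatic: $s_\sigma$ commutes with $f$ (Proposition~\ref{prop:BilliardFactorizations}), so $f^l(m_j)=f^l(f^j(m_0))=s_\sigma(f^j(m_0))=s_\sigma(m_j)$ for every $j\in\Zset$, giving simultaneously $q_{l+j}=\sigma q_j$ and $p_{l+j}=\sigma p_j$.

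To compute $\sigma$ I would pass to elliptic coordinates. By Theorem~\ref{thm:Oscillations}, $e=\mu_0$ makes one round trip per billiard segment, so it completes $l$ oscillations over $q_0,\dots,q_l$; and since $m_1$ is even and the half–orbit is the $y$–axis mirror image of the other half (the reflection fixing $m_0$ and $m_l$ preserves $h$), the coordinate $h=\mu_1$ completes exactly $m_1/2$ oscillations there. A turning point of $h$ at $h=a$ is a $y$–axis crossing; a lower turning point is an $x$–axis crossing for an E–caustic ($h=b$) but a caustic tangency for an H–caustic, for which the $x$–axis is instead met at the $l$ turning points $e=b$. Using that each such turning point is a \emph{transversal} sign change of the matching Cartesian coordinate (this is precisely item~\ref{item:CriticalPoints}), the sign $\sigma_2$, read off from $y_l=\sigma_2 y_0$, equals $(-1)^{m_1/2}$ for an E–caustic and $(-1)^{l}$ for an H–caustic. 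The sign $\sigma_1$ must be read off from the velocity, since $x_0=x_l=0$: the outgoing horizontal velocity at $q_0$ has the sign of $x_1$ and the incoming one at $q_l$ has the sign of $-x_{l-1}$, and counting the $m_1/2-1$ interior $y$–axis crossings gives $\tmop{sign}(x_{l-1})=(-1)^{m_1/2-1}\tmop{sign}(x_1)$, hence $\sigma_1=(-1)^{m_1/2}$ in both cases.

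Finally I would feed in the arithmetic constraints on the winding numbers. For an E–caustic the rotation number is $(m_1/2)/m_0$ in lowest terms, so $\gcd(m_1/2,2l)=1$ forces $m_1/2$ odd; therefore $\sigma_1=\sigma_2=-1$, i.e. $\sigma=-\Identity$, which is exactly $q_l=-q_0$, $p_l=-p_0$ and, after propagation, $q_{l+j}=-q_j$, $p_{l+j}=-p_j$, proving part a). For an H–caustic I would simply combine $\sigma_2=(-1)^l$ and $\sigma_1=(-1)^{m_1/2}$: this gives $p_l=((-1)^{m_1/2}u_0,(-1)^l v_0)$, which is the second line when $l$ is odd and $m_1/2$ even, and the third when $l$ and $m_1/2$ are both odd; in the remaining case $l$ even one has $m_0=2l\equiv0\pmod4$, so Theorem~\ref{thm:Oscillations}, item~\ref{item:NotMultiplesOf4} (not all winding numbers are multiples of four) forces $m_1\not\equiv0\pmod4$, i.e. $m_1/2$ odd, so that $\sigma_1=-1$ and $p_l=(-u_0,v_0)$ as claimed, independently of $m_1/2$.

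The step I expect to be the main obstacle is the endpoint bookkeeping behind $\sigma_1$. The two extreme impacts $q_0,q_l$ sit simultaneously on the $y$–axis and at the corner $(e,h)=(0,a)$ of $\mathcal{C}_\lambda$, so they are themselves $y$–axis crossings; to assign the signs of $x_1$ and $x_{l-1}$ unambiguously I must argue that neither the first nor the last segment hides an interior $y$–axis crossing, which follows from $m_1<m_0$ (the coordinate $h$ cannot return to its maximum within a single segment). Getting this parity exactly right — rather than off by one — is what makes the three H–cases come out as stated, and it is also the reason the E–case collapses to the clean central symmetry $f^l=-\Identity$ along the orbit.
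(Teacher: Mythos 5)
Your proof is correct, but it takes a genuinely different route from the paper's. The paper argues part a) directly in Cartesian terms: after obtaining $x_l=0$ exactly as you do, it observes $q_l\in\{q_0,-q_0\}$, excludes $q_l=q_0$ because the trajectory winds around an E-caustic with constant orientation (so $q_l=q_0$ would force $p_l=p_0$ and hence period $l<m_0$), obtains $p_l=-p_0$ from the same monotone-rotation property, and then skips part b) as ``similar''. You instead show $f^l(m_0)=s_{\sigma}(m_0)$ for some $\sigma\in\Sigma$ (legitimate: by the row $R_x$ of Table~\ref{tab:Pointsqp_ST_2D}, those four points are the only phase points over $(0,\pm\sqrt{b})$ whose line is tangent to $Q_{\lambda}$), determine $\sigma$ by counting axis crossings through the half-period oscillations of the elliptic coordinates, and close with winding-number arithmetic (coprimality for E-caustics; item~\ref{item:NotMultiplesOf4}) of Theorem~\ref{thm:Oscillations} for H-caustics with $l$ even). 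Your route buys uniformity: one mechanism proves a) and b) simultaneously, including the case the paper omits, and it explains structurally why the parities of $l$ and $m_1/2$ govern part b); the paper's route buys brevity for part a). Two of your justifications should be repaired, though neither is a genuine gap: (i) the claim that ``$h$ cannot return to its maximum within a single segment'' does not follow from $m_1<m_0$ --- the correct and simpler reason is that a straight segment meets the line $x=0$ at most once, so a segment with an endpoint on the $y$-axis has no interior crossing; alternatively, an interior impact on the $y$-axis would give a third orbit point in $\FixedSet(R_x)$, contradicting item~\ref{item:NoMoreThan2}) of Theorem~\ref{thm:SymmetricOrbits}; (ii) the lowest-terms property of $\rho=(m_1/2)/m_0$ for E-caustics is true (the billiard map on an E-type invariant circle is conjugate to a rigid rotation, so the minimal period is the reduced denominator) but is not stated in the paper, and you could bypass it entirely: if $m_1/2$ were even, your own computation would give $\sigma=\Identity$, i.e.\ period $l$, contradicting the minimality of $m_0=2l$.
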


\begin{proof}
  The hypothesis $r = R_x$ in a), implies $x_{- j} = - x_j$;
  see Table~\ref{tab:Propertiesqp_2D}.
  Then, as $m_0 = 2 l$, we have $x_l = x_{- l} = - x_l$,
  so $x_l = 0$ and $q_l \in \left\{ q_0, - q_0 \right\}$.
  If $q_l = q_0$, then $p_l = p_0$, since the line $q_l + \left\langle p_l
  \right\rangle$ must be tangent to the ellipse $Q_{\lambda}$ and the billiard
  trajectory turns around $Q_{\lambda}$ in a constant ---clockwise or
  anticlockwise--- direction. But the period cannot be smaller than
  $m_0$ by hypothesis. Hence, $q_l = - q_0$. Finally, the identity $p_l = -
  p_0$ is necessary to keep the rotation direction around $Q_{\lambda}$. The
  formulae for a general integer $j$ are obvious.
  
  We skip the proof for H-caustics in b): it is similar.
\end{proof}

Following we prove that all SPOs inside a noncircular ellipse are doubly SPOs
and that there are 12 classes of SPOs, as claimed in the introduction. We
recall that SPOs are classified by caustic type and intersected symmetry sets.

\begin{theorem}
  \label{thm:SPTs_2D}All SPOs inside a noncircular ellipse are doubly SPOs.
  SPTs inside noncircular ellipses are characterized as trajectories
  connecting, in elliptic coordinates, two different vertexes of their
  rectangle $\mathcal{C}_{\lambda}$. There are 12 classes of SPTs, listed in
  Table~\ref{tab:SPTs_2D}.
\end{theorem}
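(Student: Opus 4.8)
The plan is to classify each nonsingular SPO by two invariants --- its caustic type (E or H) and the unordered pair of \emph{feasible} symmetry sets it meets --- and then to prove that exactly six such pairs occur for each type. I would build the argument on three ingredients already at hand: Theorem~\ref{thm:SymmetricOrbits}, which counts the symmetry points of a symmetric periodic orbit; Table~\ref{tab:Vertexes_2D}, which identifies each feasible symmetry set with a corner of the rectangle $\mathcal{C}_{\lambda}$; and Lemma~\ref{lem:Technical_EH}, which for even period exposes the reflection hidden in the orbit. Throughout, set $\sigma_x=\tmop{diag}(-1,1)$, $\sigma_y=\tmop{diag}(1,-1)$, so that $R_x=s_{\sigma_x}\circ R$, $R_y=s_{\sigma_y}\circ R$, and $s_{-\Identity}(q,p)=(-q,-p)$ is the central symmetry.

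First I would settle the vertex characterization. By Table~\ref{tab:Vertexes_2D} a point $(q_j,p_j)$ of the orbit lies on a feasible symmetry set precisely when the associated point $q_\star$ (the impact point itself for $R,R_x,R_y$, the midpoint of a segment for the $f\circ$ reversors) has elliptic coordinates at a corner of $\mathcal{C}_\lambda$; conversely, a trajectory through a corner yields such a point. Since a symmetric periodic orbit carries exactly two points on the union of an associated pair of symmetry sets (Theorem~\ref{thm:SymmetricOrbits}, item~\ref{item:SPOs}), every SPT connects two vertices. Table~\ref{tab:FeasibleReversors_2D} (proved in Subsec.~\ref{ssec:ForbiddenReversors_2D}) then says that for a fixed caustic type only four of the six corners actually lie in $\mathcal{C}_\lambda$, so the two vertices are drawn from four candidates.

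The core is double symmetry together with the count. For odd period (which forces an E-caustic, as H-caustics have even period) the two symmetry points already sit on the two distinct associated sets $\FixedSet(\tilde r_\sigma)$ and $\FixedSet(\hat r_\sigma)$, so the orbit is doubly symmetric and connects the two ``same-$h$'' corners $\{R_x,f\circ R_x\}$ or $\{R_y,f\circ R_y\}$. For even period both points fall on one set, and here I would invoke Lemma~\ref{lem:Technical_EH}: it shows $f^l$ acts on the orbit as a coordinate reflection $s_{\sigma^\ast}$, with $\sigma^\ast$ dictated by the parities of $l=m_0/2$ and $m_1/2$ and by the caustic type. Since $f^l(O)=O$ this gives $s_{\sigma^\ast}(O)=O$, whence by Remark~\ref{rem:SymmetryReversor} the orbit is also symmetric under the second reversor $s_{\sigma^\ast}\circ r$, producing a second vertex via Table~\ref{tab:Vertexes_2D}. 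To systematize the bookkeeping I would record the subgroup $\Sigma_O=\{\sigma\in\Sigma:s_\sigma(O)=O\}$: the reversors fixing $O$ form the coset $\sigma\,\Sigma_O$, so the connected pair of vertices is read off from this coset and the even/odd rule of Theorem~\ref{thm:SymmetricOrbits}. Enumerating the admissible $\Sigma_O$ then gives the classes. For E-caustics the only even-period possibility is $\Sigma_O=\{\Identity,-\Identity\}$ (central symmetry), yielding the four ``cross'' pairs, which with the two odd pairs makes six; for H-caustics $\Sigma_O$ may be any of the three order-two subgroups, each contributing two pairs (edges for the axial reflections, diagonals for $s_{-\Identity}$), again six. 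Hence $2\times 6=12$ classes, realized by the representatives in Table~\ref{tab:SPTs_2D}.

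The delicate point, and the step I expect to absorb most of the work, is the even-period exclusion analysis. I must check that the forbidden reflections really are excluded for each type: for instance $s_{\sigma_x}\circ R_x=R$ and $s_{\sigma_y}\circ R_x$ is the empty reversor $R_{xy}$, and both $\FixedSet(R)$ and $\FixedSet(f\circ R_{xy})$ demand an H-caustic, so $\Sigma_O=\{\Identity,\sigma_x\}$ and $\{\Identity,\sigma_y\}$ cannot occur for E; whereas for H all three subgroups survive, the central one giving the diagonals. A recurring subtlety I would flag explicitly is that symmetry under a \emph{forbidden} reversor is harmless: the orbit then meets the \emph{associated} feasible set rather than the forbidden one, so no contradiction arises (this is exactly why central symmetry is allowed for H and produces diagonals). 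Finally I would guarantee that the two resulting symmetry points never coincide --- this is where nonsingularity and noncircularity enter. By Proposition~\ref{prop:SymmetrySets} two symmetry sets meet only along trajectories contained in a coordinate axis or along $2$-periodic ones, both excluded here, so the symmetry sets are mutually exclusive along our orbit and the two vertices are genuinely different. Once these parity exclusions and the distinctness are nailed down, reading the six pairs per caustic type off the coset structure is routine.
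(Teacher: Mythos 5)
Your overall architecture is the paper's own: characterize STs by vertexes of $\mathcal{C}_{\lambda}$ (Table~\ref{tab:Vertexes_2D}), cut down to the four feasible reversors per caustic type (Table~\ref{tab:FeasibleReversors_2D}), dispatch odd periods with item~\ref{item:SPOs}) of Theorem~\ref{thm:SymmetricOrbits}, extract a second symmetry in the even case from Lemma~\ref{lem:Technical_EH}, and defer realizability to the representatives of Table~\ref{tab:SPTs_2D}. Your repackaging of Lemma~\ref{lem:Technical_EH} as ``$f^{m_0/2}$ acts on $O$ as the reflection $s_{\sigma^{\ast}}$'', together with the subgroup $\Sigma_O$ and the use of Proposition~\ref{prop:SymmetrySets} to keep the two symmetry points distinct, is a correct and clean equivalent of the paper's index chasing, and your remark that symmetry under a forbidden reversor merely forces the orbit onto the \emph{associated} feasible set is exactly the right reading.

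There is, however, one step where the mechanism you state cannot deliver what you need: the even-period E-caustic case. There Lemma~\ref{lem:Technical_EH}a) gives $\sigma^{\ast} = -\Identity$ regardless of the parity of $l = m_0/2$, so from $r = R_x$ you only learn that $O$ is $R_y$-symmetric, hence meets $\FixedSet(R_y) \cup \FixedSet(f \circ R_y)$ --- and for E-caustics \emph{both} of these sets are feasible, so nothing forces the choice. The ``even/odd rule'' of Theorem~\ref{thm:SymmetricOrbits} cannot break the tie: it only says that for even period the two points lie on one and the same set, not which one. This is precisely the distinction between $m_0 = 4k$ (edge pair $\{R_x, R_y\}$) and $m_0 = 4k+2$ (diagonal pair $\{R_x, f \circ R_y\}$), and without it you cannot exclude that, say, all even-period E orbits are of diagonal type, leaving the edge pairs unrealized and only $10$ classes. (For H-caustics there is no such ambiguity: there the partner $s_{\sigma^{\ast}} \circ r$ always has an empty or forbidden symmetry set, so the associated set is forced, as you note.) The paper settles the point by explicit computations along the orbit: for $m_0 = 4k$, $y_{3k} = y_{-k} = y_k = -y_{3k}$ yields an impact point on the $x$-axis, i.e.\ a point of $\FixedSet(R_y)$; for $m_0 = 4k+2$, $u_{3k+2} = u_{-k} = u_{k+1} = -u_{3k+2}$ yields a vertical velocity, i.e.\ a point of $\FixedSet(f \circ R_y)$. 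In your language the fix is a two-line index argument: $R_y$ acts on orbit indices by $j \mapsto l - j$, and $2j \equiv l \pmod{2l}$ is solvable iff $l$ is even. A related slip: your opening claim that item~\ref{item:SPOs}) of Theorem~\ref{thm:SymmetricOrbits} already shows ``every SPT connects two vertices'' is not right as stated, since for even period the two guaranteed symmetry points can lie on the same set and hence give the \emph{same} vertex; it is the double-symmetry argument that produces the second vertex.
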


\begin{proof}
  Let $O$ be a $r$-SPO through a point $\left( q_{}, p_{} \right) \in
  \FixedSet \left( r \right)$ with winding numbers $2 \leq m_1 < m_0$, being
  $m_0$ its period. We must prove that there exists another reversor
  $\breve{r} \neq r$ and some index $j \in \Zset$ such that $\left( q_j,
  p_j \right) \in \FixedSet \left( \breve{r} \right)$.
  
  First, let us consider the caustic type E.
  Then, according to Table~\ref{tab:FeasibleReversors_2D},
  we have only four possibilities:
  \begin{equation}
    r \in \left\{ R_x, f \circ R_x, R_y, f \circ R_y \right\} .
    \label{eq:4FeasibleReversors}
  \end{equation}
  The easiest situation is when $m_0 = 2 k + 1$, because we know from the last
  item in Theorem~\ref{thm:SymmetricOrbits} that
\[
(q,p) \in \FixedSet(R_x)
\Longleftrightarrow
(q_{k + 1},p_{k + 1}) \in \FixedSet(f \circ R_x),
\]
  and the same occurs with the couple of associated reversors $R_y$ and
  $f \circ R_y$. This proves the first row of Table~\ref{tab:SPTs_2D}.
  The formulae in the last column are deduced from the symmetry sets in
  Subsec.~\ref{ssec:SymmetrySets_2D}.

\begin{figure}
\resizebox{0.8\columnwidth}{!}{\includegraphics{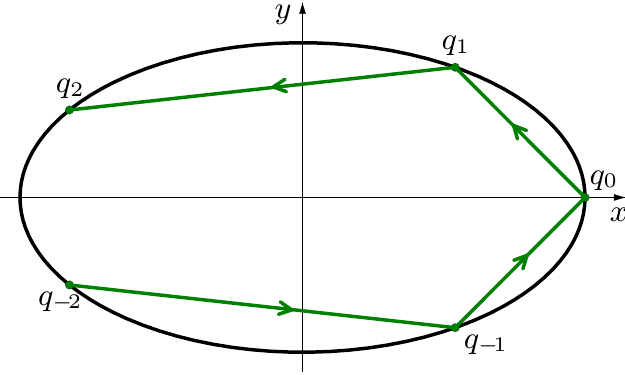}}
\caption{\label{fig:Ry-ST}A $R_y$-ST.}
\end{figure}
  
  Next, we assume that $m_0 = 4 k$. That is, we deal with the second row of
  Table~\ref{tab:SPTs_2D}. We study each one of the four feasible reversors
  listed in Eq.~(\ref{eq:4FeasibleReversors}).
  \begin{itemizedot}
    \item If $r = R_x$, then $q_0 \in \left\{ y \text{-axis} \right\}$, so
    $x_0 = 0$. Thus, since $m_0 = 4 k$, we have $y_{3 k} = y_{- k} = y_k = -
    y_{3 k}$. The last equality follows from Lemma~\ref{lem:Technical_EH}a)
    for $l = 2 k$ and $j = k$. The above equalities imply $y_{3 k} = y_k = 0$
    and thus we can take $\breve{r} = R_y$ and $j \in \left\{ k, 3 k
    \right\}$.
    
    \item The case $r = R_y$ is similar. Suffice it to exchange variables $x$
    and $y$.
    
    \item The cases $r \in \left\{ f \circ R_x, f \circ R_y \right\}$ follow
    directly from the two previous ones by using
    Corollary~\ref{cor:DualCorrespondence}.
  \end{itemizedot}
  Finally, we assume that $m_0 = 4 k + 2$, which corresponds to the third row
  of Table~\ref{tab:SPTs_2D}. As before, the four feasible reversors listed in
  Eq.~(\ref{eq:4FeasibleReversors}) are studied separately.
  \begin{itemize}
    \item If $r = R_x$, then $q_0 \in \left\{ y \text{-axis} \right\}$, so
    $x_0 = 0$. Thus, since $m_0 = 4 k + 2$, we have $u_{3 k + 2} = u_{- k} =
    u_{k + 1} = - u_{3 k + 2}$. The last equality follows from
    Lemma~\ref{lem:Technical_EH}a) for $l = 2 k + 1$ and $j = k + 1$. These
    equalities imply $u_{3 k + 2} = u_{k + 1} = 0$ and thus we can take
    $\breve{r} = f \circ R_y$ and $j \in \left\{ k + 1, 3 k + 2 \right\}$.
    
    \item The case $r = f \circ R_y$ follows directly from the case $r = R_x$
    by using Corollary~\ref{cor:DualCorrespondence}.
    
    \item The cases $r \in \left\{ R_y, f \circ R_x \right\}$ are similar to
    the previous ones. Suffice it to exchange variables $x$ and $y$.
  \end{itemize}
  The proofs for the three rows about H-caustics follow the same lines, but
  using Lemma~\ref{lem:Technical_EH}b). We skip the details.
  
  Once we know that all SPOs are doubly SPOs, we deduce that, any doubly SPT
  connects two different vertexes, namely, the ones corresponding to the
  couple of reversors of the doubly SPT. Indeed, we simply take into account
  the 1-to-1 correspondence between reversors and vertexes of rectangles
  $\mathcal{C}_{\lambda}$, as listed in Table~\ref{tab:Vertexes_2D}. Besides,
  there are $12 = 2 \times 6$ classes of SPTs, since there are two types of
  caustic (E and H) and any rectangle has four vertexes, and so, six couples
  of vertexes.
  
  Finally, we check that the $12$ classes are \tmtextit{realizable}. That is,
  we find one SPT of each class. This can be achieved by properly choosing the
  winding numbers. See Table~\ref{tab:SPTs_2D}.
\end{proof}

\subsection{Minimal SPTs}

We end the study of SPTs inside ellipses by showing an SPT of each one of the
12 classes with the smallest period $m_0$. We call \tmtextit{minimal} such
SPTs. Therefore, $m_0 \in \left\{ 3, 4, 6 \right\}$ for E-caustics, and $m_0
\in \left\{ 4, 6 \right\}$ for H-caustics. Period $m_0 = 2$ is discarded,
since two-periodic billiard trajectories are singular. The 12 minimal SPTs are
drawn in Table~\ref{tab:SPTs_2D}, both in Cartesian and elliptic coordinates.
All of them connect \tmtextit{two} different vertexes of the rectangle
$\EuScript{\mathcal{C}}_{\lambda}$, as claimed in Theorem~\ref{thm:SPTs_2D}.

We realize that any minimal SPT of type E has a ``twin'' of type H when both
are depicted in elliptic coordinates.
This has to do with the fact that both trajectories have
the same rotation number and connect the same vertexes.

\begin{table*}
  \caption{\label{tab:SPTs_2D}Classification and symmetry properties of SPTs
    together to the minimal SPTs in Cartesian and elliptic coordinates. Black, red,
    grey, blue and green colors agree in both coordinates. The first three colors
    also agree with the ones displayed in Fig.~\ref{fig:SidesVertexes}.}
\begin{tabular}{*{4}{c|}c}
  Type & Period & \begin{tabular}{c}Minimal SPTs in\\Cartesian coordinates
  \end{tabular} & \begin{tabular}{c}Minimal SPTs in\\elliptic coordinates
  \end{tabular} & \begin{tabular}{c}Couples of reversors\\and symmetry properties
  \end{tabular} \\\hline
  \multirow{15}*{E} & $2 k + 1$ &
  \begin{tabular}{c}\includegraphics{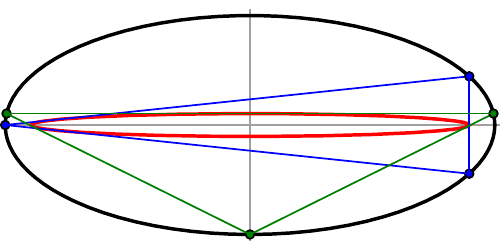}\end{tabular} &
  \begin{tabular}{c}\includegraphics{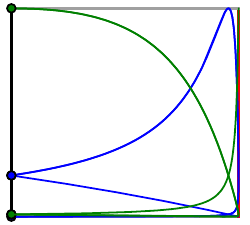}\end{tabular} &
  \begin{tabular}{c}
    {\color{dark green} $R_x$, $f \circ R_x$}\\
    {\color{dark green} }{\color{dark green} $q_0 = q_{2 k + 1} \in \left\{
      y \text{-axis} \right\}$}\\
    {\color{dark green} $p_{k + 1} \left| \right| \left\{ x \text{-axis}
      \right\}$}\\\\
    {\color{blue} $R_y$, $f \circ R_y$}\\
    {\color{blue} $q_0 = q_{2 k + 1} \in \left\{ x \text{-axis} \right\}$}\\
    {\color{blue} $p_{k + 1} \left| \right| \left\{ y \text{-axis}
      \right\}$}
  \end{tabular}\\\cline{2-5}\noalign{\vspace{0.5pt}}
  & $4 k$ &
  \begin{tabular}{c}\includegraphics{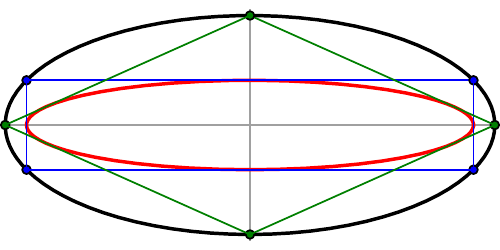}\end{tabular} &
  \begin{tabular}{c}\includegraphics{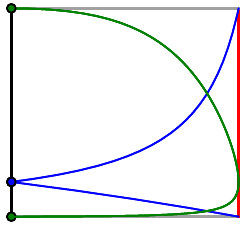}\end{tabular} &
  \begin{tabular}{c}
    {\color{dark green} $R_x$, $R_y$}\\
    {\color{dark green} }{\color{dark green} $q_0 = q_{4 k} = - q_{2 k} \in
      \left\{ y \text{-axis} \right\}$}\\
    {\color{dark green} $q_{3 k} = - q_k \in \left\{ x \text{-axis}
      \right\}$}\\
    \\
      {\color{blue} $f \circ R_x$, $f \circ R_y$}\\
      {\color{blue} $p_0 = p_{4 k} = - p_{2 k} \left| \right| \left\{ x
        \text{-axis} \right\}$}\\
      {\color{blue} $p_{3 k} = - p_k \left| \right| \left\{ y \text{-axis}
        \right\}$}
  \end{tabular}\\\cline{2-5}\noalign{\vspace{0.5pt}}
    & $4 k + 2$ &
  \begin{tabular}{c}\includegraphics{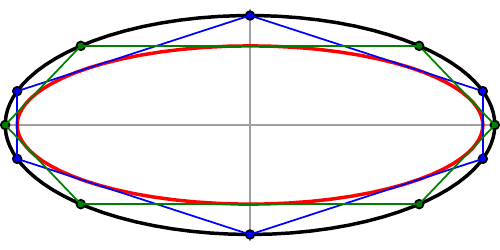}\end{tabular} &
  \begin{tabular}{c}\includegraphics{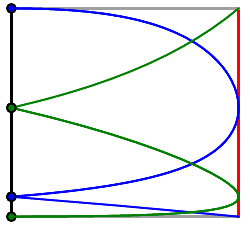}\end{tabular} &
  \begin{tabular}{c}
    {\color{dark green} $R_y$, $f \circ R_x$}\\
    {\color{dark green} $q_0 = q_{4 k + 2} = - q_{2 k + 1} \in \left\{ x
      \text{-axis} \right\}$}\\
    {\color{dark green} $p_{3 k + 2} = - p_{k + 1} \left| \right| \left\{ x
      \text{-axis} \right\}$}\\\\
    {\color{blue} $R_x$, $f \circ R_y$}\\
    {\color{blue} $q_0 = q_{4 k + 2} = - q_{2 k + 1} \in \left\{ y
      \text{-axis} \right\}$}\\
    {\color{blue} $p_{3 k + 2} = - p_{k + 1} \left| \right| \left\{ y
      \text{-axis} \right\}$}
  \end{tabular}\\\hline
  \multirow{17}{*}{H} &
  \begin{tabular}{l}$4 k + 2$\\\\$\DS\frac{m_1}{2}$ even\end{tabular} &
  \begin{tabular}{c}\includegraphics{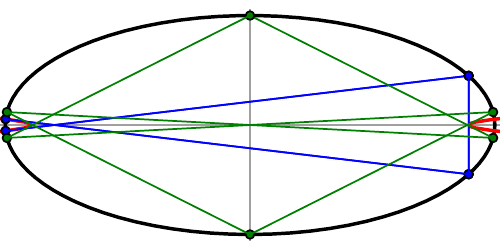}\end{tabular} &
  \begin{tabular}{c}\includegraphics{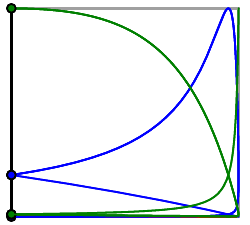}\end{tabular} &
  \begin{tabular}{c}
    {\color{dark green} $R_x$, $f \circ R_{x y}$}\\
    {\color{dark green} $q_0 = q_{4 k + 2} = - q_{2 k + 1} \in \left\{ y
      \text{-axis} \right\}$}\\
    {\color{dark green} $q_{k + 1} = - q_k$, $q_{3 k + 2} = - q_{3 k +
        1}$}\\
    {\color{dark green} $x_{3 k + 1} = x_k$, $y_{3 k + 1} = - y_k$}\\
    {\color{blue} $R$, $f \circ R_y$}\\
    {\color{blue} $q_0 = \left( x_0, y_0 \right) = q_{4 k + 2} \in Q \cap
      Q_{\lambda}$}{\color{blue} }\\
    {\color{blue} $q_{2 k + 1} = \left( x_0, - y_0 \right) \in Q \cap
      Q_{\lambda}$}\\
    {\color{blue} $p_{3 k + 2} = - p_{k + 1} \left| \right| \left\{ y
      \text{-axis} \right\}$}
  \end{tabular}\\\cline{2-5}
  & $4 k$ &
  \begin{tabular}{c}\includegraphics{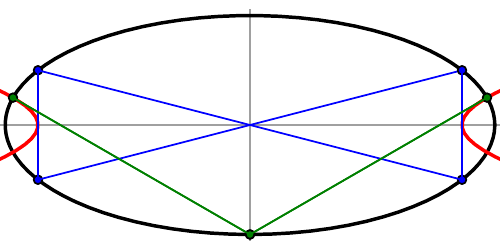}\end{tabular} &
  \begin{tabular}{c}\includegraphics{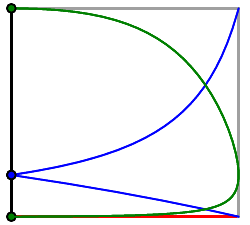}\end{tabular} &
  \begin{tabular}{c}
    {\color{dark green} $R$, $R_x$}\\
    {\color{dark green} $q_0 = q_{4 k} = q_{2 k} \in \left\{ y \text{-axis}
      \right\}$}\\
    {\color{dark green} $q_k = \left( x_k, y_k \right) \in Q \cap
      Q_{\lambda}$}\\
    {\color{dark green} $q_{3 k} = \left( - x_k, y_k \right) \in Q \cap
      Q_{\lambda}$}\\
    {\color{blue} $f \circ R_y$, $f \circ R_{x y}$}\\
    {\color{blue} $q_0 = q_{4 k} = - q_1, q_{2 k + 1} = - q_{2 k}$}\\
    {\color{blue} $x_{2 k} = - x_0, y_{2 k} = y_0$}\\
    {\color{blue} $p_{3 k} = - p_k \left| \right| \left\{ y \text{-axis}
      \right\}$}
  \end{tabular}\\\cline{2-5}\noalign{\vspace{0.5pt}}
  & \begin{tabular}{l}$4 k + 2$\\\\$\DS\frac{m_1}{2}$ odd \end{tabular} &
  \begin{tabular}{c}\includegraphics{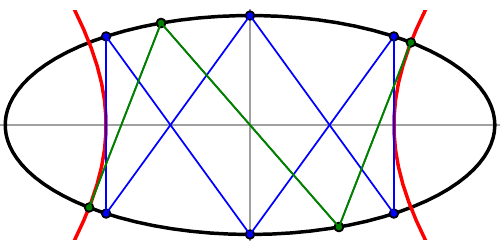}\end{tabular} &
  \begin{tabular}{c}\includegraphics{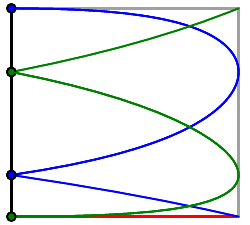}\end{tabular} &
  \begin{tabular}{c}
    {\color{dark green} $R$, $f \circ R_{x y}$}\\
    {\color{dark green} $q_0 = q_{4 k + 2} = - q_{2 k + 1} \in Q \cap
      Q_{\lambda}$}\\
    {\color{dark green} $q_{3 k + 2} = q_k = - q_{k + 1} = - q_{3 k + 1}$}\\\\
    {\color{blue} $R_x$, $f \circ R_y$}\\
    {\color{blue} $q_0 = q_{4 k + 2} = - q_{2 k + 1} \in \left\{ y
      \text{-axis} \right\}$}\\
    {\color{blue} $p_{3 k + 2} = - p_{k + 1} \left| \right| \left\{ y
      \text{-axis} \right\}$}
  \end{tabular}\\\hline
\end{tabular}
\end{table*}

Any SPT, minimal or not, can be computed as follows. First, we fix a caustic
type (E or H), a reversor $r$, a period $m_0$ (which must be even for
H-caustics), and an even winding number $m_1$, such that $2  \leq m_1
< m_0$. We have listed in Table~\ref{tab:SPTs_2D} all possible combinations.
Then we find the caustic parameter through numerical inversion of the relation
$\rho \left( \lambda \right) = m_1 / 2 m_0$. Of course, we solve that equation
for $\lambda \in E$ or $\lambda \in H$, depending on the caustic type we are
looking for. Finally, we take from Table~\ref{tab:Pointsqp_ST_2D} the starting
point $\left( q, p \right) \in \FixedSet \left( r \right)$ such that $q +
\left\langle p \right\rangle$ is tangent to $Q_{\lambda}$, and iterating the
billiard map $f : M \rightarrow M$, we get the whole SPO in the phase space
$M$ and the whole SPT in $\Rset^2$.

Any $R$-SPT is travelled twice in opposite directions, since it hits
orthogonally the ellipse at points $q \in Q \cap Q_{\lambda}$. It can be
observed in the last three rows of Table~\ref{tab:SPTs_2D}. Therefore, there
exist SPTs of even period $m_0 = 2 l \geq 4$, with only $l + 1$ distinct
impact points on the ellipse. We have found similar SPTs inside ellipsoids of
$\Rset^3$. See Fig.~\ref{fig:TwoAmazingSPTs_3D}.

Finally, we stress that any SPT has a dual version of the same period and
caustic type, given by the dual transformation $g$;
see Corollary~\ref{cor:DualCorrespondence}.
For instance, this duality is evident for the
two SPTs with caustic type E that have common period $m_0 = 4 k$. Suffice it
to compare the two groups of associated formulae in the second row of
Table~\ref{tab:SPTs_2D},
and to recall that $g$ exchanges the role of positions and velocities.
In particular, the rectangular trajectory of that row is the dual
version of the diamond-shaped one. The same applies to rows 1, 4 and 5.
Instead, trajectories in rows 3 and 6 are dual of themselves.

\section{3D case}\label{sec:3D}

The previous section sets the basis of this one. Roughly speaking, we apply
similar ideas to find analogous results.

\subsection{Caustics and elliptic coordinates}

Let us recall some classical facts about caustics and elliptic coordinates.
All the notations introduced in this subsection will be used later on in this
section.

We write the ellipsoid as
\begin{equation}
  Q = \left\{ \left( x_1, x_2, x_3 \right) \in \Rset^3 :
  \frac{x_1^2}{a_1} + \frac{x_2^2}{a_2} + \frac{x_3^2}{a_3} = 1 \right\},
  \label{eq:Ellipsoid3D}
\end{equation}
where $0 < a_1 < a_2 < a_3$. Any nonsingular billiard trajectory inside $Q$ is
tangent to two distinct confocal caustics $Q_{\lambda_1}$ and $Q_{\lambda_2}$
(with $\lambda_1 < \lambda_2$) of the family
\[
Q_{\mu} =
\left\{
( x_1, x_2, x_3 ) \in \Rset^3 :
\sum_{j = 1}^3 \frac{x_j^2}{a_j - \mu} = 1
\right\},
\]
where $\mu \not\in \left\{ a_1, a_2, a_3 \right\}$. If we set
\[
E =   ( 0,   a_1), \quad
H_1 = ( a_1, a_2),\quad
H_2 = ( a_2, a_3),
\]
then $Q_{\mu}$ is an ellipsoid for $\mu \in E$, an one-sheet hyperboloid if
$\mu \in H_1 $, and a two-sheet hyperboloid when $\mu \in H_2$.

Not all combinations of caustics exist. For instance, both caustics cannot be
ellipsoids. The four possible combinations are denoted by EH1, H1H1, EH2, and
H1H2. Hence, the caustic parameter $\lambda = \left( \lambda_1, \lambda_2
\right)$ belongs to the nonsingular caustic space
\[ \Lambda = \left( E \times H_1 \right) \cup \left( H_1 \otimes H_1 \right)
   \cup \left( E \times H_2 \right) \cup \left( H_1 \times H_2 \right), \]
for $H_1 \otimes H_1 = \left\{ \left( \lambda_1, \lambda_2 \right) \in H_1
\times H_1 : \lambda_1 < \lambda_2 \right\}$, in order to avoid the singular
case $\lambda_1 = \lambda_2$.

Let us write the three coordinates planes of $\Rset^3$ as
\[ \Pi_l = \{ \left( x_1, x_2, x_3 \right) \in \Rset^3 : x_l = 0\},
   \qquad l \in \left\{ 1, 2, 3 \right\} . \]
We know (see Theorem~\ref{thm:Jacobi}) that given any $q \not\in \Pi_1 \cup \Pi_2
\cup \Pi_3$, there exist an ellipsoid $Q_e$, a one-sheet hyperboloid $Q_{h_1}$
and a two-sheet hyperboloid $Q_{h_2}$ such that
\begin{equation}
  q \in Q_e \cap Q_{h_1} \cap Q_{h_2}, \label{eq:TripleIntersection}
\end{equation}
those being quadrics mutually orthogonal at $q$. Therefore, $e < a_1 < h_1 <
a_2 < h_2 < a_3$. Let $\mathfrak{q}= \left( e, h_1, h_2 \right)$ be the
\tmtextit{elliptic coordinates} of $q = \left( x_1, x_2, x_3 \right)$.
The relation between Cartesian and elliptic
coordinates~(\ref{eq:EllipticCoordinatesnD}) becomes
\begin{equation}
  x_i^2 =  \frac{\left( a_i - e \right)  \left( a_i - h_1 \right)  \left( a_i
  - h_2 \right)}{\left( a_i - a_j \right)  \left( a_i - a_k \right)},
  \label{eq:EllipticCoordinates3D}
\end{equation}
for any $\left\{ i, j, k \right\} = \left\{ 1, 2, 3 \right\}$. To understand
what happens at nongeneric points, we define the three singular caustics as
\[ Q_{a_l} = \Pi_l, \qquad l \in \left\{ 1, 2, 3 \right\} . \]
We also recall that the \tmtextit{focal ellipse} and \tmtextit{focal
hyperbola} shared by the family of confocal quadrics
$\{ Q_{\mu} : \mu \in \Rset\}$ are
\begin{eqnarray*}
  \mathcal{E} & = & \left\{ \left( 0, x_2, x_3 \right) \in \Rset^3 :
  \frac{x_2^2}{a_2 - a_1} + \frac{x_3^2}{a_3 - a_1} = 1 \right\} \subset
  \Pi_1,\\
  \mathcal{H} & = & \left\{ \left( x_1, 0, x_3 \right) \in \Rset^3 :
  \frac{x_3^2}{a_3 - a_2} - \frac{x_1^2}{a_2 - a_1} = 1 \right\} \subset \Pi_2
  .
\end{eqnarray*}
These notations are useful because of the following lemma, which is a known
extension of the first item in Theorem~\ref{thm:Jacobi}.

\begin{lemma}\label{lem:TripleIntersection}
  If $q \not\in \mathcal{E} \cup \mathcal{H}$,
  there exists $\mathfrak{q}= \left( e, h_1, h_2 \right)$ such that $e \leq
  a_1 \leq h_1 \leq a_2 \leq h_2 \leq a_3$, $e \neq h_1 $, $h_1 \neq
  h_2$, and property~(\ref{eq:TripleIntersection}) holds, including its
  orthogonal character. 
\end{lemma}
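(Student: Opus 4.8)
The plan is to reduce the whole statement to the sign analysis of a single cubic. For a fixed $q=(x_1,x_2,x_3)$, clearing denominators in the defining equation $\sum_j x_j^2/(a_j-\mu)=1$ of $Q_\mu$ shows that $q\in Q_\mu$ precisely when
\[
 P(\mu):=\prod_{j=1}^3(a_j-\mu)-\sum_{i=1}^3 x_i^2\prod_{j\neq i}(a_j-\mu)=0 .
\]
This $P$ is a cubic with leading coefficient $-1$, so $P(-\infty)=+\infty$ and $P(+\infty)=-\infty$; its three roots are the sought elliptic coordinates. I would read off the ordering from the values $P(a_1)=-x_1^2(a_2-a_1)(a_3-a_1)$, $P(a_2)=x_2^2(a_2-a_1)(a_3-a_2)$, and $P(a_3)=-x_3^2(a_3-a_1)(a_3-a_2)$, which obey $P(a_1)\le 0$, $P(a_2)\ge 0$, $P(a_3)\le 0$, with each equality holding if and only if the corresponding $x_l$ vanishes. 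When all $x_l\neq 0$ the inequalities are strict and the intermediate value theorem yields three roots $e<a_1<h_1<a_2<h_2<a_3$ linked to the Cartesian coordinates by Eq.~(\ref{eq:EllipticCoordinates3D}); this is just the first item of Theorem~\ref{thm:Jacobi}.

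First I would treat the interior of each coordinate plane. If only $x_1=0$, then $P(\mu)=(a_1-\mu)Q_2(\mu)$ with $Q_2$ quadratic, and $Q_2(a_1)=(a_2-a_1)(a_3-a_1)\bigl(1-\tfrac{x_2^2}{a_2-a_1}-\tfrac{x_3^2}{a_3-a_1}\bigr)$, which vanishes exactly on the focal ellipse $\mathcal{E}$. Off $\mathcal{E}$ the cases $Q_2(a_1)>0$ and $Q_2(a_1)<0$ place the roots of $Q_2$ as one in $(a_1,a_2)$ and one in $(a_2,a_3)$, respectively as one in $(-\infty,a_1)$ and one in $(a_2,a_3)$; together with the linear root $a_1$ these give either $e=a_1<h_1<a_2<h_2$ or $e<a_1=h_1<a_2<h_2$, in both cases obeying the non-strict ordering together with $e\neq h_1$ and $h_1\neq h_2$. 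The case $x_2=0$ is identical after factoring $P=(a_2-\mu)R_2(\mu)$, the relevant value being $R_2(a_2)=(a_2-a_1)(a_3-a_2)\bigl(\tfrac{x_3^2}{a_3-a_2}-\tfrac{x_1^2}{a_2-a_1}-1\bigr)$, which vanishes exactly on $\mathcal{H}$. The case $x_3=0$ is easiest: factoring $P=(a_3-\mu)R_3(\mu)$ one finds $R_3(a_3)>0$ unconditionally, so $a_3$ is never a double root and no focal exclusion is needed there.

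The only degeneracies I must exclude are $e=h_1$ and $h_1=h_2$. Since the ordering forces $e\le a_1\le h_1$ and $h_1\le a_2\le h_2$, an equality $e=h_1$ can occur only as a double root of $P$ at $a_1$, and $h_1=h_2$ only as a double root at $a_2$. But $a_1$ is a double root iff $P(a_1)=0$ and $P'(a_1)=-Q_2(a_1)=0$, i.e. iff $q\in\mathcal{E}$; likewise a double root at $a_2$ occurs iff $q\in\mathcal{H}$. Hence outside $\mathcal{E}\cup\mathcal{H}$ the three values satisfy $e\neq h_1$ and $h_1\neq h_2$, and automatically $e<h_2$ because $e\le a_1<a_2\le h_2$, so the three parameters are pairwise distinct. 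Points on the coordinate axes, where two $x_l$ vanish, fall under the same factorizations with two linear factors split off, and I would check directly that the surviving root collides with an adjacent $a_l$ precisely when $q\in\mathcal{E}\cup\mathcal{H}$.

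For orthogonality, pairwise distinctness of $e,h_1,h_2$ makes the gradients $\nabla\bigl(\sum_j x_j^2/(a_j-\mu_i)\bigr)$ at $q$ mutually orthogonal by the classical confocal computation, which uses only distinctness of the parameters. When one coordinate equals some $a_l$ the corresponding quadric degenerates to the plane $\Pi_l=Q_{a_l}$, and orthogonality persists because at a point with $x_l=0$ every confocal quadric has its gradient lying in $\Pi_l$, hence meets $\Pi_l$ at a right angle. The main obstacle is bookkeeping: the case distinctions for points on the planes and especially on the axes are numerous, and at each boundary one must confirm that the non-strict ordering, the two required non-collisions, and the orthogonality all survive the degeneration of a quadric into a plane.
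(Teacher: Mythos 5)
Your proposal cannot be matched against a proof in the paper, because the paper gives none: Lemma~\ref{lem:TripleIntersection} is introduced there only as ``a known extension of the first item in Theorem~\ref{thm:Jacobi}'' and is quoted without argument, just as Theorem~\ref{thm:Jacobi} itself is. Your argument is therefore a genuine addition, and it is correct. The reduction to the cubic $P(\mu)=\prod_j(a_j-\mu)-\sum_i x_i^2\prod_{j\neq i}(a_j-\mu)$ is sound, and the correspondence between roots of $P$ and membership $q\in Q_\mu$ holds even at the singular values, since $P(a_l)=-x_l^2\prod_{j\neq l}(a_j-a_l)$ vanishes exactly when $q\in\Pi_l=Q_{a_l}$. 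Your sign values at $a_1,a_2,a_3$ are right, as are the factored values $Q_2(a_1)$ and $R_2(a_2)$ (which vanish precisely on $\mathcal{E}$ and $\mathcal{H}$, respectively) and the unconditional positivity of $R_3(a_3)$, which is exactly why no exclusion is needed on $\Pi_3$. The unifying mechanism --- under the weak interlacing $e\le a_1\le h_1\le a_2\le h_2\le a_3$, a collision $e=h_1$ forces a double root of $P$ at $a_1$ and $h_1=h_2$ forces one at $a_2$, and these double roots occur exactly on $\mathcal{E}$ and $\mathcal{H}$ --- is the right one, and the orthogonality argument correctly splits into the classical identity for two nonsingular confocal quadrics (which needs only distinctness of the two parameters) and the degenerate case, where any confocal quadric through a point with $x_l=0$ has gradient with vanishing $l$-th component, hence meets $\Pi_l$ orthogonally (this also covers two coordinate planes meeting each other). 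The only deferred item, the axes, does work out as you predict: on the $x_3$-axis one has $P(\mu)=(a_1-\mu)(a_2-\mu)(a_3-x_3^2-\mu)$, and the free root hits $a_1$ or $a_2$ exactly at the focal vertices $x_3^2=a_3-a_1$ and $x_3^2=a_3-a_2$; on the $x_2$-axis the free root $a_2-x_2^2$ hits $a_1$ exactly when $x_2^2=a_2-a_1$, a point of $\mathcal{E}$; on the $x_1$-axis the free root $a_1-x_1^2$ never collides, consistent with that axis avoiding $\mathcal{E}\cup\mathcal{H}$; and at the origin the roots are $a_1,a_2,a_3$ themselves, giving the vertex $(a_1,a_2,a_3)$ used in the paper for $f\circ R_{123}$. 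In short, where the paper buys brevity by citing the literature, your route buys a self-contained elementary proof that in addition explains \emph{why} the focal conics are precisely the exceptional set; its only cost is the case bookkeeping, which you flag honestly and which is routine.
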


Thus, we can express in elliptic coordinates all points along nonsingular
billiard trajectories. The trajectory is contained inside $Q$, so $e \geq 0$.
Impact points $q \in Q$ correspond to $e = 0$. Tangency points $q \in
Q_{\lambda_j}$ correspond to $e = \lambda_j \in E$, $h_1 = \lambda_j \in H_1$
or $h_2 = \lambda_j \in H_2$ depending on the type of the caustic
$Q_{\lambda_j}$. Crossing points $q \in \Pi_l$ have the following elliptic
coordinates: $\Pi_1$ is $\left\{ e = a_1 \right\} \cup \left\{ h_1 = a_1
\right\}$, $\Pi_2$ is $\left\{ h_1 = a_2 \right\} \cup \left\{ h_2 = a_2
\right\}$, and $\Pi_3$ is $\left\{ h_2 = a_3 \right\}$.

Indeed, one could agree that $\mathcal{E}= \left\{ e = a_1 = h_1 \right\}$ and
$\mathcal{H}= \left\{ h_1 = a_2 = h_2 \right\}$, although we do not need it,
because all nonsingular billiard trajectories sharing the caustics
$Q_{\lambda_1}$ and $Q_{\lambda_2}$ are contained, in elliptic coordinates,
inside the cuboid
\begin{equation}
  \EuScript{\mathcal{C}}_{\lambda} = \left\{ \begin{array}{ll}
    \left[ 0, \lambda_1 \right] \times \left[ a_1, \lambda_2 \right] \times
    \left[ a_2, a_3 \right], & \text{for EH1},\\
    \left[ 0, a_1 \right] \times \left[ \lambda_1, \lambda_2 \right] \times
    \left[ a_2, a_3 \right], & \text{for H1H1},\\
    \left[ 0, \lambda_1 \right] \times \left[ a_1, a_2 \right] \times \left[
    \lambda_2, a_3 \right], & \text{for EH2},\\
    \left[ 0, a_1 \right] \times \left[ \lambda_1, a_2 \right] \times \left[
    \lambda_2, a_3 \right], & \text{for H1H2}.
  \end{array} \right. \label{eq:Cuboid3D}
\end{equation}
We note that $e \neq h_1 \neq h_2 $ for all $\mathfrak{q}= \left( e,
h_1, h_2 \right) \in \mathcal{C}_{\lambda}$, since $\lambda = \left(
\lambda_1, \lambda_2 \right) \in \Lambda$. Hence, all points belonging to
$\EuScript{\mathcal{C}}_{\lambda}$ verify Eqs.~(\ref{eq:TripleIntersection})
and~(\ref{eq:EllipticCoordinates3D}).

\subsection{Frequency map and winding numbers}\label{ssec:FrequencyMap}

The following results related to periodic trajectories of billiards inside
ellipsoids can be found in Ref.~\onlinecite{CasasRamirez2011}.

There exists a map $\omega : \Lambda \rightarrow \Rset^2$,
called \tmtextit{frequency map} and defined by means of six
hyperelliptic integrals,
that characterizes all caustic parameters $\lambda = (\lambda_1, \lambda_2)$
that give rise to periodic trajectories. (Its explicit formula is given in
Ref.~\onlinecite{CasasRamirez2011}.) To be precise, the billiard trajectories
sharing the caustics $Q_{\lambda_1}$ and $Q_{\lambda_2}$ are periodic if and only if
\begin{equation}
  \omega \left( \lambda_1, \lambda_2 \right) = \left( m_1, m_2 \right) / 2 m_0
  \in \Qset^2 \label{eq:FrequencyWindingNumbers}
\end{equation}
for some positive integers $m_0$, $m_1$, and $m_2$. These are the
\tmtextit{winding numbers} introduced in item~\ref{item:WindingNumbers}) of
Theorem~\ref{thm:Oscillations}. They describe how trajectories fold in
$\Rset^3$. First, $m_0$ is the period. Second, $m_1$ is the number of
$\Pi_1$-crossings and $m_2$ is twice the number of turns around the $x_1$-axis
for EH1-caustics; $m_1$ is twice the number of turns around the $x_3$-axis and
$m_2$ is the number of $\Pi_3$-crossings for EH2-caustics; $m_1$ is the number
of tangential touches with each one-sheet hyperboloid caustic and $m_2$ is
twice the number of turns around the $x_1$-axis for H1H1-caustics; whereas
$m_1$ is the number of $\Pi_2$-crossings and $m_2$ is the number of
$\Pi_3$-crossings for H1H2-caustics. Only EH1-caustics and EH2-caustics have
trajectories of odd period.

\subsection{Reversors and their symmetry sets}

Let $\sigma_l, \sigma_{m n}, \sigma_{1 2 3} : \Rset^3 \rightarrow
\Rset^3$ be the $x_m x_n$-specular reflection, the $x_l$-axial
reflection, and the central reflection, respectively. Here, $\left\{ l, m, n
\right\} = \left\{ 1, 2, 3 \right\}$. Two samples are $\sigma_3 \left( x_1,
x_2, x_3 \right) = \left( x_1, x_2, - x_3 \right)$ and $\sigma_{1 3} \left(
x_1, x_2, x_3 \right) = \left( - x_1, x_2, - x_3 \right)$. Let $s_l, s_{m n},
s_{1 2 3} : M \rightarrow M$ be the symmetries given by $s_l \left( q, p
\right) = \left( \sigma_l q, \sigma_l p \right)$, $s_{m n} \left( q, p \right)
= \left( \sigma_{m n} q, \sigma_{m n} p \right)$, and $s_{1 2 3} \left( q, p
\right) = \left( \sigma_{1 2 3} q, \sigma_{1 2 3} p \right)$. We denote by $R$
the reversor written as $\tilde{r}$ in Sec.~\ref{sec:SymmetricBilliards}.
Likewise, $R_l = s_l \circ R$, $R_{m n} = \sigma_{m n} \circ R$, and $R_{1 2
3} = s_{1 2 3} \circ R$.

\begin{remark}
  \label{rem:lmn}Henceforth, we shall adopt the following index notation for
  the sake of brevity. If $l $, $m$, and $n$ appear without an
  explicit explanation of their ranges, it must be understood that they
  represent \tmtextit{any} choice such that $\left\{ l, m, n \right\} =
  \left\{ 1, 2, 3 \right\} .$ This convention is used, for instance, in
  Tables~\ref{tab:SymmetrySets_3D} and~\ref{tab:VertexesPointsReversors_3D}.
  Moreover, in the second to fifth rows of
  Table~\ref{tab:VertexesPointsReversors_3D}, index $l$ has a distinguished meaning
  from indexes $m$ and $n$; and the formulae contained in those rows are
  invariant under permutation of $m$ and $n$. This has to do with the fact
  that $\sigma_{m n} = \sigma_{n m}$.
\end{remark}

The three planar sections of the ellipsoid are
\[ S_l = Q \cap \Pi_l, \qquad l \in \left\{ 1, 2, 3 \right\} . \]
The symmetry sets of the $16 = 2^4$ reversors are listed in
Table~\ref{tab:SymmetrySets_3D}, which follows directly from
Proposition~\ref{prop:SymmetrySets}. We note that only the symmetry sets of $R_{1 2 3}$
and $f \circ R$ are empty. We usually write coordinates $q = \left( x_1, x_2,
x_3 \right)$ in the configuration space $Q$, and $p = \left( u_1, u_2, u_3
\right)$ in the velocity space $\Sset^2$.

\begin{table}
\caption{\label{tab:SymmetrySets_3D}The symmetry sets of the 16
  reversors of the billiard inside a triaxial ellipsoid
  $Q \subset \Rset^3$.}
\begin{ruledtabular}
  \begin{tabular}{lcc}
    $r$ & $\FixedSet \left( r \right)$ & $\FixedSet (f \circ r)$\\
    \hline \\ [-0.3cm]
    $R$ & $p \in N_q Q$ & $\emptyset$\\
    $R_l$ & $q \in S_l$ and $p \in N_q S_l$ & $u_m = u_n = 0, u^2_l = 1$\\
    $R_{m n}$ & $x_m = x_n = 0, x^2_l = a_l$ &
    $( q + \langle p \rangle) \bot_{\ast} \{ \mbox{$x_l$-axis} \}$\\
    $R_{1 2 3}$ & $\emptyset$ & $0 \in q + \left\langle p \right\rangle$\\
  \end{tabular}
\end{ruledtabular}
\end{table}

\subsection{Characterization of STs}\label{ssec:STsVertexes_3D}

We want to characterize nonsingular STs inside nondegenerate ellipsoids, as
trajectories passing, in elliptic coordinates, through some vertex of the
cuboid $\mathcal{C}_{\lambda}$. In order to accomplish this goal, we must
prove both implications. In Proposition~\ref{prop:Vertex2Reversor} we check
that any vertex of $\mathcal{C}_{\lambda}$ is visited by some (indeed, eight)
STs. In Proposition~\ref{prop:Reversor2Vertex} we see that any ST passes
through some vertex of $\mathcal{C}_{\lambda}$.

\begin{proposition}
  \label{prop:Vertex2Reversor}Let $\lambda = \left( \lambda_1, \lambda_2
  \right) \in \Lambda$ be fixed. Given any vertex $\mathfrak{q}_{\star} =
  \left( e, h_1, h_2 \right)$ of $\mathcal{C}_{\lambda}$, let $q_{\star} \in
  Q_e \cap Q_{h_1} \cap Q_{h_2}$. Then there exists some reversor $r$, and
  some point $\left( q, p \right) \in \FixedSet \left( r \right)$ such that
  $q_{\star} \in q + \left\langle p \right\rangle$ and the line $q +
  \left\langle p \right\rangle$ is tangent to $Q_{\lambda_1}$ and
  $Q_{\lambda_2}$. If $e = 0$, then $q = q_{\star}$;
  otherwise $q_{\star}$ is the middle point of the consecutive
  impact points $\hat{q}$ and $q$.
\end{proposition}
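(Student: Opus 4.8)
The plan is to run through the eight vertices of $\mathcal{C}_{\lambda}$ for each of the four caustic types and, for each one, read off the reversor from the symmetry-set dictionary of Table~\ref{tab:SymmetrySets_3D}. The basic device is the correspondence between the extreme value attained by an elliptic coordinate at a vertex and the locus on which $q_{\star}$ lies: $e = 0$ forces $q_{\star} \in Q$ (an impact point); a coordinate equal to $\lambda_j$ forces $q_{\star} \in Q_{\lambda_j}$, so the segment is tangent to that caustic precisely at $q_{\star}$; and a coordinate equal to $a_l$ forces $q_{\star} \in \Pi_l$. By Lemma~\ref{lem:TripleIntersection} each vertex then places $q_{\star}$ at the mutually orthogonal intersection of three confocal quadrics, where the singular members $\Pi_l = Q_{a_l}$ are permitted; this orthogonality is exactly what selects the direction $p$.

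First I would settle the four vertices with $e = 0$, for which $q = q_{\star} \in Q$, splitting on the two conditions coming from $h_1$ and $h_2$. If both are coordinate planes, then $q_{\star}$ lies on a coordinate axis, $\FixedSet(R_{mn})$ imposes no condition on $p$, and any outward line through $q_{\star}$ tangent to both caustics (finitely many exist, by the confocal tangent-line theory) gives $(q,p) \in \FixedSet(R_{mn})$. If exactly one condition is a plane $\Pi_l$ and the other a caustic $Q_{\lambda_j}$, then at $q$ the quadrics $Q$, $Q_{\lambda_j}$, $\Pi_l$ are mutually orthogonal, so $T_q Q_{\lambda_j}$, being the orthogonal complement of $N_q Q_{\lambda_j}$, coincides with $N_q Q \perp \langle x_l\text{-axis}\rangle = N_q S_l$; hence every admissible tangent direction automatically lies in $N_q S_l$ and $(q,p) \in \FixedSet(R_l)$. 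If both conditions are caustics, then $q \in Q \cap Q_{\lambda_1} \cap Q_{\lambda_2}$ and the common tangent direction $T_q Q_{\lambda_1} \cap T_q Q_{\lambda_2}$ is the orthogonal complement of $N_q Q_{\lambda_1} \perp N_q Q_{\lambda_2} = T_q Q$, namely $N_q Q$; so $p \in N_q Q$ and $(q,p) \in \FixedSet(R)$.

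For the four vertices with $e \neq 0$ I would invoke duality rather than repeat the geometry. By Proposition~\ref{prop:DualSymmetry}(d) the dual map $g$ sends $\FixedSet(\tilde{r}_{\sigma})$ onto $\FixedSet(\hat{r}_{-\sigma}) = \FixedSet(f \circ \tilde{r}_{-\sigma})$ and preserves the caustics, hence the cuboid; since $-\Identity = \sigma_{123}$, $-\sigma_l = \sigma_{mn}$ and $-\sigma_{mn} = \sigma_l$, the reversors $R, R_l, R_{mn}$ of the previous paragraph map respectively to $f \circ R_{123}, f \circ R_{mn}, f \circ R_l$, whose symmetry sets force $q_{\star}$ to be interior. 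As $g$ is injective and there are exactly four vertices of each kind, it carries the solved $e = 0$ vertices bijectively onto the four $e \neq 0$ vertices, each of which is therefore visited by the $g$-image of an already constructed orbit (Corollary~\ref{cor:DualCorrespondence}). The midpoint assertion is then immediate: for $(q,p) \in \FixedSet(\hat{r}_{\sigma'})$ the line crosses $V^+_{\sigma'}$ orthogonally while $\sigma'$ interchanges the consecutive impacts $\hat{q}$ and $q$, so the crossing point $q_{\star} \in V^+_{\sigma'}$ is their midpoint, and $e \neq 0$ because $q_{\star}$ is interior to $Q$.

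The step I expect to cost the most care is the middle subcase of the second paragraph, where $q_{\star}$ lies simultaneously on a plane and on a caustic: one must be sure that the direction cut out by tangency to the \emph{second} caustic still lies in $N_q S_l$ and yields an outward $(q,p) \in M$. The identity $T_q Q_{\lambda_j} = N_q S_l$ reduces this to the standard common-tangent count for a confocal pair, but verifying it uniformly across the four caustic types, together with the bookkeeping that matches each concrete vertex of~(\ref{eq:Cuboid3D}) to its reversor as recorded in Table~\ref{tab:VertexesPointsReversors_3D}, is the bulk of the work; following the authors' two-dimensional treatment I would present it as a table and check only one representative entry in full detail.
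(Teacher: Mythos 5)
Your treatment of the four $e=0$ vertices is essentially the paper's own argument (its cases of $q_\star\in Q$ on zero, one, or two coordinate planes), and the orthogonality identities you use are correct: tangency to a caustic passing through $q_\star$ forces tangency \emph{at} $q_\star$, $T_{q}Q_{\lambda_1}\cap T_qQ_{\lambda_2}=N_qQ$ gives the $R$ case, and $T_qQ_{\lambda_j}=N_qS_l$ gives the $R_l$ case. The difference is that the paper does not stop at "finitely many tangent lines exist by confocal tangent-line theory": it solves the discriminant/linear systems explicitly to produce the $(q,p)$ of Table~\ref{tab:VertexesPointsReversors_3D}. That computation is not optional bookkeeping, because the existence of \emph{real} solutions (nonnegativity of the expressions for $u_i^2$, $x_i^2$) is exactly where the caustic-type restrictions of Table~\ref{tab:ForbiddenReversors3D} come from; general position statements like Theorem~\ref{thm:Jacobi} do not guarantee a line through a \emph{prescribed} point tangent to two \emph{prescribed} confocal quadrics. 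You flag this honestly as the bulk of the work, so this half is an acceptable plan.

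The duality half, however, has two genuine gaps. First, you need $g$ to preserve the caustic parameters $(\lambda_1,\lambda_2)$, not merely periods and symmetry sets, which is all that Proposition~\ref{prop:DualSymmetry} and Corollary~\ref{cor:DualCorrespondence} provide; otherwise the $g$-image of a trajectory through a vertex of $\mathcal{C}_\lambda$ passes through a vertex of a \emph{different} cuboid. The fact is true, but it is proved nowhere in the paper, which instead verifies the dual formulas directly via the substitution $u_i^2\leftrightarrow x_i^2/a_i$ (see the Remark after the proposition), and it even computes the center vertex $f\circ R_{123}$ in full rather than by transport. Second, and more seriously, the step ``$g$ is injective and there are four vertices of each kind, hence a bijection'' fails precisely for H1H1-caustics: there each feasible interior reversor is shared by \emph{two} vertices (e.g.\ $(a_1,\lambda_1,a_3)$ and $(a_1,\lambda_2,a_3)$ both correspond to $f\circ R_{13}$), so knowing that the dual point lies in $\FixedSet(f\circ R_{13})$ does not determine which vertex its line visits, and injectivity of $g$ on phase space does not prevent the duals of the two $R_2$-vertices $(0,\lambda_1,a_2)$ and $(0,\lambda_2,a_2)$ from landing on the \emph{same} interior vertex, leaving the other one uncovered. (Appealing to $g^2=-f$ does not rescue the count, since the induced ``map on vertices'' is not obviously well defined.) Closing this gap requires showing that the dual of the trajectory tangent to $Q_{\lambda_j}$ at its $\Pi_2$-symmetry point crosses the $x_2$-axis at height $y$ with $a_2-y^2=\lambda_k$, $k\neq j$, i.e.\ the formula $u_2^2=(a_2-\lambda_k)/a_2$ of Table~\ref{tab:VertexesPointsReversors_3D} — exactly the kind of explicit computation your duality route was designed to avoid, and the reason the paper carries out the constructions vertex by vertex.
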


We clarify some aspects of this result before its proof:
\begin{itemizedot}
  \item We follow the index notation explained in Remark~\ref{rem:lmn}.
  
  \item We know that two symmetry sets only intersect at points $(q,p) \in M$
  such that the line $q + \left\langle p \right\rangle$ is
  contained in some coordinate plane; see Proposition~\ref{prop:SymmetrySets}.
  But we are dealing with nonsingular trajectories. Therefore, we can
  associate just one reversor to each vertex.
  
  \item Correspondence $\mathfrak{q}_{\star} \mapsto (q,p)$
  is 1-to-8; see Table~\ref{tab:VertexesPointsReversors_3D}.
  This has to do with two facts.
  First, elliptic coordinates do not distinguish among the
  eight octants in $\Rset^3$. Moreover, unit velocities $p$ directed
  inward $Q$ at impact points $q$ are excluded from
  Table~\ref{tab:VertexesPointsReversors_3D}; otherwise,
  $(q,p) \not\in M$. The billiard trajectory associated to any of
  those eight points $(q, p) \in M$ is $r$-symmetric and passes,
  in elliptic coordinates, through vertex $\mathfrak{q}_{\star}$.
\end{itemizedot}

\noindent\textbf{Proof\ }
  To begin with, we know from Lemma~\ref{lem:TripleIntersection} that $Q_e$,
  $Q_{h_1}$ and $Q_{h_2}$ are mutually orthogonal at $q_{\star}$. There are
  several kinds of vertexes. The first distinction is $e = 0$ or $e \neq 0$;
  that is, $q_{\star} \in Q$ or $q_{\star} \not\in Q$. The second distinction is
  the number of coordinate planes through $q_{\star}$. It turns out that there
  are six kinds of vertexes. We study each kind separately.
  \begin{enumeratealpha}
    \item If the point $q_{\star}$ belongs to $Q$ but is outside all three
    coordinate planes, then $\mathfrak{q}_{\star} = \left( 0, \lambda_1,
    \lambda_2 \right)$. From Eq.~(\ref{eq:Cuboid3D}), the type is H1H2. Thus,
    $Q_{\lambda_1}$ is a one-sheet hyperboloid, $Q_{\lambda_2}$ is a two-sheet
    hyperboloid, and $Q$, $Q_{\lambda_1}$ and $Q_{\lambda_2}$ are pairwise
    orthogonal at $q = q_{\star}$. Thus, if $p$ is the outward unit normal
    velocity to $Q$ at $q$, then $\left( q, p \right) \in \FixedSet \left( R
    \right)$ and the line $q + \left\langle p \right\rangle$ is tangent to
    $Q_{\lambda_1}$ and $Q_{\lambda_2}$. Finally, let us check the formulae
    for $q$ and $p$ given in Table~\ref{tab:VertexesPointsReversors_3D}.

\begin{table*}
      \caption{\label{tab:VertexesPointsReversors_3D}Relations among vertexes
      $\mathfrak{q}_{\star}$ of the cuboid $\mathcal{C}_{\lambda}$, types of
      caustics, reversors $r$, and points $q_{\star} \in q + \left\langle p
      \right\rangle$ such that $q + \left\langle p \right\rangle$ is tangent
      to $Q_{\lambda_1}$ and $Q_{\lambda_2}$, with $\left( q, p \right) \in
      \FixedSet \left( r \right)$. Notation for indexes $\left\{ l, m, n
      \right\} = \left\{ 1, 2, 3 \right\}$ and $\left\{ j, k \right\} =
      \left\{ 1, 2 \right\}$ is described in the text.}
\begin{ruledtabular}
      \begin{tabular}{lcccl}
        Vertex $\mathfrak{q}_{\star} = \left( e, h_1, h_2 \right)$ & Type &
        $q_{\star}$ belongs to & $r$ & $q = \left( x_1, x_2, x_3 \right)$, $p
        = \left( u_1, u_2, u_3\right)$\\ [0.05cm]
        \hline\\ [-0.36cm]
        $\left( 0, \lambda_1, \lambda_2 \right)$ & H1H2 & $Q \cap
        Q_{\lambda_1} \cap Q_{\lambda_2}$ & $R$ & $x_l^2 =  \DS\frac{a_l  \left(
        a_l - \lambda_1 \right)  \left( a_l - \lambda_2 \right)}{\left( a_l -
        a_m \right)  \left( a_l - a_n \right)}$, $u_l = \DS\frac{x_l}{a_l}
        \sqrt{\frac{a_1 a_2 a_3}{\lambda_1 \lambda_2}}$\\ [0.25cm]
        \hline\\ [-0.36cm]
        $e = 0$, $\left\{ h_1, h_2 \right\} = \left\{ a_l, \lambda_j \right\}$
        & all & $Q \cap \Pi_l \cap Q_{\lambda_j}$ & $R_l$ &
        \begin{tabular}{l}$x_l = 0$, $x_m^2
         = \DS\frac{a_m \left( a_m - \lambda_j \right)}{a_m - a_n}$,
        $x_n^2  = \DS\frac{a_n \left( a_n - \lambda_j \right)}{a_n -
        a_m}$\\
        $u_l^2 = \DS\frac{a_l - \lambda_k}{a_l}$, $\left( u_m, u_n
        \right) = \sqrt{\DS\frac{\lambda_k}{a_1 a_2 a_3 \lambda_j}} \left( a_n
        x_m, a_n x_m \right)$\end{tabular}\\ [0.7cm]
        \hline\\ [-0.36cm]
        $e = 0$, $\left\{ h_1, h_2 \right\} = \left\{ a_m, a_n \right\}$ &
        \begin{tabular}{c}not\\H1H1\end{tabular} &
        $Q \cap \Pi_m \cap \Pi_n$ & $R_{m n}$ &
        \begin{tabular}{l}$x^2_l = a_l$,
        $x_m = x_n = 0$, $u^2_l = \DS\frac{\lambda_1 \lambda_2}{a_m a_n}$,
        $\tmop{sign} \left( u_l \right) = \tmop{sign} \left( x_l \right)$\\
        $u_m^2 =  \DS\frac{\left( a_m - \lambda_1 \right)  \left( a_m
        - \lambda_2 \right)}{a_m  \left( a_m - a_n \right)}$, $u_n^2 = 
        \DS\frac{\left( a_n - \lambda_1 \right)  \left( a_n - \lambda_2
        \right)}{a_n  \left( a_n - a_m \right)}$\end{tabular}\\ [0.6cm]
        \hline\\ [-0.36cm]
        $\left\{ e, h_1, h_2 \right\} = \left\{ a_l, \lambda_1, \lambda_2
        \right\}$ & \begin{tabular}{c}not\\H1H1\end{tabular} &
        $\Pi_l \cap Q_{\lambda_1} \cap Q_{\lambda_2}$ & $f \circ R_l$ &
        \begin{tabular}{l}$u^2_l = 1$, $u_m = u_n = 0$, $x_l^2 =  \DS\frac{a_l
        \lambda_1 \lambda_2}{a_m a_n}$, $\tmop{sign} \left( u_l \right) =
        \tmop{sign} \left( x_l \right)$\\
        $x_m^2 =  \DS\frac{\left( a_m - \lambda_1 \right)  \left( a_m
        - \lambda_2 \right)}{a_m - a_n}$, $x_n^2 = \DS\frac{\left( a_n -
        \lambda_1 \right)  \left(a_n - \lambda_2 \right)}{a_n - a_m}$
        \end{tabular}\\ [0.6cm]
        \hline\\ [-0.36cm]
        $\left\{ e, h_1, h_2 \right\} = \left\{ a_m, a_n, \lambda_j \right\}$
        & all & $\Pi_m \cap \Pi_n \cap Q_{\lambda_j}$ & $f \circ R_{m n}$ &
        \begin{tabular}{l}$u_l = 0$, $u_m^2 = \DS\frac{a_m - \lambda_j}{a_m - a_n}$,
         $u_n^2 = \DS\frac{a_n - \lambda_j}{a_n - a_m}$\\
        $x_l^2 = a_l - \lambda_k$, $\left( x_m, x_n \right) =
        \sqrt{\DS\frac{a_m a_n \lambda_k}{a_l \lambda_j}} \left( u_m, u_n
        \right)$\end{tabular}\\ [0.7cm]
        \hline\\ [-0.36cm]
        $\left( a_1, a_2, a_3 \right)$ & H1H2 & $\Pi_1 \cap \Pi_2 \cap \Pi_3$
        & $f \circ R_{123}$ & $u_l^2 =  \DS\frac{\left( a_l - \lambda_1 \right) 
        \left( a_l - \lambda_2 \right)}{\left( a_l - a_m \right)  \left( a_l -
        a_n \right)}$, $x_l = \sqrt{\DS\frac{a_1 a_2 a_3}{\lambda_1 \lambda_2}}
        u_l$\\
      \end{tabular}
\end{ruledtabular}
\end{table*}
    
    The formula for $q$ follows directly from
    Eq.~(\ref{eq:EllipticCoordinates3D}). Thus, there are exactly eight choices
    for $q$. If $p \in N_q Q $ and $p$ points outward $Q$ at $q$, then
    there exists $\mu > 0$ such that $u_l = \mu x_l / a_l$. And using that $p
    \in \Sset^2$ we get that
    \[ 1 = \mu^2  \left( \frac{x_1^2}{a_1^2} + \frac{x_2^2}{a_2^2} +
       \frac{x_3^2}{a_3^2} \right) \Longrightarrow \mu^{} = \sqrt{\frac{a_1
       a_2 a_3}{\lambda_1 \lambda_2}} . \]
    In summary, only $R$-STs with H1H2-caustics can take place for this
    vertex.
    
    \item \label{item:QPil}
    If the point $q_{\star}$ belongs to $Q$ and is contained in only one
    coordinate plane, say $\Pi_l$, then $\mathfrak{q}_{\star} = \left( 0, h_1,
    h_2 \right)$ with $\left\{ h_1, h_2 \right\} = \left\{ a_l, \lambda_j
    \right\}$ for some $j \in \left\{ 1, 2 \right\}$. In that case, $q_{\star}
    \in Q \cap \Pi_l \cap Q_{\lambda_j} = S_l \cap Q_{\lambda_j}$. We do not
    get any restriction on the type of the caustics, but only some
    restrictions on the indexes; see Eq.~(\ref{eq:Cuboid3D}). Namely,
    \begin{itemize}
      \item $l \in \left\{ 2, 3 \right\}$ and $j = 2$ for type EH1;
      
      \item $l \in \left\{ 2, 3 \right\}$ for type H1H1;
      
      \item $l \in \left\{ 1, 2 \right\}$ and $j = 2$ for type EH2; and
      
      \item $l = j = 2$ or $\left( l, j \right) = \left( 3, 1 \right)$ for
      type H1H2.
    \end{itemize}
    We observe that any $\left( q, p \right) \in M$ such that $q = q_{\star}$
    and $p \in N_q S_l$ is contained in $\FixedSet \left( R_l \right)$.
    Thus, the formula for $q$ follows again from
    relations~(\ref{eq:EllipticCoordinates3D}),
    and we find four choices for $q$. Next,
    we look for an outward velocity $p = \left( u_1, u_2, u_3 \right) \in
    \Sset^2$ such that $p \bot T_q S_l$ and $q + \left\langle p
    \right\rangle$ is tangent to both $Q_{\lambda_j}$ and $Q_{\lambda_k}$ with
    $\left\{ j, k \right\} = \left\{ 1, 2 \right\}$.
    
    Condition $p \bot T_q S_l$ reads as $x_n u_m / a_n = x_m u_n / a_m$. In
    other words, we look for some $\nu \in \Rset$ such that
    \[ \left( u_m, u_n \right) = \nu \left( a_n x_m, a_m x_n \right) . \]
    Then, tangency with $Q_{\lambda_j}$ is immediate, since $Q_{\lambda_j}$ is
    orthogonal to the planar section $S_l = Q \cap \Pi_l$ at $q$. It only
    remains to impose the tangency with $Q_{\lambda_k}$. Thus, the quadratic
    equation
    \[ \frac{\left( x_l + t u_l \right)^2}{a_l - \lambda_k} + \frac{\left( x_m
       + t u_m \right)^2}{a_m - \lambda_k} + \frac{\left( x_n + t u_n
       \right)^2}{a_n - \lambda_k} = 1 \]
    must have a unique solution in $t$, which is equivalent to the vanishing
    of its discriminant: $\beta^2 = \alpha \gamma$, where
\begin{eqnarray*}
\alpha & = &
\frac{u_l^2}{a_l - \lambda_k} + \frac{u_m^2}{a_m - \lambda_k} + \frac{u_n^2}{a_n - \lambda_k} =
\frac{1}{a_l - \lambda_k} \times \\
& &
\left( 1 + \left( \frac{a_l - a_m}{a_m - \lambda_k} a_n^2 x_m^2 +
                    \frac{a_l - a_n}{a_n - \lambda_k} a_m^2 x_n^2 \right) \nu^2
                    \right),\\ 
\beta & = &
\DS\frac{x_m u_m}{a_m - \lambda_k} + \frac{x_n u_n}{a_n - \lambda_k} =
\frac{a_m a_n ( \lambda - \beta) \nu}{(a_m - \lambda_k)(a_n - \lambda_k)},\\
\gamma & = &
\frac{x_m^2}{a_m - \lambda_k} + \frac{x_n^2}{a_n - \lambda_k} - 1 \\
& = &
\frac{\lambda_k(\lambda_j - \lambda_k)}{(a_m - \lambda_k)( a_n - \lambda_k)}.
\end{eqnarray*}
    Here, we have used that $u_l^2 = 1 - u_m^2 - u_n^2$. After some calculations we
    find $\nu^2 = \lambda_k / \lambda_j a b c$, and from here we get the desired
    formulae in Table~\ref{tab:VertexesPointsReversors_3D}. We must impose in turn,
    that $p$ is an outward velocity, so
    \[ 0 < \left\langle Dq, p \right\rangle = \nu \left( a_n x_m^2 / a_m + a_m
       x^2_n / a_n \right), \]
    and $\nu > 0$. Thus, once we fix $q$, there are two choices for $u_l$, and
    exactly eight choices for $\left( q, p \right)$.
    
    We finish this item by stressing the consequences of the restrictions on
    the index $l$. They mean that the reversor $R_1$ cannot take place for
    types EH1, H1H1, and H1H2, whereas $R_3$ is forbidden for type EH2. This
    information is displayed in Table~\ref{tab:ForbiddenReversors3D}.
    
    \item \label{item:QPimPin}
    If $q_{\star}$ belongs to $Q$ and is contained in two coordinate
    planes, say $\Pi_m$ and $\Pi_n$, then $\mathfrak{q}_{\star} = \left( 0,
    h_1, h_2 \right)$ with $\left\{ h_1, h_2 \right\} = \left\{ a_m, a_n
    \right\}$. This prevents H1H1-caustics; see Eq.~(\ref{eq:Cuboid3D}).
    Furthermore, we get again some restrictions on the indexes. Namely,
    \begin{itemize}
      \item $1 \in \left\{ m, n \right\}$ for type EH1;
      
      \item $3 \in \left\{ m, n \right\}$ for type EH2; and
      
      \item $1 \not\in \left\{ m, n \right\}$ for type H1H2.
    \end{itemize}
    Besides, $q_{\star}$ is one of the two vertexes of the ellipsoid on the
    $x_l$-axis. Hence, $(q_\star,p) \in \FixedSet(R_{m n})$ for any
    $(q_\star,p) \in M$. Finally,
    let us check that the formulae given for $p = (u_1, u_2, u_3)$
    in Table~\ref{tab:VertexesPointsReversors_3D} are correct.
    
    We look for an outward velocity $p \in \Sset^2$ such that the line
    $q + \left\langle p \right\rangle$ is tangent to $Q_{\lambda_k}$, for $k =
    1, 2$. Thus, the quadratic equation
    \[ \frac{\left( x_l + t u_l \right)^2}{a_l - \lambda_k} + \frac{\left( x_m
       + t u_m \right)^2}{a_m - \lambda_k} + \frac{\left( x_n + t u_n
       \right)^2}{a_n - \lambda_k} = 1 \]
    must have a unique solution in $t$, which is equivalent to the vanishing
    of its discriminant: $\beta^2 = \alpha \gamma$, where
    \begin{eqnarray*}
      \alpha & = & \frac{u_l^2}{a_l - \lambda_k} + \frac{u_m^2}{a_m -
      \lambda_k} + \frac{u_n^2}{a_n - \lambda_k}\\
      & = & \frac{1}{a_n - \lambda_k} \left( 1 + \frac{a_n - a_l}{a_l -
      \lambda_k} u_l^2 + \frac{a_n - a_m}{a_m - \lambda_k} u_m^2 \right),\\
      \beta & = & \frac{x_l u_l}{a_l - \lambda_k},\\
      \gamma & = & \frac{x_l^2}{a_l - \lambda_k} - 1 = \frac{\lambda_k}{a_l -
      \lambda_k} .
    \end{eqnarray*}
    After some calculations we obtain that
    \[ a_n  \left( a_m - \lambda_k \right) u_l^2 + \left( a_m - a_n \right)
       \lambda_k u_m^2 = \lambda_k  \left( a_m - \lambda_k \right) . \]
    for $k = 1, 2$. These two equations along with condition $u_l^2 + u^2_m +
    u_n^2 = 1 $ form a system of three linear equations in the
    unknowns $u_l^2$, $u_m^2$, and $u_n^2$. A simple computation shows that
    the formulae given in Table~\ref{tab:VertexesPointsReversors_3D} are the
    unique solution of this system.
    
    Finally, we impose that $p$ is an outward velocity, so
    \[ 0 < \left\langle Dq, p \right\rangle = x_l u_l / a_l, \]
    and $x_l u_l > 0$. Thus, once fixed the vertex $q$, there are two choices
    for $u_m$ and two others for $u_n$. Therefore, there are exactly eight
    choices for $\left( q, p \right)$.
    
    We finish again by stressing the consequences of the restrictions on the
    indexes $m$ and $n$. They mean that the reversor $R_{2 3}$ cannot take
    place for type EH1, $R_{1 2}$ is forbidden for type EH2, both $R_{1 3}$
    and $R_{1 2}$ are not allowed for type H1H2, and all reversors of the form
    $R_{m n}$ cannot be found for type H1H1. This information is also
    displayed in Table~\ref{tab:ForbiddenReversors3D}.
    
    \item
    If $q_{\star}$ does not belong to $Q$ but it is in all coordinate
    planes, then $q_{\star} = \left( 0, 0, 0 \right) = 0$ and
    $\mathfrak{q}_{\star} = \left( a_1, a_2, a_3 \right)$. In that case the
    type is H1H2; see Eq.~(\ref{eq:Cuboid3D}). We look for lines $q_{\star} +
    \left\langle p \right\rangle = \left\langle p \right\rangle$ that are
    tangent to the one-sheet hyperboloid $Q_{\lambda_1}$ and to the two-sheet
    hyperboloid $Q_{\lambda_2}$. Since $q_{\star}$ is the center of both
    quadrics, $p = \left( u_1, u_2, u_3 \right)$ must be an asymptotic
    direction of them, so
    \[ \frac{u^2_1}{a_1 - \lambda_k} + \frac{u^2_2}{a_2 - \lambda_k} +
       \frac{u^2_3}{a_3 - \lambda_k} = 0  , \qquad
       k \in \left\{ 1, 2 \right\} . \]
    These equations along with $u_l^2 + u^2_m + u_n^2 = 1 $ form a
    system of three linear equations in the unknowns $u_l^2$, $u_m^2$, and
    $u_n^2$. A simple computation shows that the formulae given in
    Table~\ref{tab:VertexesPointsReversors_3D} are the unique solution of this
    system. Thus, there are eight choices for $p$.
    
    The line $q_{\star} + \left\langle p \right\rangle = 0 + \left\langle p
    \right\rangle$ intersects the ellipsoid $Q$ at some points $q = \mu p$ and
    $\hat{q} = - \mu p$ where $\mu > 0$. So, $x_l = \mu u_l$. And using that
    $q \in Q$ we get that
    \[ 1 = \mu^2  \left( \frac{u_1^2}{a_1^2} + \frac{u_2^2}{a_2^2} +
       \frac{u_3^2}{a_3^2} \right) \Longrightarrow \mu^{} = \sqrt{\frac{a_1
       a_2 a_3}{\lambda_1 \lambda_2}} . \]
    Finally, we realize that $\left( q, p \right) \in \FixedSet \left( R_{1 2
    3} \right)$ and $q_{\star}$ is the middle point of $\hat{q}$ and $q$.
    
    In summary, only $(f \circ R_{123})$-STs with H1H2-caustics
    can take place for this vertex.
    
    \item If $q_{\star}$ does not belong to $Q$ but it is in two coordinate
    planes, say $\Pi_m$ and $\Pi_n$, then $\mathfrak{q}_{\star} = \left( e,
    h_1, h_2 \right)$ with $\left\{ e, h_1, h_2 \right\} = \left\{ a_m, a_n,
    \lambda_j \right\}$. This case is the ``dual'' of case~\ref{item:QPil}),
    so it is associated to the reversors $f \circ R_{mn}$.
    The computations do not involve new ideas. We skip them.
    We just stress that there are no restriction on the type of the caustics,
    but only some restrictions on the indexes; see Eq.~(\ref{eq:Cuboid3D}).
    Namely,
    \begin{itemize}
      \item $1 \in \left\{ m, n \right\}$ for types EH1, H1H1, and H1H2; and
      
      \item $3 \in \left\{ m, n \right\}$ for types EH2.
    \end{itemize}
    They mean that the reversor $f \circ R_{2 3}$ cannot take place for types
    EH1, H1H1, and H1H2, whereas $f \circ R_{1 2}$ is forbidden for type EH2.
    
    \item If $q_{\star}$ does not belong to $Q$ and is contained in only one
    coordinate plane, say $\Pi_l$, then $\mathfrak{q}_{\star} = \left( e, h_1,
    h_2 \right)$ with $\left\{ h_1, h_2 \right\} = \left\{ a_l, \lambda_1,
    \lambda_2 \right\}$. In that case, $q_{\star} \in \Pi_l \cap Q_{\lambda_1}
    \cap Q_{\lambda_2}$. This case is the ``dual'' of case~\ref{item:QPimPin}),
    so it is associated to the reversors $f \circ R_l$.
    We also skip the computations.
    
    We only remark that no H1H1-caustics can be associated to this case.
    Besides, we get again some restrictions on the indexes;
    see Eq.~(\ref{eq:Cuboid3D}).
    Namely, $l \neq 1$ for type EH1; $l \neq 3$ for type
    EH2; and $l = 1$ for type H1H2. Consequently, $f \circ R_1$ cannot take
    place for type EH1, $f \circ R_3$ is forbidden for type EH2, both $f \circ
    R_2$ and $f \circ R_3$ are not allowed for type H1H2, and all reversors of
    the form $f \circ R_l$ cannot be found for type H1H1.
    \hspace*{\fill}$\Box$\medskip
  \end{enumeratealpha}

\begin{remark}
  Opposed vertexes of a given cuboid $\mathcal{C}_{\lambda}$ provide points on dual
  symmetry sets. At the same time, the formulae for $\left( q, p \right)$ of
  reversors $f \circ R_l$, $f \circ R_{m n}$ and $f \circ R_{1 2 3}$ follow from the
  formulae of their dual reversors $R_{m n}$, $R_l$ and $R_{}$, respectively, by the
  change $u_i^2 \leftrightarrow x_i^2 / a_i$. This does not come as a surprise; see
  Corollary~\ref{cor:DualCorrespondence}. Suffice it to realize that the dual
  transformation $g : M \rightarrow M $, $g \left( q, p \right) = \left( \bar{q},
  \bar{p} \right)$, verifies that $\bar{p} = - C^{- 1} q$, $C^2 = \tmop{diag}
  \left( a_1, a_2, a_3 \right)$.
\end{remark}

All implications in the proof of Proposition~\ref{prop:Vertex2Reversor} can be
reversed. Hence, we can move along Table~\ref{tab:VertexesPointsReversors_3D}
in both directions: from left to right, and from right to left.

Nevertheless, we prefer to prove Proposition~\ref{prop:Reversor2Vertex}
following a reasoning whose generalization to an arbitrary dimension is
straightforward.

\begin{proposition}
  \label{prop:Reversor2Vertex}Let $O$ be a nonsingular $r$-SO through a point
  $\left( q_{}, p \right) \in \FixedSet \left( r \right)$ for some reversor
  $r : M \rightarrow M$. Let $\lambda = \left( \lambda_1, \lambda_2 \right)
  \in \Lambda$ be its caustic parameter. Then there exists a vertex
  $\mathfrak{q}_{\star} = \left( e, h_1, h_2 \right)$ of the cuboid
  $\mathcal{C}_{\lambda}$ and a point $q_{\star} \in Q_e \cap Q_{h_1} \cap
  Q_{h_2}$ such that $q_{\star} \in q + \left\langle p \right\rangle$.
\end{proposition}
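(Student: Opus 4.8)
The plan is to exploit a single fact: although the reversors act on Cartesian coordinates through the sign changes encoded in $\sigma$, the Jacobi elliptic coordinates are blind to those sign changes. Indeed, formula~(\ref{eq:EllipticCoordinates3D}) expresses each $x_i^2$ — not $x_i$ — in terms of $\left( e, h_1, h_2 \right)$, so the map $q \mapsto \mathfrak{q}$ factors through $q \mapsto \left( x_1^2, x_2^2, x_3^2 \right)$ and is therefore invariant under every reflection $\sigma \in \Sigma$. Consequently, any ST — which is symmetric under a reflection composed with a time reversal — becomes a \emph{pure} time-reversal symmetric curve once it is drawn in elliptic coordinates. The whole proof will reduce to reading off the consequences of this evenness together with the oscillation Theorem~\ref{thm:Oscillations}, and nothing in it is special to dimension three.

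First I would locate the symmetry centre. Writing the orbit as $\left( q_j, p_j \right) = f^j \left( q, p \right)$ with $\left( q, p \right) \in \FixedSet \left( r \right)$, Theorem~\ref{thm:SymmetricOrbits} (in the concrete form recorded in Table~\ref{tab:Propertiesqp_2D} and its $3$D analogue) yields $q_{- j} = \sigma q_j$ when $r = \tilde{r}_{\sigma} \in \left\{ R, R_l, R_{m n} \right\}$, and $q_{- \left( j + 1 \right)} = \sigma q_j$ when $r = \hat{r}_{\sigma} \in \left\{ f \circ R_l, f \circ R_{m n}, f \circ R_{1 2 3} \right\}$. In the first case the centre is the impact point $q_{\star} = q_0 = q$; in the second it is the midpoint $q_{\star} = \frac{1}{2} \left( q_0 + \sigma q_0 \right)$ of the segment $\left[ q_{- 1}, q_0 \right] = \left[ \sigma q_0, q_0 \right]$, which lies in $V^+_{\sigma}$ because $\left( \Identity - \sigma \right) q_0 \in V^-_{\sigma}$ is orthogonal to $V^+_{\sigma}$. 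In both cases $q_{\star}$ lies on the line $q + \left\langle p \right\rangle$, exactly as demanded by the statement. Parametrising the trajectory by arc length with $q \left( 0 \right) = q_{\star}$, and using that $\sigma$ is an isometry, these relations become the continuous identity $q \left( - t \right) = \sigma q \left( t \right)$; hence, by the $\sigma$-invariance above, $\mathfrak{q} \left( - t \right) = \mathfrak{q} \left( t \right)$, so every coordinate $\mu_i \left( t \right)$ is an even function of $t$.

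I would then close the argument with Theorem~\ref{thm:Oscillations}. For $i = 1, 2$ the coordinate $\mu_i$ is smooth everywhere, so evenness forces $\mu_i' \left( 0 \right) = 0$, and item~\ref{item:CriticalPoints}) turns this into $\mu_i \left( 0 \right) \in \left\{ c_{2 i}, c_{2 i + 1} \right\}$, an endpoint of $I_i$. For $\mu_0 = e$ there are two possibilities: if $q_{\star} \in Q$ then $e \left( 0 \right) = 0 = c_0$ is already an endpoint, covering precisely the point where $\mu_0$ may fail to be smooth; if $q_{\star} \not\in Q$ then $\mu_0$ is smooth at $0$, so $e' \left( 0 \right) = 0$ again gives $e \left( 0 \right) = c_1$. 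In every case $\mathfrak{q}_{\star} = \mathfrak{q} \left( 0 \right)$ has all three coordinates at the ends of their intervals, that is, it is a vertex of $\mathcal{C}_{\lambda}$, while Lemma~\ref{lem:TripleIntersection} guarantees $q_{\star} \in Q_e \cap Q_{h_1} \cap Q_{h_2}$. The hard part, and the only place requiring genuine care, is the passage from the discrete reflection identities to the continuous statement $q \left( - t \right) = \sigma q \left( t \right)$, together with the bookkeeping of the symmetry centre for the $\hat{r}_{\sigma}$ reversors, where it sits at a half-integer time step and need not be an impact point. Once that is settled, the reasoning uses nothing beyond the $\sigma$-invariance of elliptic coordinates and the critical-point dichotomy of Theorem~\ref{thm:Oscillations}, which is exactly what makes it transcribe verbatim to $\Rset^{n + 1}$.
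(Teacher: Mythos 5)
Your proposal is correct and follows essentially the same route as the paper's own proof: centre an arc-length parameterization at the impact point (for $R$, $R_l$, $R_{mn}$) or at the midpoint of $[\hat q, q]$ (for the $f\circ$ reversors), observe that the reflection symmetry $\tilde q(-t)=\sigma\tilde q(t)$ becomes evenness of $\tilde{\mathfrak q}(t)$ because elliptic coordinates ignore signs, and then invoke item~\ref{item:CriticalPoints}) of Theorem~\ref{thm:Oscillations} plus Lemma~\ref{lem:TripleIntersection} to conclude that $\tilde{\mathfrak q}(0)$ is a vertex visited by the line $q+\langle p\rangle$. The only difference is presentational: you derive the continuous symmetry from the discrete orbit relations and track the smoothness of $\mu_0$ via the dichotomy $q_\star\in Q$ versus $q_\star\notin Q$, exactly as the paper does through Table~\ref{tab:Propertiesq_3D}.
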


\begin{proof}
  First, we consider the reversors of the form $R$, $R_l$ or $R_{m n}$. Let
  $\tilde{q} : \Rset \rightarrow \Rset^3$, $\tilde{q} \left( t
  \right) = \left( \tilde{x}_1 \left( t \right), \tilde{x}_2 \left( t
  \right), \tilde{x}_3 \left( t \right) \right)$, be any of the
  arc-length parameterizations of the billiard trajectory through
  the impact point $q_{\star} : = q$ with unit velocity $p$,
  determined by the conditions $\tilde{q}(0) = q_{\star}$
  and $\tilde{q}'(0) = p$.
  Clearly, it is smooth except at
  impact points. Besides, it has the symmetry properties listed in
  Table~\ref{tab:Propertiesq_3D}, which can be deduced from comments after
  Remark~\ref{rem:CentralAxialSpecular} and elementary geometric arguments.
  
\begin{table}
\caption{\label{tab:Propertiesq_3D}Symmetry properties of the
arc-length parameterization $\tilde{q}(t)$ of $r$-STs
such that $\tilde{q}(0) = q_{\star}$ and $\tilde{q}'(0) = p$.}
\begin{ruledtabular}
    \begin{tabular}{@{\hspace{1.9cm}}l@{\hspace{0.3cm}}l@{\hspace{2.1cm}}}
      $r$ & Symmetry properties\\
      \hline \\ [-0.3cm]
      $R$ & $\tilde{q} \left( - t \right) = \tilde{q} \left( t \right)$\\
      $R_l$ & $\tilde{q} \left( - t \right) = \sigma_l  \tilde{q} \left( t
      \right)$\\
      $R_{m n}$ & $\tilde{q} \left( - t \right) = \sigma_{m n}  \tilde{q}
      \left( t \right)$\\
      $f \circ R_l$ & $\tilde{q} \left( - t \right) = \sigma_l  \tilde{q}
      \left( t \right)$\\
      $f \circ R_{m n}$ & $\tilde{q} \left( - t \right) = \sigma_{m n} 
      \tilde{q} \left( t \right)$\\
      $f \circ R_{1 2 3}$ & $\tilde{q} \left( - t \right) = - \tilde{q} \left(
      t \right)$\\
    \end{tabular}
\end{ruledtabular}
  \end{table}
  
  Let $\tilde{\mathfrak{q}} : \Rset \rightarrow \mathcal{C}_{\lambda}$,
  $\tilde{\mathfrak{q}} \left( t \right) = ( \tilde{e} \left( t \right),
  \tilde{h}_1 \left( t \right), \tilde{h}_2 \left( t \right) )$, be the
  corresponding parameterization in elliptic coordinates. We know that
  components $\tilde{e} \left( t \right)$, $\tilde{h}_1 \left( t \right)$, and
  $\tilde{h}_2 \left( t \right)$, oscillate in some intervals in such a way
  that their only critical points are attained at the extremes of these
  intervals. The cuboid $\mathcal{C}_{\lambda}$ is the product of these three
  intervals. Besides, the functions $\tilde{h}_1 \left( t \right)$ and
  $\tilde{h}_2 \left( t \right)$ are smooth everywhere.
  
  The above symmetries of $\tilde{q} \left( t \right)$ imply that
  $\tilde{\mathfrak{q}} \left( - t \right) = \tilde{\mathfrak{q}} \left( t
  \right)$, since elliptic coordinates do not distinguish among the eight
  octants in $\Rset^3$. Therefore,
  \[ \tilde{h}'_1 \left( 0 \right) = 0, \qquad \tilde{h}'_2 \left( 0
     \right) = 0. \]
  In addition, $\tilde{e} \left( 0 \right) = 0$, since we have taken
  $q_{\star} = q \in Q.$ Hence, $\mathfrak{q}_{\star} = \left( e, h_1, h_2
  \right) := \tilde{\mathfrak{q}} \left( 0 \right) = ( 0,
  \tilde{h}_1 \left( 0 \right), \tilde{h}_1 \left( 0 \right) )$ is a
  vertex of $\mathcal{C}_{\lambda}$; see item~\ref{item:CriticalPoints}) of
  Theorem~\ref{thm:Oscillations}.
  
  It remains to consider reversors of the form $f \circ R_l$ $f \circ R_{m n}$
  and $f \circ R_{1 2 3}$. Let $\tilde{q} : \Rset \rightarrow
  \Rset^3$ be the arc-length parameterization of the billiard trajectory
  that begins at the middle point $q_{\star} := \left( q + \hat{q}
  \right) / 2$ with unit velocity $p$. We recall that $\hat{q}$ is the
  previous impact point, that is, $Q \cap \left( q + \langle p \rangle \right)
  = \{q, \hat{q} \}$.
  
  The symmetry properties of these parameterizations are also listed in
  Table~\ref{tab:Propertiesq_3D}. For instance, if $r = f \circ R_{1 2 3}$, then
  $q_{\star} = \left( 0, 0, 0 \right)$, which implies $\tilde{q} \left( - t
  \right) = - \tilde{q} \left( t \right)$. Then, we apply exactly the same
  argument as before, with just one difference. The function $\tilde{e} \left(
  t \right)$ is smooth at $t = 0$, because $\tilde{q} \left( 0 \right) =
  q_{\star} \not\in Q$ and $\tilde{e} \left( 0 \right) \neq 0$. Hence, we also
  get that $\tilde{e}' \left( 0 \right) = 0$, so $\mathfrak{q}_{\star} =
  \left( e, h_1, h_2 \right) := \tilde{\mathfrak{q}} \left( 0 \right) =
  ( \tilde{e} \left( 0 \right), \tilde{h}_1 \left( 0 \right), \tilde{h}_1
  \left( 0 \right) )$ is a vertex of $\mathcal{C}_{\lambda}$.
  
  Finally, the intersection property
  $q_{\star} \in Q_e \cap Q_{h_1} \cap Q_{h_2}$ follows
  from Lemma~\ref{lem:TripleIntersection}.
\end{proof}

\begin{corollary}
  Nonsingular STs inside triaxial ellipsoids of $\Rset^3$ are
  characterized as trajectories passing, in elliptic coordinates, through some
  vertex of their cuboid.
\end{corollary}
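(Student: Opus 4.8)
The plan is to read the corollary as the biconditional ``a nonsingular trajectory is symmetric $\Longleftrightarrow$ it passes, in elliptic coordinates, through a vertex of its cuboid'', and to obtain the two implications from the two preceding propositions. The forward implication is already contained in Proposition~\ref{prop:Reversor2Vertex}: if $O$ is a nonsingular $r$-SO through $(q,p) \in \FixedSet(r)$, that proposition produces a vertex $\mathfrak{q}_{\star}$ of $\mathcal{C}_{\lambda}$ together with a point $q_{\star} \in Q_e \cap Q_{h_1} \cap Q_{h_2}$ lying on the line $q + \langle p \rangle$. Since $q_{\star}$ sits on a segment of the trajectory and its elliptic coordinates are exactly $\mathfrak{q}_{\star}$, the trajectory visits that vertex. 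For this half I would simply invoke Proposition~\ref{prop:Reversor2Vertex}.

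For the converse I would start from a nonsingular trajectory $T$ sharing caustics $Q_{\lambda_1}$ and $Q_{\lambda_2}$ that passes through some vertex, i.e.\ some point $q_{\star}$ of $T$ ---either an impact point or the midpoint of one of its segments--- has elliptic coordinates equal to a vertex $\mathfrak{q}_{\star}$ of $\mathcal{C}_{\lambda}$. Proposition~\ref{prop:Vertex2Reversor} then attaches to $\mathfrak{q}_{\star}$ (and to the chosen $q_{\star}$) a reversor $r$ together with a phase point $(q,p) \in \FixedSet(r)$ whose line $q + \langle p \rangle$ passes through $q_{\star}$ and is tangent to both caustics. Once I know that $(q,p)$ is a genuine phase point of $T$, Theorem~\ref{thm:SymmetricOrbits} immediately gives that $T$ is $r$-symmetric, finishing the converse.

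The crux, and the step I expect to be the main obstacle, is exactly this identification of $(q,p)$ as a point of $T$. The segment of $T$ through $q_{\star}$ and the line $q + \langle p \rangle$ supplied by Proposition~\ref{prop:Vertex2Reversor} are both tangent to the same pair of confocal caustics and both pass through $q_{\star}$, so I must argue that they coincide. I would handle this through the same case split used in Proposition~\ref{prop:Vertex2Reversor} (by the value of $e$ and the number of coordinate planes through $q_{\star}$), using the confocal geometry of Theorem~\ref{thm:Jacobi}: at a vertex point the tangency-to-both-caustics condition becomes rigid. For instance, when $e = 0$ and $q_{\star} \in Q \cap Q_{\lambda_1} \cap Q_{\lambda_2}$ the only admissible direction is the normal to $Q$, forcing $(q_{\star}, p) \in \FixedSet(R)$; and when $q_{\star}$ lies on one or two coordinate planes, or off $Q$, the analogous rigidity pins the relevant segment of $T$ down to the reconstructed line. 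The remark following Proposition~\ref{prop:Vertex2Reversor}, that all of its implications are reversible, is precisely what legitimizes reading Table~\ref{tab:VertexesPointsReversors_3D} from right to left and hence this matching.

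Finally I would observe that the two implications together yield the stated characterization, and that no new computation is needed beyond what Propositions~\ref{prop:Vertex2Reversor} and~\ref{prop:Reversor2Vertex} already supply: the corollary is essentially their conjunction, once the coincidence of the two tangent lines through $q_{\star}$ has been secured.
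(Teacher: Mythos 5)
Your proposal is correct and takes essentially the same route as the paper: the corollary is read as the conjunction of Proposition~\ref{prop:Reversor2Vertex} (ST implies vertex) and Proposition~\ref{prop:Vertex2Reversor} (vertex implies ST), with the converse direction secured exactly as the paper secures it, namely by the observation that all implications in the proof of Proposition~\ref{prop:Vertex2Reversor} can be reversed, so Table~\ref{tab:VertexesPointsReversors_3D} may be read from right to left. The ``crux'' you isolate---that the trajectory's segment through $q_{\star}$ must coincide with the reconstructed line $q + \langle p \rangle$ because tangency to both caustics at a vertex point is rigid---is precisely what that remark encapsulates, so your argument matches the paper's, if anything spelling out the matching step more explicitly.
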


\subsection{Forbidden reversors for each type of caustics}%
\label{ssec:ForbiddenReversors_3D}

Next, we emphasize that, although there are 14 non\-empty symmetry sets, once fixed
the caustic type, some of them cannot take place. To be more precise, there are ten
forbidden symmetry sets for H1H1-caustics, but only six otherwise. These results
have been obtained along the proof of Proposition~\ref{prop:Vertex2Reversor}, and we
organize them in Table~\ref{tab:ForbiddenReversors3D}. Forbidden reversors appear in
\tmtextit{dual couples} ---couples whose symmetry sets are interchanged by the
dual transformation $g$.

\begin{table}
\caption{\label{tab:ForbiddenReversors3D}Forbidden reversors for each
type of caustics.}
\begin{ruledtabular}
\begin{tabular}{@{\hspace{1cm}}l@{\hspace{0.3cm}}@{\hspace{0.3cm}}l@{\hspace{1cm}}}
Type & Forbidden reversors \\
[0.1cm] \hline
EH1 & $R$, $f \circ R_{123}$, $R_1$,
      $f \circ R_{23}$, $R_{23}$, $f \circ R_1$ \\
EH2 & $R$, $f \circ R_{123}$, $R_3$, $f \circ R_{12}$,
      $R_{1 2}$, $f \circ R_3$\\
H1H1 & \begin{tabular}{l} $R$, $R_1$, $R_{12}$, $R_{13}$, $R_{23}$, $f \circ R_{123}$ \\
                          $f \circ R_1$, $f \circ R_2$, $f\circ R_3$, $f \circ R_{23}$
       \end{tabular}                       \\
H1H2 & $R_1$, $f \circ R_{23}$, $R_{13}$, $f \circ R_2$,
       $R_{12}$, $f \circ R_3$\\
\end{tabular}
\end{ruledtabular}
\end{table}

A glance at cuboid~(\ref{eq:Cuboid3D}) and
Table~\ref{tab:VertexesPointsReversors_3D} shows that,
two different vertexes of the same cuboid cannot be associated to
the same reversor, but for H1H1-caustics.
This agrees with the previous discussion, where we found $8 = 14 - 6$ feasible
symmetry sets for caustics of type EH1, EH2, and H1H2, but only $4 = 14 - 10$ for
H1H1-caustics.

\subsection{Characterization and classification of SPTs}

Let us characterize and classify SPTs inside triaxial ellipsoids of
$\Rset^3$. We classify them by the caustic type and the couple of
vertexes of the cuboid they connect.

\begin{theorem}
  \label{thm:SPTs_3D}Nonsingular SPTs inside triaxial ellipsoids of
  $\Rset^3$ are characterized as trajectories connecting, in elliptic
  coordinates, two different vertexes of their cuboid. All nonsingular SPOs
  are doubly SPOs, but a few with H1H1-caustics. Besides, there are exactly
  112 classes of nonsingular SPTs,
  listed in Tables~\ref{tab:ClassificationSPTsEH1}--\ref{tab:ClassificationSPTsH1H1}.
\end{theorem}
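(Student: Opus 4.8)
The plan is to transcribe the proof of Theorem~\ref{thm:SPTs_2D} to three dimensions, replacing the rectangle by the cuboid $\mathcal{C}_\lambda$ and the pair of winding numbers by the triple $(m_0,m_1,m_2)$. The characterization of STs as vertex-visiting trajectories is already in hand through Proposition~\ref{prop:Reversor2Vertex} and its corollary, so what remains for the periodic case is to upgrade \emph{one} vertex to \emph{two different} vertexes, to decide when their two reversors coincide, and to count and realize the classes. I would start from a nonsingular $r$-SPT with caustic parameter $\lambda\in\Lambda$, winding numbers $(m_0,m_1,m_2)$ and length period $L_0$, take the arc-length parameterization $\tilde{q}(t)$ of Proposition~\ref{prop:Reversor2Vertex} normalized so that $\tilde{q}(0)=q_\star$ sits at the vertex $V_0:=\tilde{\mathfrak{q}}(0)$, and pass to elliptic coordinates. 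The key point is that every symmetry in Table~\ref{tab:Propertiesq_3D} collapses to the single relation $\tilde{\mathfrak{q}}(-t)=\tilde{\mathfrak{q}}(t)$, because elliptic coordinates do not distinguish octants; thus $\tilde{\mathfrak{q}}$ is an even, $L_0$-periodic curve inside the cuboid.

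Next I would extract a second vertex. Evenness together with $L_0$-periodicity forces evenness about $L_0/2$, so the smooth coordinates satisfy $\tilde{h}_1'(L_0/2)=\tilde{h}_2'(L_0/2)=0$, while $\tilde{e}$ necessarily sits at an endpoint of $I_0$ at $t=L_0/2$ (if $\tilde{e}$ is smooth there its derivative vanishes; if not, $t=L_0/2$ is an impact and $\tilde{e}=0$). By item~\ref{item:CriticalPoints}) of Theorem~\ref{thm:Oscillations} all three coordinates are then at endpoints, so $V_{1/2}:=\tilde{\mathfrak{q}}(L_0/2)$ is a vertex. Counting half-oscillations shows that $\tilde{e},\tilde{h}_1,\tilde{h}_2$ return to their starting endpoints at $t=L_0/2$ precisely for the coordinates whose winding number is even, landing on the opposite endpoint otherwise; hence $V_{1/2}\neq V_0$ unless all $m_i$ are even. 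In that remaining all-even regime item~\ref{item:HalfPeriod}) of Theorem~\ref{thm:Oscillations} makes $\tilde{\mathfrak{q}}$ have period $L_0/2$, so it is also even about $L_0/4$ and $V_{1/4}:=\tilde{\mathfrak{q}}(L_0/4)$ is again a vertex; since not all winding numbers are multiples of four (item~\ref{item:NotMultiplesOf4}) of the same theorem) at least one coordinate flips and $V_{1/4}\neq V_0$. Either way the SPT joins two different vertexes, which is the stated characterization.

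I would then read off the dichotomy and the count. The two vertexes determine two reversors through Table~\ref{tab:VertexesPointsReversors_3D}, and by the remark following Table~\ref{tab:ForbiddenReversors3D} distinct vertexes of a nonsingular cuboid carry distinct reversors for every type but H1H1; hence every SPO is doubly symmetric except for the H1H1 orbits joining the two vertexes that share one of the four feasible reversors $R_2,R_3,f\circ R_{12},f\circ R_{13}$, which are the promised few exceptions. For the enumeration, each cuboid has $2^3=8$ vertexes and therefore $\binom{8}{2}=28$ unordered pairs, so the four caustic types give at most $4\cdot 28=112=2^{2n}(2^{n+1}-1)$ classes. For EH1, EH2 and H1H2 the eight vertexes carry eight distinct reversors, whereas for H1H1 they carry only the four reversors above, each shared by two vertexes; but the number of vertex pairs is $28$ in all four cases, matching $2^{n}(2^{n+1}-1)=\binom{2^{n+1}}{2}$ per type.

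Finally I would realize each class by a winding-number argument, exactly as the parity analysis suggests: fixing a type and a target pair $(V_0,V')$ differing in a coordinate set $S\subseteq\{0,1,2\}$, I would prescribe the parities so that $\{i:m_i\text{ odd}\}=S$, or $\{i:m_i\equiv 2\bmod 4\}=S$ in the all-even regime, compatibly with the even-parity rule in item~\ref{item:WindingNumbers}) of Theorem~\ref{thm:Oscillations}, and then invert the frequency map~(\ref{eq:FrequencyWindingNumbers}) to obtain $\lambda$ together with an explicit starting point from Table~\ref{tab:VertexesPointsReversors_3D}; the resulting representatives are collected in Tables~\ref{tab:ClassificationSPTsEH1}--\ref{tab:ClassificationSPTsH1H1}. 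I expect the main obstacle to be precisely this realizability bookkeeping: one must verify that, type by type, the forced-even constraints of item~\ref{item:WindingNumbers}) still leave enough freedom to reach every one of the $28$ target parity patterns, so that no class is vacuous and the value $112$ is attained \emph{exactly} rather than merely as an upper bound.
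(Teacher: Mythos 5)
Your proposal reproduces the paper's own argument almost step for step: the even, $L_0$-periodic curve $\tilde{\mathfrak{q}}(t)$ in elliptic coordinates, the second vertex at $t=L_0/2$, the parity equivalences $h_i^{\star}=h_i^{\bullet}\Leftrightarrow m_i\in2\Zset$, the fall-back to $t=L_0/4$ via items~\ref{item:HalfPeriod}) and~\ref{item:NotMultiplesOf4}) of Theorem~\ref{thm:Oscillations}, the $4\times 28=112$ count, the H1H1 exceptions, and realizability by inverting the frequency map. However, it establishes only one direction of the stated characterization. ``Characterized as trajectories connecting two different vertexes'' is an equivalence, and you never show the converse: that a nonsingular trajectory which connects, in elliptic coordinates, two different vertexes of its cuboid is actually \emph{periodic}. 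This does not follow from Proposition~\ref{prop:Reversor2Vertex} or its corollary (passing through one vertex only yields symmetry), nor can it be deduced from item~\ref{item:SPOs}) of Theorem~\ref{thm:SymmetricOrbits}: that item requires the two points to lie on the symmetry sets of two \emph{associated} reversors $\tilde{r}_{\sigma}$ and $\hat{r}_{\sigma}=f\circ\tilde{r}_{\sigma}$ of a single factorization, whereas here the two vertexes generically carry reversors from different factorizations of $f$ (e.g.\ the couple $\left\{ R_3, f\circ R_{12}\right\}$ for EH1-caustics), so no single factorization covers both points.

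The paper closes this with a short ``two mirrors'' argument that your plan needs. If $\mathfrak{q}_{\star}=\tilde{\mathfrak{q}}(t_{\star})$ and $\mathfrak{q}_{\bullet}=\tilde{\mathfrak{q}}(t_{\bullet})$ are vertexes with $t_{\star}<t_{\bullet}$, then by Proposition~\ref{prop:Vertex2Reversor} the trajectory is symmetric with respect to two reversors, and repeating the reasoning of Proposition~\ref{prop:Reversor2Vertex} the curve $\tilde{\mathfrak{q}}$ is even about both parameters: $\tilde{\mathfrak{q}}(2t_{\star}+t)=\tilde{\mathfrak{q}}(-t)=\tilde{\mathfrak{q}}(2t_{\bullet}+t)$ for all $t$. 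Composing the two reflections gives $\tilde{\mathfrak{q}}(t+T)=\tilde{\mathfrak{q}}(t)$ with $T=2\left(t_{\bullet}-t_{\star}\right)$, so $\tilde{\mathfrak{q}}$ is $T$-periodic and the trajectory itself is periodic with period $T$ or $2T$ by item~\ref{item:WindingNumbers}) of Theorem~\ref{thm:Oscillations}. This step is short but it is what makes the first sentence of the theorem true; without it your classes are defined on objects (vertex-connecting trajectories) not yet known to be SPTs. Everything else in your proposal --- the doubly-SPO dichotomy with its H1H1 exceptions, the counting, and the realizability bookkeeping, which indeed the paper also treats somewhat informally via the frequency map and the conjectured ordering~(\ref{eq:Conjecture}) --- matches the paper's proof.
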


\begin{proof}
  Let $O$ be a nonsingular $r$-SPO through a point $\left( q_{}, p \right) \in
  \FixedSet \left( r \right)$, for some reversor $r : M \rightarrow M$, with
  winding numbers $m_0, m_1, m_2$. Let $\lambda = \left( \lambda_1, \lambda_2
  \right) \in \Lambda$ be its caustic parameter. Let $L_0$ be its length.
  
  Let $\tilde{q} : \Rset \rightarrow \Rset^3$ be the arc-length
  parameterization of the billiard trajectory that begins at the distinguished
  point $q_{\star}$ introduced in Proposition~\ref{prop:Reversor2Vertex}.
  
  Let $\tilde{\mathfrak{q}} : \Rset \rightarrow \mathcal{C}_{\lambda}$
  be the corresponding parameterization in elliptic coordinates. Then
  $\tilde{\mathfrak{q}} \left( t \right)$ is even ---see the proof of
  Proposition~\ref{prop:Reversor2Vertex}--- and $L_0$-periodic
  ---see Theorem~\ref{thm:Oscillations}.
  
  The key trick is as simple as to realize that
  \[ \tilde{\mathfrak{q}} \left( L_0 / 2 - t \right) = \tilde{\mathfrak{q}}
     \left( t - L_0 / 2 \right) = \tilde{\mathfrak{q}} \left( t + L_0 / 2
     \right), \qquad \forall t \in \Rset. \]
  Hence, $\tilde{\mathfrak{q}} \left( t \right)$ is even with respect to $L_0 /
  2$, and so
  \[ \mathfrak{q}_{\star} = \left( e^{\star}, h_1^{\star}, h^{\star}_2 \right)
     := \tilde{\mathfrak{q}} \left( 0 \right), \quad
     \mathfrak{q}_{\bullet} = \left( e^{\bullet}, h_1^{\bullet}, h^{\bullet}_2
     \right) := \tilde{\mathfrak{q}} \left( L_0 / 2 \right) \]
  are vertexes of the cuboid $\mathcal{C}_{\lambda}$. The proof for
  $\mathfrak{q}_{\star}$ was already explained in
  Proposition~\ref{prop:Reversor2Vertex};
  the proof for $\mathfrak{q}_{\bullet}$ is equal.
  
  Now, taking into account the interpretation of the winding numbers as, the
  number of complete oscillations of each elliptic coordinate when the
  arc-length parameter $t$ moves from $0$ to $L_0$
  ---see item~\ref{item:WindingNumbers}) of Theorem~\ref{thm:Oscillations}---,
  we consider two cases:
  \begin{itemize}
    \item If some winding number is odd, then $\mathfrak{q}_{\bullet} \neq
    \mathfrak{q}_{\star}$, because
    \[ e^{\star} = e^{\bullet} \Leftrightarrow m_0 \in 2\Zset,
       \qquad h^{\star}_i = h^{\bullet}_i \Leftrightarrow m_i \in
       2\Zset. \]
    \item If all winding numbers are even, then $\tilde{\mathfrak{q}} \left( t
    \right)$ has period $L_0 / 2$ and $\mathfrak{q}_{\bullet}
    =\mathfrak{q}_{\star}$
    ---see item~\ref{item:WindingNumbers}) of Theorem~\ref{thm:Oscillations}---,
    in which case we repeat the same strategy
    to find a second vertex $\mathfrak{q}_{\circ} = \left( e^{\circ},
    h_1^{\circ}, h^{\circ}_2 \right) := \tilde{\mathfrak{q}} \left( L_0 /
    4 \right)$ such that
    \[ e^{\star} = e^{\circ} \Leftrightarrow m_0 \in 4\Zset,
       \qquad h^{\star}_i = h^{\circ}_i \Leftrightarrow m_i \in
       4\Zset. \]
    Then, $\mathfrak{q}_{\circ} \neq \mathfrak{q}_{\star}$
    ---see item~\ref{item:WindingNumbers}) of Theorem~\ref{thm:Oscillations}.
  \end{itemize}
  Therefore, the trajectory connects two different vertexes in both cases.
  Next, we prove that any trajectory $\tilde{q} \left( t \right)$ connecting,
  in elliptic coordinates, two vertexes is an SPT. Thus, we assume that
  $\mathfrak{q}_{\star} = \tilde{\mathfrak{q}} \left( t_{\star} \right)$ and
  $\mathfrak{q}_{\bullet} = \tilde{\mathfrak{q}} \left( t_{\bullet} \right)$
  are two different vertexes with $t_{\star} < t_{\bullet}$. From
  Proposition~\ref{prop:Vertex2Reversor} we know that this trajectory is
  $r_{\star}$-symmetric and $r_{\bullet}$-symmetric for some reversors
  $r_{\star}$ and $r_{\bullet}$. The case $r_{\star} = r_{\bullet}$ is not
  excluded. Repeating the reasoning in Proposition~\ref{prop:Reversor2Vertex},
  we deduce that $\tilde{\mathfrak{q}} \left( t \right)$ is symmetric with
  respect to $t = t_{\star}$ and $t = t_{\bullet}$. In particular,
  $\tilde{\mathfrak{q}} \left( 2 t_{\star} + t \right) = \tilde{\mathfrak{q}}
  \left( - t \right)$ and $\tilde{\mathfrak{q}} \left( 2 t_{\bullet} + t
  \right) = \tilde{\mathfrak{q}} \left( - t \right)$ for all $t \in \Rset$.
  Set $T = 2 \left(t_{\bullet} - t_{\star} \right)$. Then
  \[ \tilde{\mathfrak{q}} \left( t + T \right) = \tilde{\mathfrak{q}} \left( 2
     t_{\bullet} + t - 2 t_{\star} \right) = \tilde{\mathfrak{q}} \left( 2
     t_{\star} - t \right) = \tilde{\mathfrak{q}} \left( t \right) \]
  for all $t \in \Rset$.
  Hence, $\tilde{\mathfrak{q}} \left( t \right)$ is $T$-periodic, and so,
  $\tilde{q} \left( t \right)$ is periodic with period $T$ or $2 T$; see
  item~\ref{item:WindingNumbers}) of Theorem~\ref{thm:Oscillations}.
  This proves the characterization of SPTs.
  
  Once we have established that any SPT connects two different vertexes of its
  cuboid, it is easy to deduce that, all SPOs inside triaxial ellipsoids of
  $\Rset^3$ whose type of caustics are EH1, EH2 or H1H2, must be doubly
  SPOs. Suffice it to recall that each vertex of the cuboid is associated to a
  different reversor for those three types.
  
  The number 112 comes from $4 \times 28$, since there are 4 types of caustics
  and any 3-dimensional cuboid has eight vertexes, and so $28 = \left( 8
  \times 7 \right) / 2$ couples of vertexes.
  
  We must check that none of the 112 classes of SPTs is fictitious. To do it,
  we present an algorithm that provides a minimal SPT of each class.
  \tmtextit{Minimal} means that it has the smallest possible period. The key
  step of the algorithm is to properly choose the winding numbers $\left( m_0,
  m_1, m_2 \right)$. In accordance with the previous discussion, we
  distinguish four kinds of winding number. Namely, even: ``e'', odd: ``o'',
  multiple of four: ``f'', and even but not multiple of four: ``t''. For
  instance, all winding numbers of kind $\left( \text{t}, \text{t}, \text{t}
  \right)$ connect opposite vertexes of their cuboids. The kind $\left(
  \text{f}, \text{f}, \text{f} \right)$ never takes place
  ---see item~\ref{item:WindingNumbers}) of Theorem~\ref{thm:Oscillations}.
  
  Let us explain the algorithm by using an example. We want to connect
  opposite vertexes $\mathfrak{q}_{\star} = \left( e^{\star}, h_1^{\star},
  h^{\star}_2 \right) = \left( 0, \lambda_2, a_3 \right)$ and
  $\mathfrak{q}_{\bullet} = \left( e^{\bullet}, h_1^{\bullet}, h^{\bullet}_2
  \right) = \left( \lambda_1, a_1, a_2 \right)$ of the cuboid
  \[ \EuScript{\mathcal{C}}_{\lambda} = \left[ 0, \lambda_1 \right] \times
     \left[ a_1, \lambda_2 \right] \times \left[ a_2, a_3 \right], \]
  which corresponds to caustic type EH1.

\begin{table}
\caption{\label{tab:ClassificationSPTsEH1}Classification of SPTs for
  caustic type EH1. Notation for kinds ``e'', ``o'', ``f'', and ``t'' is
  described in the text. For each kind of winding numbers we list its minimal
  representative, and its four couples of reversors, whose order inside the
  couple is irrelevant.}
\begin{ruledtabular}
  \begin{tabular}{@{\hspace{0.5cm}}c@{\hspace{0.3cm}}@{\hspace{0.3cm}}l@{\hspace{0.5cm}}}
    \begin{tabular}{c}$( m_0, m_1, m_2 )$\\ kind/minimal\end{tabular} &
      Couples of reversors\\\hline
    $\left( \text{t}, \text{t}, \text{t} \right)$ & $\left\{ R_3, f \circ R_{1
    2} \right\}$, $\left\{ R_2, f \circ R_{1 3} \right\}$\\
    $\left( 10, 6, 2 \right)$ & $\left\{ R_{1 3}, f \circ R_2 \right\}$,
    $\left\{ R_{1 2}, f \circ R_3 \right\}$\\
    \hline
    $\left( \text{t}, \text{t}, \text{f} \right)$ & $\left\{ R_3, f \circ R_{1
    3} \right\}$, $\left\{ R_2, f \circ R_{1 2} \right\}$\\
    $\left( 10, 6, 4 \right)$ & $\left\{ R_{1 3}, f \circ R_3 \right\}$,
    $\left\{ R_{1 2}, f \circ R_2 \right\}$\\
    \hline
    $\left( \text{t}, \text{f}, \text{t} \right)$ & $\left\{ R_3, f \circ R_2
    \right\}$, $\left\{ R_2, f \circ R_3 \right\}$\\
    $\left( 6, 4, 2 \right)$ & $\left\{ R_{1 3}, f \circ R_{1 2} \right\}$,
    $\left\{ R_{1 2}, f \circ R_{1 3} \right\}$\\
    \hline
    $\left( \text{o}, \text{e}, \text{e} \right)$ & $\left\{ R_3, f \circ R_3
    \right\}$, $\left\{ R_2, f \circ R_2 \right\}$\\
    $\left( 5, 4, 2 \right)$ & $\left\{ R_{1 3}, f \circ R_{1 3} \right\}$,
    $\left\{ R_{1 2}, f \circ R_{1 2} \right\}$\\
    \hline
    $\left( \text{f}, \text{t}, \text{t} \right)$ & $\left\{ R_3, R_{1 2}
    \right\}$, $\left\{ f \circ R_3, f \circ R_{1 3} \right\}$\\
    $\left( 8, 6, 2 \right)$ & $\left\{ R_2, R_{1 3} \right\}$, $\left\{ f
    \circ R_2, f \circ R_{1 3} \right\}$\\
    \hline
    $\left( \text{f}, \text{t}, \text{f} \right)$ & $\left\{ R_3, R_{1 3}
    \right\}$, $\left\{ f \circ R_3, f \circ R_{1 3} \right\}$\\
    $\left( 8, 6, 4 \right)$ & $\left\{ R_2, R_{1 2} \right\}$, $\left\{ f
    \circ R_2, f \circ R_{1 2} \right\}$\\
    \hline
    $\left( \text{f}, \text{f}, \text{t} \right)$ & $\left\{ R_3, R_2
    \right\}$, $\left\{ f \circ R_3, f \circ R_2 \right\}$\\
    $\left( 8, 4, 2 \right)$ & $\left\{ R_{1 3}, R_{1 2} \right\}$, $\left\{ f
    \circ R_{1 3}, f \circ R_{1 2} \right\}$\\
  \end{tabular}
\end{ruledtabular}
\end{table}

\begin{table}
\caption{\label{tab:ClassificationSPTsEH2}Analogous of
Table~\ref{tab:ClassificationSPTsEH1} for caustic type EH2.}
\begin{ruledtabular}
  \begin{tabular}{@{\hspace{0.5cm}}c@{\hspace{0.3cm}}@{\hspace{0.3cm}}l@{\hspace{0.5cm}}}
    \begin{tabular}{c}$( m_0, m_1, m_2 )$\\ kind/minimal\end{tabular} &
      Couples of reversors\\\hline
    $\left( \text{t}, \text{t}, \text{t} \right)$ & $\left\{ R_1, f \circ R_{2
    3} \right\}$, $\left\{ R_2, f \circ R_{1 3} \right\}$\\
    $\left( 10, 6, 2 \right)$ & $\left\{ R_{1 3}, f \circ R_2 \right\}$,
    $\left\{ R_{2 3}, f \circ R_1 \right\}$\\
    \hline
    $\left( \text{t}, \text{t}, \text{f} \right)$ & $\left\{ R_1, f \circ R_2
    \right\}$, $\left\{ R_2, f \circ R_1 \right\}$\\
    $\left( 10, 6, 4 \right)$ & $\left\{ R_{13}, f \circ R_{2 3} \right\}$,
    $\left\{ R_{2 3}, f \circ R_{1 3} \right\}$\\
    \hline
    $\left( \text{t}, \text{f}, \text{t} \right)$ & $\left\{ R_1, f \circ R_{1
    3} \right\}$, $\left\{ R_2, f \circ R_{2 3} \right\}$\\
    $\left( 6, 4, 2 \right)$ & $\left\{ R_{1 3}, f \circ R_1 \right\}$,
    $\left\{ R_{2 3}, f \circ R_2 \right\}$\\
    \hline
    $\left( \text{o}, \text{e}, \text{e} \right)$ & $\left\{ R_1, f \circ R_1
    \right\}$, $\left\{ R_2, f \circ R_2 \right\}$\\
    $\left( 5, 4, 2 \right)$ & $\left\{ R_{1 3}, f \circ R_{1 3} \right\}$,
    $\left\{ R_{2 3}, f \circ R_{2 3} \right\}$\\
    \hline
    $\left( \text{f}, \text{t}, \text{t} \right)$ & $\left\{ R_1, R_{2 3}
    \right\}$, $\left\{ f \circ R_1, f \circ R_{2 3} \right\}$\\
    $\left( 8, 6, 2 \right)$ & $\left\{ R_2, R_{1 3} \right\}$, $\left\{ f
    \circ R_2, f \circ R_{1 3} \right\}$\\
    \hline
    $\left( \text{f}, \text{t}, \text{f} \right)$ & $\left\{ R_1, R_2
    \right\}$, $\left\{ f \circ R_1, f \circ R_2 \right\}$\\
    $\left( 8, 6, 4 \right)$ & $\left\{ R_{1 3}, R_{2 3} \right\}$, $\left\{
    f \circ R_{1 3}, f \circ R_{2 3} \right\}$\\
    \hline
    $\left( \text{f}, \text{f}, \text{t} \right)$ & $\left\{ R_1, R_{1 3}
    \right\}$, $\left\{ f \circ R_1, f \circ R_{1 3} \right\}$\\
    $\left( 8, 4, 2 \right)$ & $\left\{ R_2, R_{2 3} \right\}$, $\left\{ f
    \circ R_2, f \circ R_{2 3} \right\}$\\
  \end{tabular}
\end{ruledtabular}
\end{table}

\begin{table}
\caption{\label{tab:ClassificationSPTsH1H2}Analogous of
Table~\ref{tab:ClassificationSPTsEH1} for caustic type H1H2.}
\begin{ruledtabular}
  \begin{tabular}{@{\hspace{0.5cm}}c@{\hspace{0.3cm}}@{\hspace{0.3cm}}l@{\hspace{0.5cm}}}
    \begin{tabular}{c}$( m_0, m_1, m_2 )$\\ kind/minimal\end{tabular} &
      Couples of reversors\\\hline
    $\left( \text{t}, \text{t}, \text{t} \right)$ & $\left\{ R, f \circ R_{1 2
    3} \right\}$, $\left\{ R_2, f \circ R_{1 3} \right\}$\\
    $\left( 10, 6, 2 \right)$ & $\left\{ R_3, f \circ R_{1 2} \right\}$,
    $\left\{ R_{2 3}, f \circ R_1 \right\}$\\
    \hline
    $\left( \text{t}, \text{t}, \text{f} \right)$ & $\left\{ R, f \circ R_{1
    2} \right\}$, $\left\{ R_2, f \circ R_3 \right\}$\\
    $\left( 10, 6, 4 \right)$ & $\left\{ R_3, f \circ R_{1 2 3} \right\}$,
    $\left\{ R_{2 3}, f \circ R_{1 3} \right\}$\\
    \hline
    $\left( \text{t}, \text{f}, \text{t} \right)$ & $\left\{ R, f \circ R_{1
    3} \right\}$, $\left\{ R_3, f \circ R_1 \right\}$\\
    $\left( 6, 4, 2 \right)$ & $\left\{ R_2, f \circ R_{1 2 3} \right\}$,
    $\left\{ R_{2 3}, f \circ R_{1 2} \right\}$\\
    \hline
    $\left( \text{t}, \text{f}, \text{f} \right)$ & $\left\{ R, f \circ R_1
    \right\}$, $\left\{ R_2, f \circ R_{1 2} \right\}$\\
    $\left( 10, 8, 4 \right)$ & $\left\{ R_3, f \circ R_{1 3} \right\}$,
    $\left\{ R_{2 3}, f \circ R_{1 2 3} \right\}$\\
    \hline
    $\left( \text{f}, \text{t}, \text{t} \right)$ & $\left\{ R, R_{2 3}
    \right\}$, $\left\{ f \circ R_1, f \circ R_{1 2 3} \right\}$\\
    $\left( 8, 6, 2 \right)$ & $\left\{ R_2, R_3 \right\}$, $\left\{ f \circ
    R_{1 2}, f \circ R_{13} \right\}$\\
    \hline
    $\left( \text{f}, \text{t}, \text{f} \right)$ & $\left\{ R, R_2 \right\}$,
    $\left\{ f \circ R_{1 3}, f \circ R_{1 2 3} \right\}$\\
    $\left( 8, 6, 4 \right)$ & $\left\{ R_3, R_{2 3} \right\}$, $\left\{ f
    \circ R_1, f \circ R_{1 2} \right\}$\\
    \hline
    $\left( \text{f}, \text{f}, \text{t} \right)$ & $\left\{ R, R_3 \right\}$,
    $\left\{ f \circ R_{1 2}, f \circ R_{1 2 3} \right\}$\\
    $\left( 8, 4, 2 \right)$ & $\left\{ R_2, R_{2 3} \right\}$, $\left\{ f
    \circ R_1, f \circ R_{1 3} \right\}$\\
  \end{tabular}
\end{ruledtabular}
\end{table}
  
  This problem about vertexes is equivalent to find an SPT of type EH1
  that is simultaneously $R_3$-symmetric and $(f \circ R_{1 2})$-symmetric,
  because the reversors associated to $\mathfrak{q}_{\star}$ and
  $\mathfrak{q}_{\bullet}$ are $R_3$ and $f \circ R_{1 2}$,
  respectively (cf. Table~\ref{tab:VertexesPointsReversors_3D}).
  As we said above, winding numbers
  must be of kind $\left( \text{t}, \text{t}, \text{t} \right)$. Then, taking
  for granted conjecture~(\ref{eq:Conjecture}), the \tmtextup{minimal} choice
  is $\left( m_0, m_1, m_2 \right) = \left( 10, 6, 2 \right) $.
  Finally, if we solve Eq.~(\ref{eq:FrequencyWindingNumbers}) for
  $\lambda = \left( \lambda_1, \lambda_2 \right) \in E \times H_1$, with
  $\left( m_0, m_1, m_2 \right) = \left( 10, 6, 2 \right)$, and draw the
  billiard trajectory through the point $\left( q, p \right)$, given by the
  formulae of the second row of Table~\ref{tab:VertexesPointsReversors_3D} for
  $l = 3$, then we obtain a trajectory of period 10, type EH1,
  $R_3$-symmetric, and $(f \circ R_{1 2})$-symmetric.
  
  All winding numbers of kind $\left( \text{t}, \text{t}, \text{t} \right)$
  connect opposite vertexes of their cuboids. Hence, their SPTs of type EH1
  can display the following double symmetries: $\left\{ R_3, f \circ R_{12}
  \right\}$, $\left\{ R_2, f \circ R_{13} \right\}$, $\left\{ R_{13}, f \circ
  R_2 \right\}$, or $\left\{ R_{12}, f \circ R_3 \right\}$. They are obtained
  by looking at Table~\ref{tab:VertexesPointsReversors_3D} the relations
  between vertexes and reversors.
  
  Other kinds of winding numbers and other caustic types can be studied in a
  completely analogous way. And so, we complete
  Tables~\ref{tab:ClassificationSPTsEH1}--\ref{tab:ClassificationSPTsH1H1}.
\end{proof}

The caustic type H1H1 has the following peculiarity: both caustics
$Q_{\lambda_1}$ and $Q_{\lambda_2}$ are 1-sheet hyperboloids. We will denote
$Q_{\lambda_1}$ as the outer hyperboloid and $Q_{\lambda_2}$ as the inner one,
since $\lambda_1 < \lambda_2$. Note that two vertexes of the form $\left( e,
\lambda_1, h_2 \right)$ and $\left( e, \lambda_2, h_2 \right)$ are associated
to the same reversor; see Table~\ref{tab:VertexesPointsReversors_3D}. In
particular, SPTs connecting vertexes of this form are not doubly symmetric.
Likewise, different connections can give rise to the same couple of reversors.
We still classify those connections as different classes, because they have
different geometries. Symbols $\left( R_2, R_3 | \right)$,
$\left( R_3 | R_2 \right)$, $\left( R_2 | R_3
\right)$, and $\left( | R_2, R_3 \right)$ denote four classes
with caustic type H1H1 and the same couple of reversors: $\left\{ R_2, R_3
\right\}$. They are depicted in Table~\ref{tab:SPTs_3D_H1H1_period6}.
But the points $q_{\star}$, associated to each reversor
(Table~\ref{tab:VertexesPointsReversors_3D}) are on the outer
(respectively, inner)
hyperboloid when the reversor is written to the left (respectively, right) of
symbol $|$.
This notation is used in Table~\ref{tab:ClassificationSPTsH1H1}.{\medskip}

We have seen that, once fixed the caustic type, there exists a tight relation
between the symmetry sets associated to an SPT and the kind of winding
numbers. A similar result for standard-like maps was obtained by Kook and
Meiss.\cite{KookMeiss1989}
Compare Tables~\ref{tab:ClassificationSPTsEH1}--\ref{tab:ClassificationSPTsH1H1}
with their Table~I, where they list the 36 classes of SPOs for the 4D symplectic
Froeschl\'e map. Our 4D symplectic billiard map has more classes because a new
factor enters in the classification: the caustic type.

\begin{table}
\caption{\label{tab:ClassificationSPTsH1H1}Analogous of
Table~\ref{tab:ClassificationSPTsEH1} for caustic type H1H1. See comments after
  the proof of Theorem~\ref{thm:SPTs_3D} for the meaning of $(|)$.}
\begin{ruledtabular}
  \begin{tabular}{@{\hspace{0.5cm}}c@{\hspace{0.3cm}}@{\hspace{0.3cm}}l@{\hspace{0.5cm}}}
    \begin{tabular}{c}$( m_0, m_1, m_2 )$\\ kind/minimal\end{tabular} &
      Couples of reversors\\\hline
    $\left( \text{t}, \text{t}, \text{t} \right)$ & $\left( R_3  | f \circ R_{1 2} \right)$,
    $\left( R_2 | f \circ R_{13} \right)$\\
    $\left( 10, 6, 2 \right)$ & $\left( f \circ R_{1 2} | R_3
    \right)$, $\left( f \circ R_{13} | R_2 \right)$\\
    \hline
    $\left( \text{t}, \text{t}, \text{f} \right)$ & $\left( R_2  | f \circ R_{1 2} \right)$, $\left( R_3  | f \circ R_{1
    3} \right)$\\
    $\left( 10, 6, 4 \right)$ & $\left( f \circ R_{1 2} |  R_2
    \right)$, $\left( f \circ R_{1 3} | R_3 \right)$\\
    \hline
    $\left( \text{t}, \text{f}, \text{t} \right)$ & $\left( R_2, f \circ R_{1
    3} | \right)$, $\left( | R_2, f \circ R_{1 3}
    \right)$\\
    $\left( 6, 4, 2 \right)$ & $\left( R_3, f \circ R_{1 2} |
    \right)$, $\left( | R_3, f \circ R_{1 2} \right)$\\
    \hline
    $\left( \text{t}, \text{f}, \text{f} \right)$ & $\left( R_2, f \circ R_{1
    2} | \right)$, $\left( | R_2, f \circ R_{1 2}
    \right)$\\
    $\left( 10, 8, 4 \right)$ & $\left( R_3, f \circ R_{1 3} |
    \right)$, $\left( | R_3, f \circ R_{1 3} \right)$\\
    \hline
    $\left( \text{f}, \text{t}, \text{t} \right)$ & $( R_2 | R_3)$,
    $( f \circ R_{1 3} | f \circ R_{12})$\\
    $\left( 8, 6, 2 \right)$ & $\left( R_3   R_2 \right)$,
    $\left( f \circ R_{1 2}   f \circ R_{13} \right)$\\
    \hline
    $\left( \text{e}, \text{o}, \text{e} \right)$ & $(R_2 | R_2)$,
    $(f \circ R_{1 2} | f \circ R_{12})$\\
    $\left( 4, 3, 2 \right)$ & $(R_3 | R_3)$,
    $(f \circ R_{1 3} | f \circ R_{1 3})$\\
    \hline
    $\left( \text{f}, \text{f}, \text{t} \right)$ & $( R_2, R_3 | )$, $( f \circ R_{1 2}, f \circ R_{1 3}  | )$\\
    $\left( 8, 4, 2 \right)$ & $( | R_2, R_3 )$,
    $( | f \circ R_{12}, f \circ R_{13})$\\
  \end{tabular}
\end{ruledtabular}
\end{table}

\subsection{Gallery of minimal SPTs}

We have 112 classes of SPTs listed in
Tables~\ref{tab:ClassificationSPTsEH1}--\ref{tab:ClassificationSPTsH1H1},
so we tackle the task of finding a minimal representative in each class.
They have periods $m_0 \in \{4, 5, 6, 8, 10\}$.

The algorithm for the caustic type EH1 is:
1) Choose one of the seven minimal winding numbers $(m_0, m_1, m_2)$
   in Table~\ref{tab:ClassificationSPTsEH1};
2) Find caustic parameters $\lambda_1 \in E$ and $\lambda_2 \in H_1$
   such that Eq.~(\ref{eq:FrequencyWindingNumbers}) holds;
3) Choose one of the four couples of reversors $\{r, \breve{r}\}$
   in the corresponding row of Table~\ref{tab:ClassificationSPTsEH1};
4) Get a point $(q, p)$ from Table~\ref{tab:VertexesPointsReversors_3D} using
   the reversor $r$ (or $\breve{r}$); and
5) Draw the doubly SPT through $q$ with velocity $p$.

Only step~2) is problematic, because it requires the inversion of the frequency map.
The main obstacle is that Eq.~(\ref{eq:FrequencyWindingNumbers}) might not have
solution in $E \times H_1$ for some of the minimal winding numbers at hand.
Nevertheless, it is known~\cite{CasasRamirez2011} that
$\omega|_{E \times H_1}$ is a diffeomorphism and
\[ \lim_{a_1 \rightarrow 0^+} \omega \left( E \times H_1 \right) = \left\{
   \left( \omega_1, \omega_2 \right) \in \Rset^2 : 0 < \omega_2 <
   \omega_1 < \textstyle{\frac{1}{2}} \right\} . \]
Consequently, Eq.~(\ref{eq:FrequencyWindingNumbers}) has a unique
solution in $E \times H_1$ provided that ellipsoid~(\ref{eq:Ellipsoid3D}) is
flat enough.

\begin{table*}
  \caption{\label{tab:SPTs_3D_periods45}Two minimal SPTs of period 4 and four
  minimal SPTs of period 5. The ``Data'' column includes consecutively:
  caustic type, winding numbers $\left( m_0, m_1, m_2 \right)$, ellipsoid
  parameters $a_2$ and $a_1$ ---we have set $a_3 = 1$---, caustic
  parameters $\lambda_2$ and $\lambda_1$, and all reversors whose symmetry
  sets intersect the corresponding SPOs.}
  \begin{tabular}{c|l|c|c|c}
    3D $\left( x_1 : \uparrow, x_2 : \searrow, x_3 : \swarrow \right)$ & Plane
    $\Pi_1$ $\left( x_2 : \uparrow, x_3 : \rightarrow \right)$ & Plane $\Pi_2$
    $\left( x_1 : \uparrow  , x_3 : \leftarrow \right)$ &
    Plane $\Pi_3$ $\left( x_1 : \uparrow, x_2 : \rightarrow \right)$ & Data\\
    \hline
    \scalebox{0.2}{\begin{tabular}{c}\includegraphics{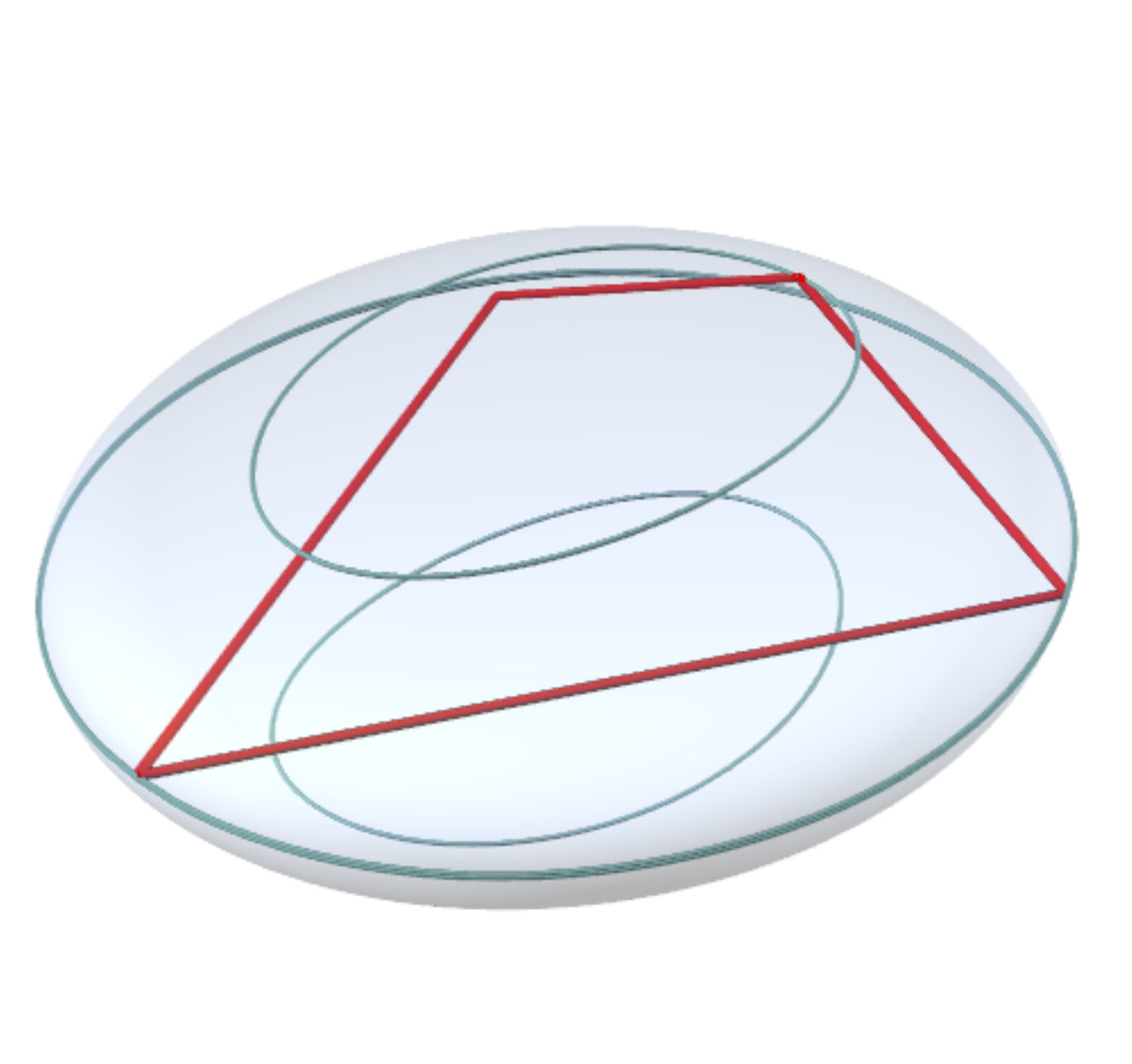}\end{tabular}} &
    \scalebox{0.2}{\begin{tabular}{c}\includegraphics{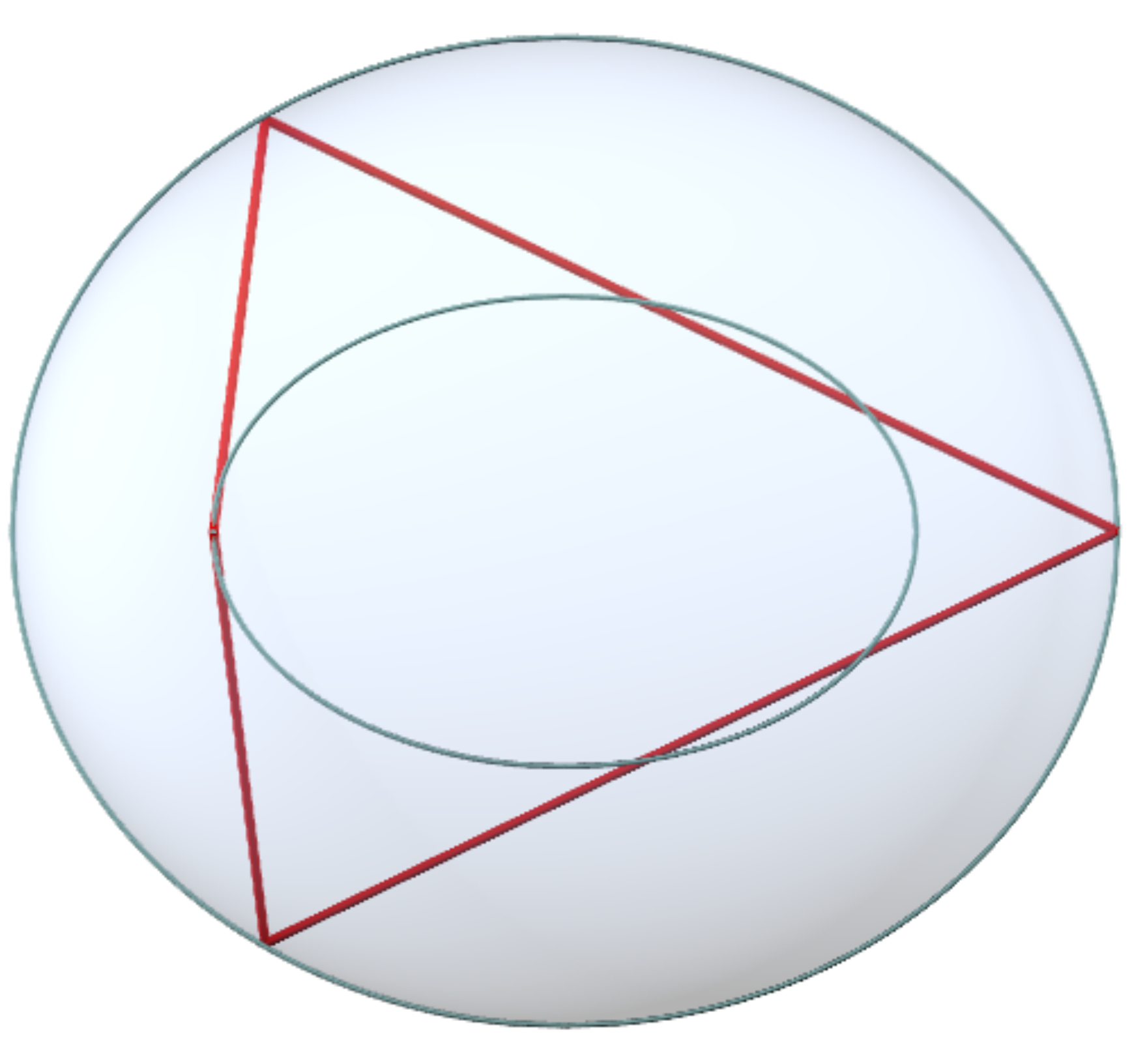}\end{tabular}} &
    \scalebox{0.2}{\begin{tabular}{c}\includegraphics{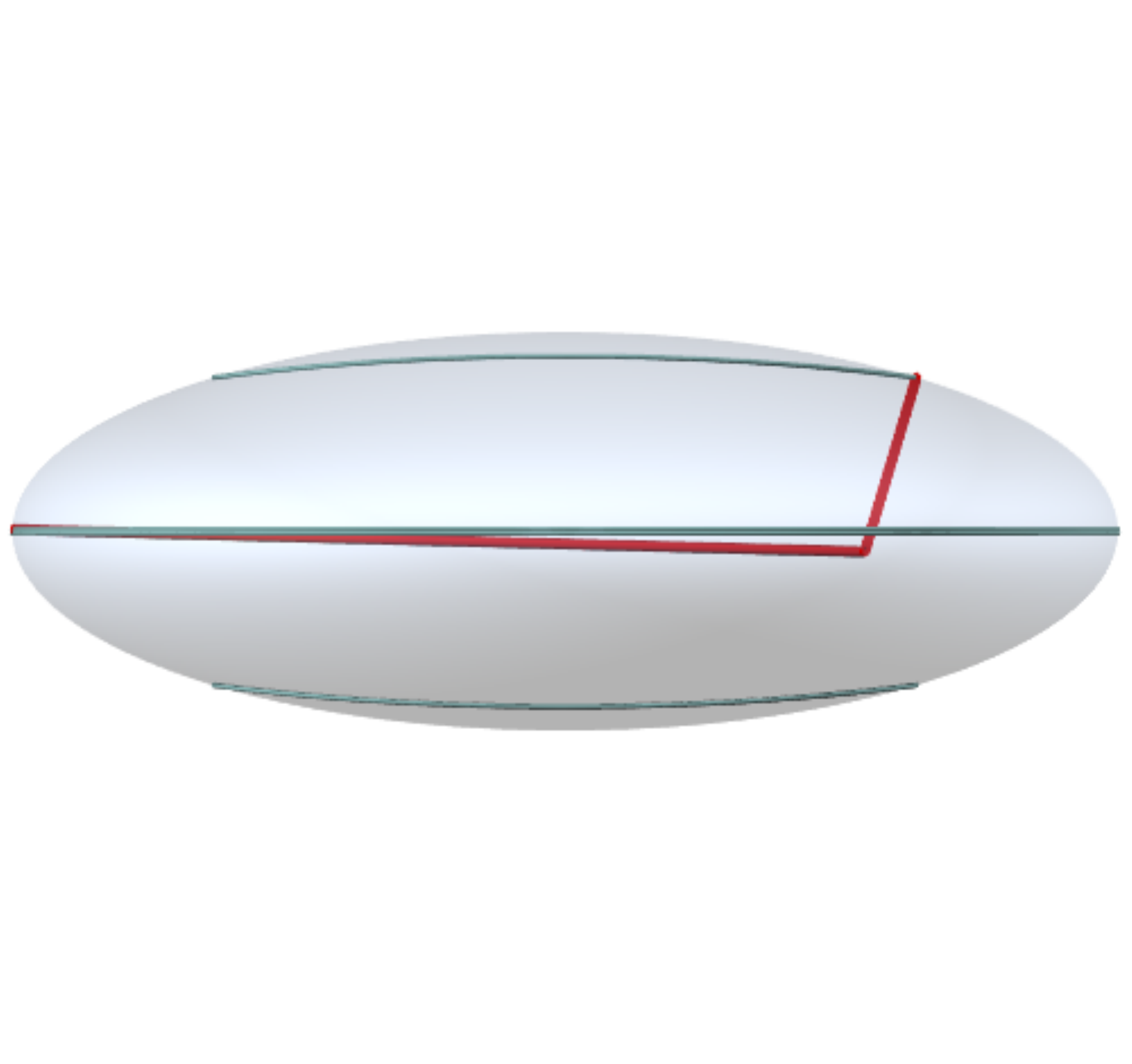}\end{tabular}} &
    \scalebox{0.2}{\begin{tabular}{c}\includegraphics{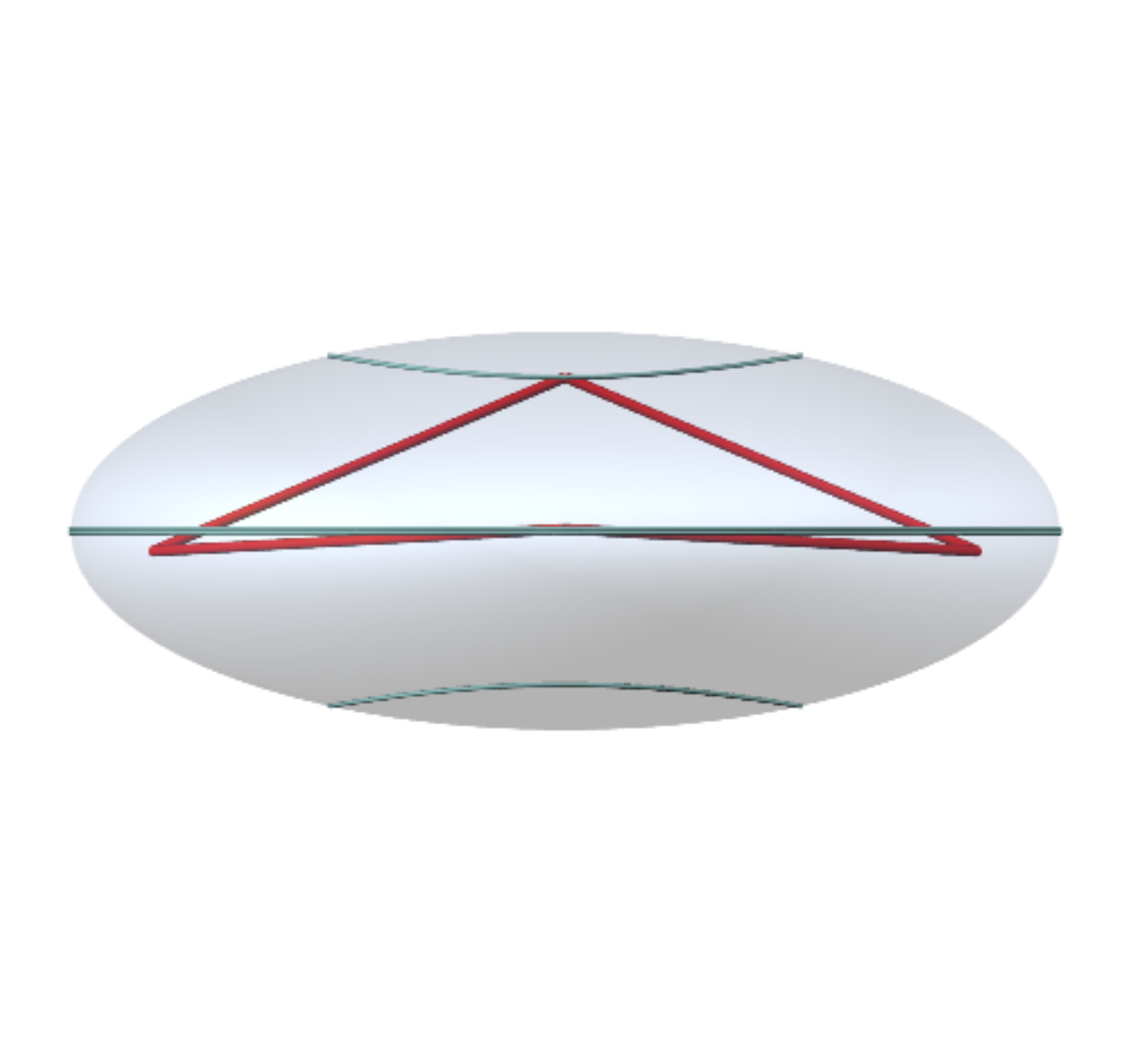}\end{tabular}} &
    \begin{tabular}{c}
      H1H1\\
      $\left( 4, 3, 2 \right)$\\
      0.8\\
      0.13\\
      0.648376\\
      0.130077\\
      $R_2$
    \end{tabular}\\\hline
    \scalebox{0.2}{\begin{tabular}{c}\includegraphics{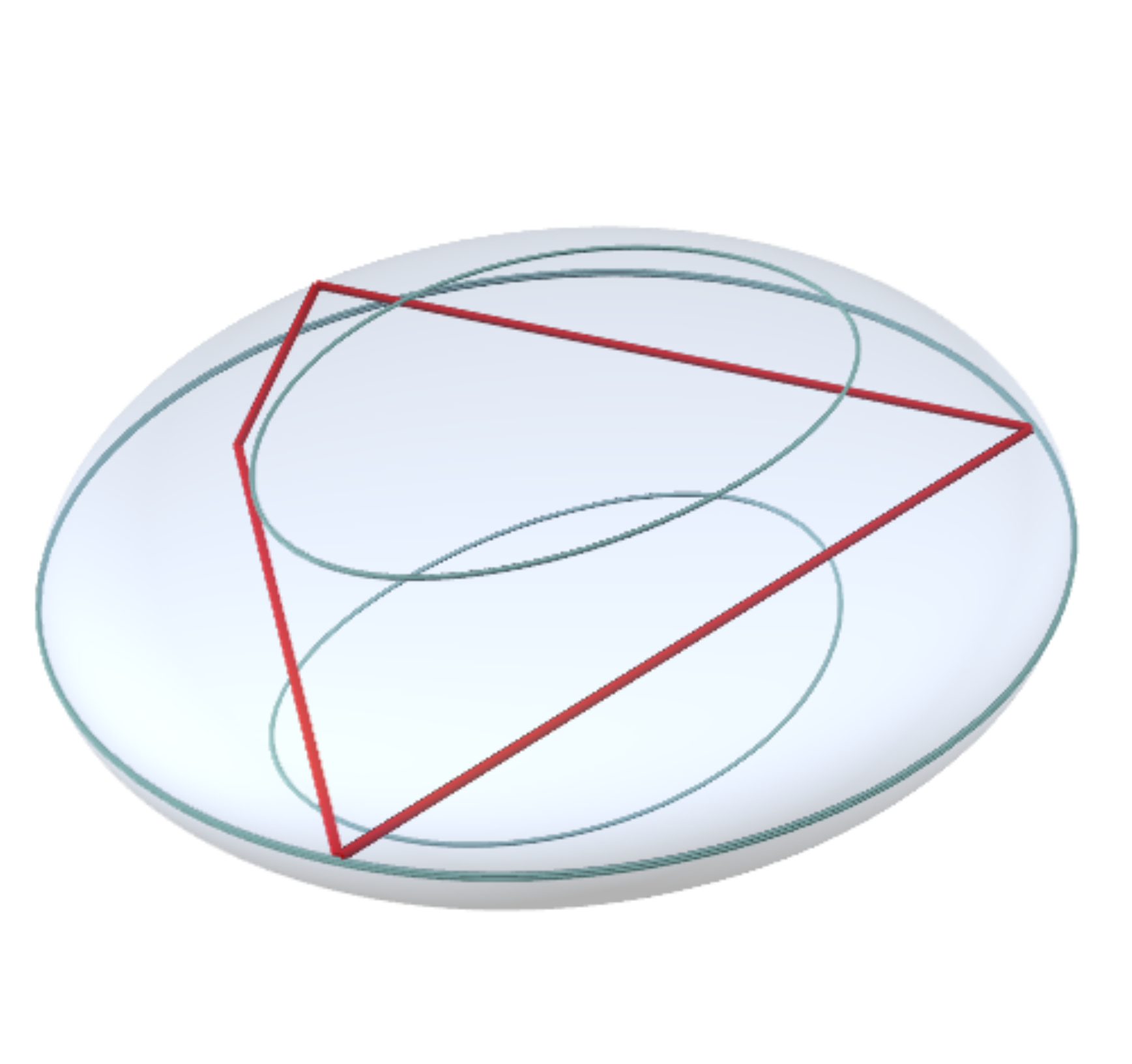}\end{tabular}} &
    \scalebox{0.2}{\begin{tabular}{c}\includegraphics{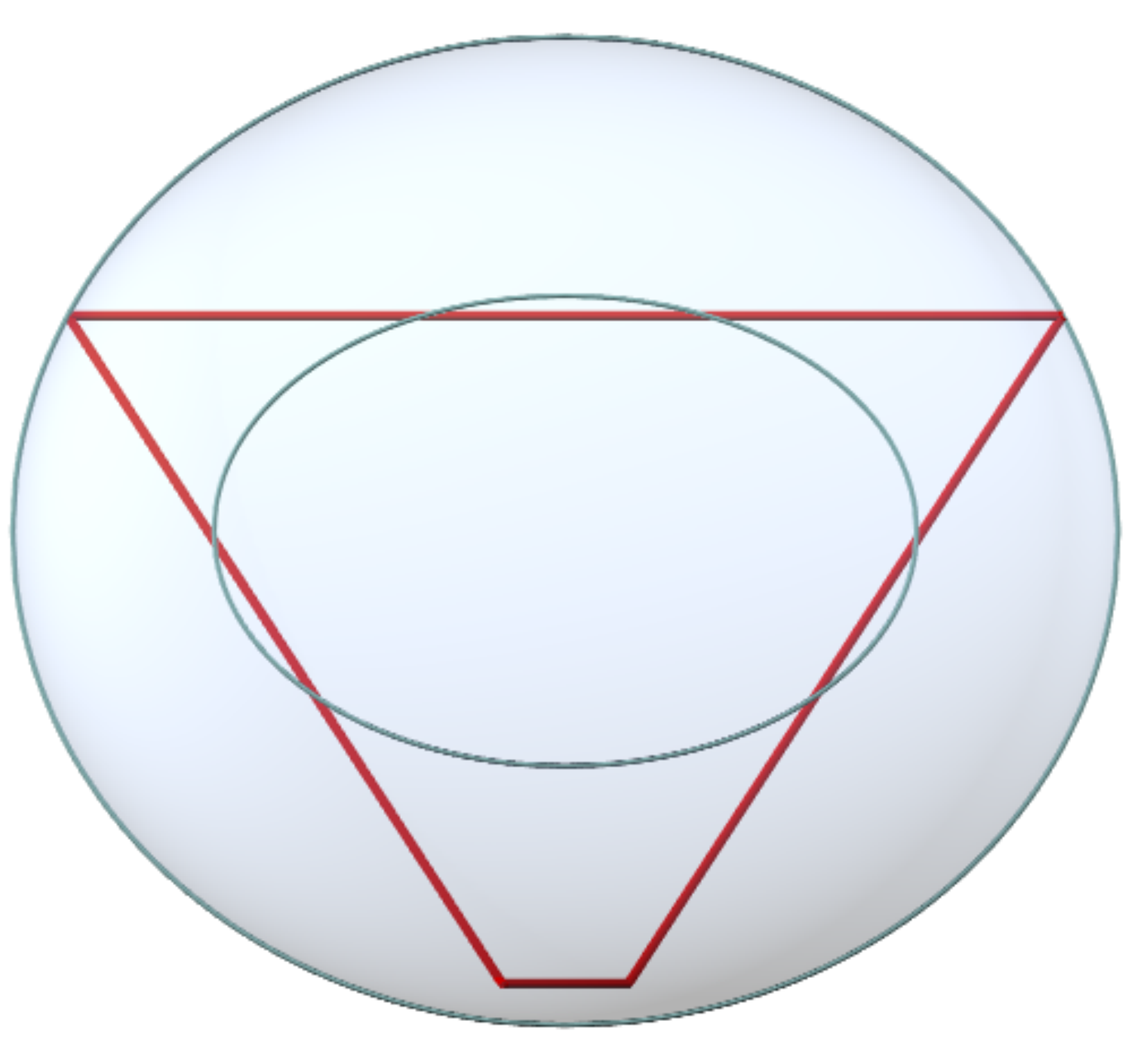}\end{tabular}} &
    \scalebox{0.2}{\begin{tabular}{c}\includegraphics{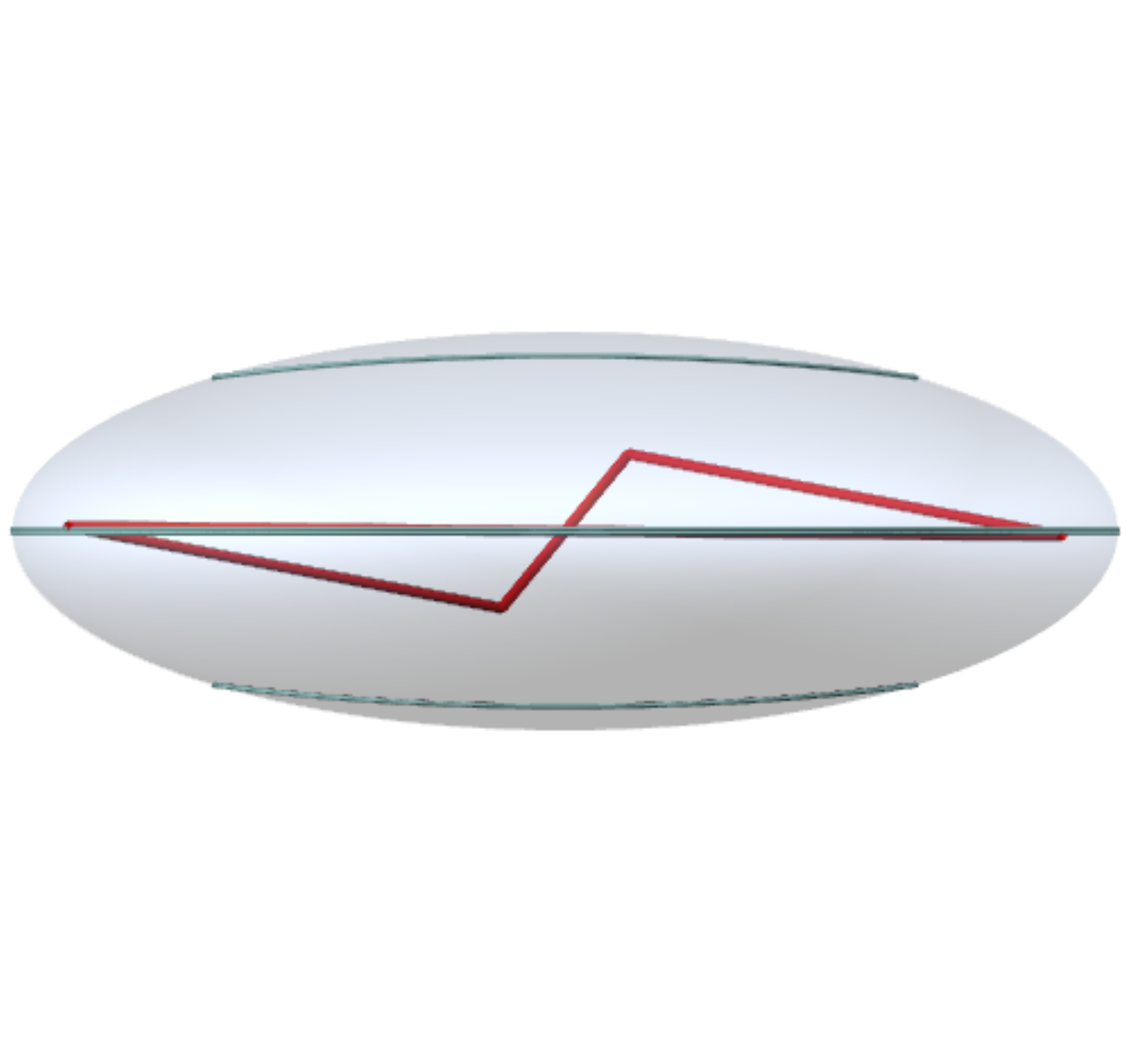}\end{tabular}} &
    \scalebox{0.2}{\begin{tabular}{c}\includegraphics{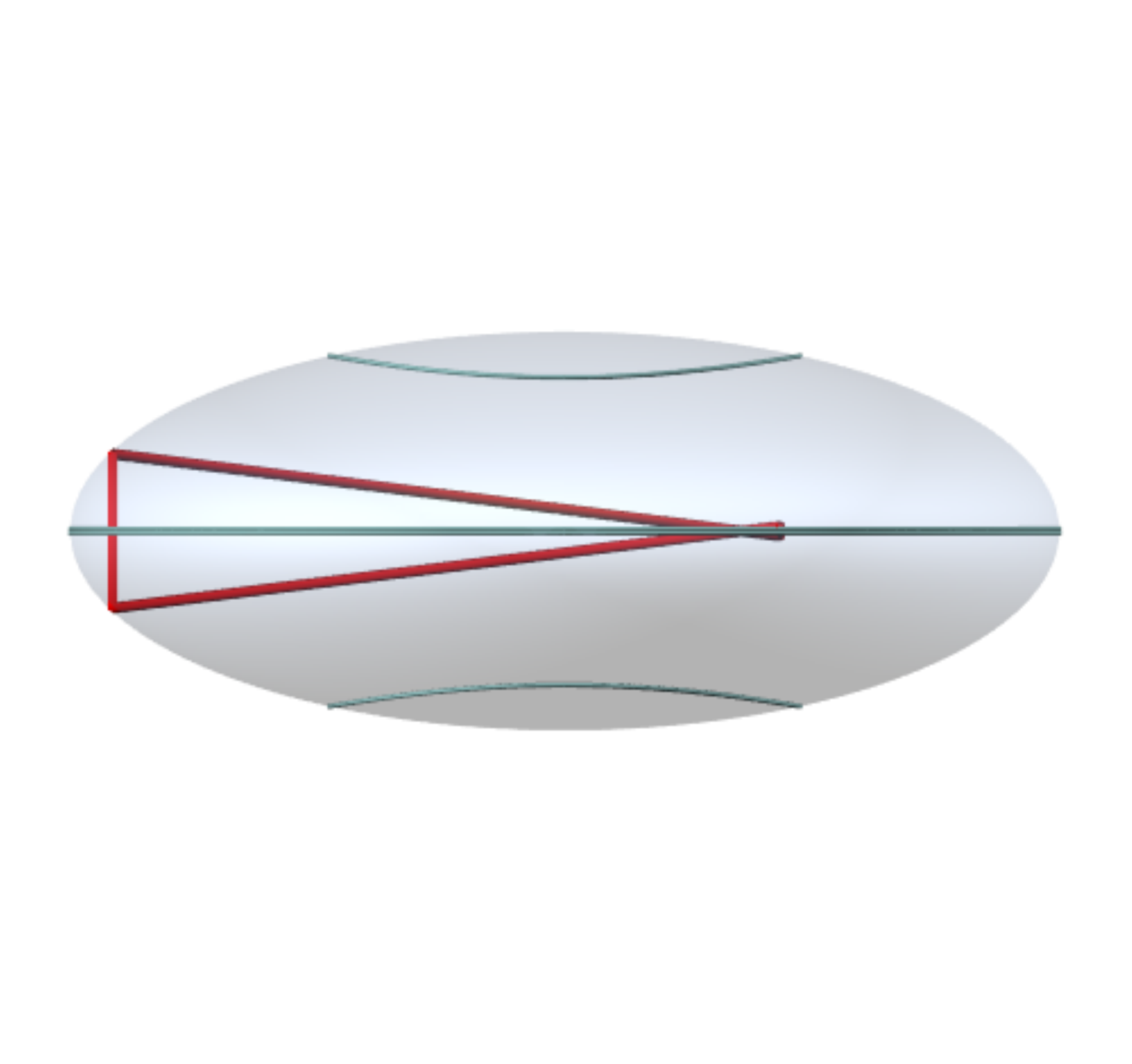}\end{tabular}} &
    \begin{tabular}{c}
      H1H1\\
      $\left( 4, 3, 2 \right)$\\
      0.8\\
      0.13\\
      0.648376\\
      0.130077\\
      $f \circ R_{1 3}$
    \end{tabular}\\\hline
    \scalebox{0.2}{\begin{tabular}{c}\includegraphics{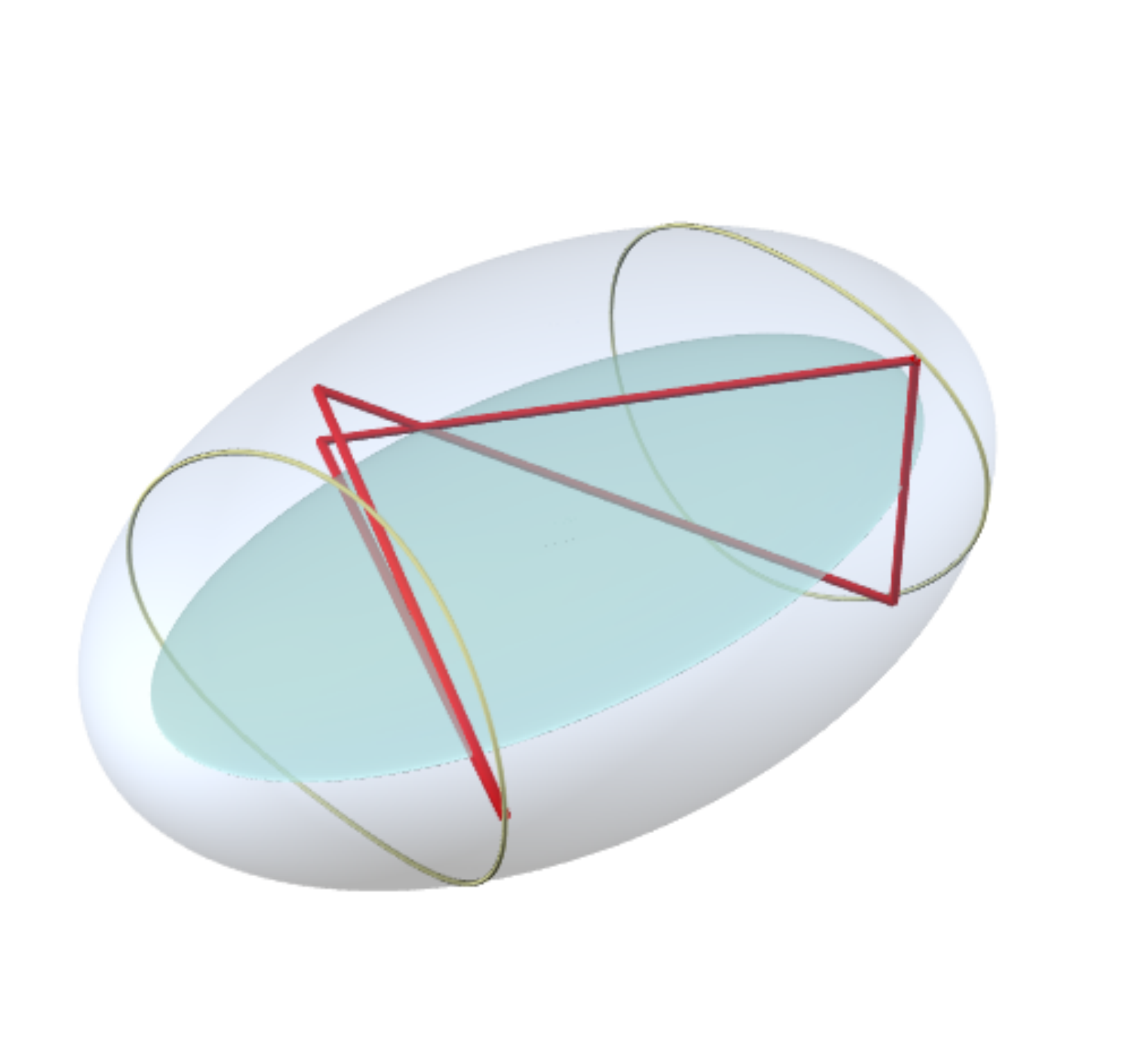}\end{tabular}} &
    \scalebox{0.2}{\begin{tabular}{c}\includegraphics{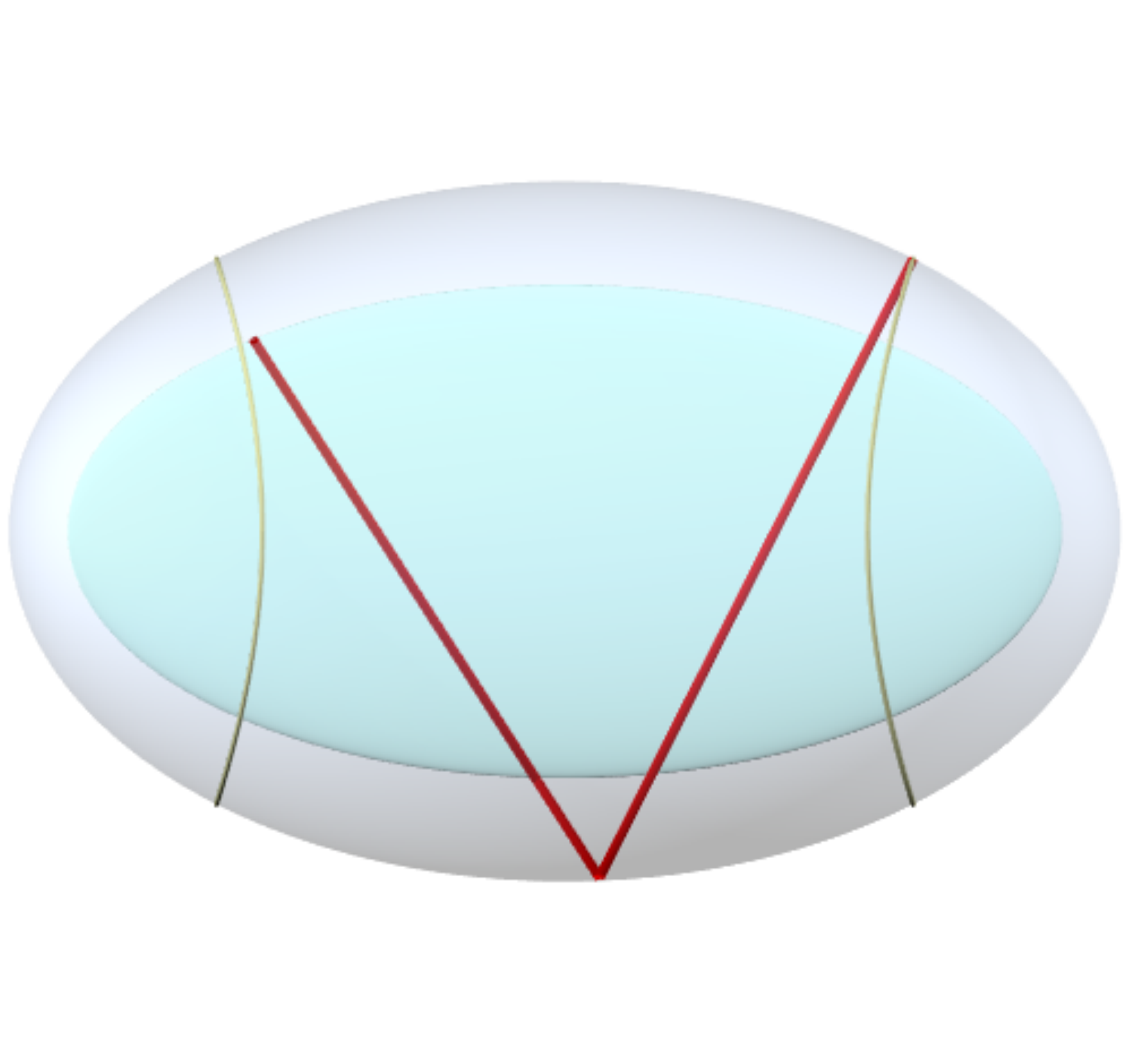}\end{tabular}} &
    \scalebox{0.2}{\begin{tabular}{c}\includegraphics{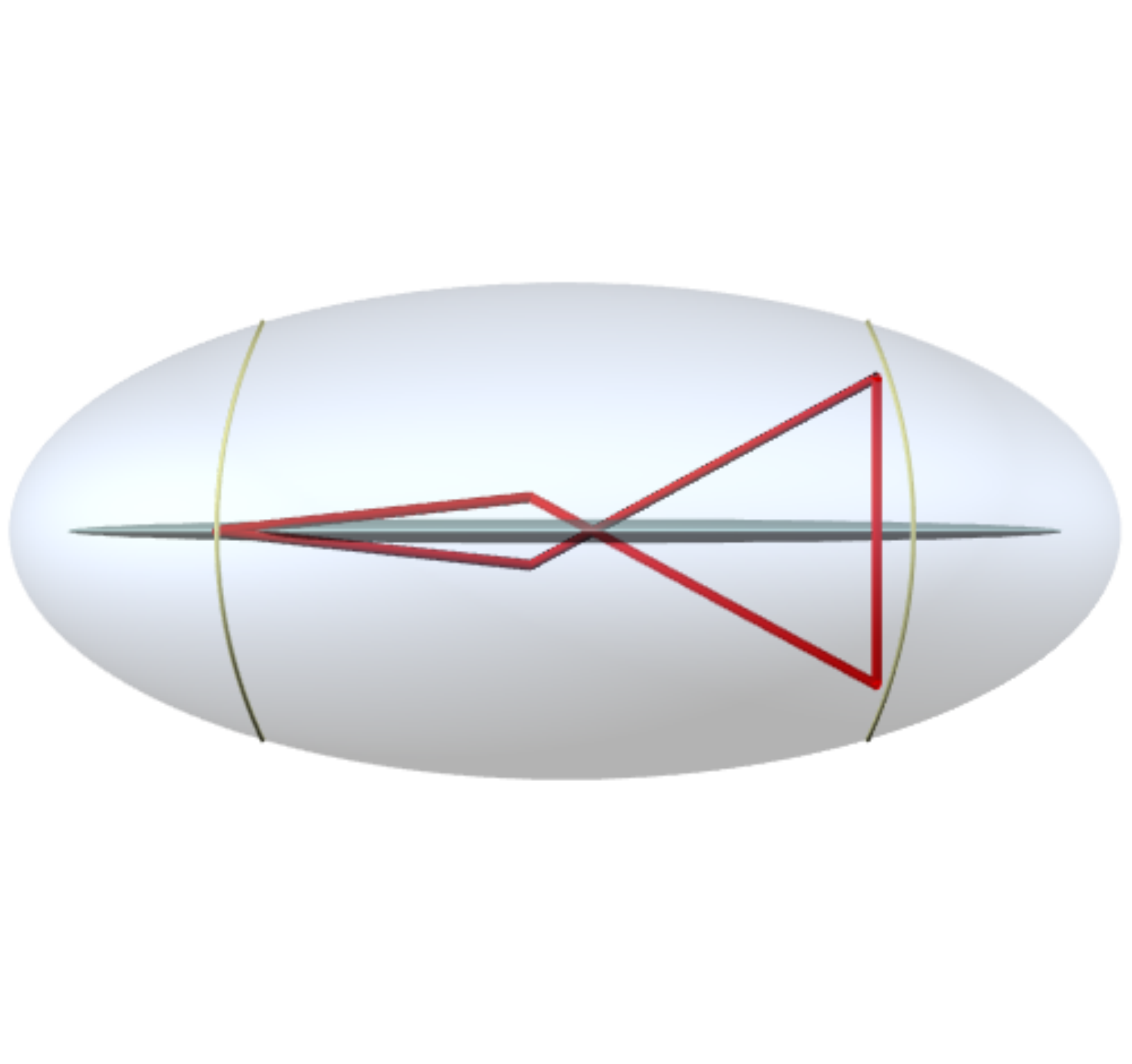}\end{tabular}} &
    \scalebox{0.2}{\begin{tabular}{c}\includegraphics{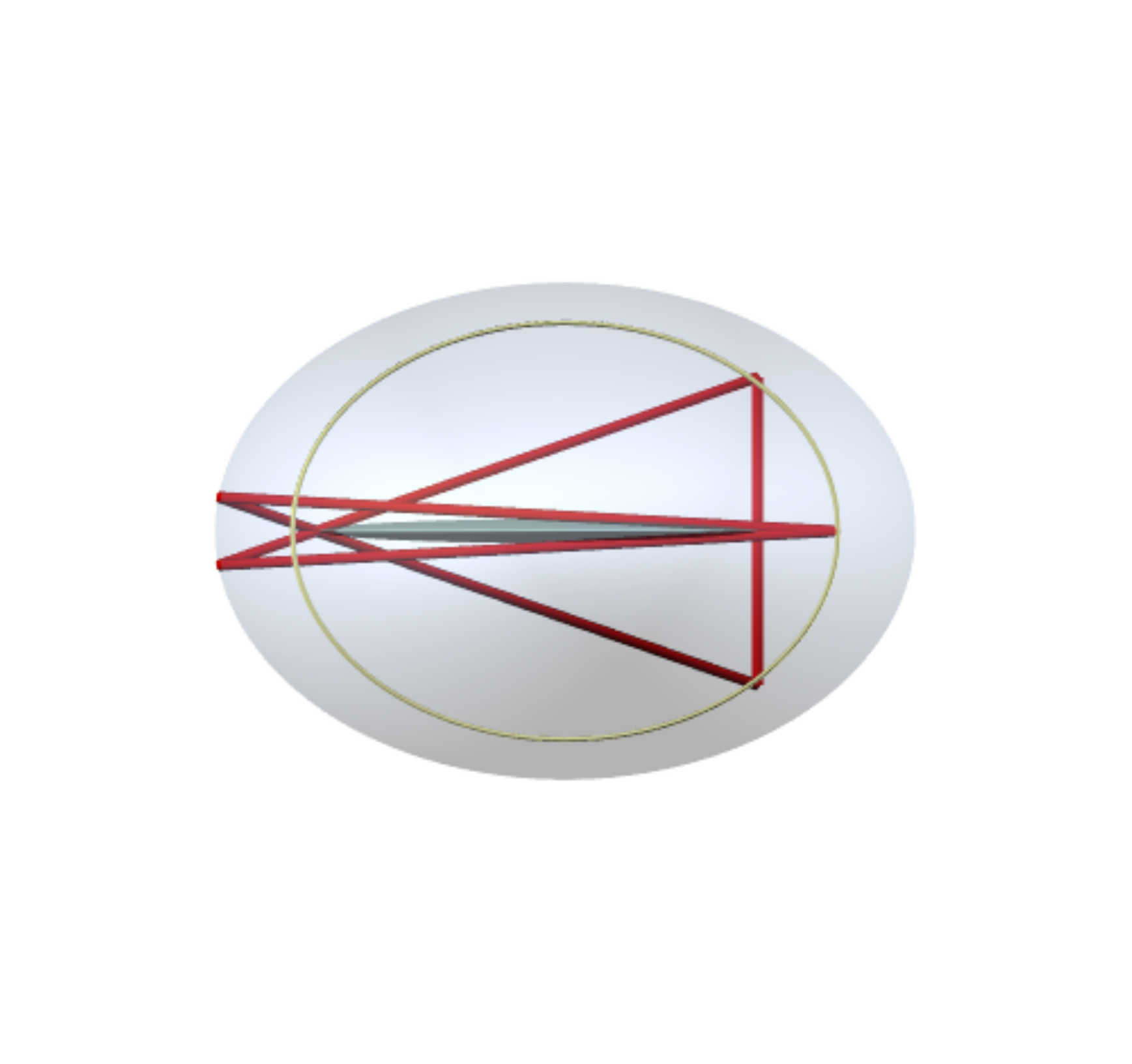}\end{tabular}} &
    \begin{tabular}{c}
      EH2\\
      $\left( 5, 4, 2 \right)$\\
      0.3969\\
      0.2\\
      0.762965\\
      0.199523\\
      $R_1$\\
      $f \circ R_1$
    \end{tabular}\\\hline
    \scalebox{0.2}{\begin{tabular}{c}\includegraphics{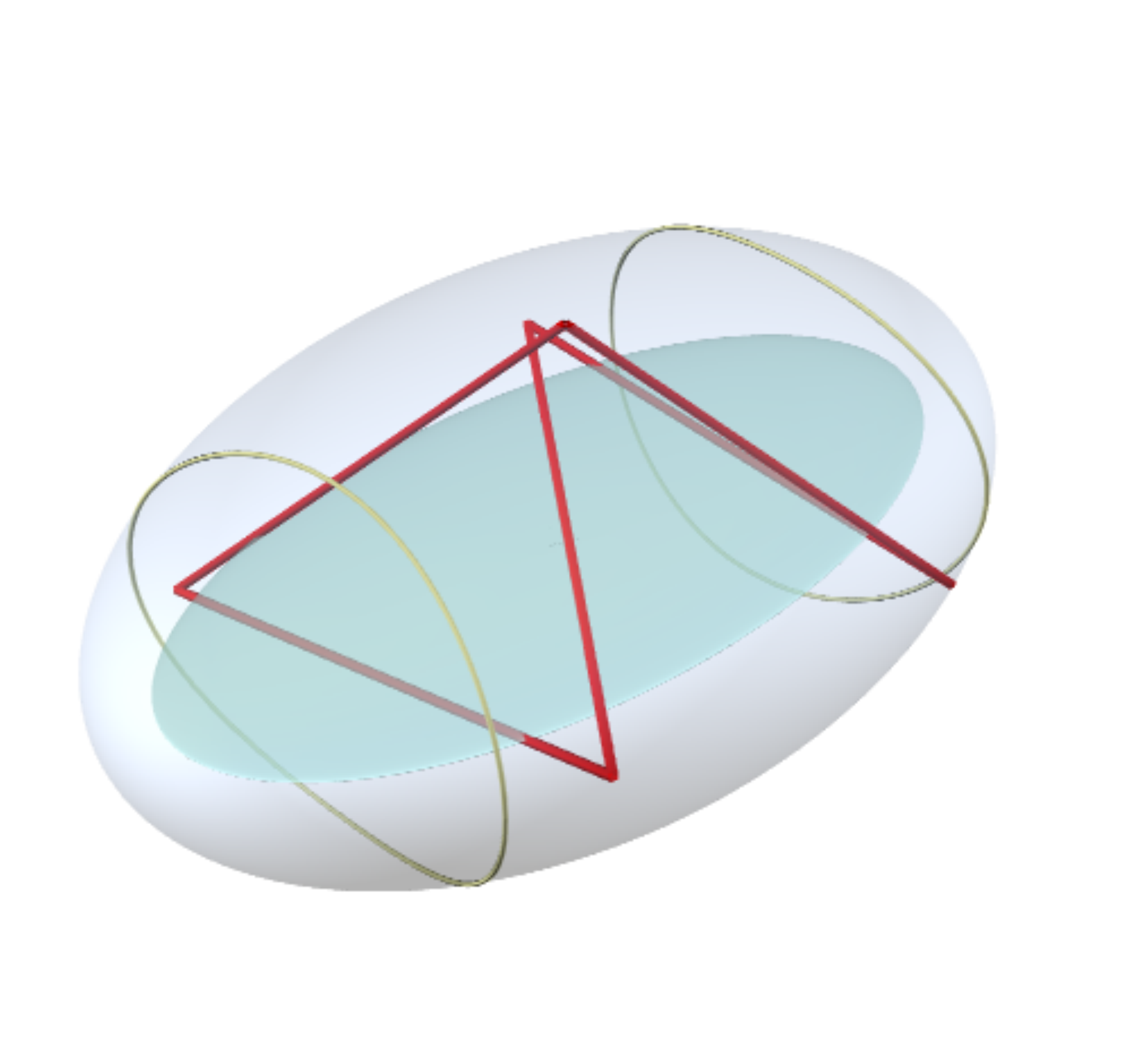}\end{tabular}} &
    \scalebox{0.2}{\begin{tabular}{c}\includegraphics{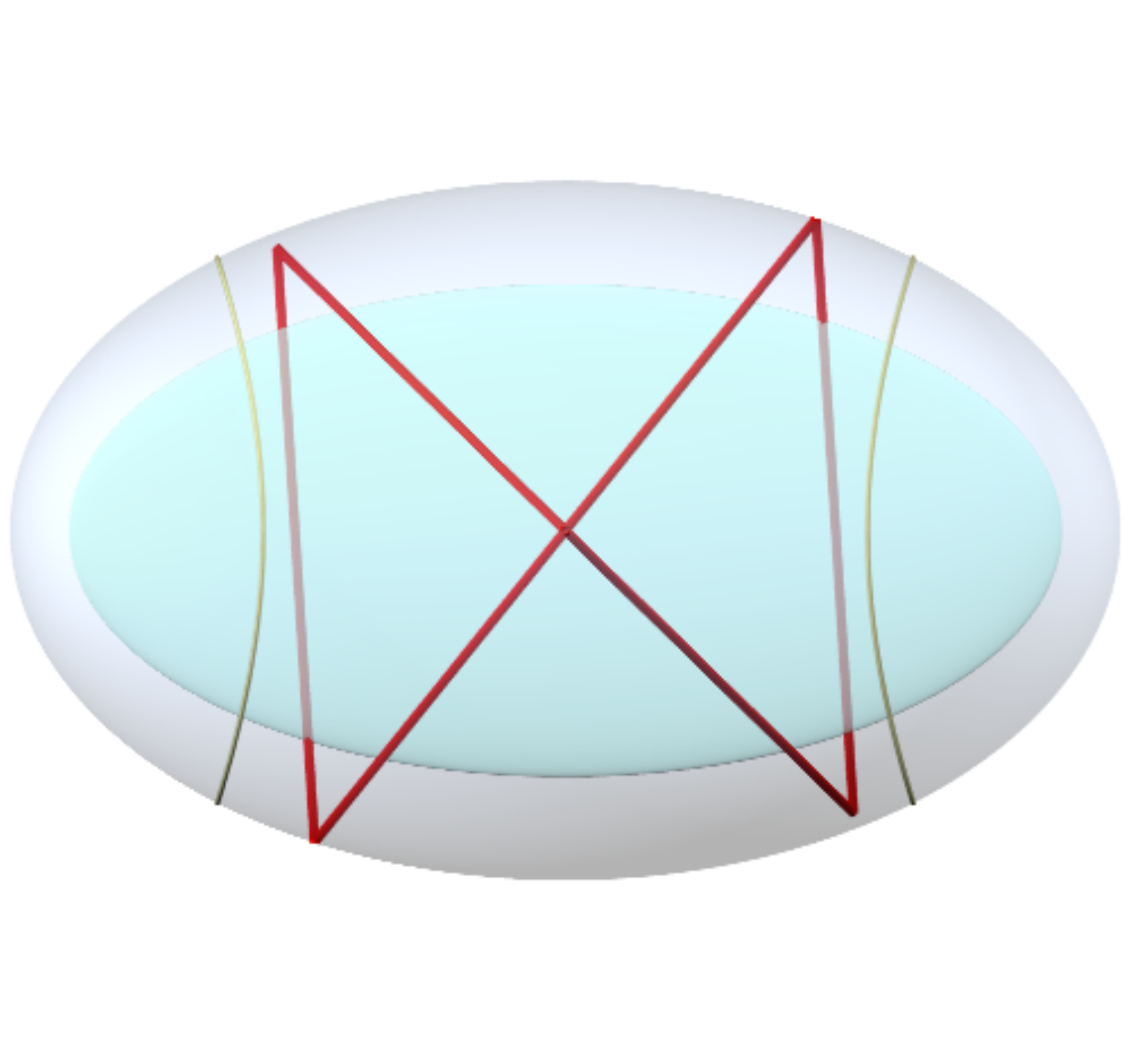}\end{tabular}} &
    \scalebox{0.2}{\begin{tabular}{c}\includegraphics{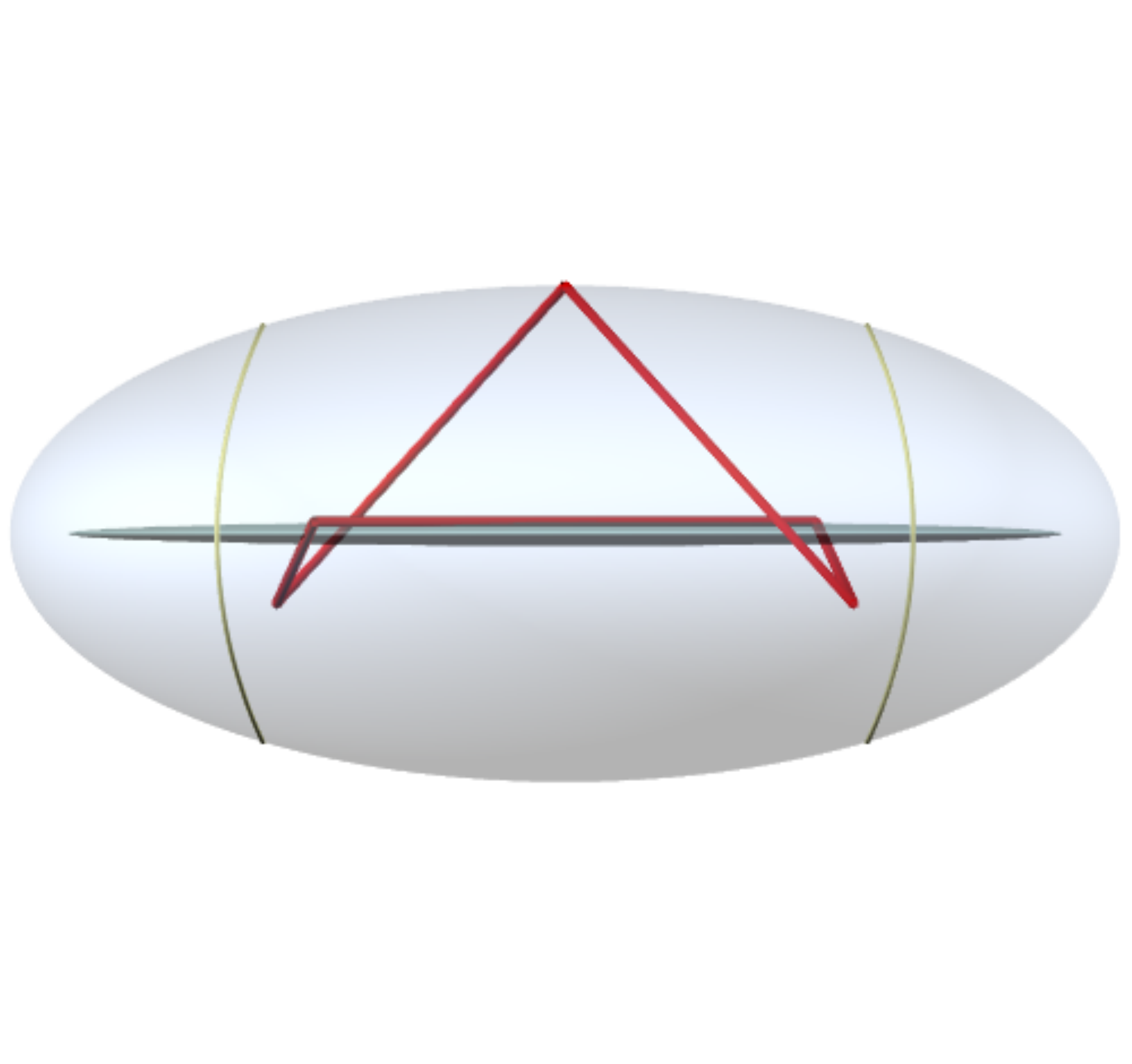}\end{tabular}} &
    \scalebox{0.2}{\begin{tabular}{c}\includegraphics{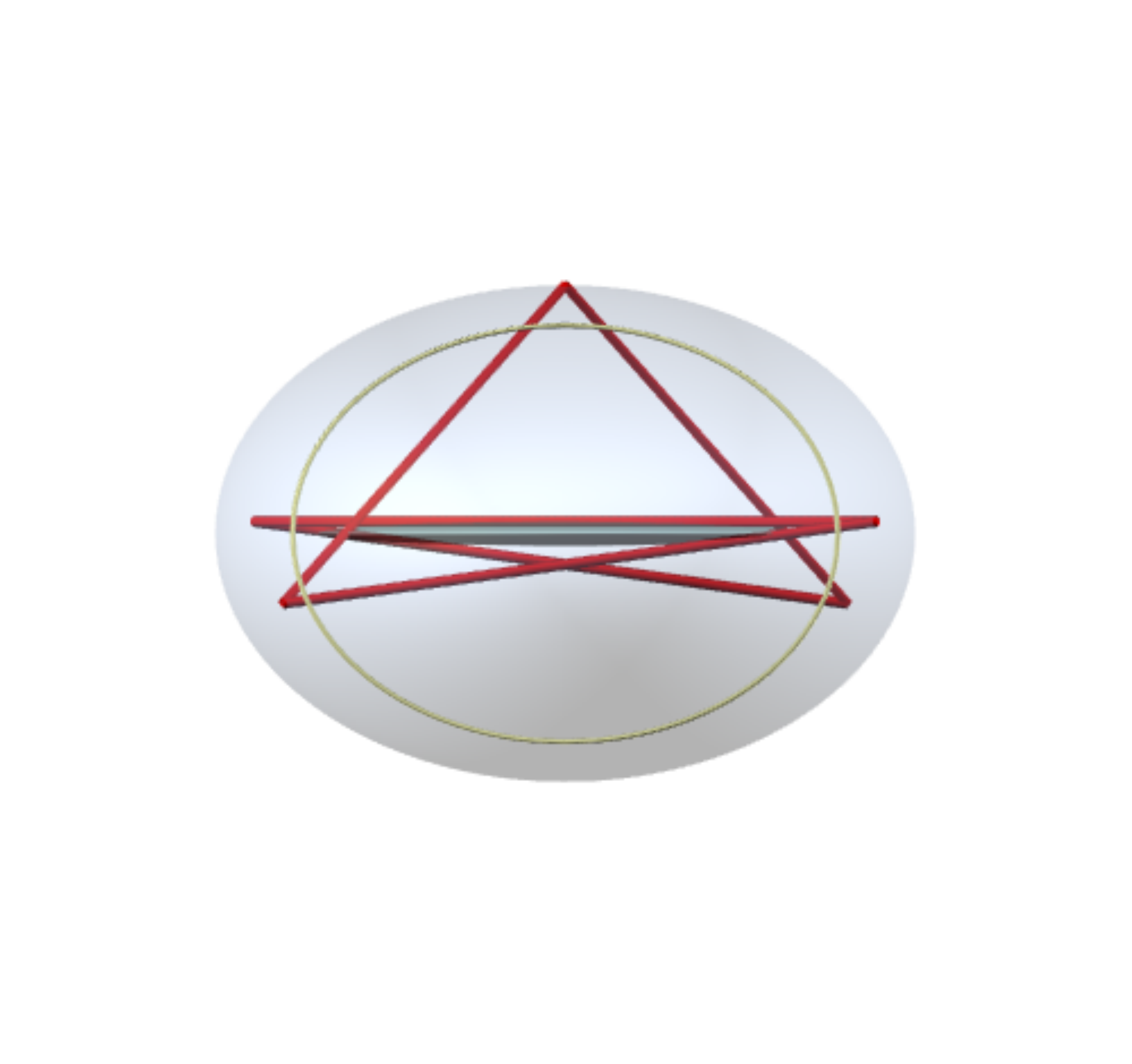}\end{tabular}} &
    \begin{tabular}{c}
      EH2\\
      $\left( 5, 4, 2 \right)$\\
      0.3969\\
      0.2\\
      0.762965\\
      0.199523\\
      $R_{2 3}$\\
      $f \circ R_{2 3}$
    \end{tabular}\\\hline
    \scalebox{0.2}{\begin{tabular}{c}\includegraphics{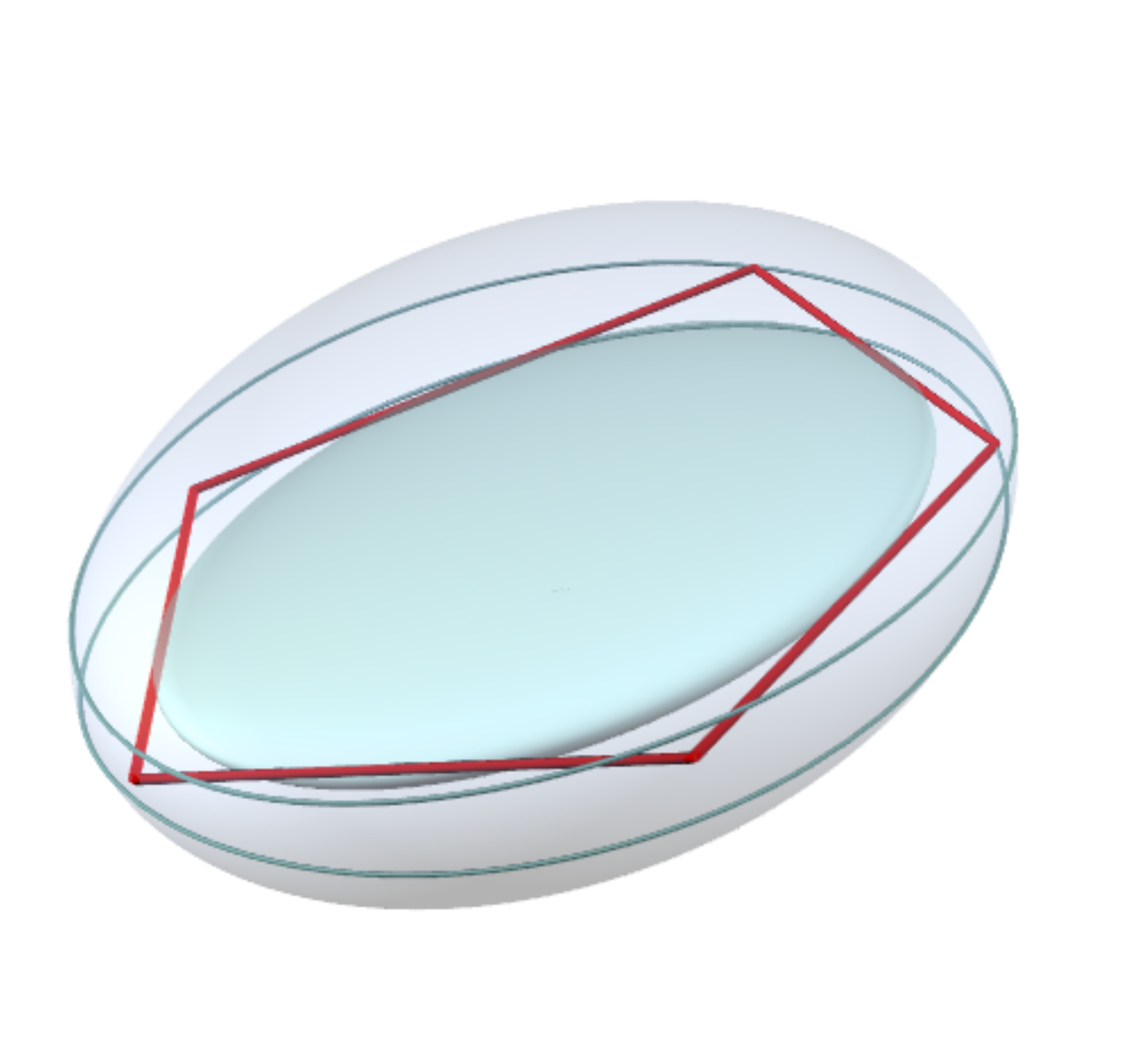}\end{tabular}} &
    \scalebox{0.2}{\begin{tabular}{c}\includegraphics{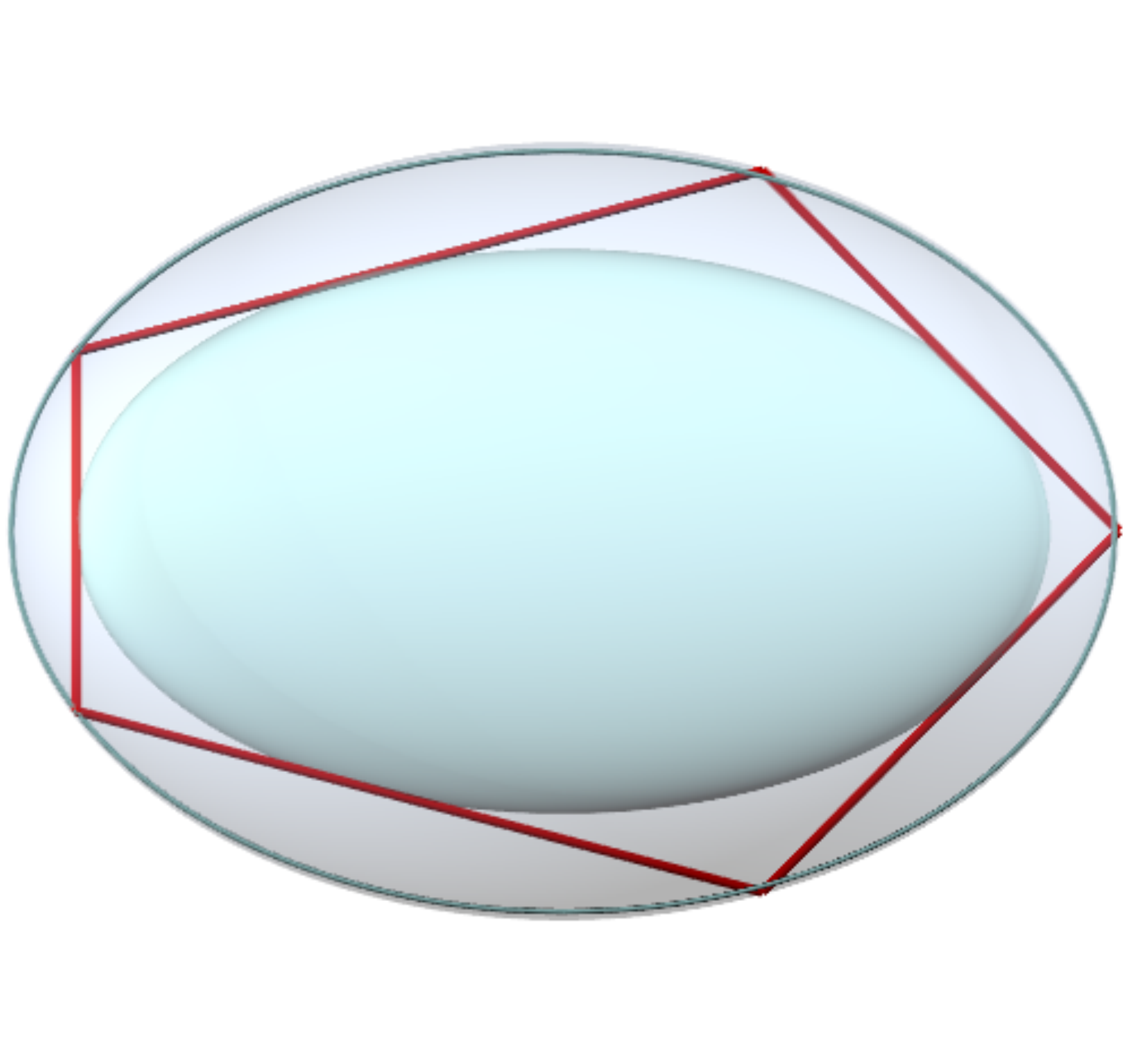}\end{tabular}} &
    \scalebox{0.2}{\begin{tabular}{c}\includegraphics{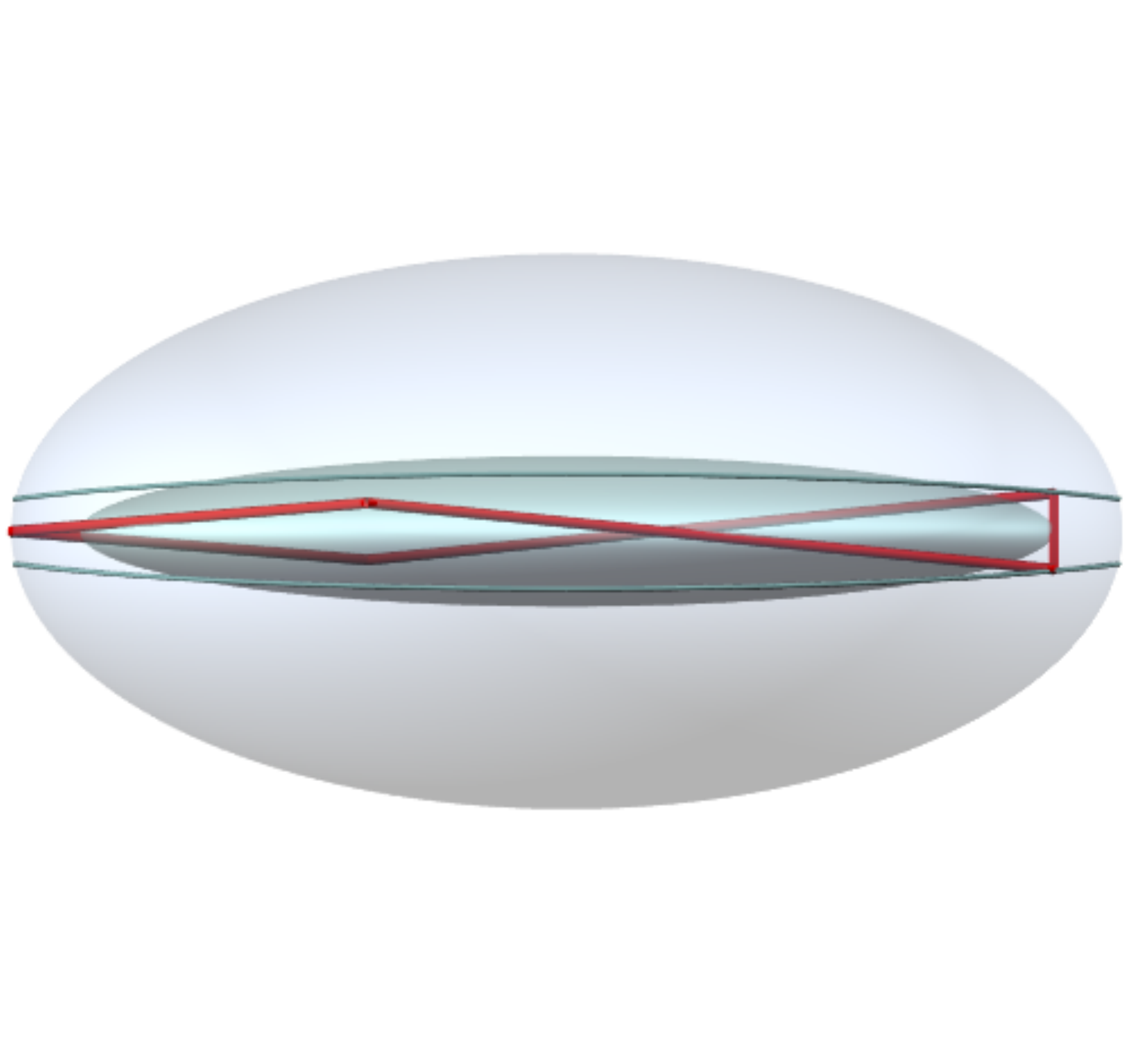}\end{tabular}} &
    \scalebox{0.2}{\begin{tabular}{c}\includegraphics{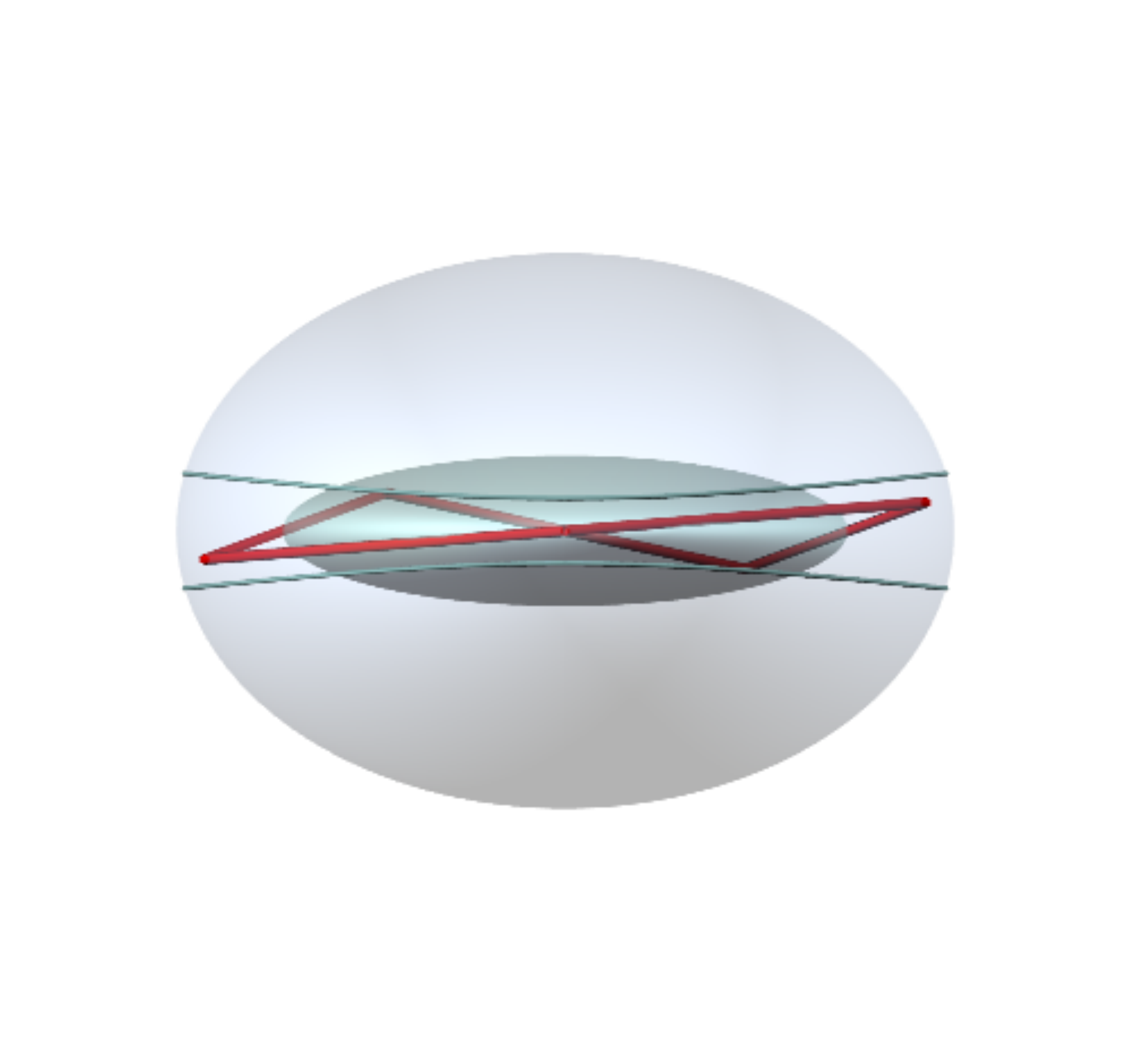}\end{tabular}} &
    \begin{tabular}{c}
      EH1\\
      $\left( 5, 4, 2 \right)$\\
      0.49\\
      0.25\\
      0.260266\\
      0.231635\\
      $R_{1 2}$\\
      $f \circ R_{1 2}$
    \end{tabular}\\\hline
    \scalebox{0.2}{\begin{tabular}{c}\includegraphics{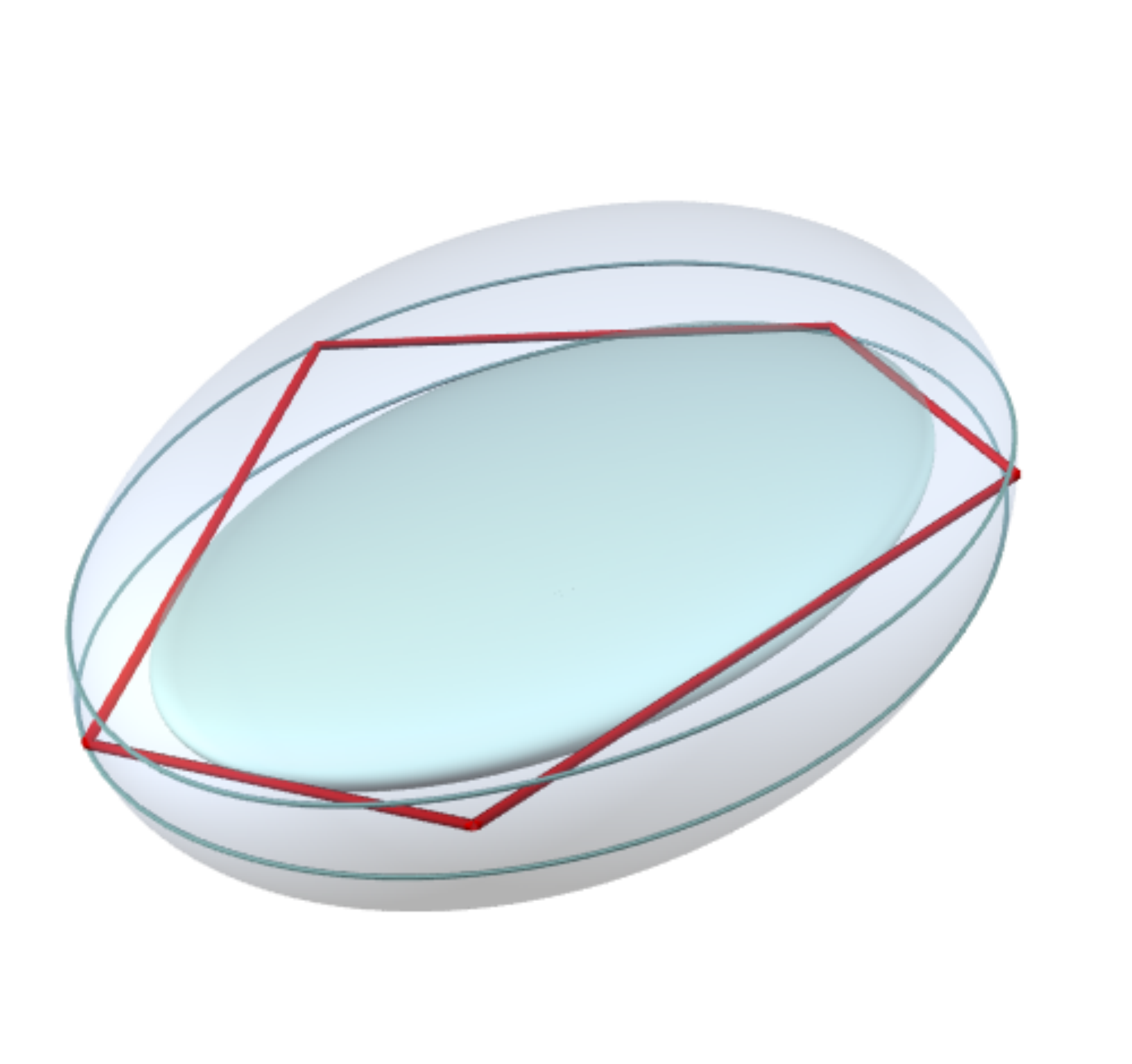}\end{tabular}} &
    \scalebox{0.2}{\begin{tabular}{c}\includegraphics{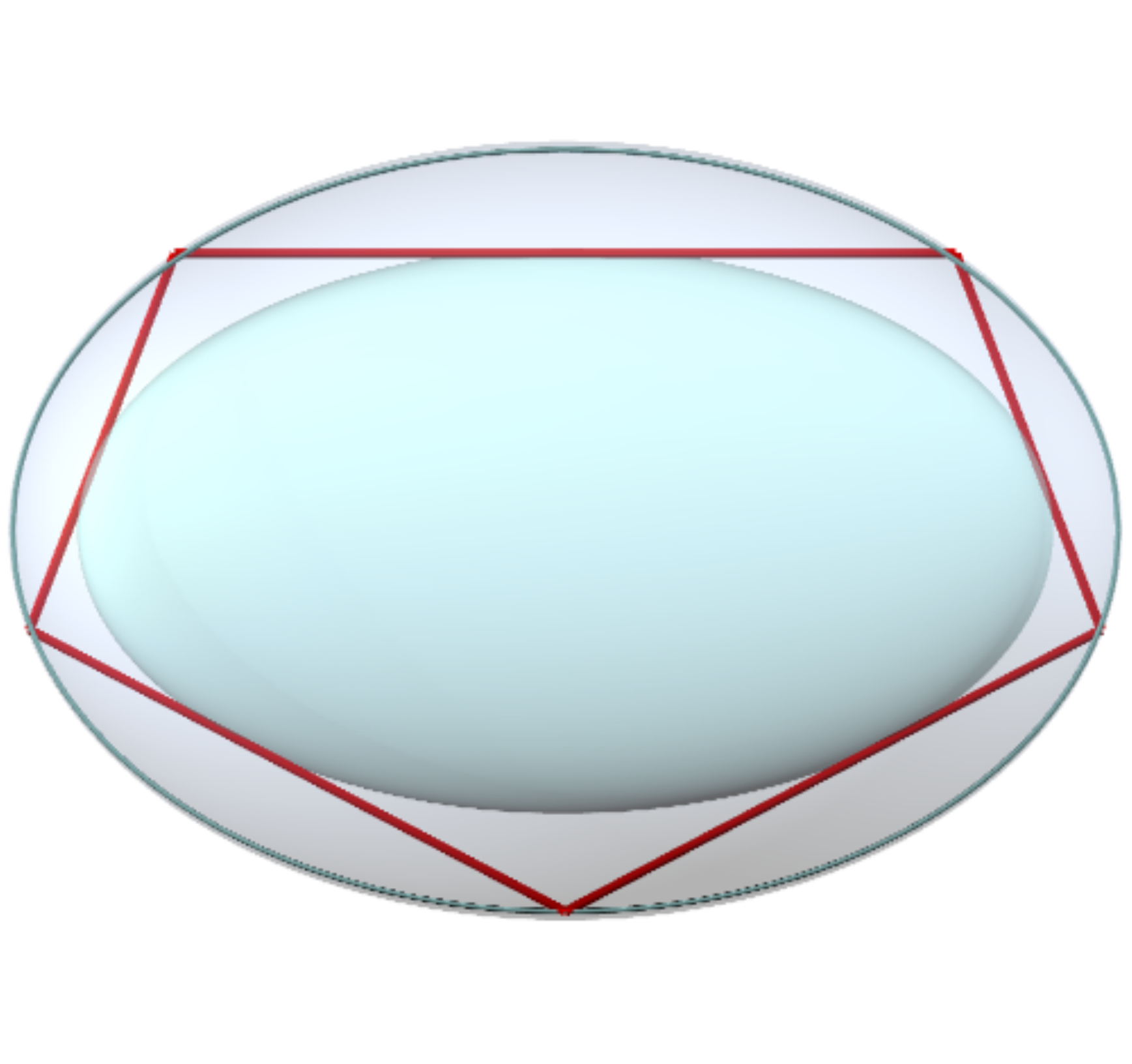}\end{tabular}} &
    \scalebox{0.2}{\begin{tabular}{c}\includegraphics{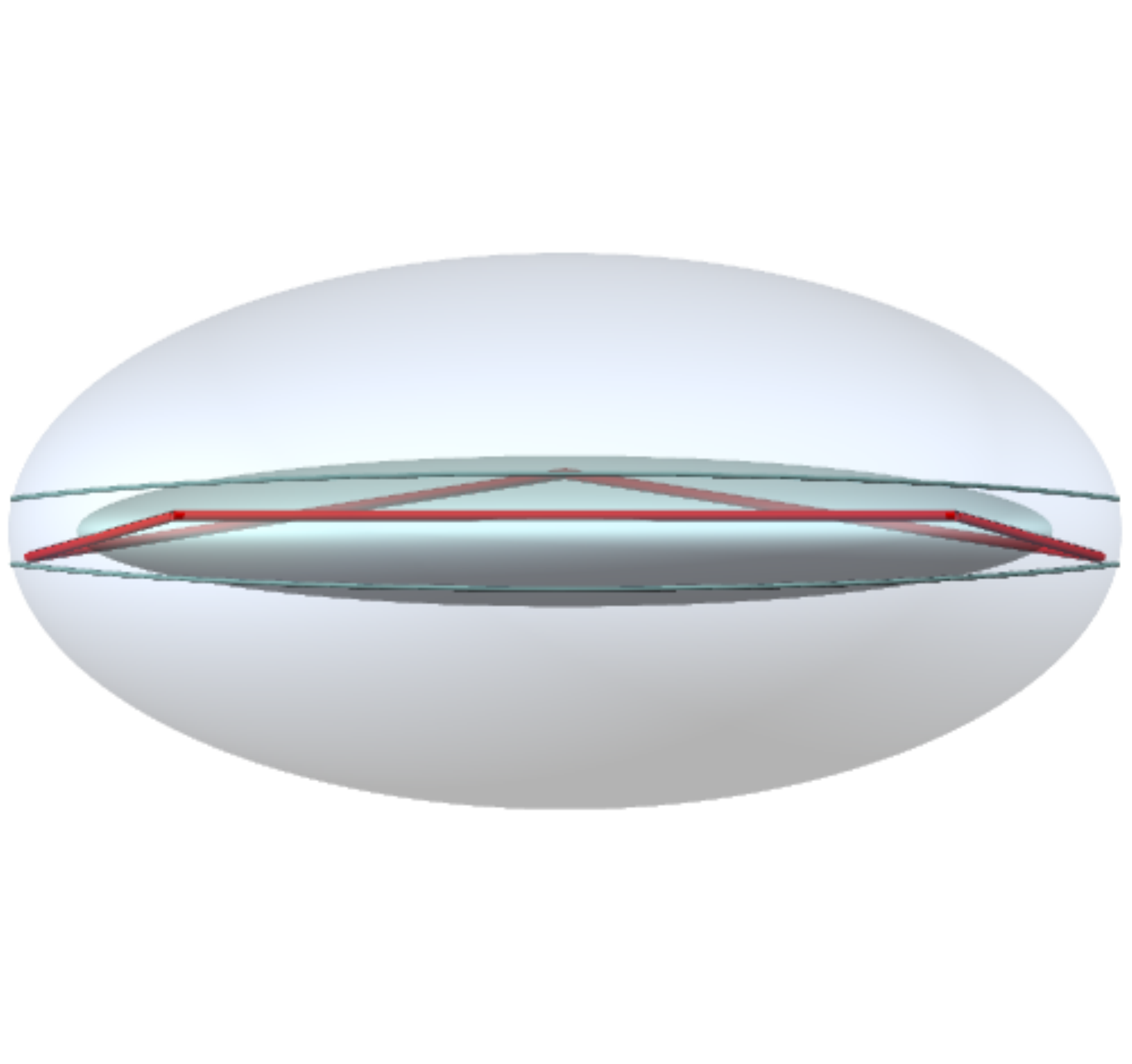}\end{tabular}} &
    \scalebox{0.2}{\begin{tabular}{c}\includegraphics{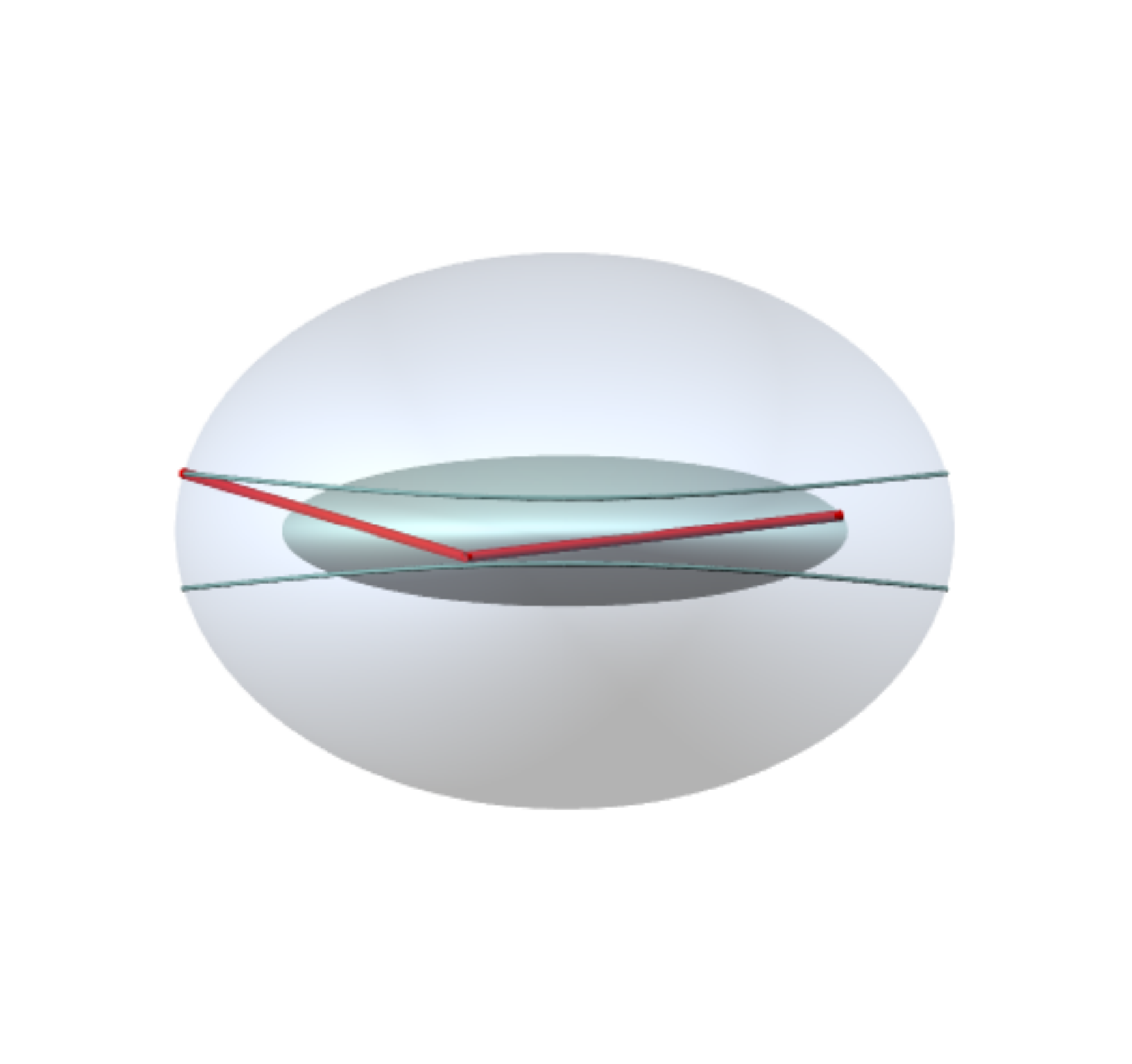}\end{tabular}} &
    \begin{tabular}{c}
      EH1\\
      $\left( 5, 4, 2 \right)$\\
      0.49\\
      0.25\\
      0.260266\\
      0.231635\\
      $R_3$\\
      $f \circ R_3$
    \end{tabular}\\
    \hline
  \end{tabular}
\end{table*}

\begin{table*}
  \caption{\label{tab:SPTs_3D_H1H2_period6}The four classes of SPTs with
  minimal winding numbers $\left( m_0, m_1, m_1 \right) = \left( 6, 4, 2
  \right)$ and H1H2-caustics.
  ``Data'' as in Table~\ref{tab:SPTs_3D_periods45}.}
  \begin{tabular}{c|l|c|c|c}
    3D $\left( x_1 : \uparrow, x_2 : \searrow, x_3 : \swarrow \right)$ & Plane
    $\Pi_1$ $\left( x_2 : \uparrow, x_3 : \rightarrow \right)$ & Plane $\Pi_2$
    $\left( x_1 : \uparrow  , x_3 : \leftarrow \right)$ &
    Plane $\Pi_3$ $\left( x_1 : \uparrow, x_2 : \rightarrow \right)$ & Data\\
    \hline
    \scalebox{0.2}{\begin{tabular}{c}\includegraphics{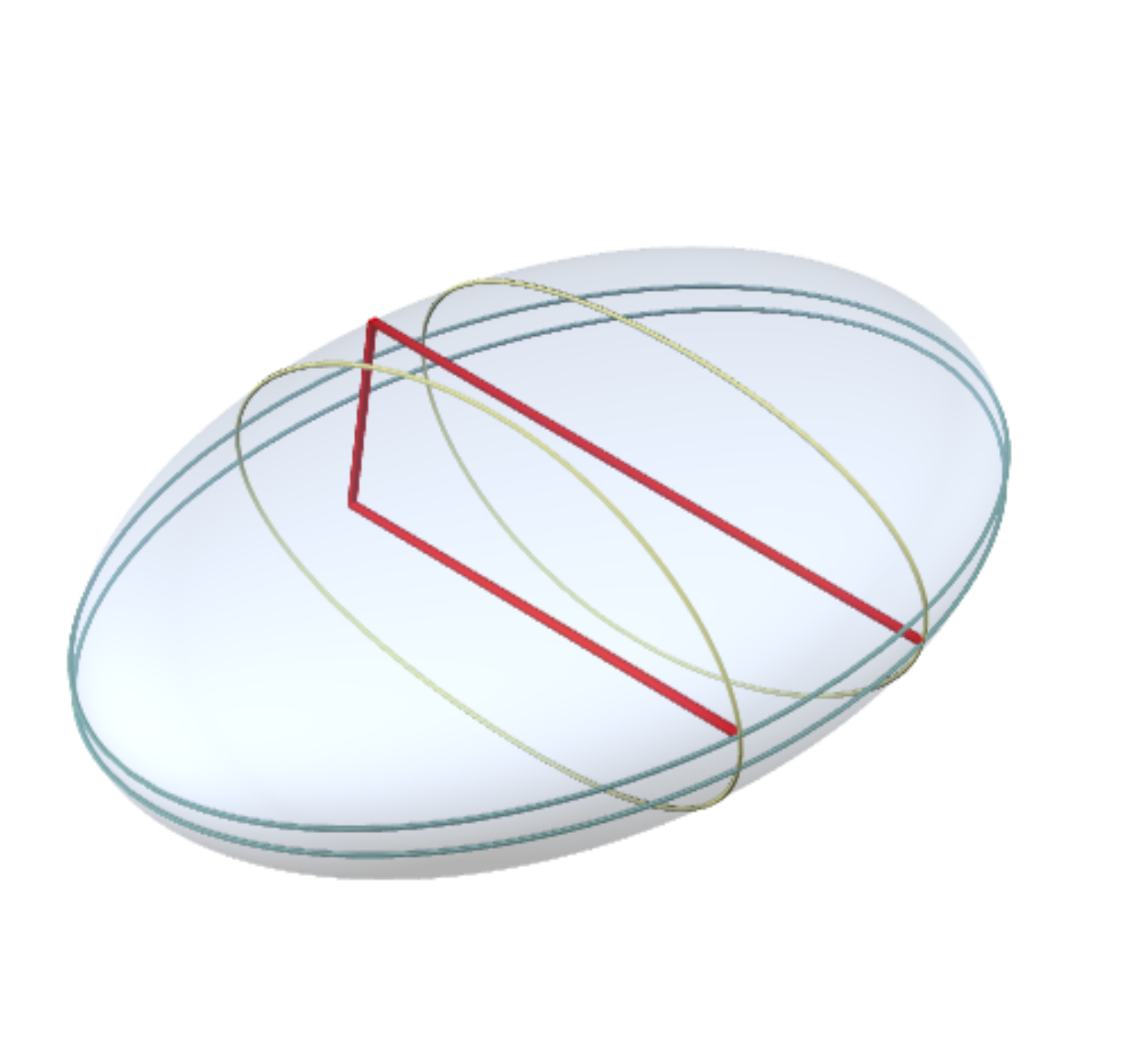}\end{tabular}} &
    \scalebox{0.2}{\begin{tabular}{c}\includegraphics{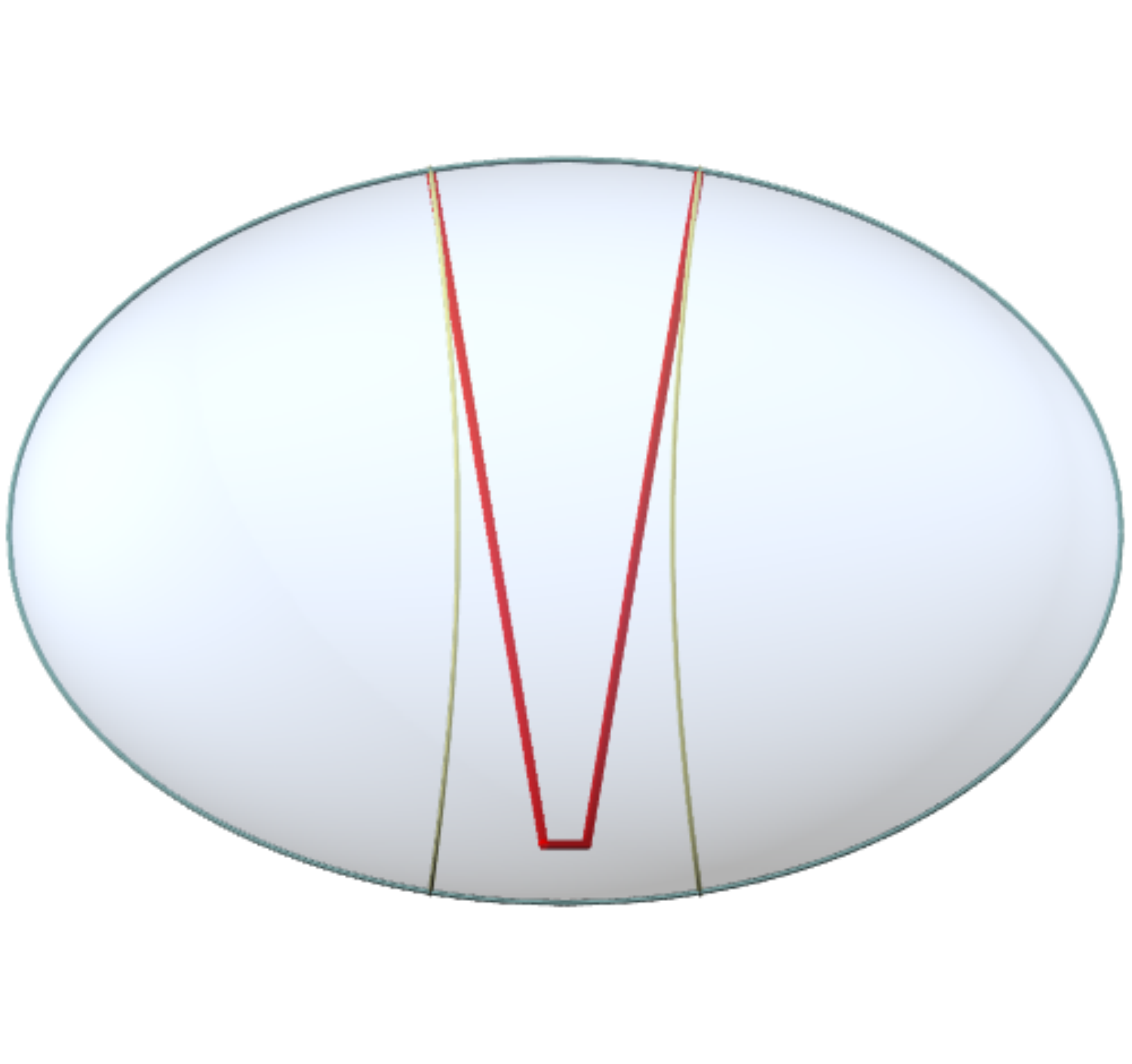}\end{tabular}} &
    \scalebox{0.2}{\begin{tabular}{c}\includegraphics{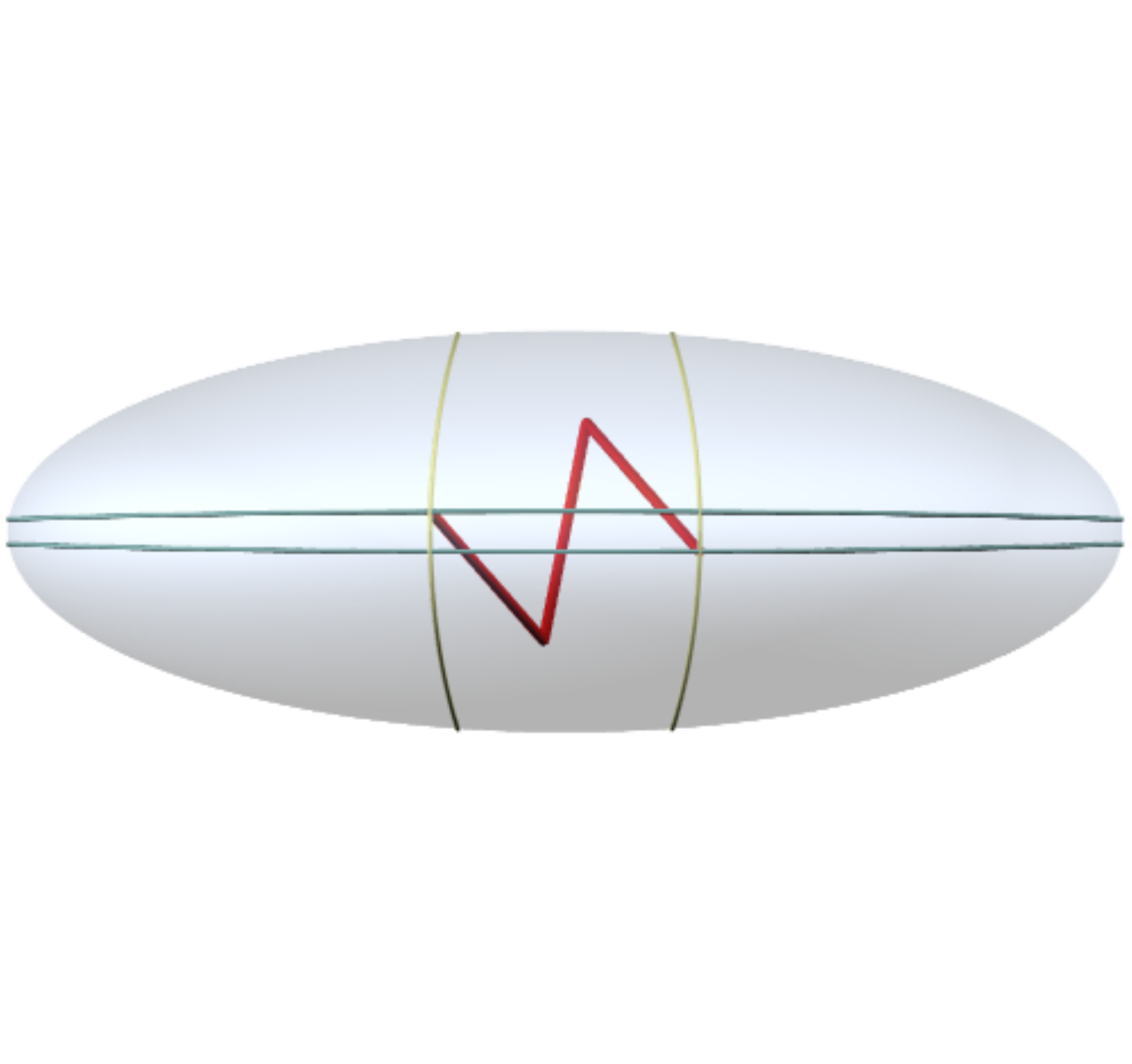}\end{tabular}} &
    \scalebox{0.2}{\begin{tabular}{c}\includegraphics{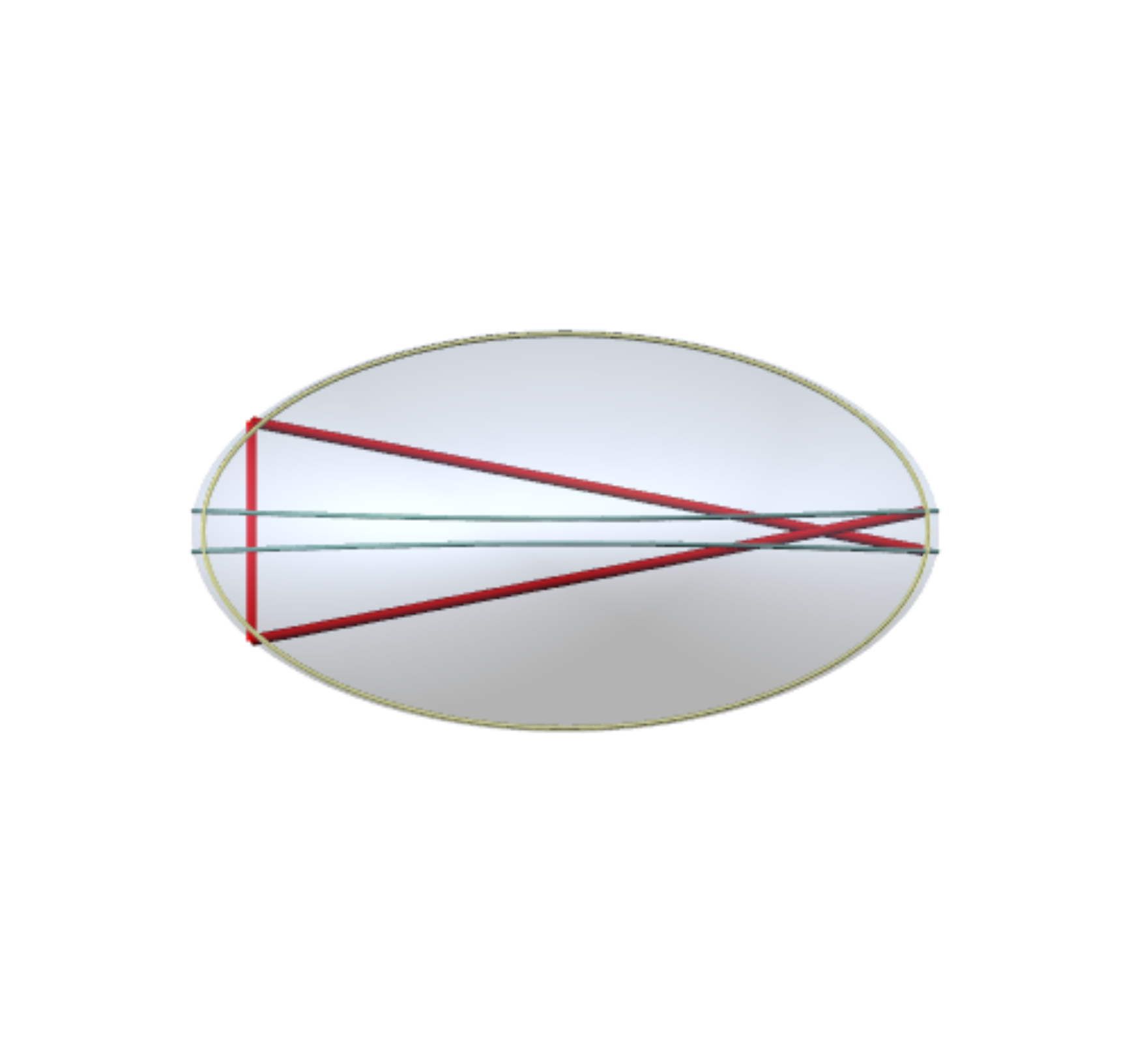}\end{tabular}} &
    \begin{tabular}{c}
      H1H2\\
      $\left( 6, 4, 2 \right)$\\
      0.45\\
      0.13\\
      0.967756\\
      0.133273\\
      $R$\\
      $f \circ R_{1 3}$
    \end{tabular}\\\hline
    \scalebox{0.2}{\begin{tabular}{c}\includegraphics{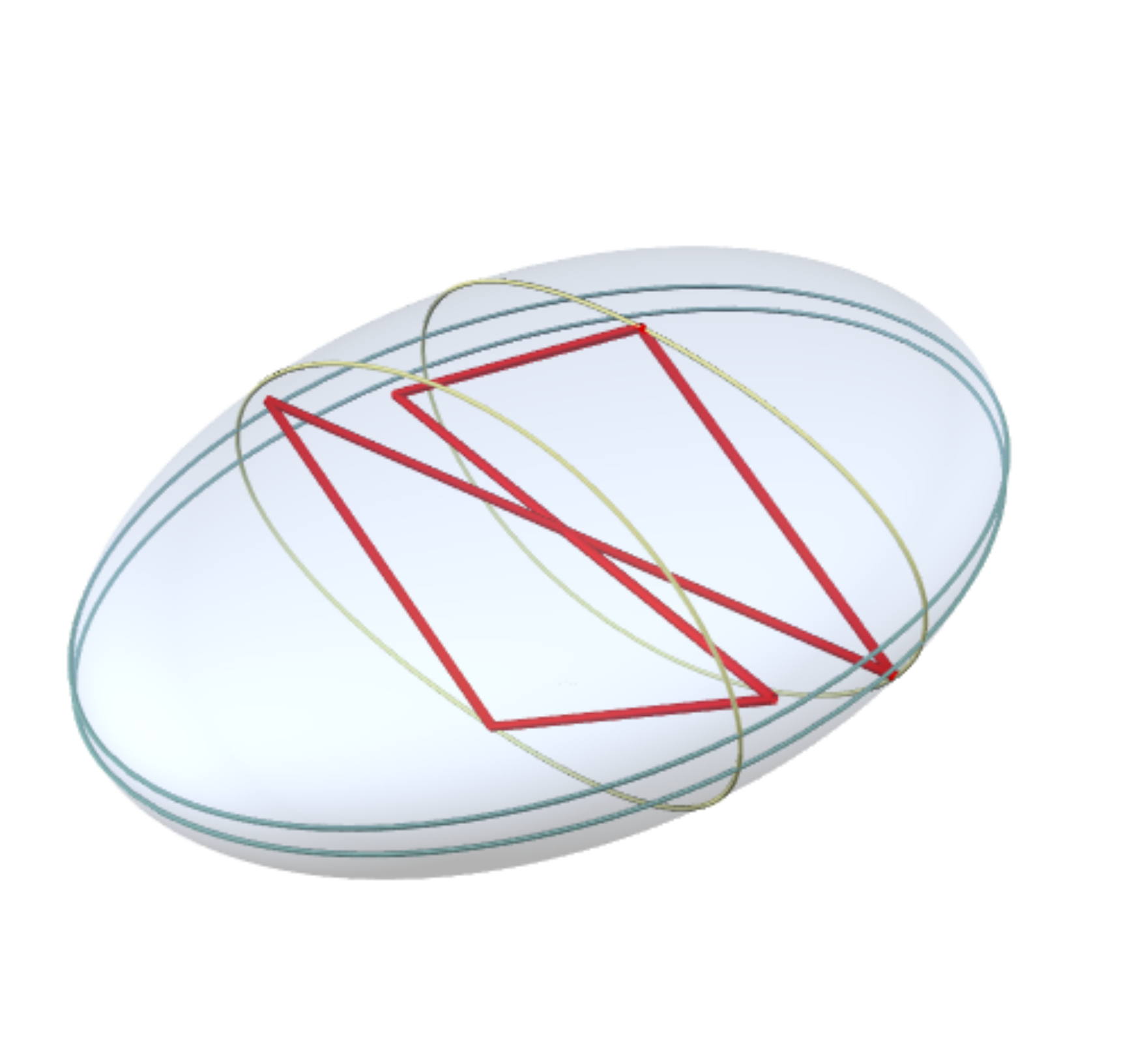}\end{tabular}} &
    \scalebox{0.2}{\begin{tabular}{c}\includegraphics{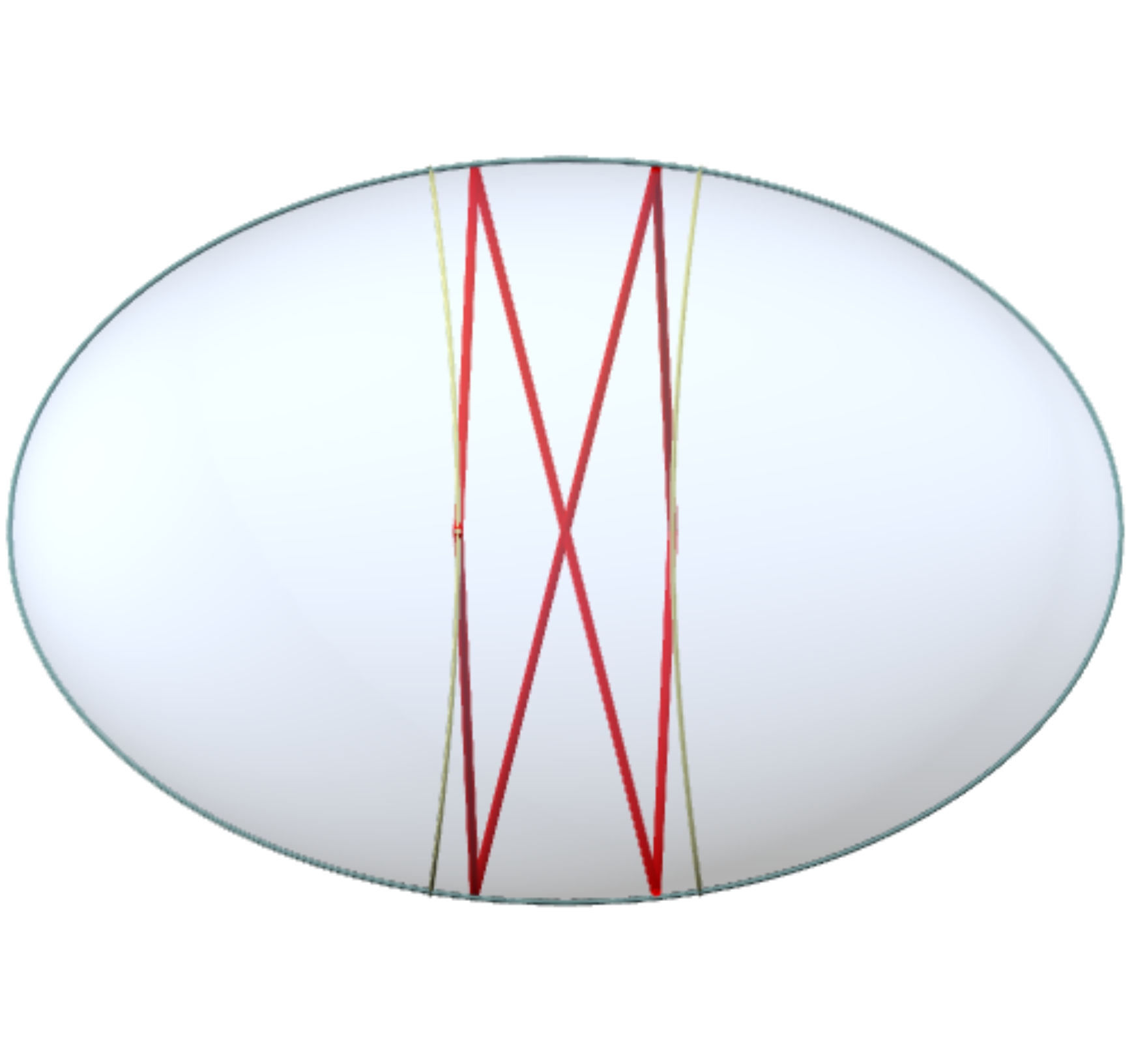}\end{tabular}} &
    \scalebox{0.2}{\begin{tabular}{c}\includegraphics{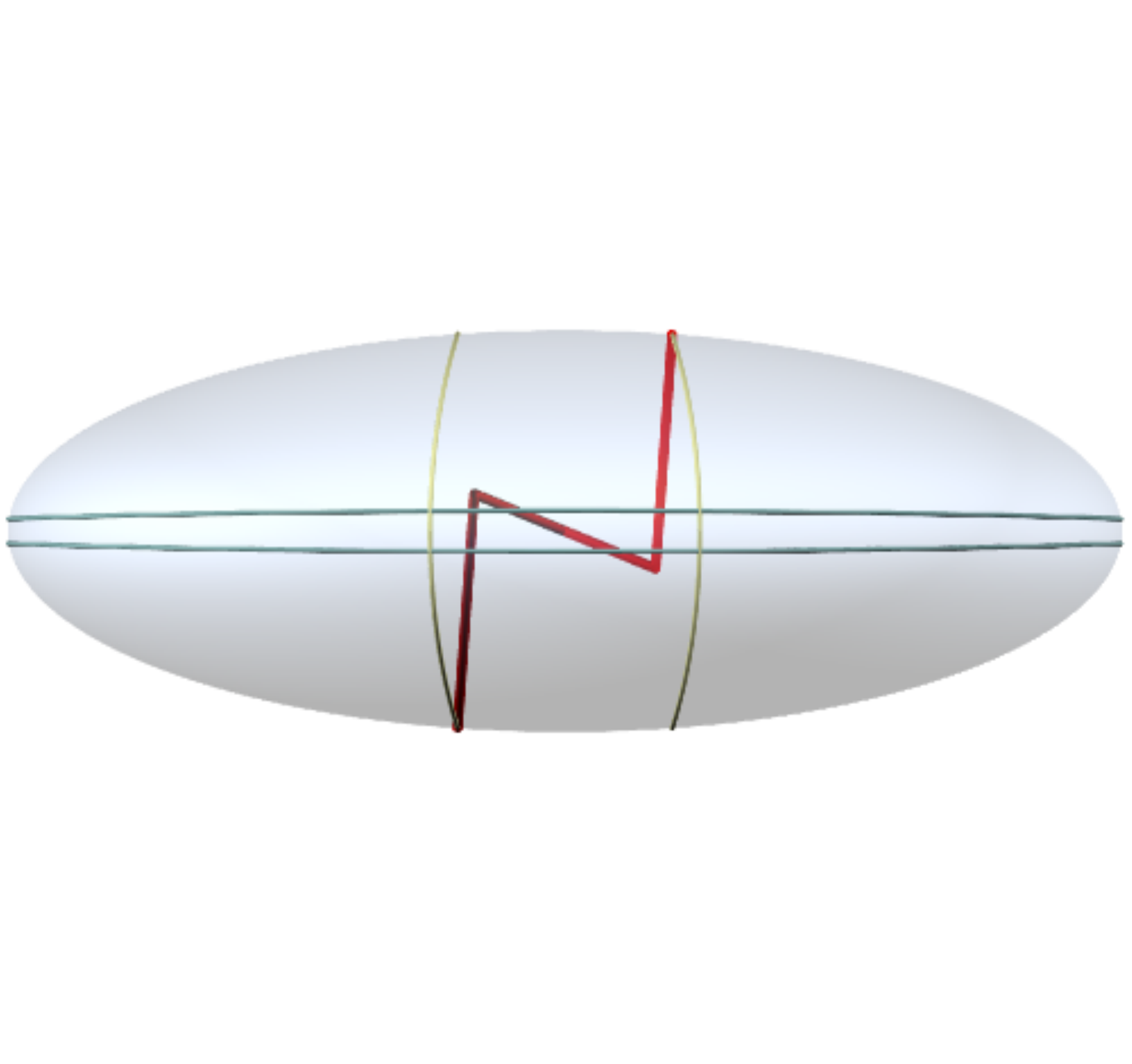}\end{tabular}} &
    \scalebox{0.2}{\begin{tabular}{c}\includegraphics{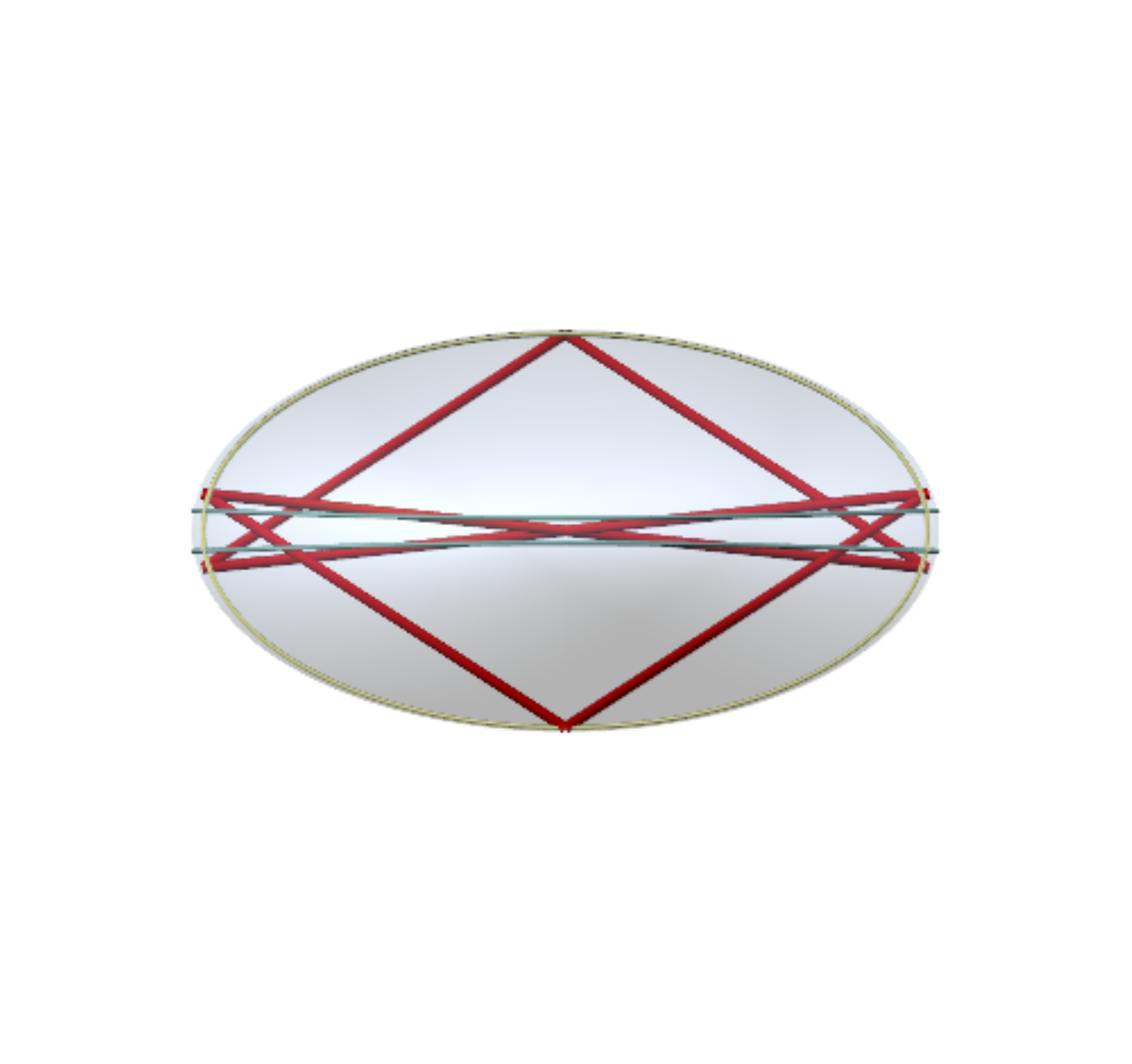}\end{tabular}} &
    \begin{tabular}{c}
      H1H2\\
      $\left( 6, 4, 2 \right)$\\
      0.45\\
      0.13\\
      0.967756\\
      0.133273\\
      $R_2$\\
      $f \circ R_{1 2 3}$
    \end{tabular}\\\hline
    \scalebox{0.2}{\begin{tabular}{c}\includegraphics{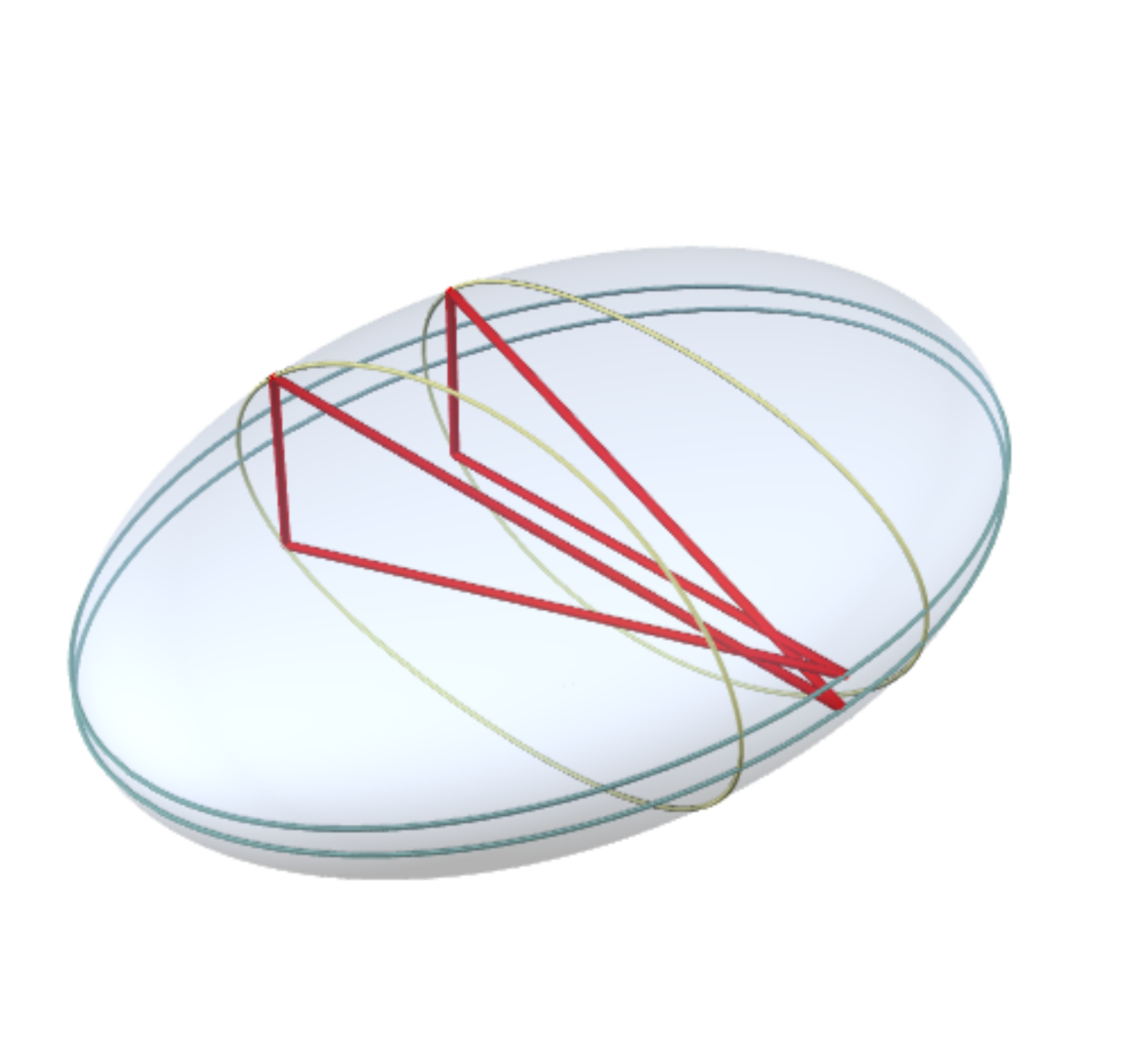}\end{tabular}} &
    \scalebox{0.2}{\begin{tabular}{c}\includegraphics{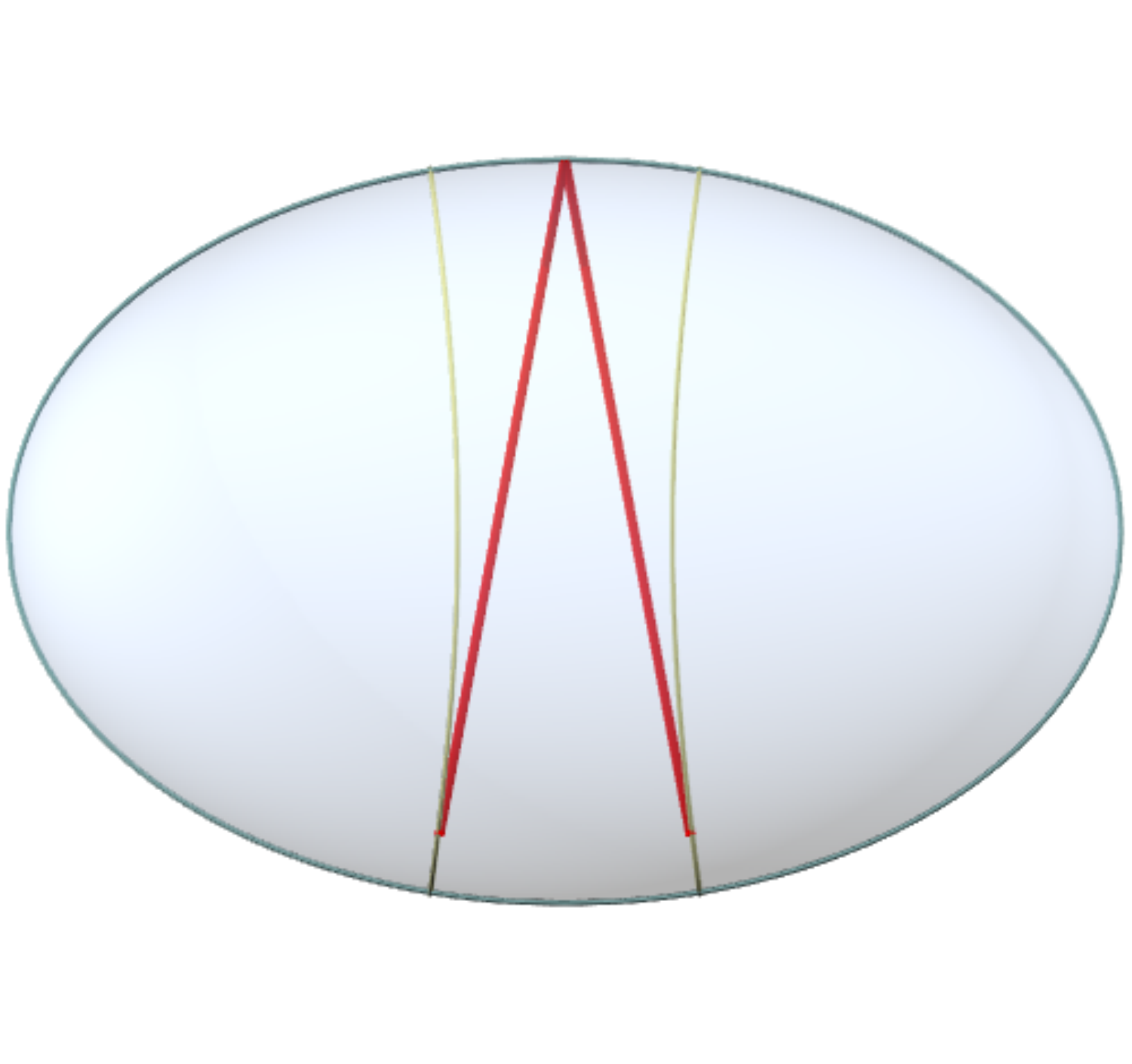}\end{tabular}} &
    \scalebox{0.2}{\begin{tabular}{c}\includegraphics{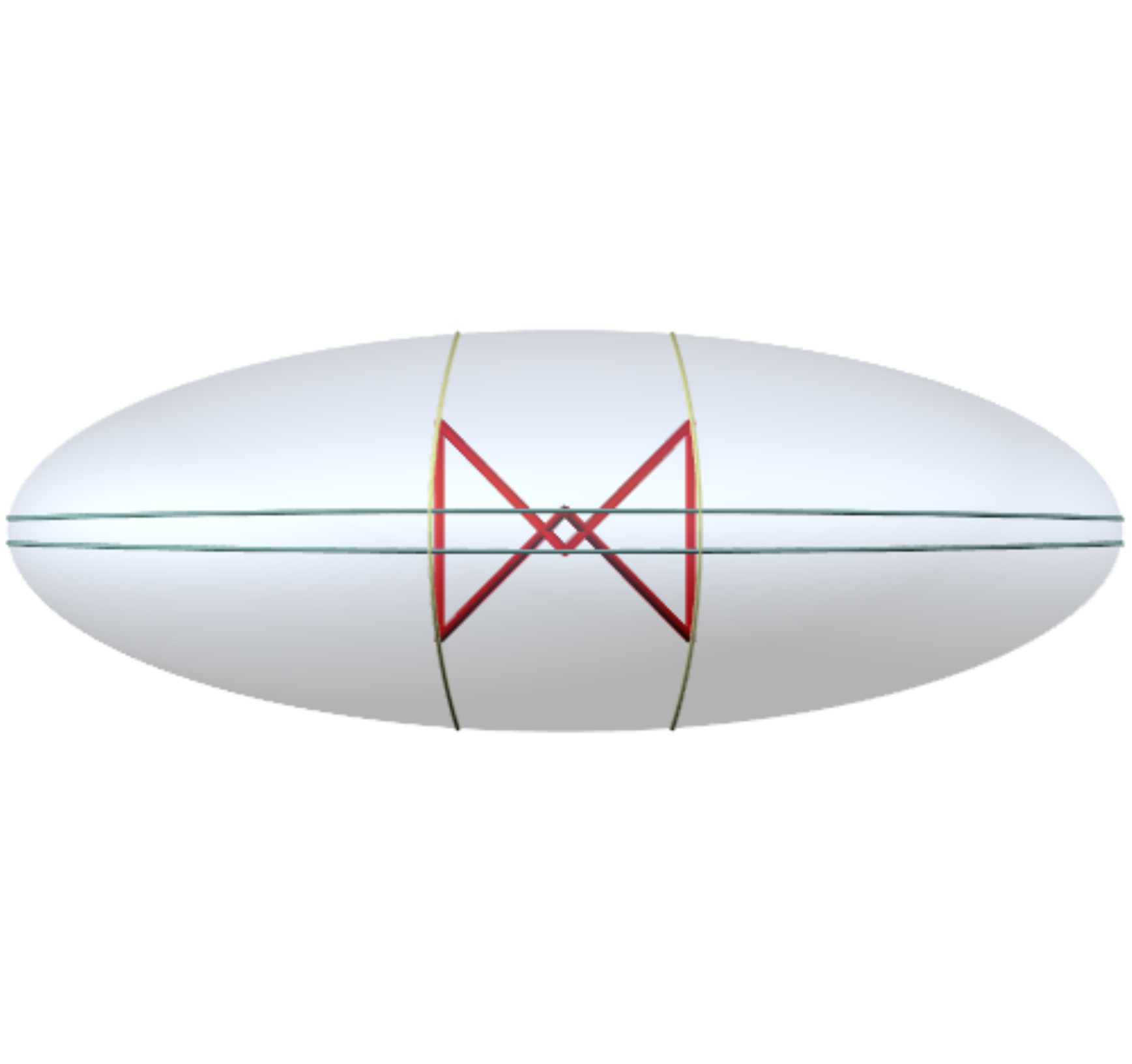}\end{tabular}} &
    \scalebox{0.2}{\begin{tabular}{c}\includegraphics{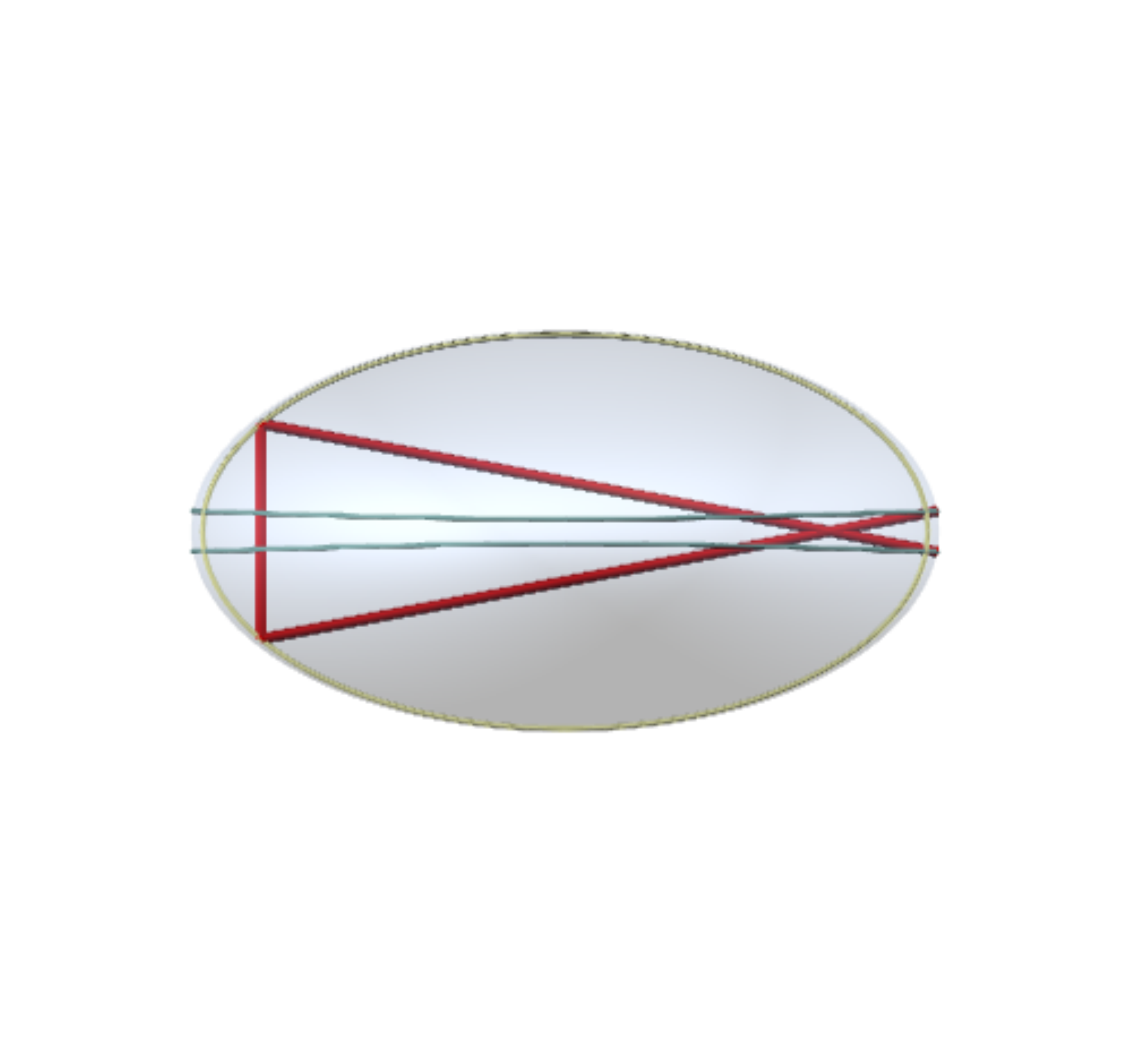}\end{tabular}} &
    \begin{tabular}{c}
      H1H2\\
      $\left( 6, 4, 2 \right)$\\
      0.45\\
      0.13\\
      0.967756\\
      0.133273\\
      $R_3$\\
      $f \circ R_1$
    \end{tabular}\\\hline
    \scalebox{0.2}{\begin{tabular}{c}\includegraphics{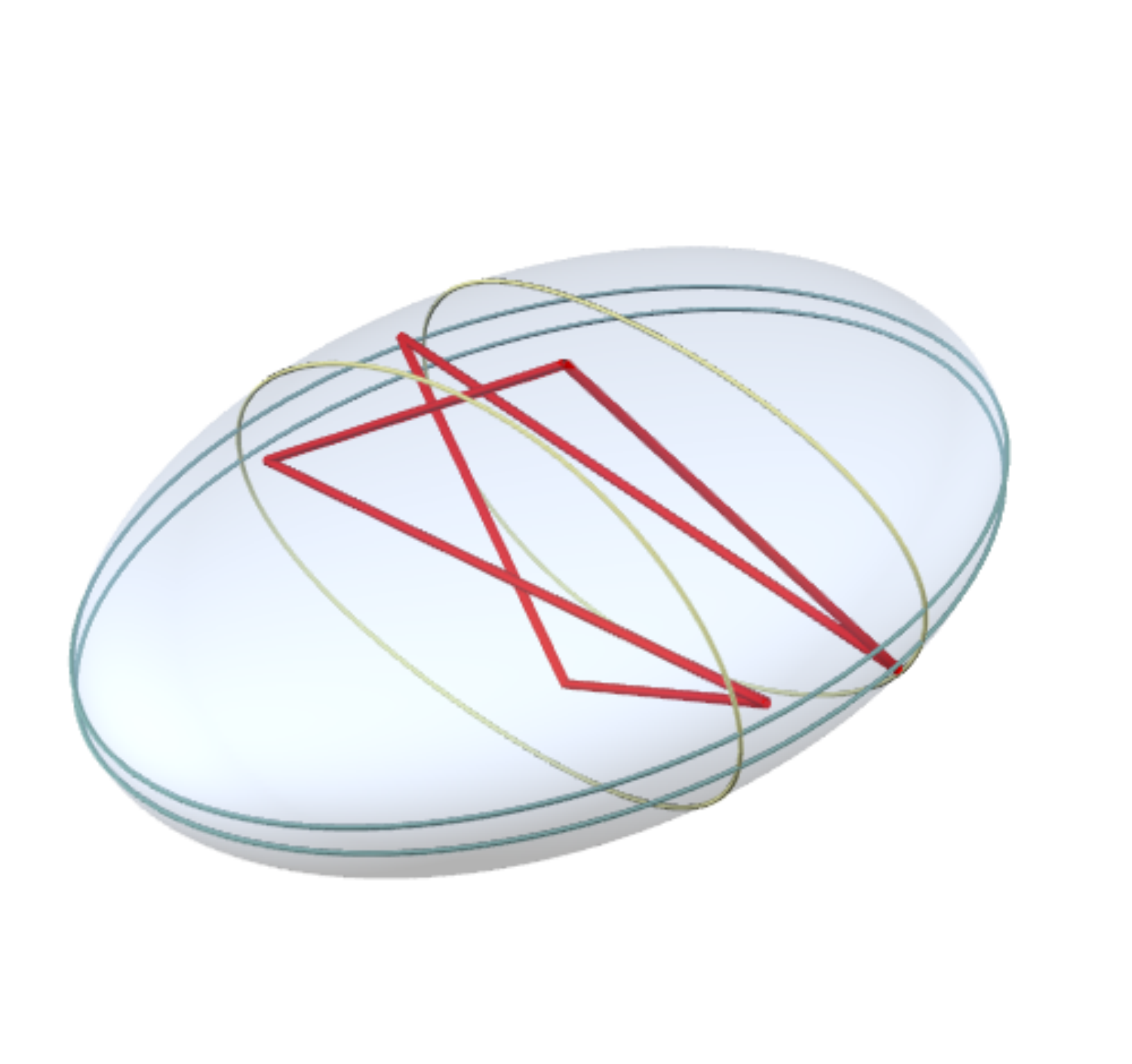}\end{tabular}} &
    \scalebox{0.2}{\begin{tabular}{c}\includegraphics{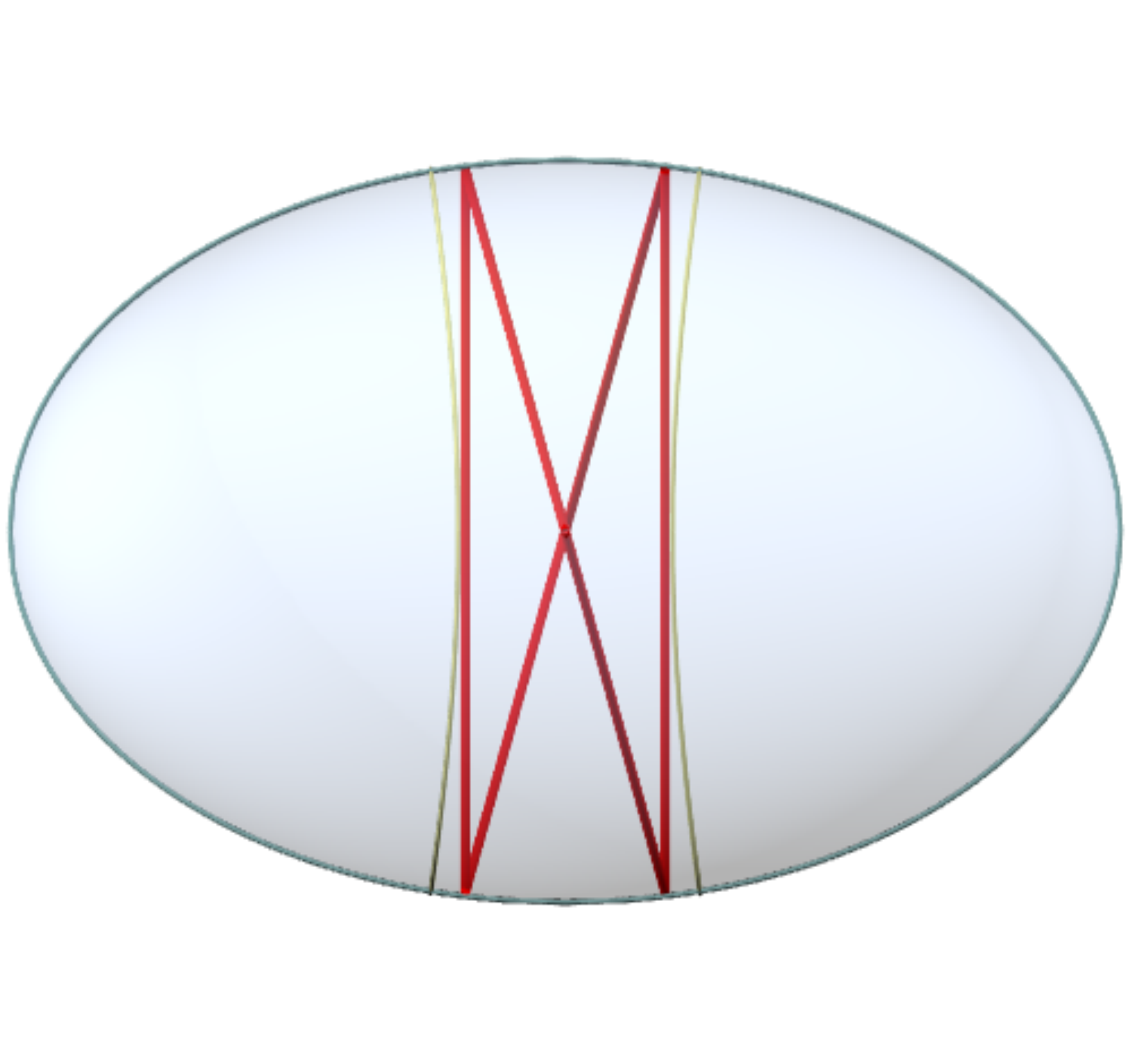}\end{tabular}} &
    \scalebox{0.2}{\begin{tabular}{c}\includegraphics{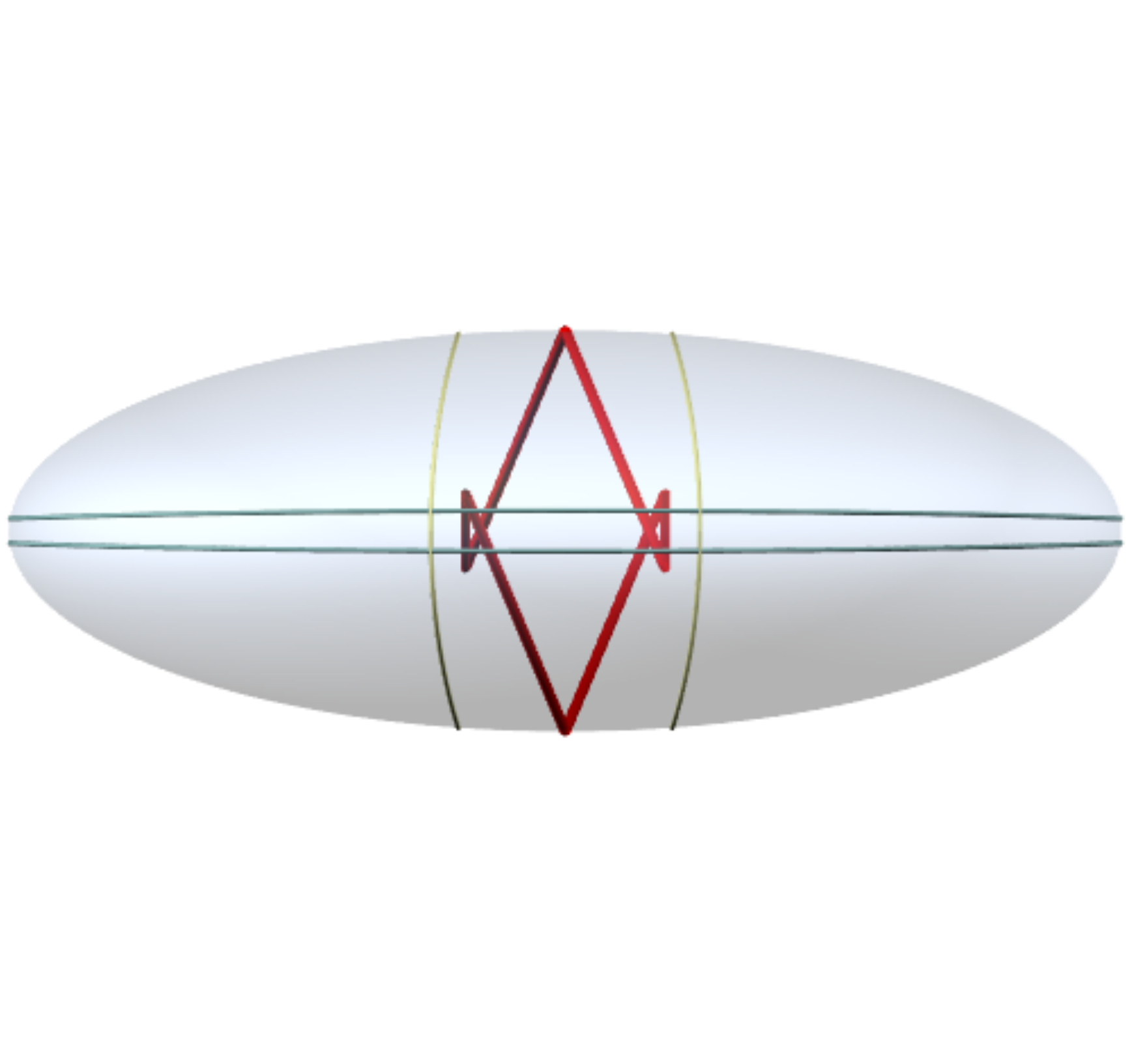}\end{tabular}} &
    \scalebox{0.2}{\begin{tabular}{c}\includegraphics{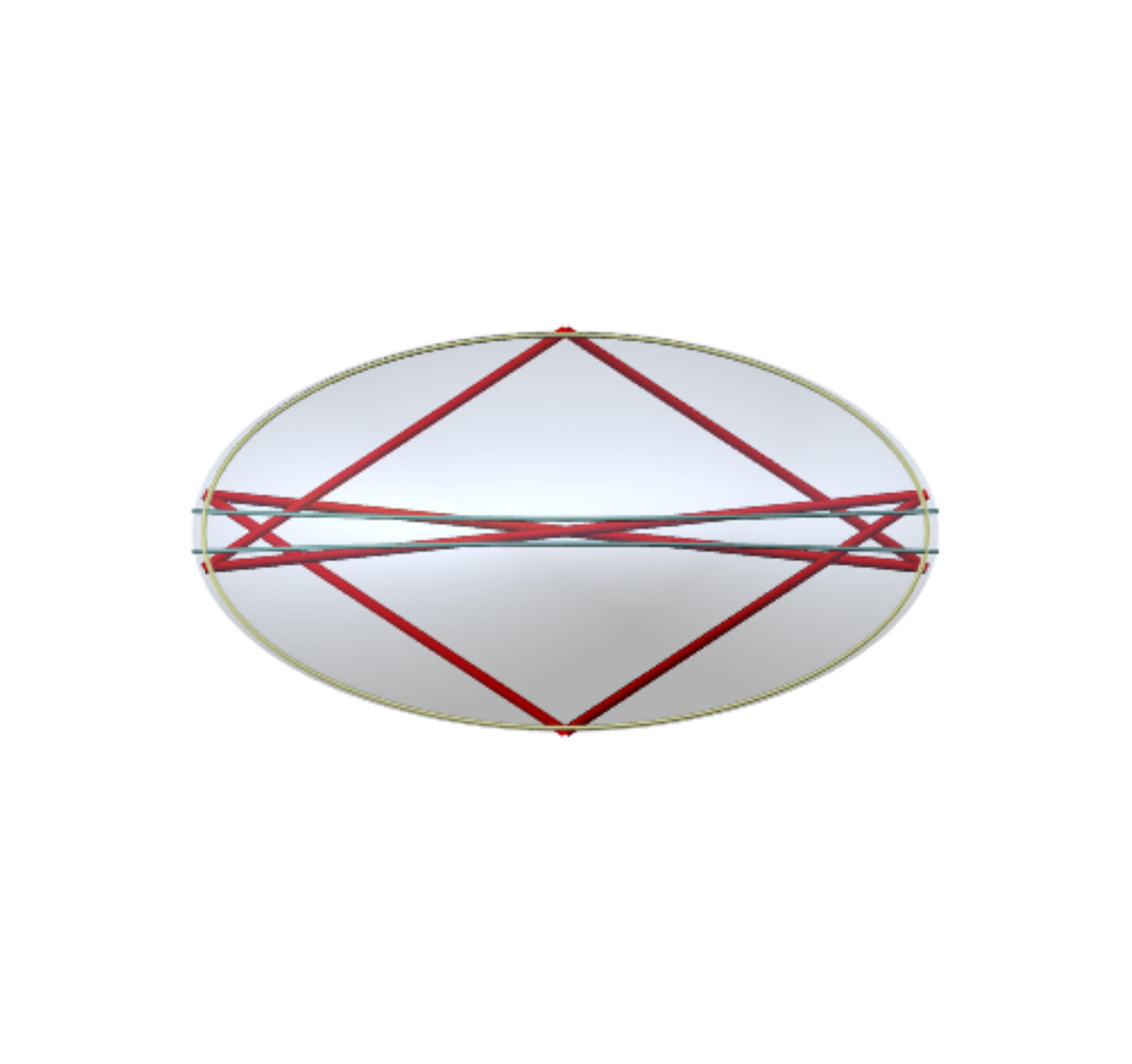}\end{tabular}} &
    \begin{tabular}{c}
      H1H2\\
      $\left( 6, 4, 2 \right)$\\
      0.45\\
      0.13\\
      0.967756\\
      0.133273\\
      $R_{2 3}$\\
      $f \circ R_{1 2}$
    \end{tabular}\\
    \hline
  \end{tabular}
\end{table*}

The other three caustic types can be dealt with the same algorithm. There is just
one difference among them. Namely, we must consider different shapes of ellipsoids
to ensure that Eq.~(\ref{eq:FrequencyWindingNumbers}) has a solution. More
precisely, we must consider almost flat ellipsoids ($a_1$ small) for
caustic types EH1 and H1H1, and almost ``segments'' (both $a_1$ and $a_2$ small)
for caustic types EH2 and H1H2. These results can be found in
Ref.~\onlinecite{CasasRamirez2011}.
This explains why some ellipsoids in
Tables~\ref{tab:SPTs_3D_periods45}--\ref{tab:SPTs_3D_EH?_period6} are so flat.
We have chosen not too extreme ellipsoids whenever it has been possible.

We have found a minimal SPT of each class. Some of them are displayed in
Tables~\ref{tab:SPTs_3D_periods45}--\ref{tab:SPTs_3D_EH?_period6}. Arrows in
their column headings point in the direction of increasing $x_1$, $x_2$ or
$x_3$. Each line of Tables~\ref{tab:SPTs_3D_periods45},
\ref{tab:SPTs_3D_H1H2_period6} and~\ref{tab:SPTs_3D_EH?_period6} shows
four different perspectives of an SPT.
The 3D image corresponds to an isometric view of the first octant.
The three projected planes are viewed from the positive missing axis. We only
display 3D views in Table~\ref{tab:SPTs_3D_H1H1_period6}. Billiard
trajectories are depicted in red. Green and yellow lines represent
intersections of $Q$ with H1-caustics and H2-caustics, respectively. The
E-caustic is also shown in cases EH1 and EH2.

Period four takes only place for four classes of SPTs of type H1H1. Period
five takes only place for eight classes of SPTs, half of type EH1 and half of
type EH2. We display half of these minimal SPTs in
Table~\ref{tab:SPTs_3D_periods45}. Their announced symmetries can be verified by
observing their projections. We can check that trajectories of caustic type
H1H1 give one turn around the $x_1$-axis and have three tangential touches
with the outer (respectively, inner) 1-sheet hyperboloid, trajectories of type
EH1 give one turn around the $x_1$-axis and cross four times the plane
$\Pi_1$, whereas trajectories of type EH2 give two turns around the $x_3$-axis
and cross twice the plane $\Pi_2$. This is consistent with the geometric
interpretation given in Subsec.~\ref{ssec:FrequencyMap} of winding numbers.

We show in Table~\ref{tab:SPTs_3D_H1H2_period6} the minimal representatives of
the classes listed in the third row of Table~\ref{tab:ClassificationSPTsH1H2}.
Since all of them have $\left( 6, 4, 2 \right)$ as winding numbers, they are
$6$-periodic, cross four times the plane $\Pi_2$, and cross twice the plane
$\Pi_3$.

We draw several SPTs of type H1H1 in
Table~\ref{tab:SPTs_3D_H1H1_period6}. They show the difference among
the six classes
$\left( R_2 | R_2 \right)$, $\left( R_3 | R_3
\right)$, $\left( R_2, R_3 | \right)$, $\left( R_2 | R_3 \right)$,
$\left( R_3 | R_2 \right)$, and $\left( | R_2, R_3 \right)$.
We can locate those classes in the last rows
of Table~\ref{tab:ClassificationSPTsH1H1}. One can observe, for instance, that
the SPT of class $\left( R_2, R_3 | \right)$ has two impacts on
the intersection $Q \cap \Pi_2 \cap Q_{\lambda_1}$ and two more on $Q \cap
\Pi_3 \cap Q_{\lambda_1}$. On the contrary, the SPT of class $( | R_2, R_3)$
has two impacts on the intersection $Q \cap \Pi_2
\cap Q_{\lambda_2}$ and two more on $Q \cap \Pi_3 \cap Q_{\lambda_2}$. Since
$\lambda_1 < \lambda_2$, $Q_{\lambda_1}$ and $Q_{\lambda_2}$ are the outer and
inner one-sheet hyperboloids, respectively.

Finally, in Table~\ref{tab:SPTs_3D_EH?_period6} we present half of the 6-periodic
minimal SPTs with an ellipsoidal caustic. They correspond to the third row of
Tables~\ref{tab:ClassificationSPTsEH1}--\ref{tab:ClassificationSPTsEH2}.

\begin{table*}
\caption{\label{tab:SPTs_3D_H1H1_period6}Six SPTs (3D view) for H1H1-caustics.
They show different classes of SPTs corresponding to reversors $R_2$
and/or $R_3$. ``Data'' as in Table~\ref{tab:SPTs_3D_periods45}.}
  \begin{tabular}{c|c|c|c|c|c}
    3D $(x_1 : \uparrow, x_2 : \searrow, x_3 : \swarrow)$ & Data &
    3D $(x_1 : \uparrow, x_2 : \searrow, x_3 : \swarrow)$ & Data &
    3D $(x_1 : \uparrow, x_2 : \searrow, x_3 : \swarrow)$ & Data \\
    \hline
    \scalebox{0.2}{\begin{tabular}{c}\includegraphics{H1H1_3_8_1_4_R2_xyz}\end{tabular}} &
    \begin{tabular}{c}
      H1H1\\
      $(4, 3, 2)$\\
      0.8\\
      0.13\\
      0.648376\\
      0.130077\\
      $(R_2 | R_2)$
    \end{tabular} &
    \scalebox{0.2}{\begin{tabular}{c}\includegraphics{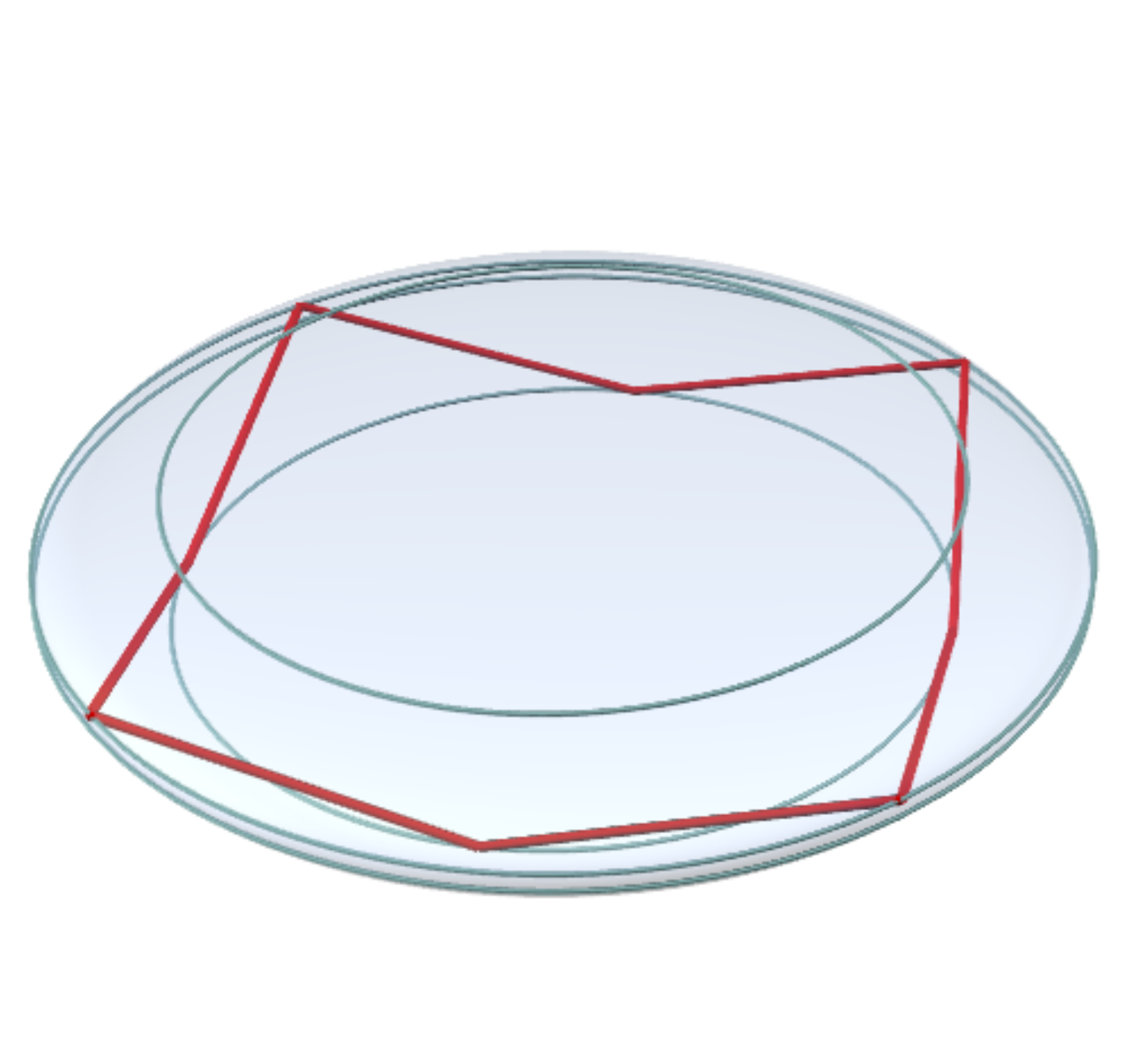}\end{tabular}} &
    \begin{tabular}{c}
      H1H1\\
      $(8, 4, 2)$\\
      0.95\\
      0.05\\
      0.056134\\
      0.457414\\
      $(R_2, R_3 |)$
    \end{tabular} &
    \scalebox{0.2}{\begin{tabular}{c}\includegraphics{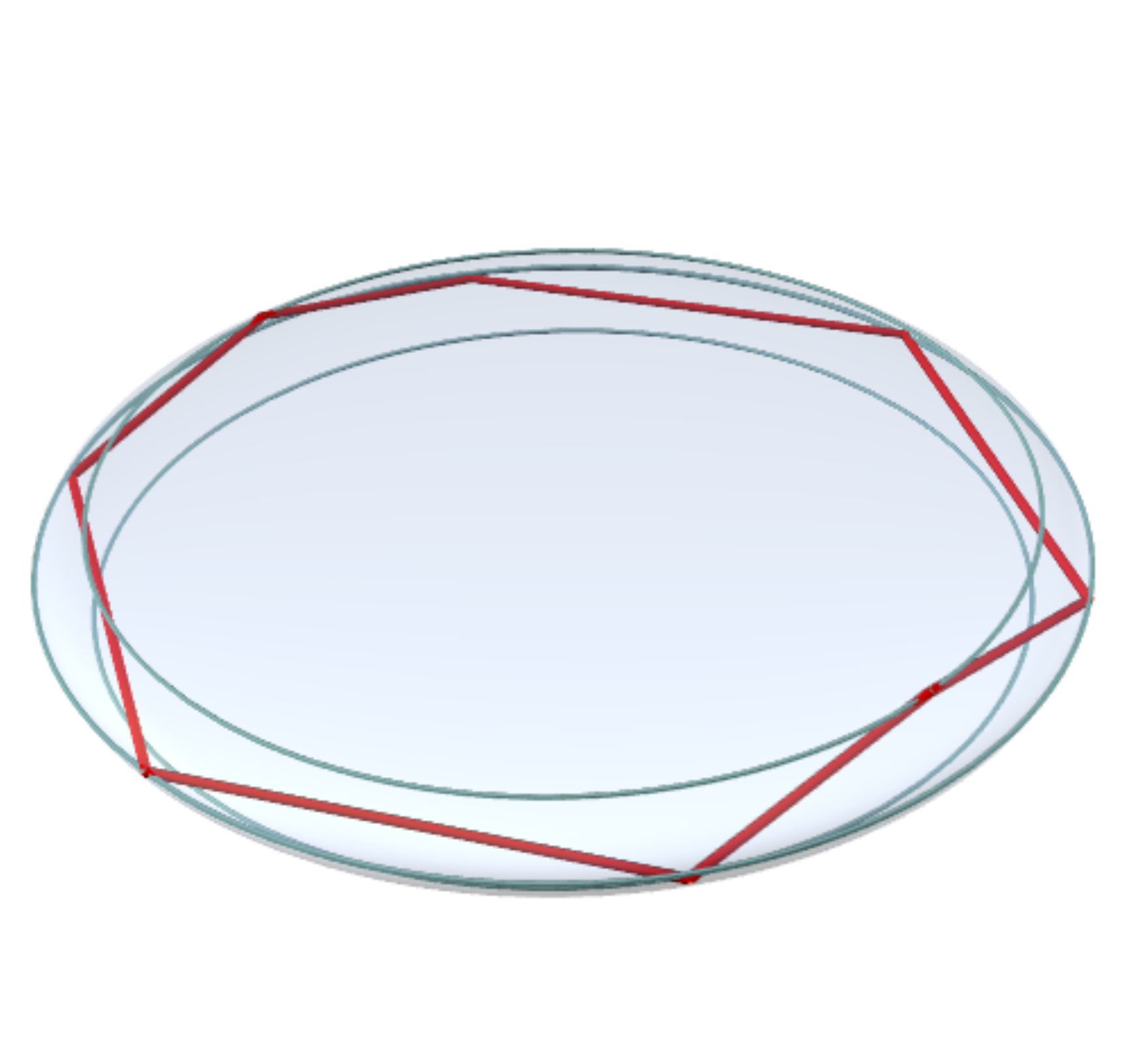}\end{tabular}} &
    \begin{tabular}{c}
      H1H1\\
      $(8, 6, 2)$\\
      0.95\\
      0.05\\
      0.050041\\
      0.229595\\
      $(R_2 | R_3)$
    \end{tabular}\\\hline
    \scalebox{0.2}{\begin{tabular}{c}\includegraphics{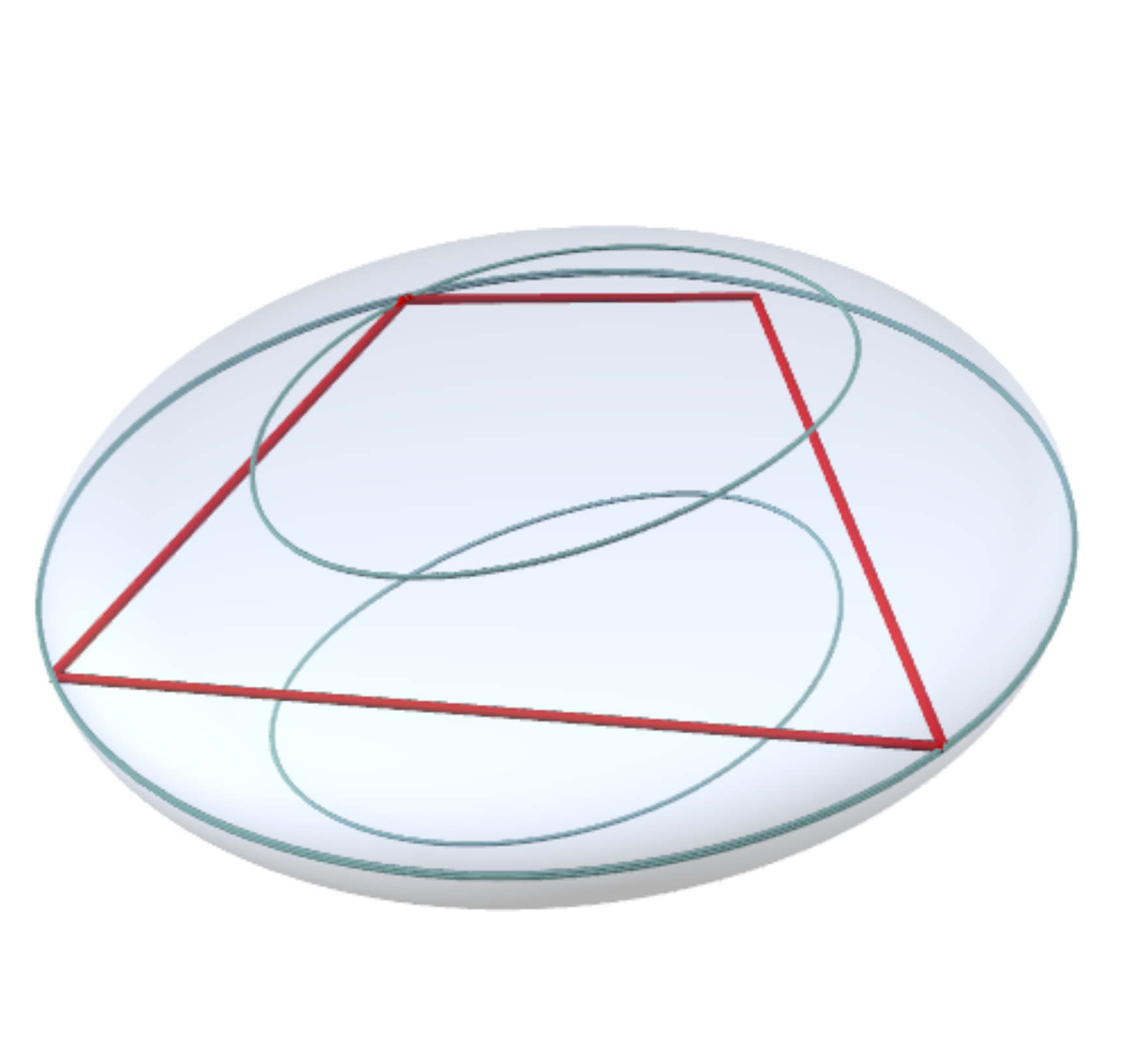}\end{tabular}} &
    \begin{tabular}{c}
      H1H1\\
      $(4, 3, 2)$\\
      0.8\\
      0.13\\
      0.648376\\
      0.130077\\
      $(R_3 | R_3)$
    \end{tabular} &
    \scalebox{0.2}{\begin{tabular}{c}\includegraphics{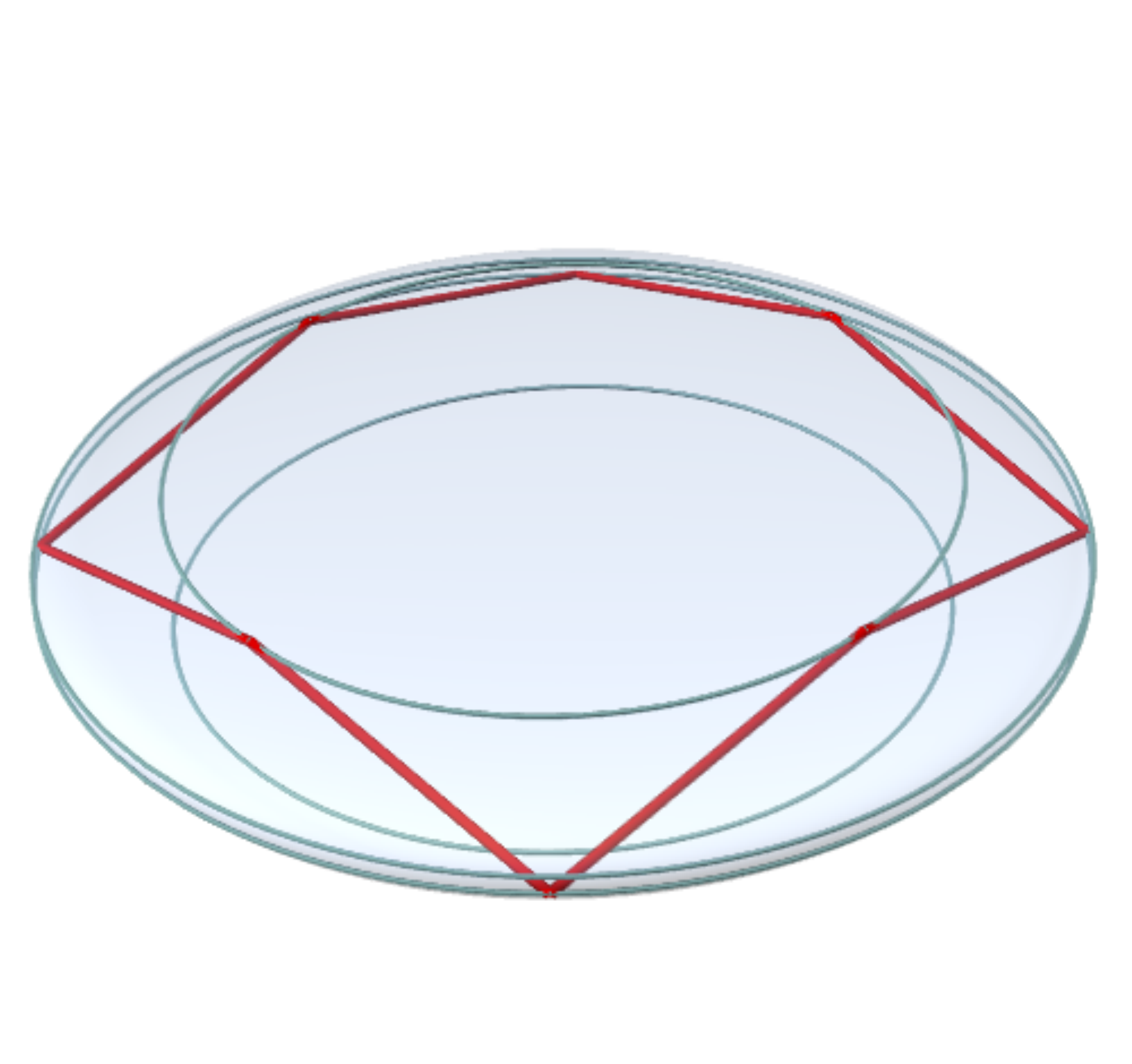}\end{tabular}} &
    \begin{tabular}{c}
      H1H1\\
      $(8, 4, 2)$\\
      0.95\\
      0.05\\
      0.056134\\
      0.457414\\
      $(| R_2, R_3)$
    \end{tabular} &
    \scalebox{0.2}{\begin{tabular}{c}\includegraphics{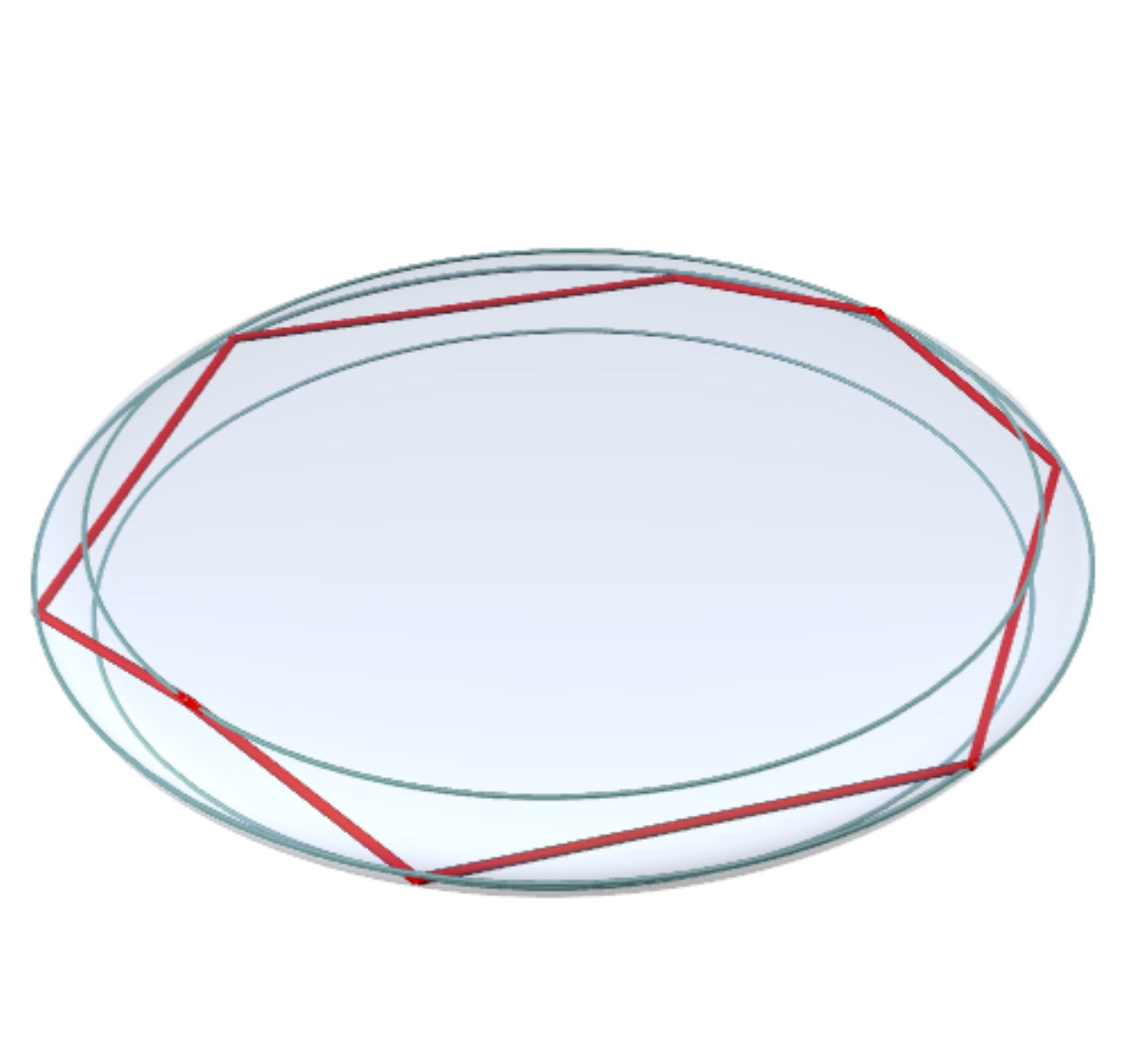}\end{tabular}} &
    \begin{tabular}{c}
      H1H1\\
      $(8, 6, 2)$\\
      0.95\\
      0.05\\
      0.050041\\
      0.229595\\
      $(R_3 | R_2)$
    \end{tabular}\\
    \hline
  \end{tabular}
\end{table*}

Some comments about these trajectories are in order:
\begin{itemize}
  \item The 4-periodic trajectories in Table~\ref{tab:SPTs_3D_periods45} are
  the simplest examples of nonplanar periodic trajectories. Besides, they are
  one of the few simply SPTs. Both trajectories have two points on the same
  symmetry set. However, each point is associated to a different caustic:
  inner or outer. That property is easier to see in the first one. Hence, they
  are of class $(R_2 | R_2)$ and $(f\circ R_{13} | f \circ R_{13})$
  in the notation used in Table~\ref{tab:ClassificationSPTsH1H1}.
  
  \item The 5-periodic trajectories in Table~\ref{tab:SPTs_3D_periods45} are
  the simplest examples of nonplanar periodic trajectories with odd period.
  They have one point on a symmetry set and another on the associated symmetry
  set. We recall that only SPTs with odd period have this property; see
  item~\ref{item:SPOs}) of Theorem~\ref{thm:SymmetricOrbits}.
  
  \item Many projections onto the horizontal plane $\Pi_1$ look like 2D SPTs
  (cf. Table~\ref{tab:SPTs_2D}).
  
  \item Any $R$-SPT is travelled twice in opposite directions, since it hits
  orthogonally the ellipsoid at some point $q \in Q \cap Q_{\lambda_1} \cap
  Q_{\lambda_2}$. Therefore, there exist SPTs of even period $m_0 = 2 l \geq
  6$ with only $l + 1$ distinct impact points on the ellipsoid, although all
  of them are of caustic type H1H2. We show a 6-periodic sample (the simplest
  one) in the first row of Table~\ref{tab:SPTs_3D_H1H2_period6}.
\end{itemize}

\begin{table*}
  \caption{\label{tab:SPTs_3D_EH?_period6}Four minimal SPTs with winding
  numbers $\left( m_0, m_1, m_2 \right) = \left( 6, 4, 2 \right)$ and an
  ellipsoidal caustic. ``Data'' as in Table~\ref{tab:SPTs_3D_periods45}.}
  \begin{tabular}{c|c|c|c|c}
    3D $\left( x_1 : \uparrow, x_2 : \searrow, x_3 : \swarrow \right)$ & Plane
    $\Pi_1$ $\left( x_2 : \uparrow, x_3 : \rightarrow \right)$ & Plane $\Pi_2$
    $\left( x_1 : \uparrow  , x_3 : \leftarrow \right)$ &
    Plane $\Pi_3$ $\left( x_1 : \uparrow, x_2 : \rightarrow \right)$ & Orbit
    data\\
    \hline
    \scalebox{0.2}{\begin{tabular}{c}\includegraphics{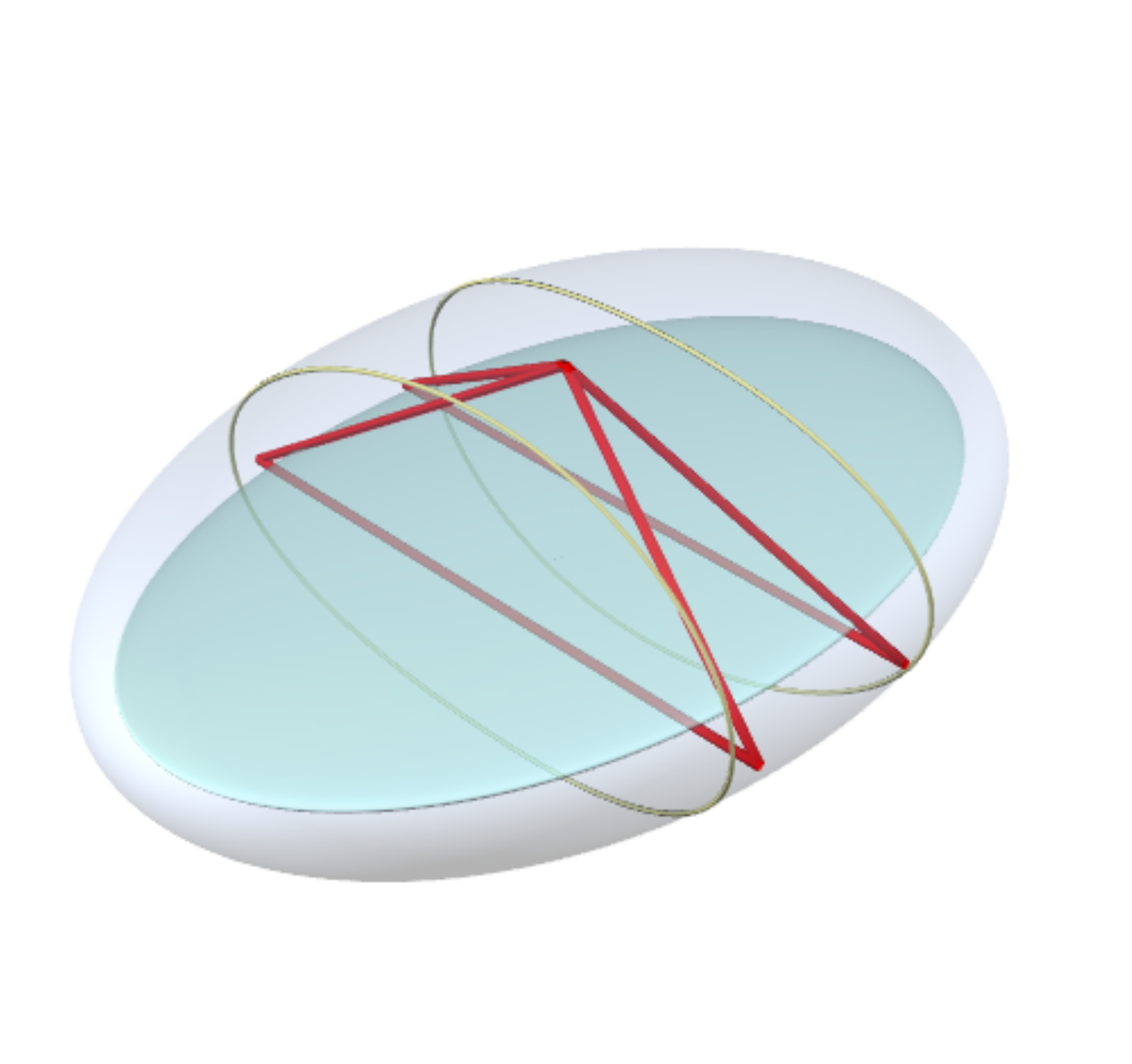}\end{tabular}} &
    \scalebox{0.2}{\begin{tabular}{c}\includegraphics{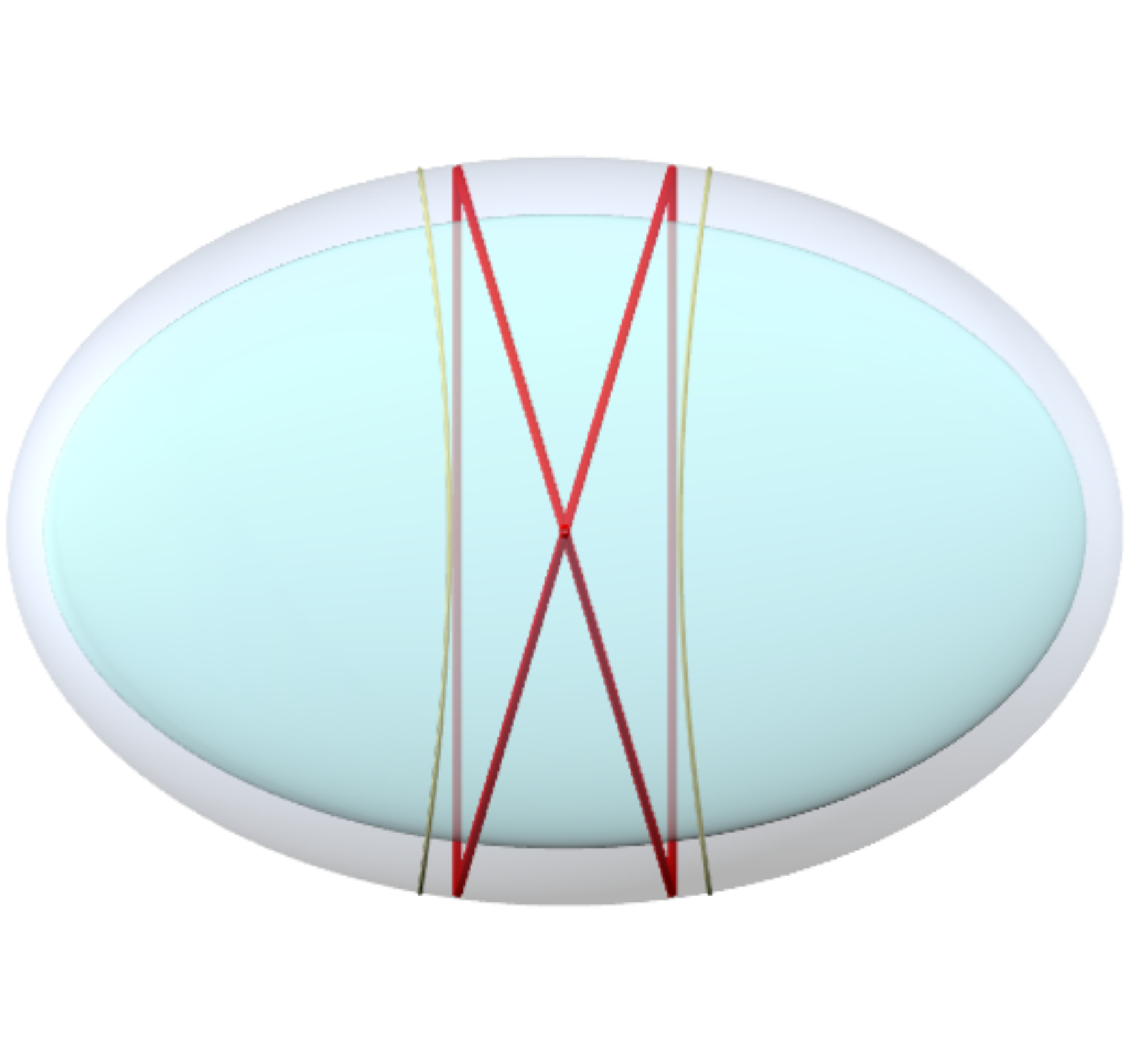}\end{tabular}} &
    \scalebox{0.2}{\begin{tabular}{c}\includegraphics{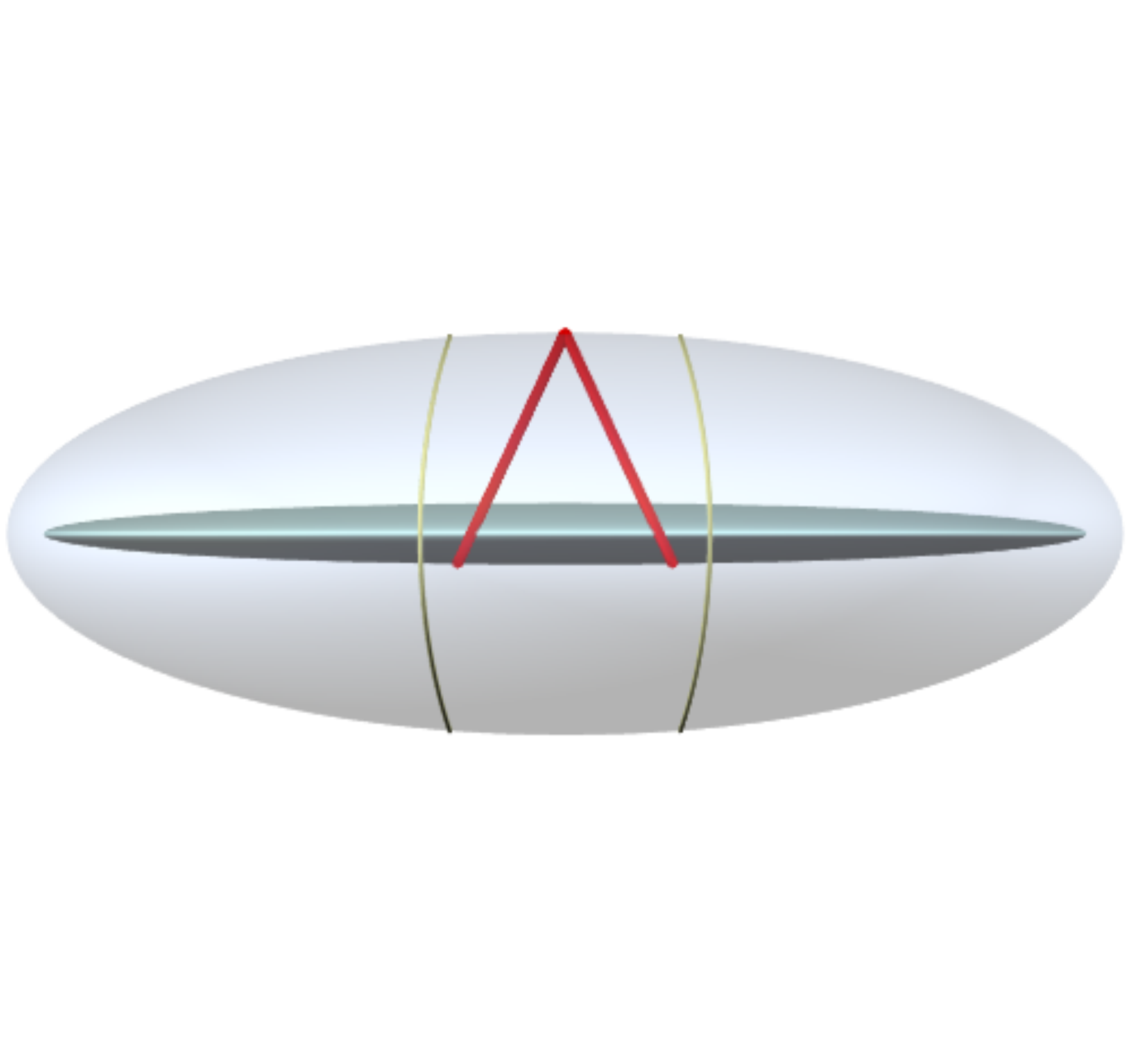}\end{tabular}} &
    \scalebox{0.2}{\begin{tabular}{c}\includegraphics{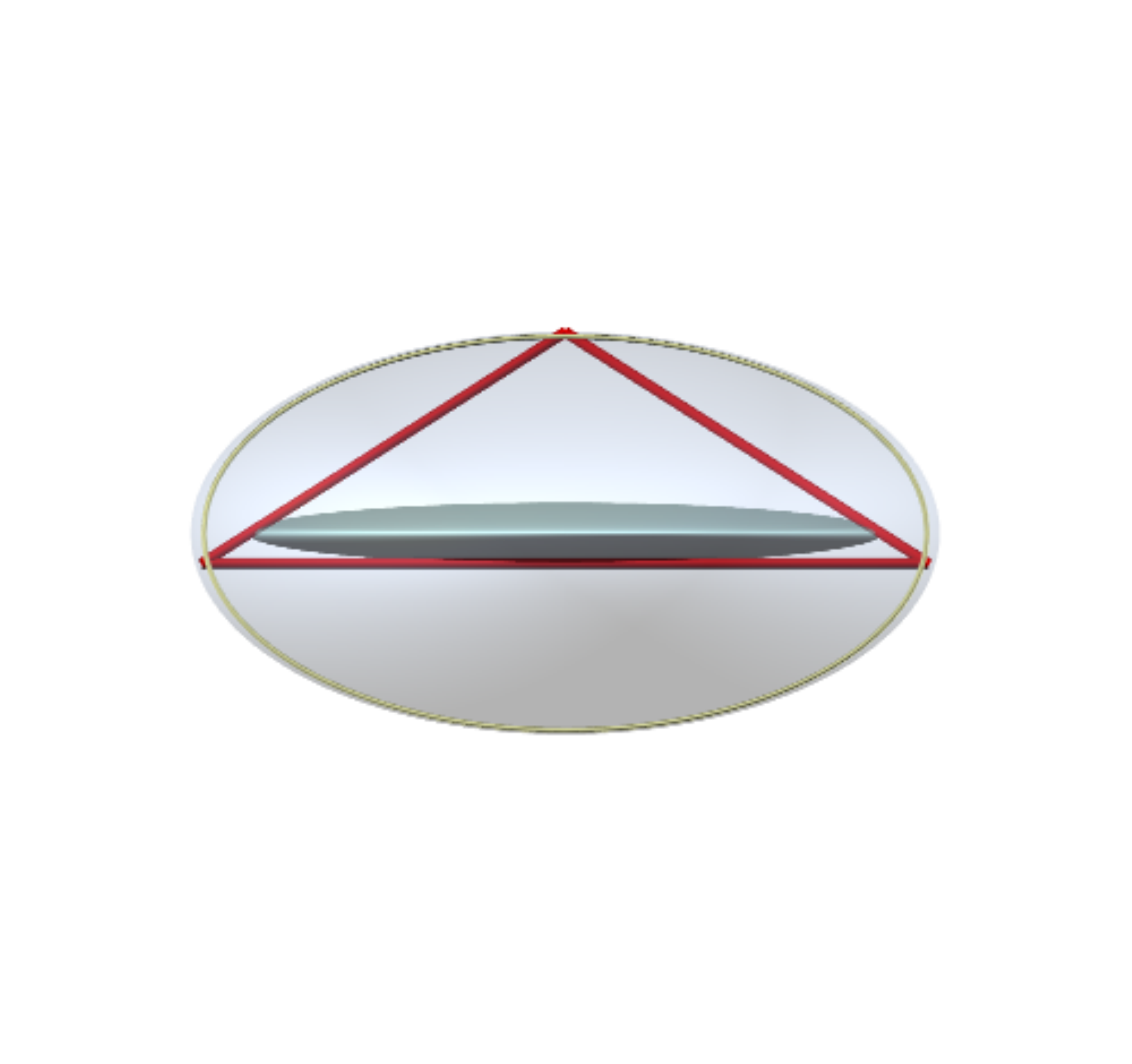}\end{tabular}} &
    \begin{tabular}{c}
      EH2\\
      $\left( 6, 4, 2 \right)$\\
      0.45\\
      0.13\\
      0.962896\\
      0.126968\\
      $R_{2 3}$\\
      $f \circ R_2$
    \end{tabular}\\
    \hline
    \scalebox{0.2}{\begin{tabular}{c}\includegraphics{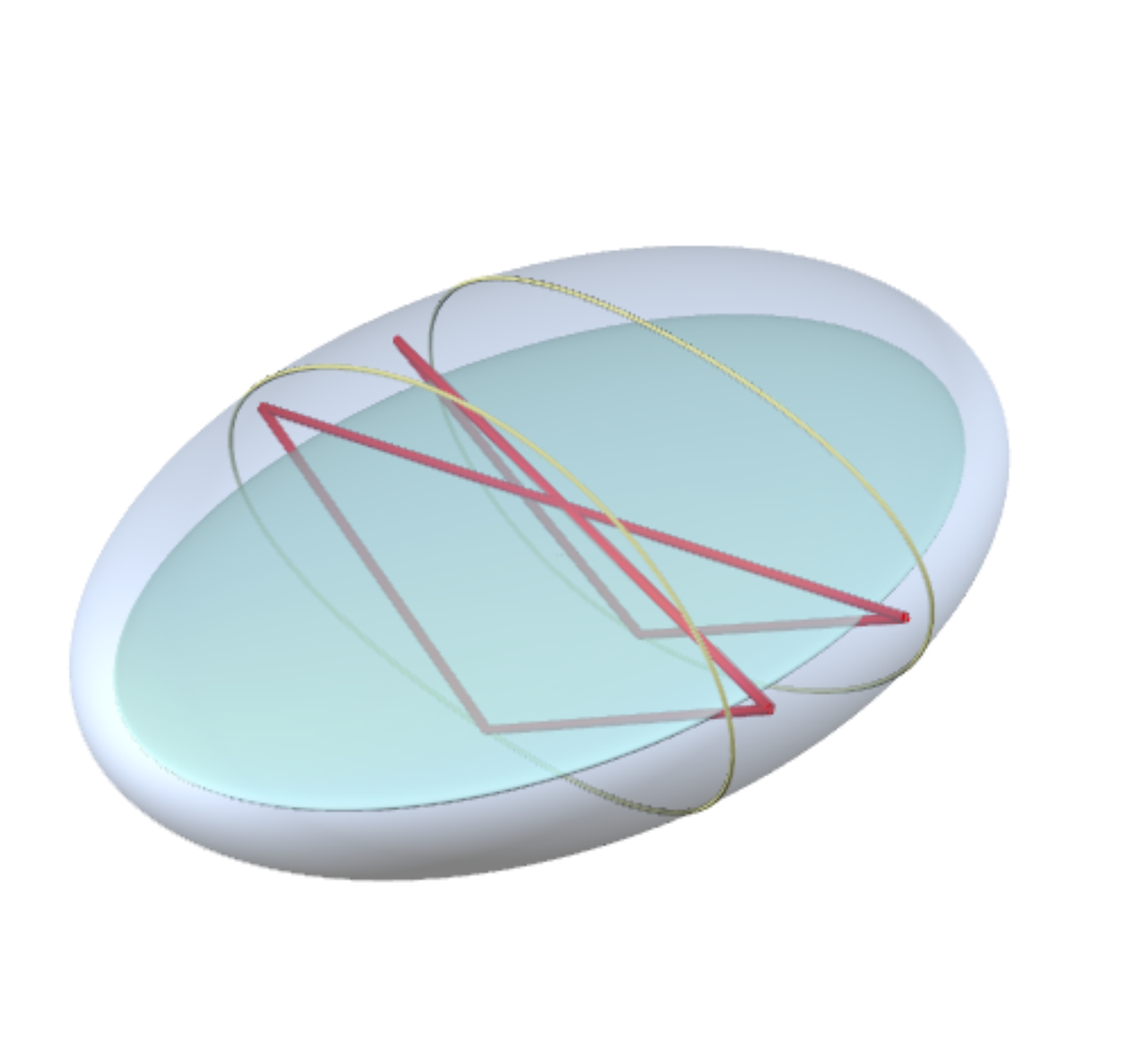}\end{tabular}} &
    \scalebox{0.2}{\begin{tabular}{c}\includegraphics{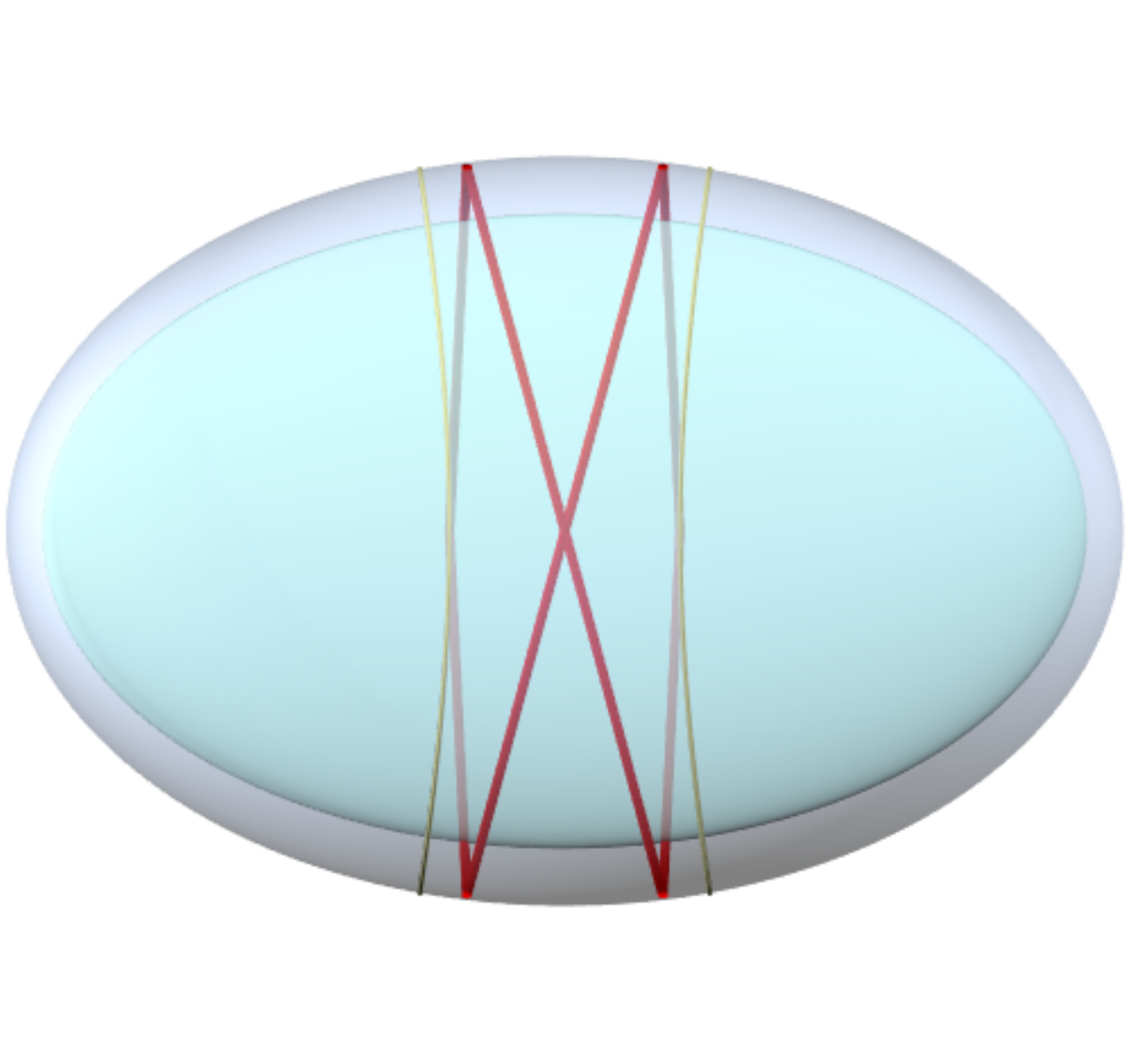}\end{tabular}} &
    \scalebox{0.2}{\begin{tabular}{c}\includegraphics{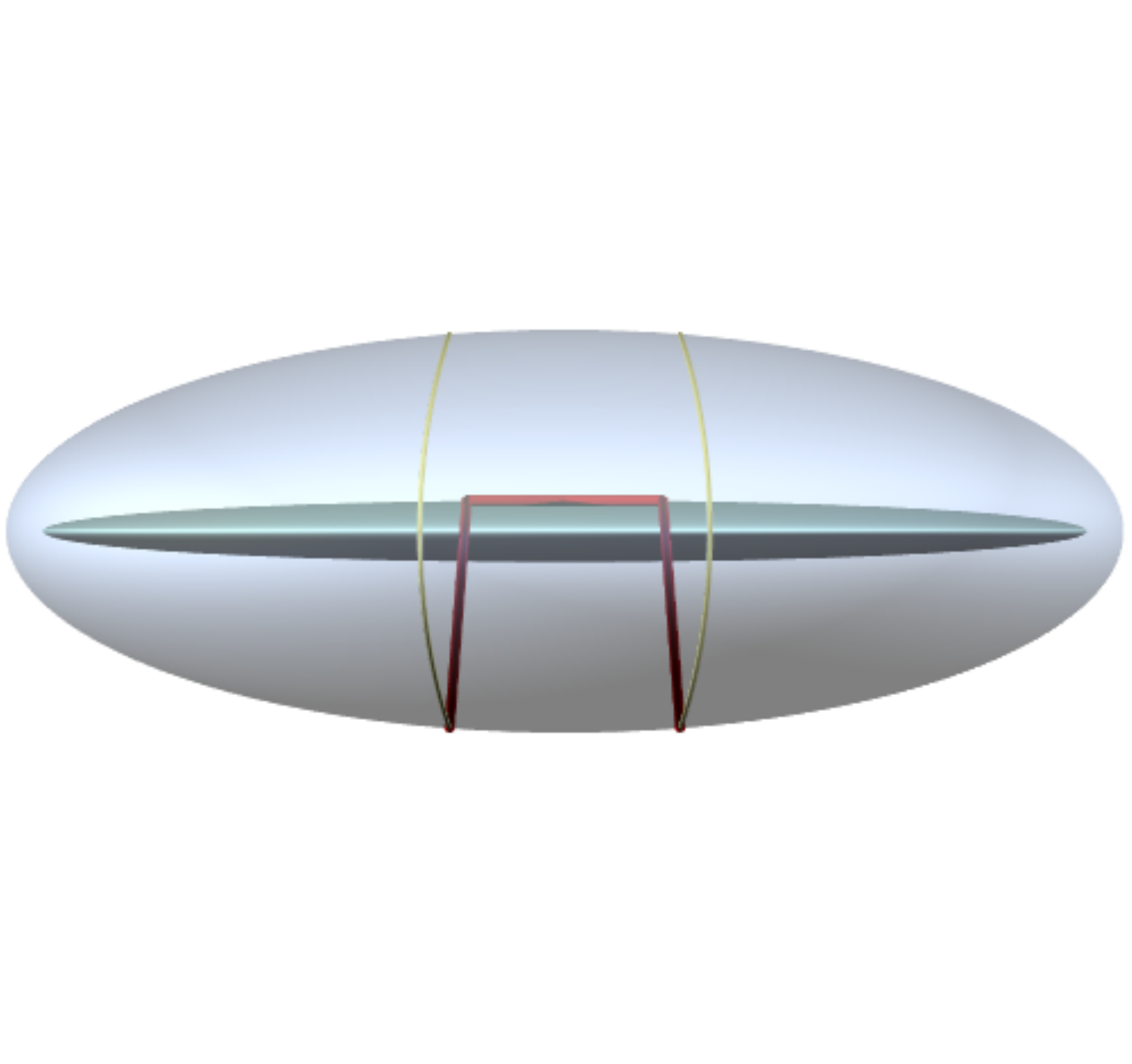}\end{tabular}} &
    \scalebox{0.2}{\begin{tabular}{c}\includegraphics{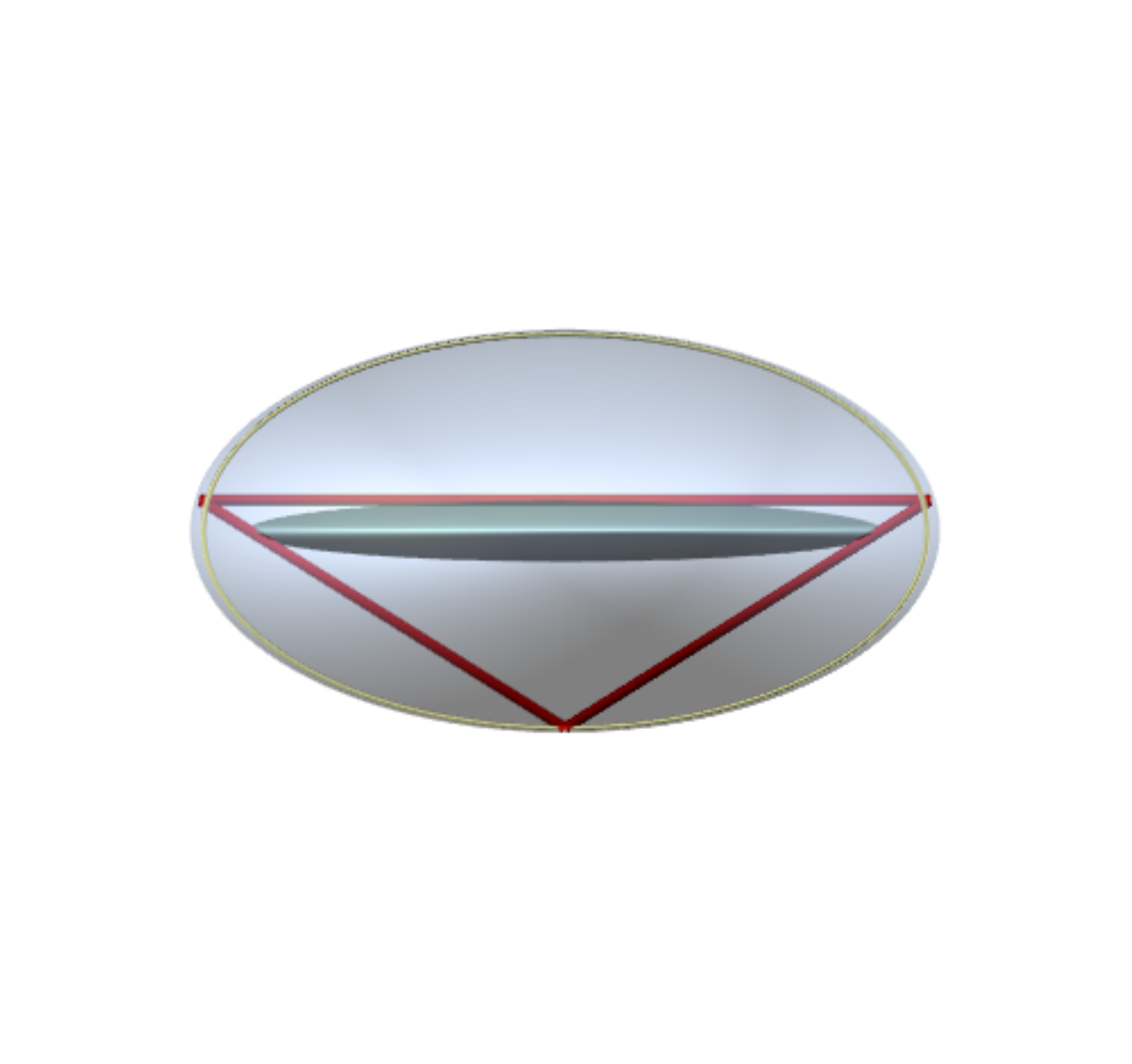}\end{tabular}} &
    \begin{tabular}{c}
      EH2\\
      $\left( 6, 4, 2 \right)$\\
      0.45\\
      0.13\\
      0.962896\\
      0.126968\\
      $R_2$\\
      $f \circ R_{2 3}$
    \end{tabular}\\
    \hline
    \scalebox{0.2}{\begin{tabular}{c}\includegraphics{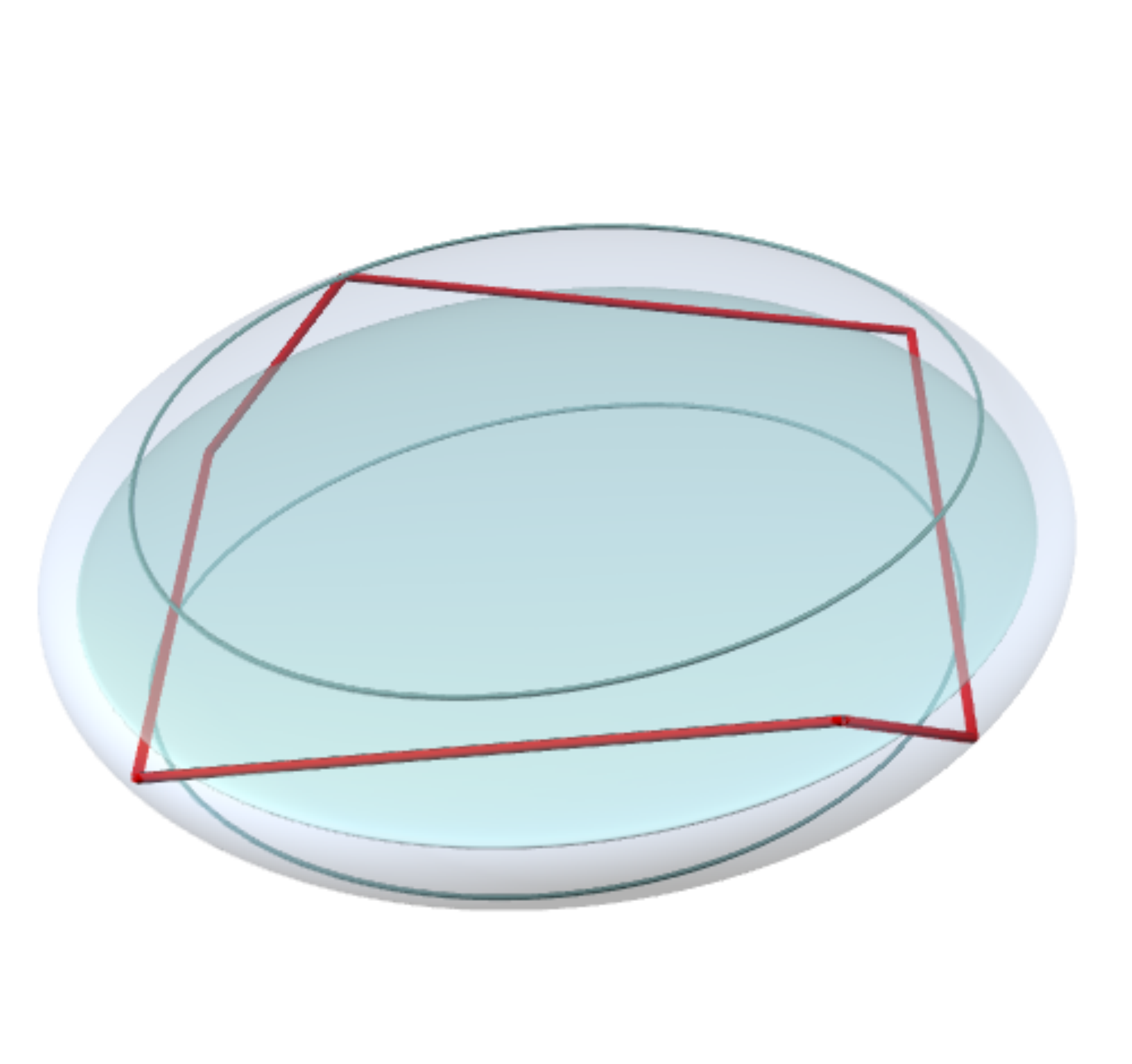}\end{tabular}} &
    \scalebox{0.2}{\begin{tabular}{c}\includegraphics{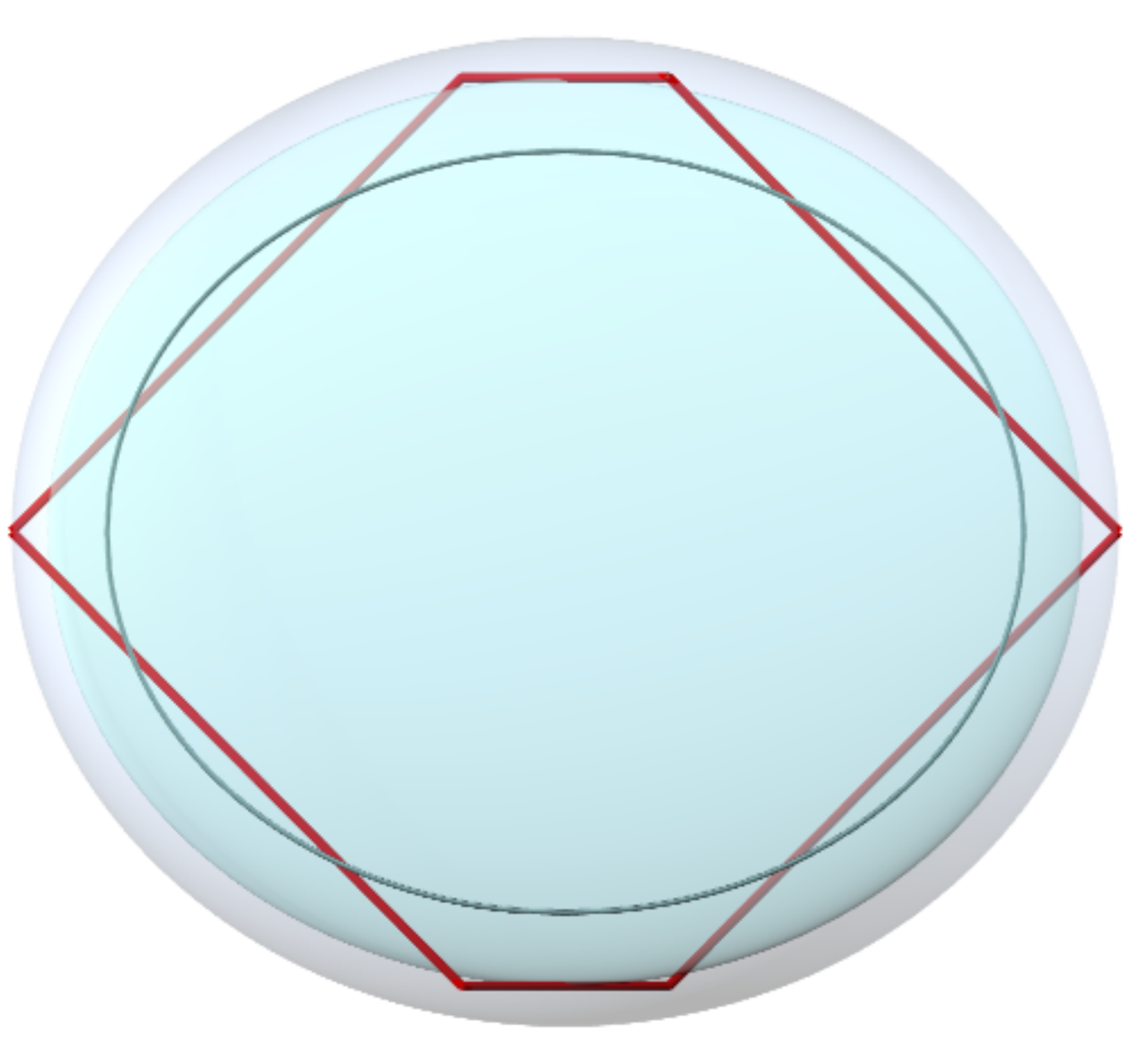}\end{tabular}} &
    \scalebox{0.2}{\begin{tabular}{c}\includegraphics{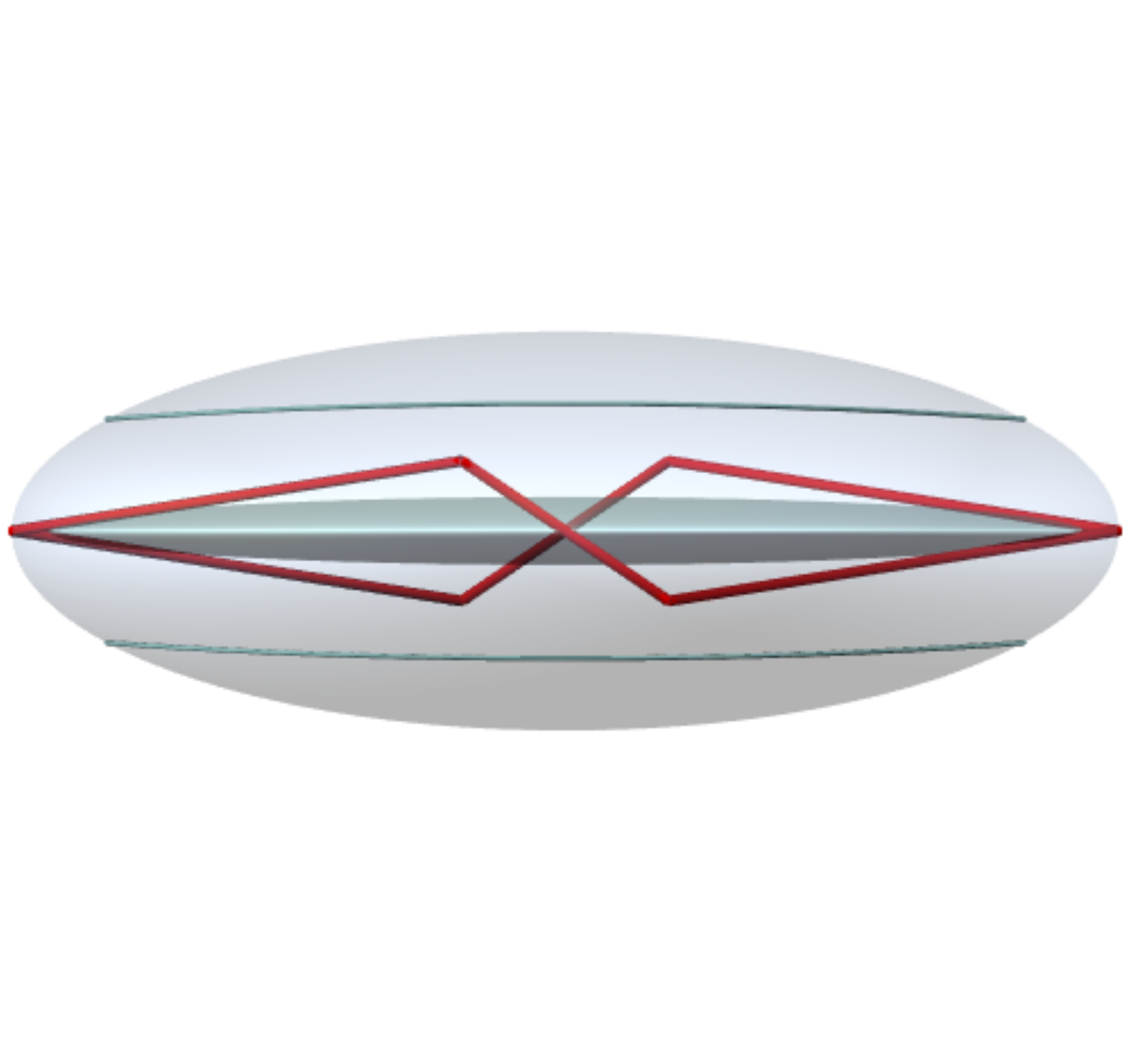}\end{tabular}} &
    \scalebox{0.2}{\begin{tabular}{c}\includegraphics{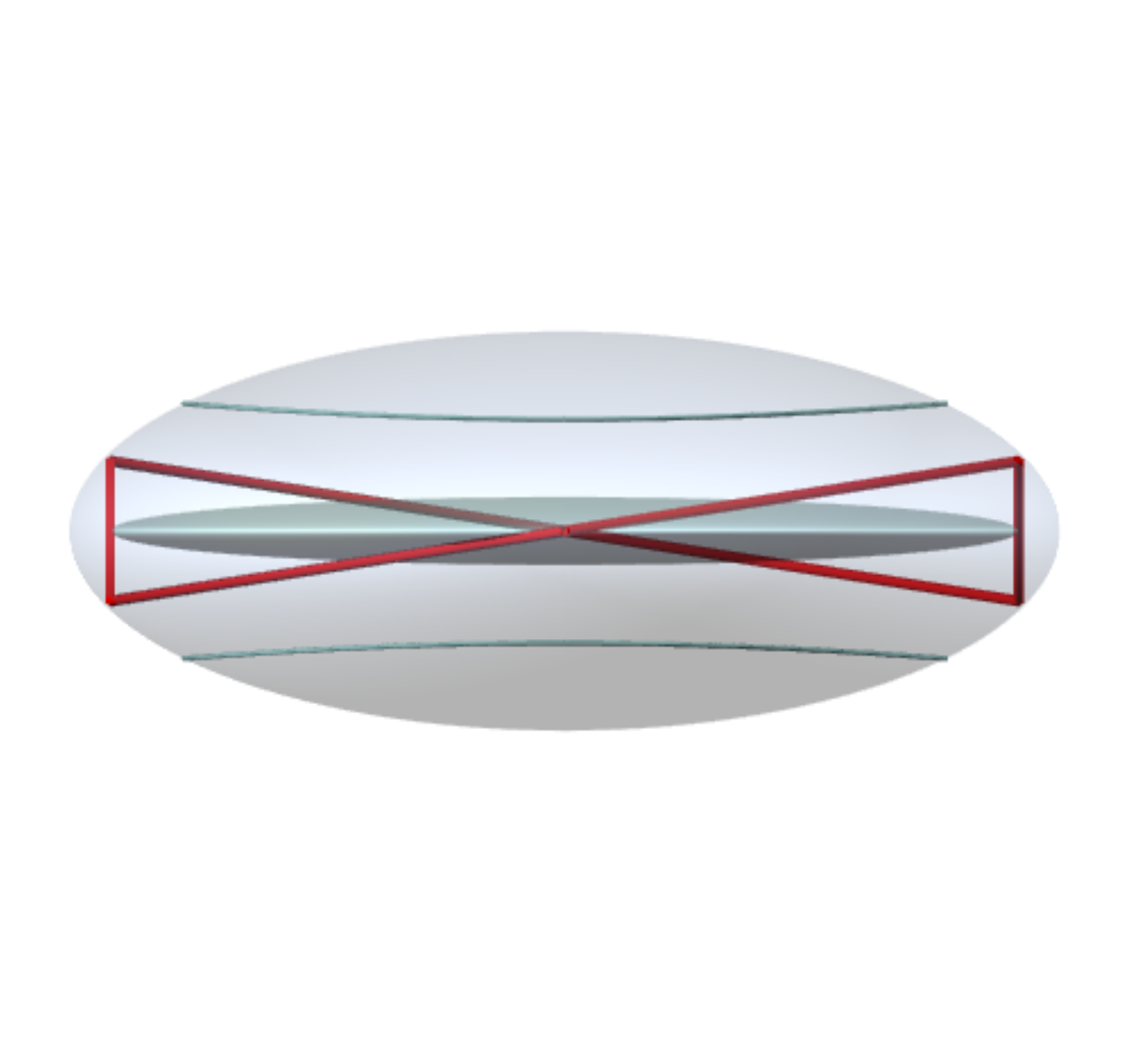}\end{tabular}} &
    \begin{tabular}{c}
      EH1\\
      $\left( 6, 4, 2 \right)$\\
      0.8\\
      0.13\\
      0.403278\\
      0.126231\\
      $R_{1 2}$\\
      $f \circ R_{1 3}$
    \end{tabular}\\
    \hline
    \scalebox{0.2}{\begin{tabular}{c}\includegraphics{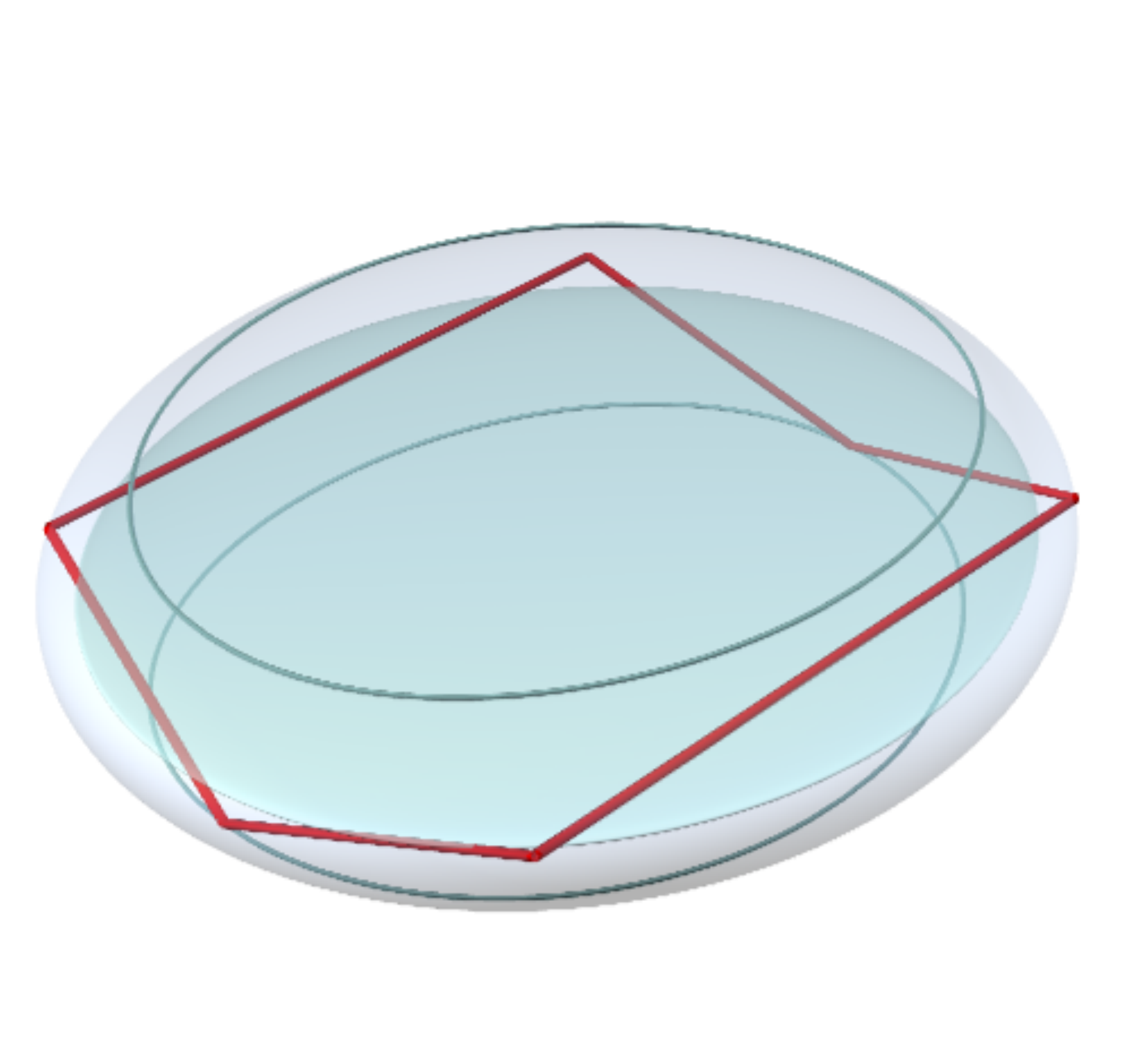}\end{tabular}} &
    \scalebox{0.2}{\begin{tabular}{c}\includegraphics{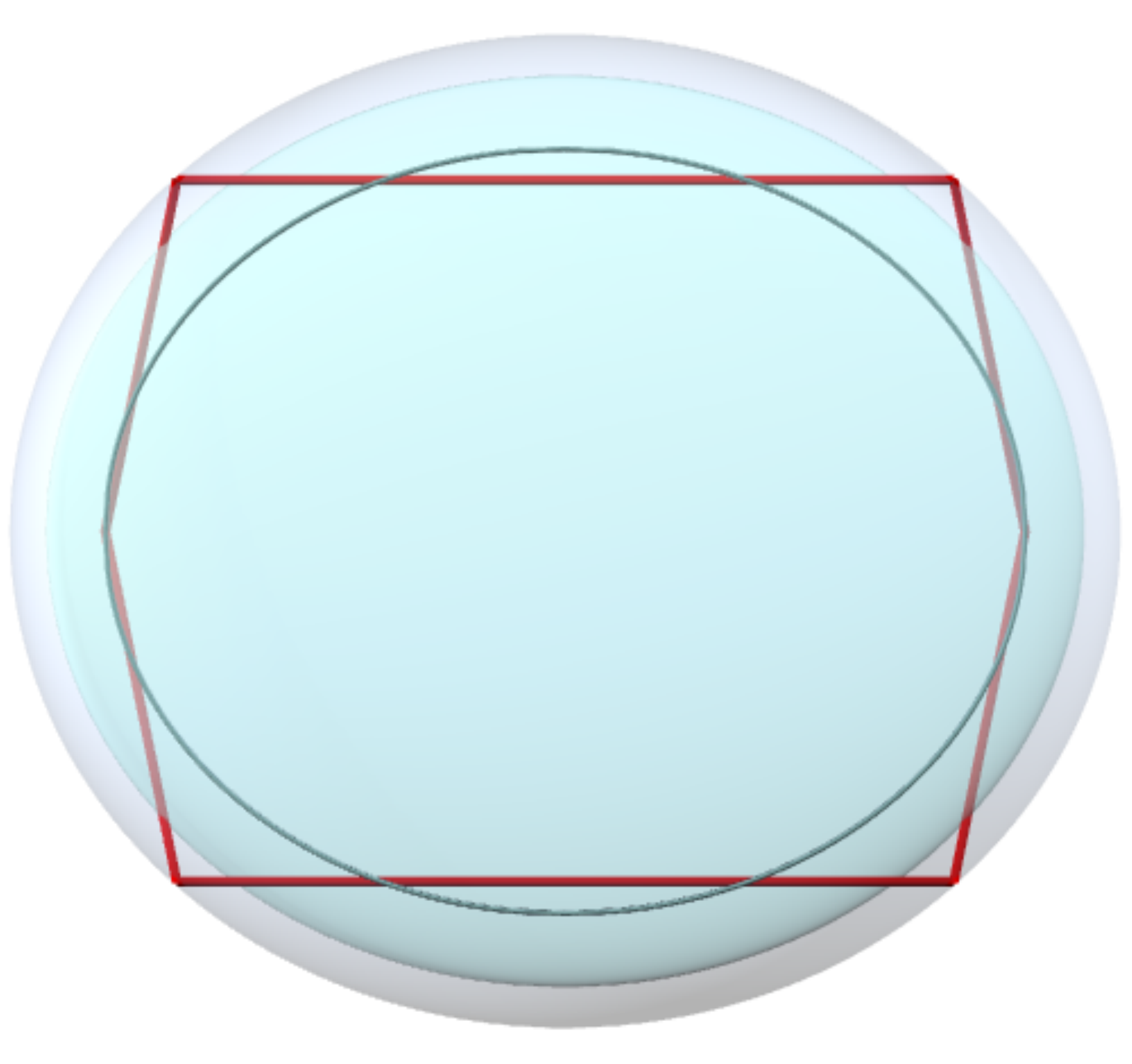}\end{tabular}} &
    \scalebox{0.2}{\begin{tabular}{c}\includegraphics{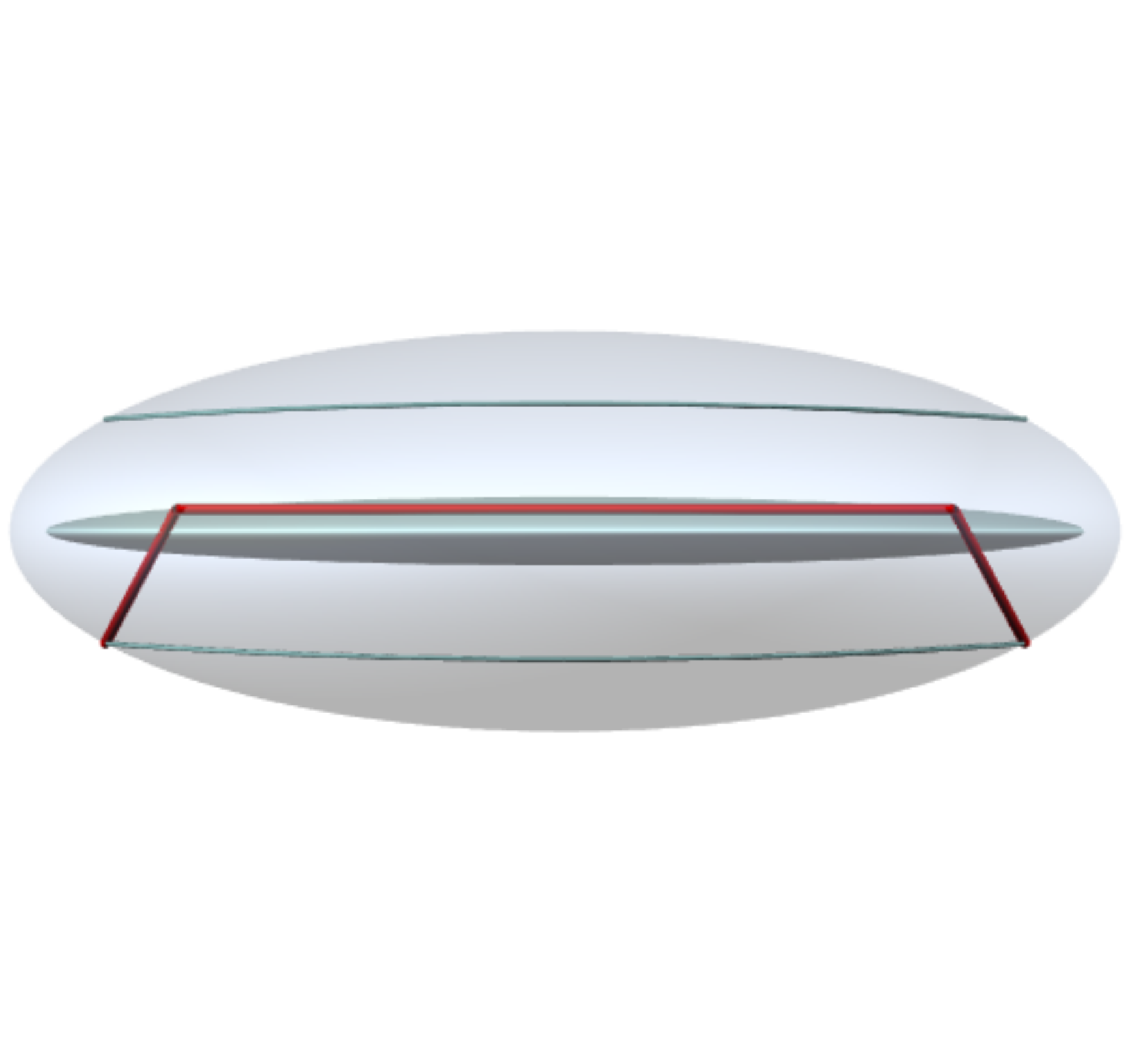}\end{tabular}} &
    \scalebox{0.2}{\begin{tabular}{c}\includegraphics{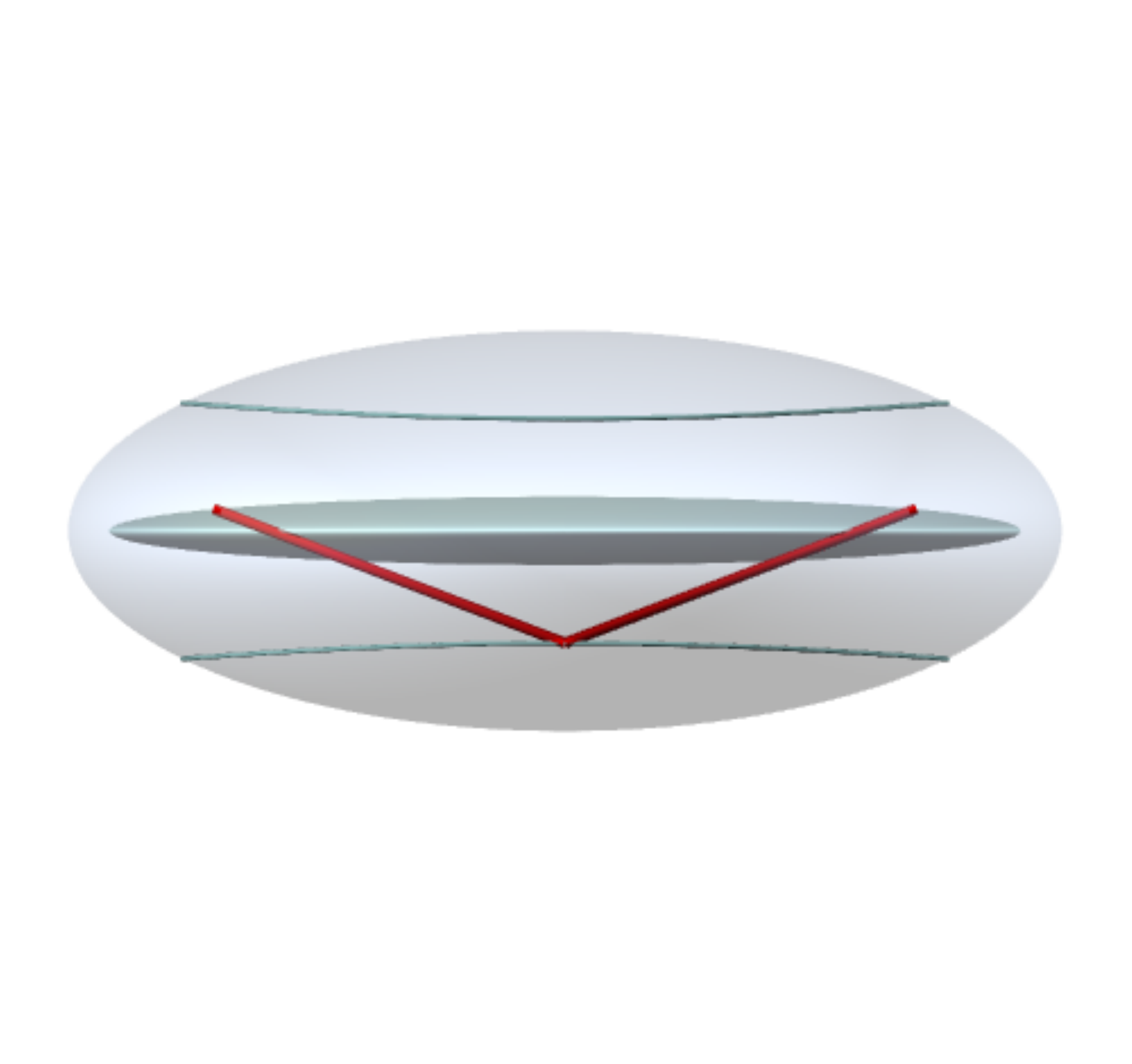}\end{tabular}} &
    \begin{tabular}{c}
      EH1\\
      $\left( 6, 4, 2 \right)$\\
      0.8\\
      0.13\\
      0.403278\\
      0.126231\\
      $R_2$\\
      $f \circ R_3$
    \end{tabular}\\
    \hline
  \end{tabular}
\end{table*}

\section{The general case}\label{sec:nD}

We have characterized STs and SPTs in terms of the vertexes of some rectangles
(for the 2D case) and cuboids (for the 3D case). We have proved that there are
exactly 12 and 112 classes of SPTs in these low-dimensional cases.
See Theorems~\ref{thm:SPTs_2D} and~\ref{thm:SPTs_3D}.

Next, we generalize both characterizations and the final classification to
nondegenerate ellipsoids of $\Rset^{n + 1}$.

We say that two SPTs are of the same class when they have the same type of
caustics and they connect the same couple of vertexes of their respective cuboids.
All SPTs inside the same class are associated to the same reversors.
For instance, looking at Table~\ref{tab:Vertexes_2D},
we see that all SPTs whose type of caustic is E and
that connect the two left (respectively, right) vertexes of the
rectangle~(\ref{eq:Rectangle}) are associated to the reversors $R_x$ and $R_y$
(respectively, $f \circ R_x$ and $f \circ R_y$).

We have the following result.

\begin{theorem}
  Nonsingular STs (respectively, nonsingular SPTs) inside nondegenetate
  ellipsoids of $\Rset^{n + 1}$ are characterized as trajectories
  passing, in elliptic coordinates, through some vertex (respectively, two
  different vertexes) of their cuboid. All nonsingular SPOs are doubly SPOs,
  but a few with caustics of repeated types. There are $2^{2 n} \left( 2^{n +
  1} - 1 \right)$ classes of nonsingular SPTs.
\end{theorem}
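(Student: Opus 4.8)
The plan is to lift the two halves of the three-dimensional argument ---Propositions~\ref{prop:Vertex2Reversor} and~\ref{prop:Reversor2Vertex}, and Theorem~\ref{thm:SPTs_3D}--- to arbitrary $n$, and then to reduce the final count to pure combinatorics of the $(n+1)$-dimensional cuboid together with a winding-number realizability argument. The characterization supplies the upper bound ``at most $2^{2n}(2^{n+1}-1)$ classes'', while realizability supplies the matching lower bound. I would organize everything around the single observation that elliptic coordinates are invariant under every reflection $\sigma\in\Sigma$, since they do not distinguish among the $2^{n+1}$ orthants of $\Rset^{n+1}$.

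First I would prove the forward characterization: every nonsingular ST meets a vertex and every nonsingular SPT connects two different vertexes. Given an $r$-SO through $(q,p)\in\FixedSet(r)$, let $\tilde q(t)$ be the arc-length parameterization starting at the distinguished point $q_\star$ ($q_\star=q$ for a reversor $\tilde r_\sigma$, and $q_\star=(q+\hat q)/2$ for $\hat r_\sigma=f\circ\tilde r_\sigma$). As in Table~\ref{tab:Propertiesq_3D}, the symmetry forces $\tilde q(-t)=\sigma\,\tilde q(t)$ for the associated reflection $\sigma$, whence the elliptic parameterization satisfies $\tilde{\mathfrak q}(-t)=\tilde{\mathfrak q}(t)$. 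Since $\mu_1,\dots,\mu_n$ are smooth everywhere (Theorem~\ref{thm:Oscillations}), evenness gives $\mu_i'(0)=0$, so each $\mu_i(0)$ sits at an endpoint of its interval by item~\ref{item:CriticalPoints}); and $\mu_0(0)=0$ when $q_\star\in Q$, or else $\mu_0'(0)=0$ with $\mu_0$ smooth at $0$. Hence $\tilde{\mathfrak q}(0)$ is a vertex, which is the $n$-dimensional Proposition~\ref{prop:Reversor2Vertex} and generalizes verbatim. For an SPO of length $L_0$, periodicity and evenness yield evenness about $L_0/2$ as well, so $\tilde{\mathfrak q}(L_0/2)$ is a second vertex; if some winding number is odd it differs from $\tilde{\mathfrak q}(0)$, and if all are even I would pass to $\tilde{\mathfrak q}(L_0/4)$, which is distinct by items~\ref{item:HalfPeriod}) and~\ref{item:NotMultiplesOf4}). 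The converse ---a trajectory connecting two vertexes is an SPT--- is the reflection-composition argument of Theorem~\ref{thm:SPTs_3D}, also dimension-free.

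Next I would attach a reversor to each vertex, generalizing Proposition~\ref{prop:Vertex2Reversor}. By Proposition~\ref{prop:SymmetrySets}, two symmetry sets meet only along coordinate-plane or $2$-periodic ---hence singular--- configurations, so a nonsingular ST lies on a \emph{unique} symmetry set and its reversor is well defined. I would read it off the combinatorial type of $\mathfrak q_\star=(\mu_0,\dots,\mu_n)$: whether $\mu_0=0$ (equivalently $q_\star\in Q$) selects the family $\{\tilde r_\sigma\}$ or $\{\hat r_\sigma\}$, while the set $A=\{j:a_j\in\{\mu_0,\dots,\mu_n\}\}$ of coordinate hyperplanes through $q_\star$ fixes the reflection $\sigma=\sigma_A$. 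Existence of an outward $p$ making $q_\star+\langle p\rangle$ tangent to all $n$ caustics follows from the confocal geometry (Lemma~\ref{lem:TripleIntersection} and Theorem~\ref{thm:Jacobi}), exactly as the explicit discriminant computations do in the 3D case: tangency to the caustics through $q_\star$ is automatic by orthogonality of confocal quadrics, and the remaining tangencies fix $p$ up to the sign choices responsible for the $1$-to-$2^{n+1}$ orthant multiplicity. A bookkeeping check then shows that, for a fixed caustic type, exactly $2^{n+1}$ of the $2^{n+2}-2$ nonempty symmetry sets are feasible, in bijection with the $2^{n+1}$ vertexes; the only collisions arise for caustic types with two caustics in a common interval $(a_i,a_{i+1})$, where exchanging them fixes the reversor. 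This is precisely the ``repeated type'' exception (the $H_1H_1$ phenomenon), and it yields the doubly-symmetric statement: distinct vertexes carry distinct reversors, save for repeated types.

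Finally, the count. Each class is a caustic type together with an unordered couple of vertexes; there are $2^n$ caustic types (Remark~\ref{rem:NumberCausticTypes}) and $\binom{2^{n+1}}{2}=2^n(2^{n+1}-1)$ couples, giving $2^{2n}(2^{n+1}-1)$ as the upper bound. For the lower bound I would realize every class: the couple connected by an SPT is governed by the parities of the winding numbers modulo $2$ and $4$ ($\mu_0$ flips between $t=0$ and $L_0/2$ iff $m_0$ is odd, and analogously for $m_i$ and $L_0/4$), so each admissible ``kind'' vector over $\{\text{e},\text{o},\text{f},\text{t}\}$ selects a difference set and hence a couple, just as in Tables~\ref{tab:ClassificationSPTsEH1}--\ref{tab:ClassificationSPTsH1H1}. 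I expect the genuine obstacle to lie here: one must choose admissible winding numbers for every couple and then invert the frequency map, i.e.\ solve Eq.~(\ref{eq:FrequencyWindingNumbers}) inside the correct component of $\Lambda$. This demands the $n$-dimensional analogue of the surjectivity of $\omega$ onto the relevant rationals for suitably degenerate (flat or needle-like) ellipsoids, which is the least routine ingredient and the step where a uniform combinatorial argument must replace the exhaustive low-dimensional tables.
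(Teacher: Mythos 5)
Your proposal is correct and follows essentially the same route as the paper: the paper proves this theorem only as a sketch, asserting that it follows by ``small refinements'' of the 2D/3D techniques (the reflection-invariance of elliptic coordinates giving evenness of $\tilde{\mathfrak{q}}(t)$, the vertex at $t=0$ and second vertex at $L_0/2$ or $L_0/4$, the vertex--reversor correspondence with the repeated-type exception), and then obtains the count as $2^n$ caustic types times $2^n(2^{n+1}-1)$ couples of vertexes, with realizability guided by the parities of the winding numbers modulo $2$ and $4$ --- exactly the structure you lay out. Your identification of the inversion of the frequency map as the genuinely nontrivial realizability step also matches the paper, which handles it only by appealing to the asymptotic surjectivity results of Ref.~\onlinecite{CasasRamirez2011} for suitably flat or needle-like ellipsoids.
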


This theorem is obtained by means of small refinements of the techniques used
in this article, although some checks become rather cumbersome.

The number $2^{2 n} \left( 2^{n + 1} - 1 \right)$ has a simple explanation.
There are $2^n$ types of caustics
---see Remark~\ref{rem:NumberCausticTypes}---,
and any $(n+1)$-dimensional cuboid has $2^{n+1}$ vertexes,
and so $2^n (2^{n+1} - 1)$ couples of vertexes.

Once fixed the ellipsoid and the caustic type, all $2^n \left( 2^{n + 1} - 1
\right)$ couples of vertexes are \tmtextit{realizable}. That is, each couple
is connected by some SPT. Of course, one should choose suitable winding
numbers for each couple. That choice is guided by the following observation.

Let $\mathcal{C}_{\lambda}$ be a cuboid such that its billiard trajectories
are periodic with winding numbers $m_0, \ldots, m_n$. Let $\hat{\mathfrak{q}}
= \left( \hat{\mu}_0, \ldots, \hat{\mu}_n \right)$ and
$\tilde{\mathfrak{q}}_{} = \left( \tilde{\mu}_0, \ldots, \tilde{\mu}_n
\right)$ be any couple of different vertexes of $\mathcal{C}_{\lambda}$
connected by an SPT.
If some winding number is odd, then $\hat{\mu}_i =
\tilde{\mu}_i \Leftrightarrow m_i \in 2\Zset$. If
all winding numbers are even, then $\hat{\mu}_i = \tilde{\mu}_i
\Leftrightarrow m_i \in 4\Zset$.
Here, $i$ is any integer index such that $0 \le i \le n$.

\section{Conclusions and future work}\label{sec:Conclusions}

We have shown that the billiard map associated to a convex symmetric
hypersurface $Q \subset \Rset^{n + 1}$ is reversible. Even more, it
admits $2^{n + 1}$ factorizations as a composition of two involutions.
Therefore, its SPOs can be classified by the symmetry sets they intersect.
We have carried out this classification for nondegenerate ellipsoids of
$\Rset^{n+1}$. The characterization of STs and SPTs in terms of their
elliptic coordinates has been the key tool.

The existence of SPOs is useful for several reasons. We indicate just two.

First, it reduces the numerical computations needed to find POs. Concretely,
we can halve the dimension of the problem, by restricting the search of
periodic points on the $n$-dimensional symmetry sets. (This issue will be
greatly appreciated when dealing with perturbed ellipsoids, for which no
frequency map exists, since the integrability is lost.) Second, an SPO of a
completely integrable reversible map, persists under symmetric perturbations,
when the symmetry sets are transverse to the Liouville invariant tori of the
map at some point of the SPO.

These two facts are crucial to study billiards inside symmetrically perturbed
ellipsoids, which is our next goal. To be more precise, we plan to generalize the
results about the break-up of resonant invariant curves for billiards inside
perturbed circles~\cite{RamirezRos2006} and perturbed
ellipses~\cite{PintoRamirez2011}.

\begin{acknowledgments}
The first author was supported in part by MCyT-FEDER grant MTM2006-00478 (Spain).
The second author was supported partially by MICINN-FEDER Grant MTM2009-06973
(Spain) and CUR-DIUE Grant 2009SGR859 (Catalonia).
\end{acknowledgments}

\end{document}